\pdfoutput=1
\documentclass[a4paper, 12pt]{article}

\usepackage{standalone}
\usepackage{header}
\usepackage{abbrev}
\usepackage{abbrev_qi}

\usepackage{enumerate}

\title{The Interface of the FK-representation of the Quantum Ising Model Converges to the $\SLE_{16/3}$}
\author{Jhih-Huang Li}
\date{}

\begin{document}

\maketitle

\vspace{1cm}

\begin{abstract}
We study the interface in the FK-representation of the 1D quantum Ising model and show that in the limit, it converges to the $\SLE_{16/3}$ curve.
\end{abstract}

\vspace{1cm}

\tableofcontents

\section{Introduction}

\begin{killcontents}
The conformal invariance of models of planar statistical mecanics at criticality was first suggested by Belavin, Polyakov and Zamolodchikov in \cite{BPZ84-1, BPZ84-2}.
This gave rise to Conformal Field Theory, consisting of the study of scaling limits of quantum fields, which should be conformally invariant.
However, mathematically, the notion of conformal invariance needed to be defined, and it turned out that the interface of the model was a good candidate, encoding important information on the model itself.
Since then, the interface of some models have been proven to be an conformally invariant curve in the sense discussed below.

The family of SLE curves, or \emph{Schramm-Löwner Evolution}, introduced by Schramm in \cite{Schramm-SLE}, is a family of conformally invariant continuous curves which do not intersect themselves.
These curves are fractal objects which are pretty well-understood.
Thus, proving the conformal invariance of a model to an SLE curve leads to the knowledge of these exponents.

In 2000, Kenyon used \emph{discrete holomorphicity} to show the conformal invariance of the dimer model \cite{Kenyon-dimer-conformal}.
Later on, Smirnov proved Cardy's formula for site-percolation on the triangular lattice \cite{Cardy, Smirnov-perco}.
Later on, Camia and Newman proved the convergence of the interface to $\SLE_6$ \cite{CamNew-perco-SLE}.
Schramm and Scheffield constructed the harmonic explorer and showed its convergence to $\SLE_4$ in \cite{SS-harm-expl}.
An extended notion of \emph{preholomorphicity}, or \emph{s-holomorphicity}, was introduced in \cite{Smirnov-towards, Smirnov-conformal} to show the conformal invariance of FK- and spin- Ising interfaces on the square lattice \cite{Smirnov-conformal, DS-conf-inv} and isoradial lattice \cite{CS-universality}.
Additionally, these conformally invariant limits were identified to be $\SLE_{16/3}$ in the FK-Ising case and $\SLE_3$ in the spin-Ising case \cite{Ising-SLE-convergence}.

To extract information from the model, the interface is a good object to study.
The techniques behind are based on complex analysis and the study of the so-called \emph{(para-)fermionic observable}, which encodes the interface.
Establishing its discrete holomorphicity and its convergence in the limit to a holomorphic function are the main points of study.
Moreover, the same observable was also used to show the constant connectivity of hexagonal lattice is $\sqrt{2 + \sqrt2}$ in \cite{DS-conn-const}.

The family of SLE curves is the first collection of curves which are conformally invariant, thus giving the first step towards the conformal field theory.
Recently, Benoist and Hongler showed that the collection of critical Ising loops converges to CLE(3), Conformal Loop Ensemble \cite{BH-Ising-CLE}.
\end{killcontents}

The quantum Ising model on $\bbZ$ is an exactly solvable one-dimensional quantum model \cite{Pfeuty-QI}.
It can be represented as the space-time evolution of a spin configuration, first introduced in \cite{AKN-perco-QI}.
Specifically, consider an initial configuration on the base graph $\mathbb{Z}$, and let it evolve according to the following quantum Hamiltonian,
$$
H = - \mu \sum_{(x, y) \in E} \Pauli{x}{3} \Pauli{y}{3} 
- \lambda \sum_{x \in V} \Pauli{x}{1}
$$
where the Pauli matrices are denoted by
\begin{equation}
\Pauli{}{1} = \left(
\begin{array}{cc}
0 & 1 \\ 1 & 0
\end{array}
\right),\quad
\Pauli{}{3} = \left(
\begin{array}{cc}
1 & 0 \\ 0 & -1
\end{array}
\right),
\end{equation}
which act on the spin configuration at each coordinate, indicated by the subscript.

The quantum Hamiltonian acts on the Hilbert space $\bigotimes_\bbZ \bbC^2$ where a Hilbert space of spin configuration $\bbC^2 \cong {\rm Vect} ( \ket{+}, \ket{-} )$ is associated to each $x \in \bbZ$.
We may represent $\ket{+} = (1, 0)$ and $\ket{-} = (0, 1)$ for example.
In the Hamiltonian, $\lambda$ and $\mu$ are two positive parameters where $\mu$ is the interaction term between particles at neighboring sites and $\lambda$ is the intensity of the external field.

\bigskip

The model can also be seen as a space-time evolution, consisting in developping the exponential operator $e^{-\beta H}$ where $\beta$ is interpreted as time \cite{AKN-perco-QI}.
In particular, we obtain the FK-representation, loop representation and random-current (random-parity) representation.
Readers may have a look at \cite{Ioffe-QI} for a nice and complete exposition on this topic.
These representations are useful in interpreting results from the classical Ising model \cite{GOS-entanglement, BG-QI-sharp, Bjornberg-infrared}.

\bigskip

Let us briefly describe the FK-representation of the model, a bit more details being given in Section \ref{sec:prem}.
In $\bbR^2$, we consider the collection of vertical real lines indexed by $\bbZ$, which we denote by $\bbZ \times \bbR$.
It is called \emph{(primal) semi-discrete lattice} below.
We put independent Poisson point processes of parameter $\lambda$ on each of these real lines whose points are called \emph{death points}.
Similarly, we consider the dual of $\bbZ \times \bbR$ which is $(\bbZ + \frac12) \times \bbR$.
It can also be seen as a collection of real lines, this time indexed by $\bbZ + \frac12$.
We also put independent Poisson point processes on each of the lines, but this time of parameter $\mu$.
These points are called \emph{bridges} and we draw at the same level a horizontal segment connecting the two neighboring vertical (primal) lines.
See Figure \ref{fig:connectivity} for an example.
Moreover, these two families of Poisson point processes are taken to be independent one of each other.

Consider a random configuration given by the above Poisson point processes and define the notion of \emph{connectivity}.
Two points $x$ and $y$ in the semi-discrete lattice are said to be connected if one can go from one to the other using only primal lines and horizontal bridges without crossing any death points.
The \emph{cluster} of a point $x$ is the largest subgraph containing $x$ which is connected in the sense mentioned above.
See Figure \ref{fig:connectivity} for an illustration.

\begin{figure}[htb]
  \centering
    \includegraphics[scale=1]{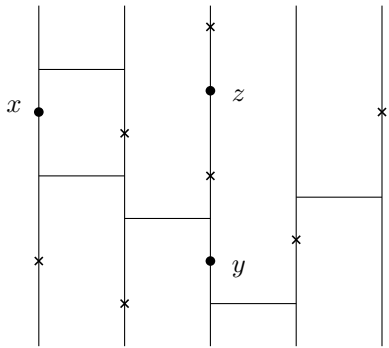}
    \caption{Only vertical lines are drawn, which are a part of the primal semi-discrete lattice. Horizontal segments are bridges and crosses are death points. Here, the points $x$ and $y$ are connected to each other but are disconnected from the point $z$.}
    \label{fig:connectivity}
\end{figure}

Having this notion of connectivity, we can talk about the FK-representation and the loop representation of the quantum Ising model.
The FK-representation is obtained by only looking at the clusters in the primal lattice.
We are interested in how the points in the primal lattice are connected, and the property of primal clusters.
The loop representation is obtained by looking at the clusters in both the primal and the dual lattices.
We draw all the interfaces separating them and we get a collection of loops or interfaces, depending on the boundary conditions.

The measure of the quantum Ising model at criticality is given as follows,
$$
\dd \bbP_{\lambda, \mu}^{\rm QI} (D, B) \propto 2^{k(D, B)} \dd \bbP_{\lambda, \mu}(D, B)
\propto \sqrt{2}^{l(D, B)} \dd \bbP_{\rho, \rho}(D, B)
$$
where $\bbP_{\lambda, \mu}$ is the law of Poisson point processes described above and $\rho = \sqrt{\lambda \mu}$.
Here, $D$ and $B$ are (random) sets of death points and bridges respectively.
Given a configuration of Poisson points $(D, B)$, the quantity $k(D, B)$ denotes the number of clusters in the FK-representation (on the primal lattice) and $l(D, B)$ denotes the number of loops in the loop representation (on the mid-edge lattice).

\bigskip

The interface in the FK-representation of the classical Ising model has been shown to be conformally invariant \cite{Smirnov-towards, Smirnov-conformal, DS-conf-inv, CS-universality} and the limiting curve can be identified with $\SLE_{16/3}$ \cite{Ising-SLE-convergence}.
Therefore, the same should also hold for the interface in the FK-representation / loop representation of the quantum Ising model.

\bigskip

In order to state the theorem, let us recall a few more important notions.
We consider a Dobrushin domain $(\Omega, a, b)$, \emph{i.e.} an open, bounded and simply connected set with two marked points on the boundary $a$ and $b$.
For every positive $\delta$, we can semi-discretize it by a subgraph $(\primaldomain, a_\delta, b_\delta)$ of the semi-discrete medial lattice $\frac\delta2 \bbZ \times \bbR$.
We shall look at the so-called \emph{Dobrushin boundary condition}, consisting of wired boundary condition on the arc $\arcab$ and free boundary condition on the arc $\arcba$.
In this case, the loop representation gives rise to a collection of loops and one interface connecting $a_\delta$ to $b_\delta$, separating the (primal) cluster connected to the wired arc and the (dual) cluster connected to the free arc.

In this paper, we prove the conformal invariance of the quantum Ising model by showing that the limit of interfaces when $\delta$ goes to 0 is conformally invariant.
This is the first quantum model proved to have such a property.
Here is the informal statement of our main theorem, the more precised version will be given in Theorem \ref{thm:main}.

\begin{thm*}
Let $(\Omega, a, b)$ be a Dobrushin domain.
Let $(\primaldomain, a_\delta, b_\delta)$ be its semi-discretized counterparts.
Define the FK-representation of the quantum Ising model on $(\primaldomain, a_\delta, b_\delta)$ and denote by $\interface$ the interface separating the (primal) wired boundary and the (dual) free boundary.
When $\delta$ goes to zero, the interface $\interface$ converges to the chordal Schramm-Löwner Evolution of parameter 16/3 in $(\Omega, a, b)$.
\end{thm*}

The proof is made possible by the similarity between the FK-representations of the quantum and the classical Ising models.
The FK-representation and the loop representation of the quantum model can be interpreted as the same representations of the classical model living on a more and more flattened rectangular lattice.
Thus, the proof almost comes from the same arguments as in the so-called isoradial case, except that some notions need to be adapted to the semi-discrete case.

\bigskip

Intuitively, using the universality of the classical Ising model \cite{CS-universality} on isoradial graphs would require an inversion of limits:
\begin{itemize}
\item On one hand, the universality result says that the classical FK-Ising model on a $\epsilon$-flattened (where $\epsilon$ is the flattened angle) isoradial rectangular lattice of meshsize $\delta$ has an interface which is conformally invariant in the limit $\delta \rightarrow 0$, provided that the angle $\epsilon$ is kept unchanged.
In this first approach, the lattice ``converges'' to the whole plane uniformly in all directions.
\item On the other hand, if we put the classical FK-Ising on a $\epsilon$-flattened $\delta$-isoradial rectangular lattice with flatter and flatter rectangles by making $\epsilon$ go to $0$, we would get continuous lines in the vertical direction, which is exactly the FK-representation of the quantum Ising model.
Therefore, to get the conformal invariance of the interface in the quantum FK-Ising, we would need to make $\delta$ go to 0, the distance between two neighboring vertical lines.
In this second approach, the lattice ``converges'' to the whole plane first in the vertical direction, then in the horizontal one.
\end{itemize}

\bigskip

The heuristic described above suggests strongly that the FK-representation of the quantum Ising model should also be conformally invariant in the limit, and that the interface in the limit should be the same as in the classical case.
However, one should make this argument mathematically rigorous, and that is why we work directly in the semi-discrete case.

In this paper, some classical notions need to be generalized and new tools be constructed.
We will define the Green's function on the semi-discrete lattice, give the notion of s-holomorphicity, show that the fermionic observable is s-holomorphic and give a proof of RSW property by the second moment method.
Everything is defined directly in the semi-discrete.
Sometimes, it will be easy to see the parallel with the discrete isoradial case, but sometimes it will be a bit more tricky.

\bigskip

Other results concerning the conformal invariance from the classical Ising model are also expected to have their counterparts in the quantum case.
For instance, the following results should be extendable.
Using fermionic spinor, Hongler and Smirnov proved the conformal invariance of the energy density in the planar Ising model \cite{HS-energy-Ising}.
Chelkak, Hongler and Izyurov showed that the magnetization and multi-spin correlations are conformally invariant in the scaling limit \cite{CHI-CI-spin}.
Recently, Hongler and Benoist gave a proof that the collection of critical Ising loops converges to CLE(3), Conformal Loop Ensemble \cite{BH-Ising-CLE}.

\bigskip

In this paper, we work directly on the semi-discrete graph to establish the conformal invariance of the interface in the FK-representation of the quantum Ising model and identify it.
This paper is divided as follows.
\begin{itemize}
\item In Section \ref{sec:prem}, we define the quantum Ising model properly and describe its geometrical representation in details.
\item In Section \ref{sec:SDCA}, the semi-discrete complex analysis is introduced.
In particular, we define a notion of \emph{s-holomorphicity}, differential operators and integration which are analogous to the continuum ones.
We also study counterparts of some classical objects and problems, \emph{e.g.} Brownian motion and Dirichlet boundary problem.
Moreover, we construct the Green's function via auxilary functions inspired by \cite{Kenyon-laplacian}, which is the main novel input of this part of the paper.
\item In Section \ref{sec:observables}, we define the (para-)fermionic observable $\obsmedial$ in the FK-represen\-tation and show that it is s-holomorphic.
This is the first step towards the conformal invariance of the interface.
\item In Section \ref{sec:CVG_thm}, we show the convergence theorem and identify the interface in the limit with an SLE curve.
We need to use the tools established in Section \ref{sec:SDCA}.
In addition, we prove the \emph{RSW property} by the second moment method inspired from \cite{RSW-harmonic}.
\end{itemize}

\begin{bibliormk}
The content of Section \ref{sec:observables} appeared recently in \cite{Bjornberg-obs} where s-holomorphicity is defined for the semi-discrete lattice.
The notion therein is formulated differently but is equivalent to the one we use in this paper.
A global review of the quantum Ising model is also given along with different representations.
It is also shown that in both the spin-representation and FK-representation, the fermionic observable satisfies the property of s-holomorphicity.
A brief idea of the proof towards conformal invariance of the interface is mentioned but the formal proof is not given there.
\end{bibliormk}

\section*{Acknowledgements} 
This research is supported by the NCCR SwissMAP, the ERC AG COMPASP, and the Swiss NSF.
The author is thankful to his advisors Hugo Duminil-Copin and Stanislav Smirnov for their help and constant support during his PhD.

\section{Preliminaries} \label{sec:prem}

\subsection{Semi-discrete lattice}

The \emph{semi-discrete lattice} is defined by the Cartesian product $\bbZ \times \bbR$.
It can be seen as a collection of \emph{vertical lines} $\bbR$ indexed by $\bbZ$ with \emph{horizontal edges} connecting neighboring vertical lines with the same $y$-coordinate.
In our graphical representation, horizontal edges are not drawn for simplicity.

\begin{figure}[htb] \centering
  \begin{minipage}{.44\textwidth}
    \includegraphics[scale=0.8]{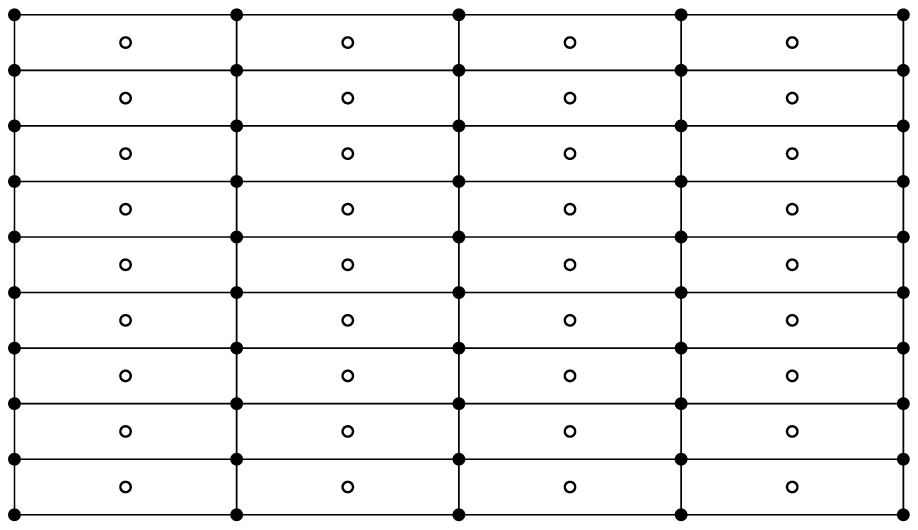}
    \caption{Black vertices and edges represent (a part of) a flattened rectangular lattice and white vertices its dual.}
    \label{fig:rec_lattice}
  \end{minipage}
  \hfill
  \begin{minipage}{.44\textwidth}
    \includegraphics[scale=0.8]{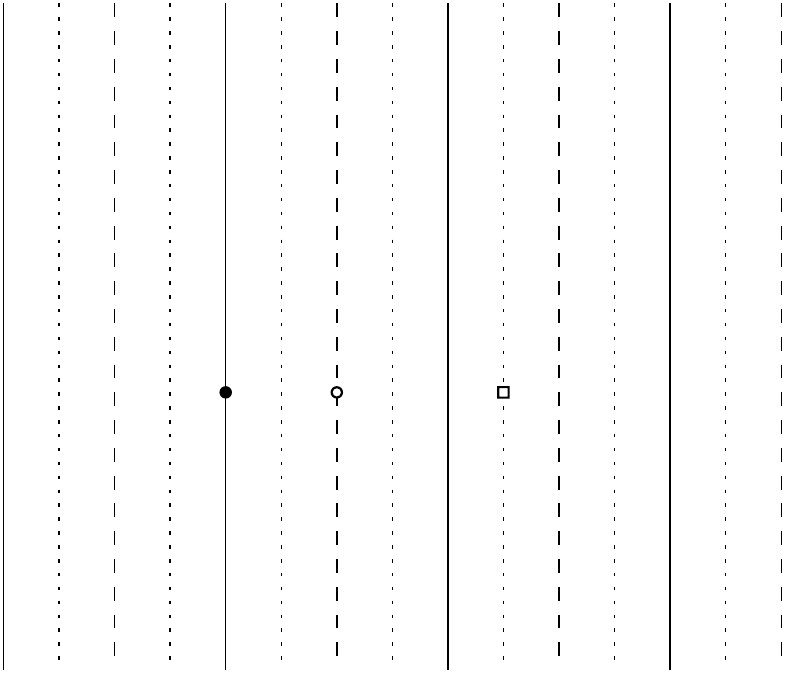}
    \caption{Solid lines represent $\primallattice$, dashed lines represent $\duallattice$ and dotted lines represent $\midedgelattice$.}
    \label{fig:lattices}
  \end{minipage}
\end{figure}

If we are given a discrete planar (isoradial) graph, it is easy to define its dual, medial and mid-edge graphs.
This will be the same for $\bbZ \times \bbR$, using the fact that it can be seen as the ``limit'' of a more and more flattened rectangular lattice $\bbZ \times \epsilon \bbZ$ as illustrated in Figure \ref{fig:rec_lattice}.
The faces are crushed together so that vertically, the notion of being neighbors gets degenerated.
We will only call neighbors two sites whoses $x$-coordinates differ by 1.
More details will be given below in a general setting.

Take $\delta > 0$.
We will define here the following notions related to the \emph{semi-discrete lattice with mesh size $\delta$}: primal, dual, medial and mid-edge lattices.
Formal definitions are given below and to visualize them, we refer to Figure \ref{fig:lattices}.

Let $\primallattice$ be the semi-discrete primal lattice $\delta \bbZ \times \bbR$.
We denote by $\duallattice$, the lattice $\delta (\bbZ + \frac{1}{2}) \times \bbR$ with the same notion of connectivity as $\delta \bbZ \times \bbR$, the \emph{dual} of $\primallattice$.
We can notice that the dual lattice is isomorphic to the primal one, by translation of $\frac{1}{2}$ in the $x$-coordinate.
As in the discrete setup, the dual lattice is given by the center of faces in the primal lattice, and edges by connecting two primal faces sharing a common edge.
Since the faces of $\primallattice$ are all crushed together vertically, the same happens to vertically-ordered dual vertices, giving us continuous lines isomorphic to $\bbR$.

The equivalent of the medial lattice from the discrete setup by taking the mid-points of neighboring primal-primal and dual-dual vertices gives us $\mediallattice = \primallattice \cup \duallattice$.
It is again isomorphic to the primal or dual lattice by scaling of factor $\frac{1}{2}$.

Finally, we take the middle of the two previous lattices to obtain the \emph{mid-edge lattice} $\delta( \frac{1}{2} \bbZ + \frac{1}{4}) \times \bbR$, denoted by $\midedgelattice$.
It is isomorphic to the medial lattice.

In the following graphical presentations, we will draw a filled black dot to represent a vertex on the primal lattice, a filled white dot a vertex on the dual lattice, and a filled white square when it is a mid-edge, or more precisely, the middle of the mid-edge.
See Figure \ref{fig:lattices}.

We need to define some more notions related to the semi-discrete lattice $\primallattice$, including segments, paths and domains.

A \emph{primal vertical segment} is denoted by $[\delta k + \icomp a, \delta k + \icomp b] := \{ \delta k\} \times [a, b]$, where $k \in \bbZ$ and $a < b$ are real numbers.
A \emph{primal horizontal segment} is denoted by $[\delta k + \icomp a, \delta l + \icomp a] := ([\delta k, \delta l] \times \{ a \}) \cap \primallattice = \{ \delta j + \icomp a, k \leq j \leq l \}$ where $k < l$ are integers and $a$ is a real number.
When a primal horizontal segment is of length $\delta$, we call it a \emph{primal segment}.

A sequence of points $(z_i)_{0 \leq i \leq n}$ on $\primallattice$ forms a \emph{path} if the consecutive points share the same $y$-coordinate (forming horizontal segments) or the same $x$-coordinate (forming vertical segments).

A \emph{primal domain} is a finite region delimited by primal horizontal and dual segments.
More precisely, it is given by a self-avoiding path consisting of $2n+1$ points $z_0, z_1, \dots, z_{2n}$ on $\primallattice$ such that
\begin{enumerate}
\item $[z_{2i}, z_{2i+1}]$ are horizontal segments for $i \in \llbracket0, n-1 \rrbracket$;
\item $[z_{2i+1}, z_{2i+2}]$ are vertical segments for $i \in \llbracket0, n-1 \rrbracket$;
\item these points form a closed path, \emph{i.e.} $z_0 = z_{2n}$.
\end{enumerate}

The set consisting of segments $\partial = \{ [z_{2i} z_{2i+1}], [z_{2i+1}, z_{2i+2}], i \in \llbracket 0, n-1 \rrbracket \}$ separates the plane into two connected \emph{open} components, a bounded one which is simply connected and an unbounded one.
The first one is called the \emph{domain} and is usually denoted by $\primaldomain$.
And $\partial$, or $\partial \Omega$, is called the \emph{boundary} of $\Omega$.
Except otherwise mentioned, the points $z_i$ are ordered counterclockwisely.

These same definitions apply to the dual lattice $\duallattice$ to get a \emph{dual domain}, usually denoted by $\dualdomain$, or to the medial lattice to get a \emph{medial domain}, $\medialdomain$.

The \emph{interior} of a primal domain $\primaldomain$, denoted by $\Int \primaldomain$, is the largest dual domain contained in $\primaldomain$.
It can also be seen as the set of dual vertices in $\primaldomain$ having both (primal) neighbors inside $\primaldomain$.
Similarly, the \emph{interior} of a dual domain $\dualdomain$ or a medial domain $\medialdomain$, denoted by $\Int \dualdomain$ or $\Int \medialdomain$, can also be defined in a similar way by replacing the word ``primal'' by ``dual'' or ``medial''.

Now we define a \emph{semi-discrete Dobrushin domain}, which is a medial domain with so-called \emph{Dobrushin boundary condition}.
Given $(a_w a_b)$ and $(b_b b_w)$ two horizontal edges, consider a primal path from $a_b$ to $b_b$ and a dual path from $b_w$ to $a_w$, such that the concatenation of both (first primal then dual) forms a counterclockwise boundary.
We denote $\partial_{ab}$ and $\partial^\star_{ba}$ the primal and dual parts.
See Figure \ref{fig:dob}.

\begin{figure}[htb] \centering
  \includegraphics[scale=0.75, page=1]{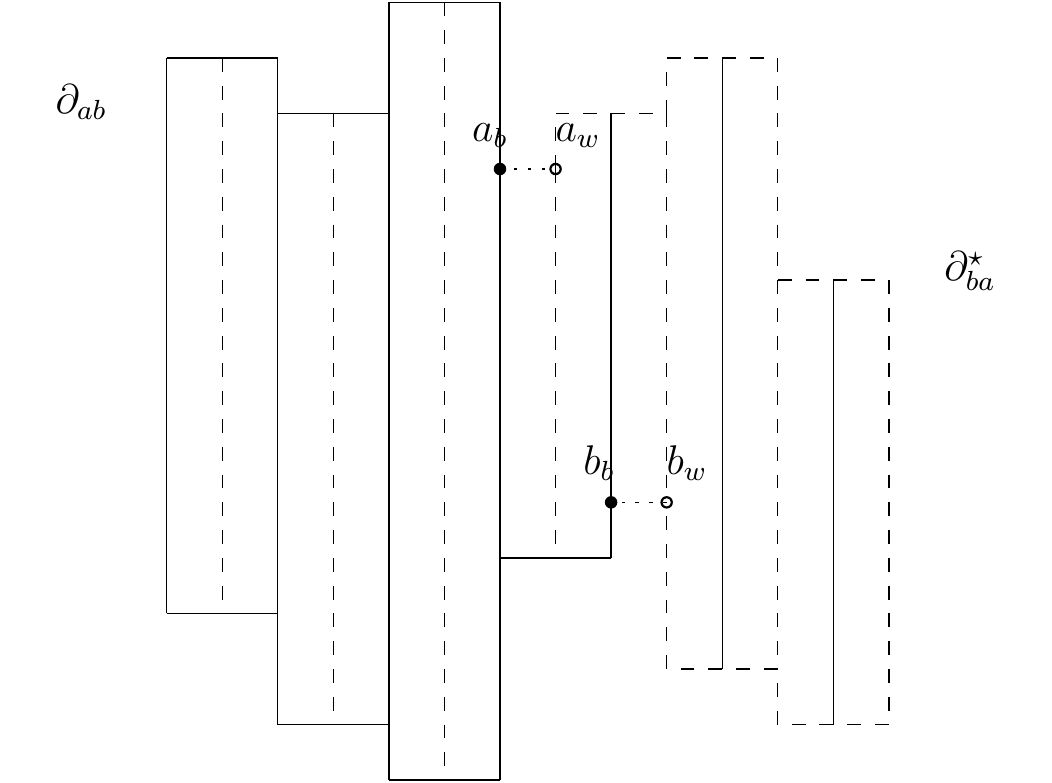}
  \caption{A Dobrushin domain.}
  \label{fig:dob}
\end{figure}

\subsection{Semi-discretization of a continuous domain}

A set in $\bbR^2$, or $\bbC^2$, is called a \emph{domain} if it is open, bounded and simply connected.
A \emph{Dobrushin domain} is a domain with two different marked points $a, b$ on the boundary.
It is often denoted by a triplet $(\Omega, a, b)$ where $\Omega$ is a domain and $a, b \in \partial \Omega$.

Here, we explain how to \emph{semi-discretize} such a domain to get its semi-discrete counterpart, on which the FK-representation of the quantum Ising model will be defined.
Consider $(\Omega, a, b)$ a Dobrushin domain in $\bbC$ or $\bbR^2$ and $\delta > 0$.
Let us denote by $[a_\delta^w a_\delta^b]$ and $[b_\delta^b b_\delta^w]$ two mid-edges with $a_\delta^b, b_\delta^b \in \primallattice$, $a_\delta^w, b_\delta^w \in \duallattice$ and mid-points $a_\delta^\flat$ and $b_\delta^\flat$ given by minimizing the distances between $a$ and $a_\delta^\flat$ and between $b$ and $b_\delta^\flat$ over all possible such mid-edge segments contained in $\Omega$.
Once we get these two distinguished edges $[a_\delta^w a_\delta^b]$ and $[b_\delta^b b_\delta^w]$, we complete the semi-discrete domain by making approximation with primal horizontal and vertical segments on the arc $\arcab$ then with dual horizontal and vertical segments on the arc $\arcba$.
This domain is denoted by $\Dobrushindomain$.
It lies in $\Int \Omega$.
See Figure \ref{fig:ex_dob} for an example.

\begin{figure} \centering
    \includegraphics[scale=0.8, page=1]{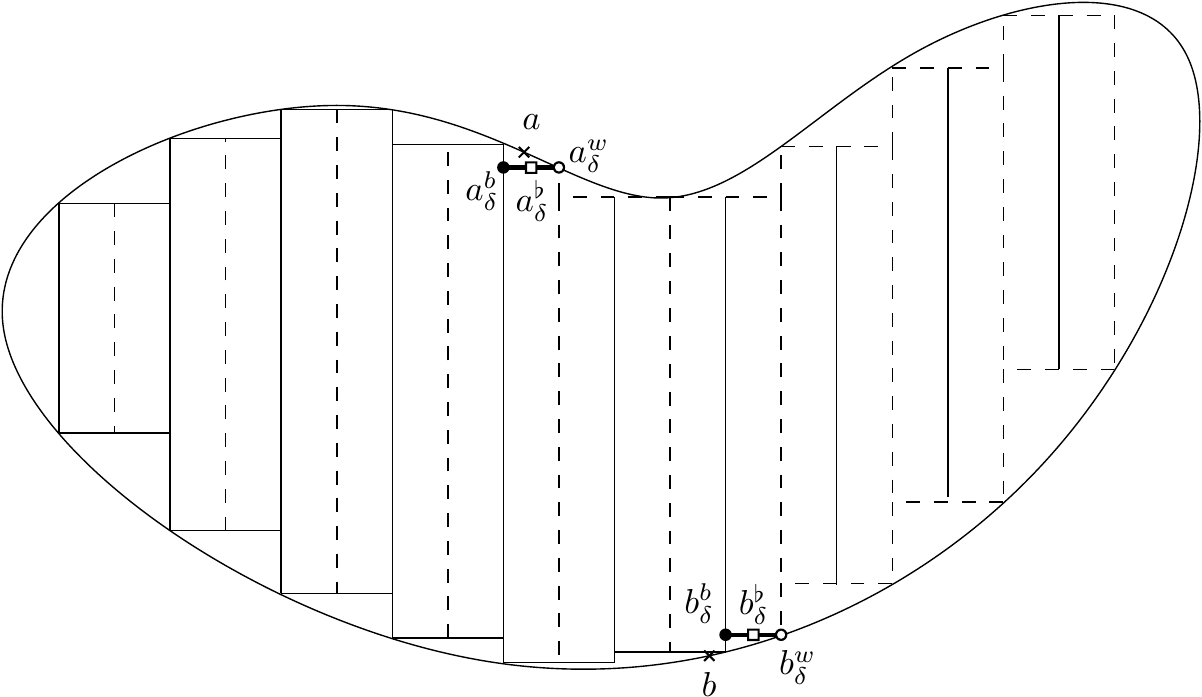}
    \caption{An example of approximation of a continuous Dobrushin domain by a semi-discrete one with mesh size $\delta$.}
    \label{fig:ex_dob}
\end{figure}

We write $\partial \medialdomain$ for the boundary of this Dobrushin domain.
This consists of four components:
\begin{align*}
[a_\delta^w a_\delta^b] & \quad \mbox{ an horizontal edge } \\
\arcab := \arcabfull & \quad \mbox{ the arc going from $a_\delta^b$ to $b_\delta^b$ } \\
[b_\delta^b a_\delta^w] & \quad \mbox{ an horizontal edge } \\
\arcba := \arcbafull & \quad \mbox{ the arc going from $b_\delta^w$ to $a_\delta^w$ }.
\end{align*}

They are ordered counterclockwise in Figure \ref{fig:ex_dob}.

\subsection{Quantum Ising model and loop representation} \label{sec:QI_loop}

On the semi-discrete medial Dobrushin domain $\Dobrushindomain$, one can define the \emph{continuum Bernoulli percolation with parameters $(\lambda, \mu)$} for $\lambda, \mu > 0$.

Consider two families of open intervals by taking the intersection between the interior of the domain $\Omega^\diamond$ and vertical primal or dual segments,
$$
\begin{array}{rcl} \primalint & := & \Int \medialdomain \cap \primallattice, \\
\dualint & := & \Int \medialdomain \cap \duallattice. \\
\end{array}
$$
We take two independent (one-dimensional) Poisson point processes of parameters $\lambda$ and $\mu$ on $\primalint$ and $\dualint$ respectively.
We denote by $(D, B)$ such a configuration, where $D$ contains the points in $\primalint$ and $B$ the points in $\dualint$.
The points in $D$ are called \emph{death points}.
They cut vertical lines into disjoint (primal) segments.
The points in $B$ are called \emph{bridges}.
They create horizontal connections between two neighboring vertical segments.
See Figure \ref{fig:qi_config} for an example.

\begin{figure}[htb]
  \begin{minipage}{\textwidth}
    \centering
    \includegraphics[scale=0.8, page=2]{images/qi_ex.pdf}
    \caption{Example of a random configuration. Red crosses are points given by Poisson point processes.}
    \label{fig:qi_config}
  \end{minipage} \vfill
  \begin{minipage}{\textwidth}
    \centering
    \includegraphics[scale=0.8, page=3]{images/qi_ex.pdf}
    \caption{Representation with death points (red crosses on primal vertical lines) and bridges (red crosses on dual vertical lines) of the above configuration.}
    \label{fig:qi_presentation}
  \end{minipage}
\end{figure}

Given a configuration of continuum Bernoulli percolation $(D, B)$, we can define the notions of \emph{primal (or dual) connectivity} with respect to the points in these two sets.

Two points in the primal domain are said to have a \emph{primal connection} if there is a primal path going from one to another by taking primal vertical segments and horizontal bridges without crossing any death points.
The notion of having a \emph{dual connection} is similar by taking the dual graph, inversing primal and dual segments and death points and bridges.
More precisely, two points in the dual domain are said to have a \emph{dual connection} if there is a path going from one to another by taking dual lines and death points without crossing any bridges.

In the following, except otherwise mentioned, the connectivity always refers to the primal domain.
We call \emph{(primal) connected component of $v \in \primallattice$} a maximal collection (in terms of set) in $\Int \medialdomain$ containing connected (primal) segments and bridges.
It is sometimes called \emph{cluster} as well.

As in the discrete setup, this model has a \emph{loop representation} too.
A simple closed path living on the mid-edge lattice $\midedgelattice$ is called a \emph{loop}.
To get this representation, we just need to go through a simple operation: replace the pieces of our domain according to the rules explained in Figure \ref{fig:loops_rules}.

\begin{figure}[htb] \centering
    \includegraphics[scale=1]{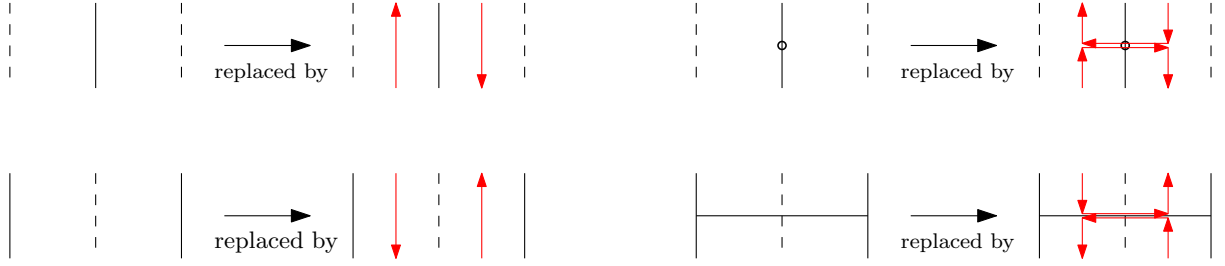}
    \caption{Transformation to get a loop representation from a continuum
Bernoulli percolation.}
    \label{fig:loops_rules}
\end{figure}

We notice that if we consider a Dobrushin domain, then we get a collection of loops surrounding either a primal or a dual connected component together with an interface connecting $a^\flat$ to $b^\flat$.
This is illustrated in Figure \ref{fig:loops_ex}.

\begin{figure}[htb] \centering
  \includegraphics[scale=0.8, page=4]{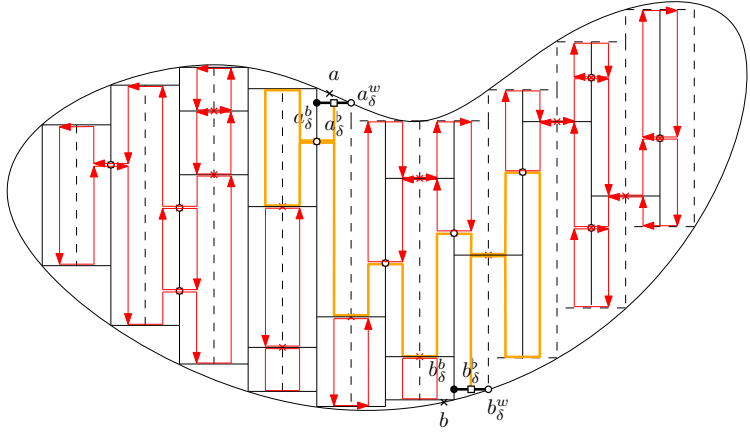}
  \caption{The loop representation corresponding to the configuration of Figure \ref{fig:qi_config}. Loops surrounding primal or dual clusters are in red
where as the interface connecting $a_\delta^\flat$ to $b_\delta^\flat$ is
in orange.}
  \label{fig:loops_ex}
\end{figure}

Let us denote by $\bbP_{\lambda, \mu}$ the law of the continuum Bernoulli percolation in the Dobrushin domain $\Dobrushindomain$.
We may also simply write $\bbP$ if the omitted parameters are clear.

Given $q \geq 1$, we will define the measure of the \emph{continuum FK-representation of parameter q}, denoted by $\dd \bbP_{q, \lambda, \mu}$ by weighing configurations according to the number of
connected components they have.
More precisely, we have the following proportionality between probability densities:
$$
\dd \bbP_{q, \lambda, \mu}(D, B) \propto q^{k(D, B)} \dd \bbP_{\lambda, \mu}(D, B)
$$
where $k(D, B)$ denotes the number of primal connected components, or clusters, in a given configuration $(D, B)$.
To make sense of this, we have to check that the partition function given below is finite:
\begin{equation*}
Z_{\lambda, \mu, q} = \int q^{k(D, B)} \dd \bbP_{\lambda, \mu}(D, B).
\end{equation*}

In fact, a bridge only decreases the number of clusters whereas a death point increases it by at most one.
So the number of clusters is bounded by the number of death points from above.
As a consequence,
$$
Z_{\lambda, \mu, q}
\leq \int q^{|D|} \dd \bbP_{\lambda, \mu}
= \sum_{n=0}^\infty q^n \frac{(\lambda C)^n}{n!} e^{-\lambda C}
= e^{(q-1) \lambda C}
$$
where $C$ is the one-dimensional Lebesgue measure of $\primaldomain$.
This quantity is finite (depending on $\delta$) since the domain $\primaldomain$ is bounded.

Thus, the measure $\bbP_{q, \lambda, \mu}$ is well-defined and it makes sense to work with the FK-representation.
In this article, we will only be interested in the case where $q=2$, a model that we now call the \emph{FK-representation of the quantum Ising model}.
Its measure $\bbP_{2, \lambda, \mu}$ can also be written as $\dd \bbP_{\lambda, \mu}^{\rm QI}$.
Its critical parameters are given by the relation $\mu / \lambda = 2$ \cite{Pfeuty-QI, BG-QI-sharp}.

We may also write the measure in terms of the loop representation introduced above.
To do so, we consider both primal and dual clusters, each of whom gives rise to a loop.
By a simple computation at criticality, we get the following relation:
\begin{equation} \label{eqn:measure_loop_rep}
\dd \bbP_{\lambda, \mu}^{\rm QI} (D, B) \propto \sqrt{2}^{l(D, B)} \dd \bbP_{\rho, \rho}(D, B)
\end{equation}
where $l(D, B)$ denotes the number of loops in a given configuration $(D, B)$ and $\rho = \sqrt{\lambda \mu}$.

\begin{killcontents}
\subsection{Relation to isoradial graphs}

Consider the usual model on a more and more flattened rectangular lattice and adjust the parameters according to ``flatness''.
More precisely, fix two parameters $\lambda$ and $\mu$ and consider the lattice $\bbZ \times \epsilon \bbZ$ for $\epsilon \leq \min(\frac{1}{\lambda}, \frac{1}{\mu})$ with vertical connection probability $1-\epsilon\lambda$ and horizontal
connection probability $\epsilon\mu$, all being independent.
By making $\epsilon$ go to 0, the probability measure converges weakly to the continuum Bernoulli percolation.
To get the random-cluster counterpart, one simply weighs clusters in the discrete setting before taking the weak limit.

\subsection{Quantum chain}

In the literature, the continuum model is sometimes represented by space-time evolutions.
Consider a spin configuration on the base graph $\bbZ$ and let the system evolve according to some quantum Hamiltonian.
This quantum Hamiltonian involves Pauli operators which generalize interactions between particules ($\sigma_x \sigma_y$ terms) and the presence of an external field ($h \sigma$ terms).
If we look at its time-evolution, the interactions between particles create bridges between neighboring sites and the external field creates death points on vertical lines, giving the geometric representation above.
Interested readers may check \cite{Ioffe-QI, GOS-entanglement, BG-QI-sharp} for more details.

A lot of techniques to studying discrete Ising model have their equivalents on quantum Ising model.
For example, the authors in \cite{BG-QI-sharp} show the sharpness of the phase transition of the quantum Ising model by using random-parity, which is an analogous notion of random-current used in \cite{ABF-Ising-sharp} to show that the phase transition of Ising model is sharp.
infrared bound \cite{Bjornberg-infrared, Bjornberg-infrared}
\end{killcontents}

\subsection{Main result}

\begin{killcontents}
It has been shown in \cite{Ising-SLE-convergence} that the interface of the Ising model on isoradial graphs converge in law to the so-called \emph{Schramm-Löwner Evolution} of parameter $3$ (spin-Ising) and $16/3$ (FK-Ising).

Smirnov first introduced the (para-)fermionic observable to show the conformal invariance of the classical Ising model on square lattice, see \cite{Smirnov-towards, Smirnov-conformal}.
Later on, Chelkak and Smirnov extended the theory of discrete complex analysis on isoradial graphs in \cite{CS-discreteca} which is the main tool to showing the universality of the Ising model on such graphs and the conformal invariance of the interface.
Details are given in \cite{CS-universality}.
To identify the limit, the property G2, a consequence of RSW property, is needed \cite{Kemp-SLE} to say that it is an SLE then to determine its parameter, we construct a martingale from the interface.
By the conformal invariance of the scaling limit of the martingale, we get the conditions which $W_t$ the driving function should satisfy, thus giving us the parameter of the SLE process.

Here, we proceed in a similar way for the quantum Ising model.
We establish the semi-discrete complex analysis in Section \ref{sec:SDCA} and show relations on the observables in Section \ref{sec:observables}.
Then, we use the convergence theorem in Section \ref{sec:CVG_thm} to show that the weak limit of interfaces is conformal invariant and identify the limiting object to be $\SLE_{16/3}$.
We establish the RSW property in Section \ref{sec:RSW_prop} (which is also a consequence of the forthcoming paper \cite{FK-universality}), giving us the condition G2 as described in \cite{Kemp-SLE}.
Then to get the parameter of the SLE process, it is exactly the same computation as in \cite{Ising-SLE-convergence}.
\end{killcontents}

We are ready to give a formal statement of the main theorem.

\begin{thm} \label{thm:main}
Let $(\Omega, a, b)$ a Dobrushin domain.
For $\delta > 0$, semi-discretize the domain to get a semi-discrete Dobrushin domain $\Dobrushindomain$ and consider the FK-representation of the critical quantum Ising model on it.
Consider the interface $\interface$ going from $a_\delta$ to $b_\delta$ in $\Dobrushindomain$.
The law of $\interface$ converges weakly to the chordal Schramm-Löwner Evolution $\SLE_{16/3}$ running from $a$ to $b$ in $\Omega$.
\end{thm}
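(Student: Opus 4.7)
The plan follows the now-standard Smirnov--Chelkak--Smirnov strategy for identifying scaling limits of critical interfaces with SLE curves, adapted here to the semi-discrete lattice. The four ingredients are: (i) the (para-)fermionic observable $F_\delta^\diamond$ on $\Omega_\delta^\diamond$ built in Section \ref{sec:observables}, which is a discrete martingale with respect to the filtration generated by the exploration of $\gamma_\delta$; (ii) s-holomorphicity of $F_\delta^\diamond$ together with the semi-discrete complex analysis of Section \ref{sec:SDCA}, giving convergence of a suitably normalized observable to an explicit holomorphic function; (iii) tightness of $\{\gamma_\delta\}_{\delta>0}$ via the RSW estimates of Section \ref{sec:CVG_thm}; and (iv) the identification of the driving function of any subsequential limit through the martingale property of the observable.

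First I would observe that, under Dobrushin boundary conditions, the exploration of $\gamma_\delta$ from $a_\delta$ to $b_\delta$ is a Markov chain in the space of semi-discrete Dobrushin domains, because the FK-representation of the quantum Ising model enjoys the usual domain Markov property. The fermionic observable $F_\delta^\diamond(\Omega_\delta^\diamond,a_\delta,b_\delta;\cdot)$, being a ratio of partition functions, depends only on the remaining unexplored domain and is therefore a martingale along the exploration. Using the s-holomorphicity and the boundary conditions established in Section \ref{sec:observables}, together with the semi-discrete Dirichlet boundary value problem and Green's function machinery from Section \ref{sec:SDCA}, I would then prove that $\delta^{-1/2} F_\delta^\diamond$ converges uniformly on compacts of $\Omega$ to $f_\Omega(z)=\sqrt{\phi'(z)}$, where $\phi:\Omega\to\mathbb{H}$ is the conformal map with $\phi(a)=0$, $\phi(b)=\infty$, normalized so that the continuous limit problem has a unique solution. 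This is exactly the analogue of the isoradial case and the main new content is checking that the convergence arguments of \cite{CS-universality, Ising-SLE-convergence} go through with the semi-discrete Green's function of Section \ref{sec:SDCA}.

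Next I would establish tightness of $\{\gamma_\delta\}$ in the Aizenman--Burchard topology, which by the Kemppainen--Smirnov criterion reduces to the annulus crossing condition (G2) for the interface. This is where I expect the main obstacle: the RSW inputs required for G2 must be proven directly on the semi-discrete lattice, since one cannot simply quote the universality theorem for isoradial graphs (the relevant flatness parameter is sent to zero). Following the second-moment strategy of \cite{RSW-harmonic}, Section \ref{sec:CVG_thm} provides uniform lower bounds on crossing probabilities of conformal rectangles that are stable as $\delta \to 0$; the delicate point is controlling the second moment of crossing indicators in the degenerate horizontal direction, where vertical lines are continuous. Once G2 holds, tightness follows and every subsequential limit $\gamma$ is a continuous simple curve from $a$ to $b$ in $\overline{\Omega}$, conformally describable by a Loewner chain.

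Finally, having extracted a subsequential weak limit $\gamma$, I would identify it with $\mathrm{SLE}_{16/3}$. After conformally mapping to $\mathbb{H}$ and parametrizing by half-plane capacity, let $W_t$ be the Loewner driving function of $\gamma$. The martingale property of $F_\delta^\diamond$, combined with the precompactness and the convergence $F_\delta^\diamond \to f_\Omega$, yields in the limit that specific explicit functions of $W_t$ and the conformal map from $\mathbb{H}\setminus\gamma[0,t]$ to $\mathbb{H}$ are continuous martingales for the filtration of $\gamma$. Applying It\^o's formula to these functionals forces $W_t$ to be a continuous martingale with quadratic variation $\langle W\rangle_t=\kappa t$, and solving the resulting algebraic identity gives $\kappa=16/3$, the same computation as in \cite{Ising-SLE-convergence}. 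Since the driving function characterizes the curve, each subsequential limit is chordal $\mathrm{SLE}_{16/3}$, and the full weak convergence of $\gamma_\delta$ claimed in Theorem \ref{thm:main} follows.
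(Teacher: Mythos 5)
Your proposal follows essentially the same route as the paper: the s-holomorphic fermionic observable as a martingale for the exploration, its convergence via the semi-discrete complex analysis and the Riemann--Hilbert boundary value problem, tightness through the RSW estimates and the condition (G2) of Kemppainen--Smirnov, and identification of $\kappa=16/3$ by the It\^o-formula computation of the classical FK-Ising case. One small correction: the limit of the observable (Theorem \ref{thm:convergence_FK}) is $\sqrt{\Phi'}$ for the conformal map $\Phi$ of $\Omega$ onto the strip $\bbR\times(0,1)$ sending $a$ and $b$ to $\mp\infty$, not $\sqrt{\phi'}$ for the half-plane map, and the observable as defined in the paper already carries the $\delta^{-1/2}$ normalization, so no further rescaling is needed.
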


\section{Semi-discrete complex analysis} \label{sec:SDCA}

In this section, we establish the main tool of our study: the semi-discrete complex analysis.
Most of the time, the notions in the classical case or isoradial case \cite{BMS, CS-discreteca, Bobenko-DCA-quad} can be generalized easily, such as derivatives, holomorphicity and harmonicity (Section \ref{sec:sdca_der}), Brownian motion (Section \ref{sec:BM}), Dirichlet boundary problem (Section \ref{sec:dirichlet}) and other related objects.
However, semi-discrete holomorphic functions require more care.

The integration on a primal or dual lattice of a semi-discrete function can also be defined similarly (Section \ref{sec:sdca_int}).
A minor difficulty could arise when it comes to integrating the product of two semi-discrete functions.
We will explain in Section \ref{sec:sdca_int_pair} how to define this so that to have expected properties as in the continuous setting.

We shall also construct the Green's function in the semi-discrete case (Section \ref{sec:Greens}).
This can be done by modifying the approach from \cite{Kenyon-laplacian}, which makes use of discrete exponential functions.

The notion of \emph{s-holomorphicity} will also be introduced in Section \ref{sec:s-hol}, which is quite important in the rest of the paper to define the observable (Section \ref{sec:observables}) and show its convergence (Section \ref{sec:CVG_thm}).

\subsection{Basic definitions} \label{sec:sdca_basics}

\subsubsection{Derivatives} \label{sec:sdca_der}

Let $\primaldomain$ be a primal semi-discrete domain.
A function $f$ defined on $\primaldomain$ is said to be \emph{continuous} if $y \mapsto f(x, y)$ is continuous for all $x \in \{ x, (x, y) \in \primaldomain \}$.
The same definition applies to functions defined on a dual domain $\dualdomain$, a medial domain $\medialdomain$ or a mid-edge domain $\midedgedomain$.
We then define in the same way a differentiable function on these domains, or even $\mathcal{C}^k$ functions, by demanding the property on all the vertical intervals.

Given a vertex $p \in \mediallattice$, we denote by $p^\pm$ the right and left neighboring vertices in $\mediallattice$, \emph{i.e.} $p^\pm := p \pm \frac{\delta}{2}$ in an algebraic way.
Similarly, we may write $e^\pm := e \pm \frac{\delta}{2}$ for neighboring mid-edge vertices of $e \in \midedgelattice$.

If $p \in \mediallattice$, we denote by $e_p^\pm$ the right and the left neighboring mid-edges, or $e_p^\pm := p \pm \frac{\delta}{4}$.
In the same way, given a mid-edge $e \in \midedgelattice$, we denote by $p_e^\pm$ the right and the left neighboring medial vertices, or $p_e^\pm := e \pm \frac{\delta}{4}$.

\begin{figure}[htb]
  \centering
    \includegraphics[scale=1]{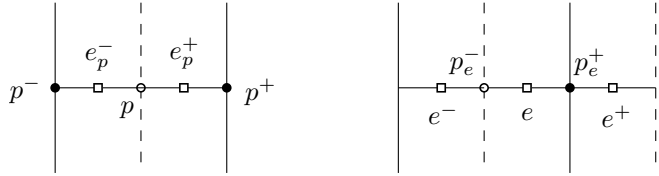}
    \caption{Illustration of the definitions above. \emph{Left}: neighboring mid-edges and vertices of a vertex. \emph{Right}: neighboring vertices and mid-edges of a mid-edge.}
    \label{fig:medial_midedge}
\end{figure}

For $e \in \midedgelattice$, we may also write $u_e$ (\emph{resp.} $w_e$) the neighboring primal (\emph{resp.} dual) vertex.
In other words, 
\begin{align*}
\{ u_e \} & = \{ p_e^+, p_e^- \} \cap \primallattice, \\
\{ w_e \} & = \{ p_e^+, p_e^- \} \cap \duallattice.
\end{align*}
See Figure \ref{fig:neighboring} for an illustration.

\begin{figure}[htb]
  \centering
    \includegraphics[scale=1]{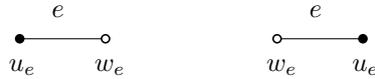}
    \caption{Illustration of the neighboring primal and dual vertices.}
    \label{fig:neighboring}
\end{figure}

The derivative $\derz$ or the anti-derivative $\derzbar$ of a differentiable function on $\mediallattice$ or $\midedgelattice$ can be defined by taking the ``semi-discrete counterpart''.
Thus, the notion of holomorphicity and harmonicity will be defined in the same way.

\begin{defn}
Let $f : \mediallattice \rightarrow \bbC$ be a complex function defined on the medial lattice.
Let $p \in \mediallattice$.
The \emph{$x$-derivative} at $p$ is given by
$$
\Deltax f(p) := \frac{f(p^+) - f(p^-)}{\delta}.
$$
If $p \in \mediallattice$, we define the \emph{second $x$-derivative} at $p$ by
$$
\Deltaxx f(p) := \Deltax \circ \Deltax f(p) = \frac{f(p^{++}) + f(p^{--}) - 2f(p)}{\delta^2}
$$
where $p^{++} = (p^+)^+$ and $p^{--} = (p^-)^-$.
\end{defn}

\begin{defn}
Let $f : \mediallattice \rightarrow \bbC$ be a differentiable complex function defined on a medial lattice $\mediallattice$.
Its \emph{derivative} and \emph{anti-derivative} at $p \in \bbL^\diamond$ are given by
\begin{align}
\derz f (p) & = \frac{1}{2} \left[ \Deltax f(p) + \frac{\dy f(p)}{\icomp} \right]
 = \frac{1}{2} \left[ \frac{f(p^+) - f(p^-)}{\delta} + \frac{\dy f(p)}{\icomp} \right], \label{eqn:der}\\
\derzbar f (p) &  = \frac{1}{2} \left[ \Deltax f(p) - \frac{\dy f(p)}{\icomp} \right]
= \frac{1}{2} \left[ \frac{f(p^+) - f(p^-)}{\delta} - \frac{\dy f(p)}{\icomp} \right].
\label{eqn:antider}
\end{align}

A semi-discrete function $f$ is said to be \emph{holomorphic at} $p \in \mediallattice$ if $\derzbar f(p) = 0$, and is said to be \emph{holomorphic in $\medialdomain$}, where $\medialdomain$ is a medial domain, if $\derzbar f(p) = 0$ for all $p \in \Int \medialdomain$.
\end{defn}

The same definitions \eqref{eqn:der} and \eqref{eqn:antider} could also be used for functions defined on mid-edge domains.
Let $f : \midedgelattice \rightarrow \bbC$ be a differentiable complex function defined on the mid-edge lattice, then
\begin{align}
\derz f (e) & = \frac{1}{2} \left[ \frac{f(e^+) - f(e^-)}{\delta}  + \frac{\dy f(e)}{\icomp} \right], \label{eqn:der_midedge}\\
\derzbar f (e) & = \frac{1}{2} \left[ \frac{f(e^+) - f(e^-)}{\delta} - \frac{\dy f(e)}{\icomp} \right].
\label{eqn:antider_midedge}
\end{align}

Consider a twice differentiable function $f$ defined on a primal (or dual) lattice.
We can define its \emph{Laplacian} by
\begin{equation} \label{eqn:Laplacian}
\laplacian f := 4 \derz \derzbar f = 4 \derzbar \derz f = \Deltaxx f + \ddy f.
\end{equation}
A twice differentiable function $f$ defined on the primal domain $\primaldomain$ is said to be 
\begin{itemize}
\item \emph{harmonic} if $\laplacian f(p) = 0$, for all $p \in \Int^2 \primaldomain$;
\item \emph{subharmonic} if $\laplacian f(p) \geq 0$, for all $p \in \Int^2 \primaldomain$;
\item \emph{superharmonic} if $\laplacian f(p) \leq 0$, for all $p \in \Int^2 \primaldomain$.
\end{itemize}
Here, we take twice $\Int$ because in the definition of the Laplacian, the second $x$-derivative is involved.
We extend these definitions to a twice differentiable function defined on a dual domain in the same way.

\subsubsection{Integration on primal and dual lattices} \label{sec:sdca_int}

We now define the notion of integral for a semi-discrete function $f$ living on $\mediallattice$.
Let $\calP = [k \delta + \icomp a, k \delta + \icomp b]$ be a vertical primal (\emph{resp.} dual) segment, meaning that $k \in \bbZ$ (\emph{resp.} $k \in \bbZ + \frac{1}{2}$) and $a < b$.
If the segment $\calP$ is oriented upwards, we write 
\begin{equation} \label{eqn:int_vert}
\int_\calP f(z) \dd z := \icomp \int_a^b f(\delta k + \icomp y) \dd y = \icomp \int_a^b f_k(y) \dd y
\end{equation}
to be the integral along the vertical segment $\calP$, where we define $f_k(\cdot) = f(\delta k + \icomp \cdot)$.
Both primal and dual vertical segments are called \emph{medial vertical segments}.

Let $\calP = \{ \delta k + \icomp t, m \leq k \leq n, k \in \bbZ \}$ a horizontal primal segment for $m, n \in \bbZ$.
We define
\begin{equation} \label{eqn:int_hor_primal}
\int_\calP f(z) \dd z := \delta \sum_{k=m}^{n-1} f \left( \delta \left( k+ \frac12 \right) + \icomp t \right)
= \delta \sum_{k=m}^{n-1} f_{k+\frac{1}{2}} (t)
\end{equation}
the integral along the horizontal primal segment $\calP$ oriented to the right.
If we have a horizontal dual segment $\calP = \{ \delta(k + \frac12) + \icomp t, m \leq k \leq n, k \in \bbZ \}$, we define in a similar way
\begin{equation} \label{eqn:int_hor_dual}
\int_\calP f(z) \dd z := \delta \sum_{k=m+1}^{n} f(\delta k + \icomp t)
= \delta \sum_{k=m+1}^{n} f_{k} (t)
\end{equation}
the integral along the horizontal dual segment.
Both primal and dual horizontal segments are called \emph{medial horizontal segment}.

In both vertical and horizontal cases, we define the integral of a reversed path to be the opposite of the integral of the original path.

The integrals above should be seen as ``semi-discrete complex'' integrals.
More precisely, we take into account the direction in which the segment goes, giving factors $\pm 1$, $\pm \icomp$.
We may also define integration against $|\dd z|$, removing thus these factors.
Moreover, our integrals shall be defined additively.

Given a semi-discrete primal domain $\primaldomain$ and a semi-discrete function $f$ on it, we may write its integral on the domain by
$$
\int_{\primaldomain} f(y) \dd y := \delta \sum_{\Int \primaldomain} \int_{\alpha_k}^{\beta_k} f_k(y) \dd y
$$
where the sum is taken over all the primal vertical axes in $\Int \primaldomain$.
Here, the integration against $\dd y$ should be seen as classical real integration.

An \emph{elementary primal (\emph{resp.} dual) domain} is given by
$$
B_k(\alpha, \beta) = \{ x + \icomp y, \delta k \leq x \leq \delta(k+1), \alpha \leq y \leq \beta \}
$$
for $\alpha < \beta$ integers and $k \in \bbZ$ (\emph{resp.} $k \in \bbZ + \frac{1}{2}$).
Its boundary consists of four primal (\emph{resp.} dual) segments, two vertical and two horizontal.
By convention, we will always orient the boundary counterclockwise.
An \emph{elementary medial domain} is either a primal or a dual elementary domain.
In the rest of the article, we denote it by $B^\lozenge_k(\alpha, \beta)$.

From integrals along horizontal and vertical segments given by Equations \eqref{eqn:int_vert}, \eqref{eqn:int_hor_primal} and \eqref{eqn:int_hor_dual}, we can define integrals along the boundary of any primal or dual domain.
Let us consider an elementary primal or dual domain $B_k(\alpha, \beta)$ as an example.
Denote by $\calC$ its boundary oriented counterclockwise.
If $f$ is a semi-discrete function which is piece-wise continuous, then its integral along the contour $\calC$ is given by
\begin{equation} \label{eqn:int_domain}
\oint_\calC f(z) \dd z :=
\delta \left[ f_{k+\frac{1}{2}}(\alpha) - f_{k+\frac{1}{2}}(\beta) \right]
+ \icomp \int_\alpha^\beta [ f_{k+1}(y) - f_k(y) ] \dd y,
\end{equation}
consisting of the four integrals coming from the four sides of the elementary domain.
Moreover, if $f$ is piece-wise differentiable, this can be rewritten as
%
where in $\derzbar f$, the term $\dy f$ is given in the sense of distributions.

Given a primal semi-discrete domain $\primaldomain$, we define the integral of a semi-discrete function $f$ along its counterclockwise-oriented contour by decomposing its boundary into vertical and horizontal segments and adding them up.

The integral along the contour of a dual domain is defined in a similar way and the corresponding properties can be obtained as well.
We notice that here we do not define the integral along the contour of a medial domain.
We will see later an alternative definition for this.

\begin{prop}[Green's formula] \label{prop:greens_formula}
Consider a primal domain $\primaldomain$.
Denote by $\calC$ its boundary oriented counterclockwise.
Given a semi-discrete function $f$ which is piece-wise differentiable, its integral along the contour $\calC$ satisfies the following relation
\begin{align*}
\oint_\calC f(z) \dd z
=  2 \icomp \int_{\primaldomain} \derzbar f_{k+\frac12}(y) \dd y.
\end{align*}
\end{prop}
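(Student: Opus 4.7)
The plan is to verify the identity on an elementary primal cell by a direct computation, then extend it to a general primal domain by slicing and cancelling internal contributions. For an elementary cell $B_k(\alpha,\beta)$ the contour integral is given in closed form by \eqref{eqn:int_domain}, and $\Int B_k(\alpha,\beta)$ reduces to the single dual vertical segment at abscissa $\delta(k+\frac12)$. Plugging the definition \eqref{eqn:antider} of $\derzbar$ at points $\delta(k+\frac12)+\icomp y$ into the right-hand side gives
\begin{align*}
2\icomp \int_{B_k(\alpha,\beta)} \derzbar f_{k+\frac12}(y)\,\dd y
&= \icomp \int_\alpha^\beta \bigl[f_{k+1}(y)-f_k(y)\bigr]\,\dd y
 - \delta \int_\alpha^\beta \dy f_{k+\frac12}(y)\,\dd y,
\end{align*}
and the fundamental theorem of calculus converts the second integral into $\delta\bigl[f_{k+\frac12}(\alpha)-f_{k+\frac12}(\beta)\bigr]$, which is exactly \eqref{eqn:int_domain}. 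In short, the vertical boundary contribution is captured by the finite-difference part of $\derzbar$ and the horizontal one by its $\dy$ part.

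For a general primal domain, I would tile $\primaldomain$ by elementary cells as follows: first slice horizontally at every $y$-coordinate appearing in $\partial\primaldomain$, then decompose each resulting horizontal strip along the primal vertical lines it contains. Simple-connectedness together with the alternating horizontal/vertical structure of $\partial\primaldomain$ guarantees that each strip is a genuine rectangle, so the decomposition is well defined. Summing the elementary identity over all cells, the right-hand sides add up to $2\icomp \int_{\primaldomain}\derzbar f_{k+\frac12}(y)\,\dd y$ by additivity over interior dual vertical segments; on the boundary side, every vertical segment interior to $\primaldomain$ is traversed once in each direction by the two cells sharing it, and every horizontal segment in $\Int \primaldomain$ (be it an artificial slicing cut or a cell boundary lying in the interior) is traversed once as the top of a lower cell and once as the bottom of the cell above. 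These contributions cancel pairwise, leaving only the counterclockwise contour $\calC = \partial\primaldomain$.

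The only real obstacle is the bookkeeping underlying this cancellation: one must check that horizontal pieces of $\partial\primaldomain$ that happen to lie on slicing levels are counted exactly once and with the correct orientation, and that the union of surviving cell boundaries produces $\calC$ traversed counterclockwise. This is essentially combinatorial given the alternating horizontal/vertical structure of $\partial\primaldomain$; a short induction on the number of elementary cells in the decomposition closes the argument and yields Green's formula.
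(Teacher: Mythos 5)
Your proposal is correct and follows essentially the same route as the paper: verify the identity on an elementary cell (where the finite-difference part of $\derzbar$ produces the two vertical integrals and the $\dy$ part, via the fundamental theorem of calculus, produces the horizontal terms of Equation \eqref{eqn:int_domain}), then decompose a general primal domain into elementary ones and let the contributions of shared internal segments cancel pairwise, leaving only the contour $\calC$. The paper's proof is just a terser version of this same decomposition-and-cancellation argument.
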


\begin{proof}
We decompose the primal or dual domain into elementary ones and sum up the Equation \eqref{eqn:int_domain} corresponding to each of them.
In the right hand side, the integrals on boundary parts of elementary domains that appear twice in the sum cancel out, and the only terms that remain sum up to the integral along the contour $\calC$.
\end{proof}

Define a vector operator $\nabla = (\Deltax, \dy)$ called \emph{nabla}.
The following statement is similar to Green's formula.

\begin{prop}[Divergence theorem] \label{prop:div_thm}
Let $\primaldomain$ be a primal domain with contour $\calC$ which is oriented counterclockwise.
Let $\overrightarrow{F} = (F_x, F_y)$ be a semi-discrete function which is continuous and takes values in $\bbR^2$.
We have the following equality,
\begin{equation} \label{eqn:divergence_thm}
\int_{\primaldomain} \nabla \cdot \overrightarrow{F} (y) \dd y
= \oint_\calC \overrightarrow{F}(z) \cdot \overrightarrow{n}(z) |\dd z|,
\end{equation}
where $\overrightarrow{n}(z)$ is the vector obtained by a rotation of $-\frac\pi2$ from the tangent vector to $\calC$ at $z$ with norm 1.
\end{prop}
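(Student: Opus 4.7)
The plan is to reduce Proposition \ref{prop:div_thm} to the already established Green's formula (Proposition \ref{prop:greens_formula}) via the standard complexification $f := F_x - \icomp F_y$, mirroring the classical derivation of the divergence theorem from the Cauchy--Green formula. Since $\vec{F}$ is real-valued, we can separate real and imaginary parts of Proposition \ref{prop:greens_formula} applied to $f$ to recover two different identities; the divergence theorem will correspond to the imaginary part.

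Concretely, the first step is to expand $\derzbar f$ using the definition \eqref{eqn:antider}: treating $F_x$ and $F_y$ as real semi-discrete functions and using $1/\icomp = -\icomp$, a short computation gives
$$2\icomp\,\derzbar f \;=\; -(\dy F_x - \Deltax F_y) \;+\; \icomp\,(\Deltax F_x + \dy F_y),$$
whose imaginary part is exactly $\nabla \cdot \vec{F}$. Thus the right-hand side of Green's formula, restricted to its imaginary part, equals $\int_{\primaldomain} \nabla \cdot \vec{F}(y)\,\dd y$, i.e.\ the left-hand side of \eqref{eqn:divergence_thm}.

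Next I would match the boundary integrals segment by segment. The counterclockwise boundary $\mathcal{C}$ of a primal domain decomposes into four types of oriented pieces: vertical segments oriented upward (outward normal $(1,0)$) or downward (outward normal $(-1,0)$), and horizontal primal segments oriented rightward (outward normal $(0,-1)$) or leftward (outward normal $(0,1)$). Using the definitions \eqref{eqn:int_vert} and \eqref{eqn:int_hor_primal} and the identity $1/\icomp = -\icomp$, in each case one checks directly that
$$\operatorname{Im}\!\int_{\mathcal{P}} f(z)\,\dd z \;=\; \int_{\mathcal{P}} \vec{F}(z)\cdot \vec{n}(z)\,|\dd z|.$$
Summing over all boundary pieces of $\mathcal{C}$ and taking imaginary parts in Proposition \ref{prop:greens_formula} then yields \eqref{eqn:divergence_thm}.

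No genuine obstacle is expected. The only care required lies in the sign bookkeeping for the four boundary cases and in a mild regularity assumption on $\vec{F}$ — strictly speaking one wants enough smoothness for the interior expression $\Deltax F_x + \dy F_y$ to make sense, interpreting $\dy F_y$ in the distributional sense already adopted for $\dy f$ after Proposition \ref{prop:greens_formula}. There is no need to re-decompose $\primaldomain$ into elementary rectangles: that work was absorbed into the proof of Green's formula, and the role of this proof is simply to recognize the divergence theorem as the imaginary part of the complex Green's identity applied to $F_x - \icomp F_y$.
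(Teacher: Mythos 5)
Your proposal is correct, but it follows a different route from the paper. The paper proves the identity directly: it reduces to an elementary primal domain $B_k(\alpha,\beta)$, where $\delta\int_\alpha^\beta(\Deltax F_{x}+\dy F_{y})(y)\,\dd y$ is computed in one line to be exactly the flux through the four sides, and then sums over a decomposition of $\primaldomain$ into elementary domains, the contributions of shared boundary pieces cancelling. You instead deduce the statement from Proposition \ref{prop:greens_formula} by setting $f=F_x-\icomp F_y$: your interior computation is right, since $2\derzbar f=(\Deltax F_x+\dy F_y)+\icomp(\dy F_x-\Deltax F_y)$, so $\myi\bigl[2\icomp\,\derzbar f\bigr]=\nabla\cdot\overrightarrow{F}$, and the segment-by-segment identity $\myi\int_\calP f(z)\,\dd z=\int_\calP \overrightarrow{F}(z)\cdot\overrightarrow{n}(z)\,|\dd z|$ does hold for the four types of oriented boundary pieces (upward/downward primal vertical, rightward/leftward primal horizontal) with $\overrightarrow{n}$ the $-\tfrac\pi2$-rotation of the tangent, matching the definitions \eqref{eqn:int_vert} and \eqref{eqn:int_hor_primal}. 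What your route buys is economy and structure: the elementary-domain bookkeeping is done once (inside Green's formula), and the divergence theorem appears as the imaginary part of the complex identity, with the real part giving the curl/circulation counterpart for free. What the paper's route buys is independence and minimal hypotheses: its proof is a self-contained two-line computation per rectangle, does not lean on Proposition \ref{prop:greens_formula}, and the regularity issue is handled by the remark following the proposition (derivatives taken in the distributional sense); your argument formally inherits the piece-wise differentiability assumption of Green's formula, which, as you correctly observe, is in any case implicitly needed for $\nabla\cdot\overrightarrow{F}$ to be meaningful, so this is a presentational rather than a mathematical gap.
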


\begin{proof}
As usual, it suffices to show this for an elementary domain, and then sum up over a decomposition of $\primaldomain$ into elementary domains.
Let $B_k(\alpha, \beta)$ be an elementary domain.
Write $F_{k, x}(\cdot) = F_x (\delta k + \icomp \cdot)$ and $F_{k, y} = F_y (\delta k + \icomp \cdot)$.
The left-hand side of Equation \eqref{eqn:divergence_thm} can be rewritten as
\begin{align*}
& \delta \int_\alpha^\beta (\Deltax F_{k, x} + \dy F_{k, y}) (y) \dd y \\
& \qquad = \int_\alpha^\beta (F_{k+\frac12, x} - F_{k-\frac12, x}) (y) \dd y
+ \delta [ F_{k, y} (\beta) - F_{k, y} (\alpha) ]
\end{align*}
which is exactly the right-hand side of Equation \eqref{eqn:divergence_thm}.
\end{proof}

We notice again that this proposition is still valid even if $\overrightarrow{F}$ is only continuous by pieces and differentiable by pieces, as long as we interprete derivatives in the sense of distributions.

\subsubsection{Integration of a pair of functions} \label{sec:sdca_int_pair}

Here, we define the integration of a pair of functions and establish the equivalent of Green's theorem in the semi-discrete case.

Let us start again with integration on elementary segments.
Consider two functions defined on the semi-discrete lattice $f$ and $g$, a vertical primal (\emph{resp.} dual) segment $\calP = [k \delta + \icomp a, k \delta + \icomp b]$ with $k \in \bbZ$ (\emph{resp.} $k \in \bbZ + \frac{1}{2}$) and $a < b$.
Recall that $f_k (\cdot) = f(k \delta + \icomp \cdot)$.
If the segment $\calP$ is oriented upwards, we write 
\begin{equation} \label{eqn:int_pari_vert_up}
\int_\calP [f; g] \dd z := \int_a^b [ f_{k-\frac12}(y) g_{k+\frac12}(y) - f_{k+\frac12}(y) g_{k-\frac12}(y) ] \dd y
\end{equation}
to be the integral along $\calP$.
If the segment is oriented downwards, we take the opposite of the above quantity.

For $m, n \in \bbZ$, let $\calP = \{ k \delta + \icomp t, m \leq k \leq n, k \in \bbZ \}$ be a horizontal primal segment.
We define
\begin{equation} \label{eqn:int_pari_hor_primal}
\int_\calP [f; g] \dd z := \delta^2 \sum_{k=m}^{n-1} (g_{k+\frac12} \dy f_{k+\frac12} - f_{k+\frac12} \dy g_{k+\frac12})(t)
\end{equation}
the integral along the horizontal primal segment $\calP$, oriented towards the right.

In the same way as integration of a semi-discrete function along the (counter\-clockwise-oriented) contour of a semi-discrete domain, we define the counterpart of a pair of functions by decomposing the contour into vertical and horizontal segments and adding them up.

\begin{prop}[Green's theorem] \label{prop:greens_thm}
Consider a primal (or dual) domain $\primaldomain$.
Denote by $\calC$ its counterclockwise-oriented boundary.
Given two semi-discrete functions $f$ and $g$ which are continuous by pieces and differentiable by pieces in $\primaldomain'$ such that $\primaldomain \subset \Int \primaldomain'$, we have
$$
\oint_\calC [f; g] \dd z 
= \delta \int_{\primaldomain} [f_k \laplacian g_k - g_k \laplacian f_k ] (y) \dd y.
$$
\end{prop}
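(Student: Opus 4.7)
The plan is to mimic the proof of Proposition \ref{prop:greens_formula}: reduce everything to elementary primal domains $B_k(\alpha, \beta)$, verify the identity on each such block by a direct computation, and then sum over a decomposition of $\primaldomain$, using that integrals along shared boundary segments enter with opposite orientations and cancel. Since the definition of $\int_\calP [f; g] \dd z$ on a segment (Equations \eqref{eqn:int_pari_vert_up}--\eqref{eqn:int_pari_hor_primal}) is additive under concatenation, this reduction is straightforward; the content lies entirely in the elementary case.

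On an elementary primal domain $B_k(\alpha,\beta)$ the counterclockwise boundary has four pieces: a bottom horizontal primal segment at height $\alpha$ from $\delta k$ to $\delta(k+1)$, a right vertical primal segment going up at $x=\delta(k+1)$, a top horizontal primal segment at height $\beta$ going leftwards, and a left vertical primal segment going down at $x=\delta k$. Using \eqref{eqn:int_pari_vert_up} and \eqref{eqn:int_pari_hor_primal} with appropriate orientations, the two vertical contributions combine into
\[
\int_\alpha^\beta \bigl\{ f_{k+\frac12}(g_{k+\frac32}+g_{k-\frac12}) - g_{k+\frac12}(f_{k+\frac32}+f_{k-\frac12})\bigr\}(y)\,\dd y,
\]
in which the ``diagonal'' terms $2 f_{k+\frac12} g_{k+\frac12}$ cancel. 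Substituting $h_{k+\frac32}+h_{k-\frac12} = \delta^2\,\Deltaxx h_{k+\frac12} + 2 h_{k+\frac12}$ for both $h=f$ and $h=g$, this simplifies to
\[
\delta^2 \int_\alpha^\beta \bigl[ f_{k+\frac12}\,\Deltaxx g_{k+\frac12} - g_{k+\frac12}\,\Deltaxx f_{k+\frac12}\bigr](y)\,\dd y.
\]
The two horizontal contributions, taken at $\alpha$ (rightward) and at $\beta$ (leftward), yield
\[
-\delta^2\bigl[ g_{k+\frac12}\,\dy f_{k+\frac12} - f_{k+\frac12}\,\dy g_{k+\frac12}\bigr]_{y=\alpha}^{y=\beta},
\]
which, by one-dimensional integration by parts in $y$, equals $\delta^2 \int_\alpha^\beta \bigl[f_{k+\frac12}\,\ddy g_{k+\frac12} - g_{k+\frac12}\,\ddy f_{k+\frac12}\bigr](y)\,\dd y$. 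Adding the two and recalling \eqref{eqn:Laplacian} gives
\[
\oint_{\partial B_k(\alpha,\beta)} [f;g]\,\dd z = \delta^2 \int_\alpha^\beta \bigl[f_{k+\frac12}\,\laplacian g_{k+\frac12} - g_{k+\frac12}\,\laplacian f_{k+\frac12}\bigr](y)\,\dd y,
\]
which is exactly $\delta \int_{B_k(\alpha,\beta)} [f\laplacian g - g\laplacian f](y)\,\dd y$, since $\Int B_k(\alpha,\beta)$ consists of the single dual vertical line at $x=\delta(k+\tfrac12)$ restricted to $[\alpha,\beta]$.

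Finally, any primal domain $\primaldomain$ decomposes into elementary primal blocks; summing the identity above over the decomposition, the boundary integrals on interior edges cancel because they are traversed with opposite orientations by adjacent blocks, while the right-hand sides add up to $\delta \int_\primaldomain [f\laplacian g - g\laplacian f](y)\,\dd y$. The main subtlety — and the step that deserves care in the write-up — is bookkeeping of signs and of the $\delta$-powers between the segmental, elementary, and global levels; in particular, one must keep in mind the convention that $\int_\primaldomain$ sums only over the interior dual lines, which is what matches the factor $\delta^2$ coming from the elementary computation with the prefactor $\delta$ in the statement. The hypothesis that $f,g$ are piecewise continuous and piecewise differentiable in a slightly larger domain $\primaldomain'$ is what allows the integration by parts above and the use of one-sided derivatives along the boundary.
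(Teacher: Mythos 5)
Your proof is correct and takes essentially the same route as the paper's: reduce to an elementary block, compute the vertical contributions via the identity $h_{k+\frac32}+h_{k-\frac12}=\delta^2\Deltaxx h_{k+\frac12}+2h_{k+\frac12}$ and the horizontal ones via the fundamental theorem of calculus in $y$, then sum over a decomposition with cancellation along shared edges. The only (immaterial) difference is that you compute on a primal elementary block whose interior line is dual, while the paper does the identical computation on a dual elementary block $B_{k-\frac12}(\alpha,\beta)$ whose interior line is primal.
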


\begin{proof}
As usual, we start by showing this for an elementary domain since we can superpose these domains to obtain more general domains and the integration terms simplify.
Consider $B_{k-\frac12}(\alpha, \beta)$ a dual elementary domain and denote $\calC$ its contour with counterclockwise orientation.
By definition, we have
\begin{align*}
\oint_{\calC} [f; g] \dd z & = \int_\alpha^\beta [f_k g_{k+1} - f_{k+1} g_k](y) \dd y
- \int_\alpha^\beta [f_{k-1} g_k - f_k g_{k-1}](y) \dd y \\
& \qquad + \delta^2 \left[ (g_k \dy f_k - f_k \dy g_k)(t) \right]_\beta^\alpha \\
& = \int_\alpha^\beta [ f_k( \delta^2 \Deltaxx g_k + 2 g_k) - g_k (\delta^2 \Deltaxx f_k + 2 f_k) ] (y)\dd y \\
& \qquad + \delta^2 \int_\alpha^\beta \dy (f_k \dy g_k - g_k \dy f_k)(t) \dd t   \\
& = \delta^2 \int_\alpha^\beta [ f_k \Deltaxx g_k - g_k \Deltaxx f_k ] (y)\dd y 
+ \delta^2 \int_\alpha^\beta (f_k \ddy g_k - g_k \ddy f_k)(t) \dd t   \\
& = \delta^2 \int_\alpha^\beta (f_k \laplacian g_k - g_k \laplacian f_k )(y) \dd y.
\end{align*}
\end{proof}

\begin{killcontents}
\subsubsection{Integration on medial lattice} \comment{à réécrire}

When it comes to integration along the boundary of a medial domain, we always work with a pair of functions.
Given any medial domain $\medialdomain$, first define its outer boundary $\calC^\lozenge$, consisting of two parts: the union of boundaries of $\medialdomain$ translated by $\frac14 \delta$ and $-\frac14 \delta$.
Both of them are oriented counterclockwise as always 

Let $f$ and $g$ be two semi-discrete functions which are continuous and $B_k^\lozenge(\alpha, \beta)$ be a medial domain with contour $\calC$.
We define the integral along the contour to be
\begin{align*}
\oint_\calP [f; g] \dd z & = \delta \left[ f_k g_k \right]^{\icomp \alpha}_{\icomp \beta} 
 + \icomp \int_\alpha^\beta (f_{k} g_{k+\frac12} + f_{k+\frac12} g_k - f_{k-\frac12} g_k - f_k g_{k-\frac12})(\icomp y) \dd y.
\end{align*}

We are now ready to state Green's theorem for a pair of semi-discrete functions.

\begin{thm}[Green's theorem]
Let $\medialdomain$ be a medial domain and denote by $\calC$ its boundary.
If $f$ and $g$ are semi-discrete functions on $\medialdomain$, then
$$
\oint_\calC [f; g] \dd z =
2 \icomp \int_{\medialdomain} [f \derzbar g + g \derzbar f] (\icomp y) \dd y
$$
\end{thm}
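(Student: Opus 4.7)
The strategy mirrors the proofs of Propositions~\ref{prop:greens_formula}, \ref{prop:div_thm}, and~\ref{prop:greens_thm}: first establish the identity on an elementary medial domain $B_k^\lozenge(\alpha,\beta)$, and then assemble a general medial domain $\medialdomain$ as a superposition of such elementary pieces, using cancellation of shared internal contributions.

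For the reassembly step, I would decompose $\medialdomain$ into a union of elementary medial cells stacked horizontally and vertically. Any internal axis appears in the contours of two neighbouring cells with opposite orientations; since the pair integrand $[f;g]$ is antisymmetric under reversal of orientation, these contributions cancel pairwise, leaving only the outer boundary integral $\oint_\calC[f;g]\,\dd z$. Meanwhile, the right-hand-side volume integrals on the elementary pieces simply add to $2\icomp\int_{\medialdomain}[f\derzbar g+g\derzbar f](\icomp y)\,\dd y$. Hence it suffices to establish the identity on each $B_k^\lozenge(\alpha,\beta)$.

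For the elementary computation, I would expand both sides explicitly. On the left, the defining formula of the medial pair-contour integral yields the horizontal-segment term $\delta[f_k g_k]^{\icomp\alpha}_{\icomp\beta}$ plus the vertical integral over $[\alpha,\beta]$ of the mixed combination $f_k g_{k+\frac12}+f_{k+\frac12}g_k-f_{k-\frac12}g_k-f_k g_{k-\frac12}$. On the right, using $\derzbar f(p)=\tfrac12\bigl[(f(p^+)-f(p^-))/\delta-\dy f(p)/\icomp\bigr]$ and the analogous expression for $g$ at each medial vertex $p$ on the axes interior to $B_k^\lozenge$, one observes that for $p$ on axis $\delta(k\pm\tfrac12)$ the neighbours $p^\pm$ sit precisely on the axes $\delta k$ and $\delta(k\pm 1)$. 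The finite-difference parts then regroup term by term into the mixed products appearing on the LHS, while the $\dy$-parts collapse, via the Leibniz rule $\dy(fg)=f\dy g+g\dy f$ and the fundamental theorem of calculus, into the horizontal boundary term $\delta[f_k g_k]^{\icomp\alpha}_{\icomp\beta}$.

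The main obstacle I anticipate is bookkeeping rather than any genuine analytic difficulty: the medial lattice interleaves primal and dual vertices, so one must identify carefully which axes of $\mediallattice$ lie strictly in the interior of $B_k^\lozenge(\alpha,\beta)$ and which lie on the $\pm\tfrac{\delta}{4}$-thickened boundary $\calC$, and then track how the operator $\derzbar$ evaluated at an interior medial vertex feeds boundary values into the right-hand side. Once a consistent convention is chosen, matching each finite-difference term on one side with its counterpart on the other is routine; the real care is needed to ensure that, after reassembling the elementary cells into $\medialdomain$, no axis is double-counted and the outer $\frac{\delta}{4}$-shifted boundary pieces combine correctly to reproduce the definition of $\oint_\calC [f;g]\,\dd z$ used in the statement.
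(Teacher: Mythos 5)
Your proposal is correct and takes essentially the same route as the paper, which likewise reduces the identity to an elementary medial cell $B_k^\lozenge(\alpha,\beta)$ and then juxtaposes cells so that shared contour contributions cancel by orientation reversal (the paper's own write-up in fact stops right after announcing this reduction). Your elementary computation closes exactly as you describe: once the indexing is fixed so that $\derzbar$ is applied on the single interior medial axis (with its neighbours on the two boundary axes), the finite-difference parts of $2\icomp\delta\,[f\derzbar g+g\derzbar f]$ reproduce the mixed vertical terms $f_k g_{k+\frac12}+f_{k+\frac12}g_k-f_{k-\frac12}g_k-f_k g_{k-\frac12}$, while the $\dy$-parts combine by the Leibniz rule and integrate to the horizontal term $\delta\left[f_k g_k\right]^{\icomp\alpha}_{\icomp\beta}$.
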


\begin{proof}
We show the formula for an elementary domain.
For a more general medial domain, we just need to juxtapose elementary domains and sum up equalities.

Let $B_k^\lozenge(\alpha, \beta)$ be an elementary domain.
\end{proof}

We notice that by taking $g \equiv 1$, we recover Green's formula from Proposition \ref{prop:greens_formula}.
\end{killcontents}

\begin{killcontents}

\subsection{Cauchy's Kernel}

We define $\Cauchy(z, \zeta)$ a singularity function with the following property.
\begin{enumerate}
\item The function $\Cauchy$ is translational invariant. (Thus, $\Cauchy(z, \zeta) = \Cauchy(0, \zeta-z) = \Cauchy(\zeta-z)$.)
\item At fixed $\zeta$, the function $z \mapsto \Cauchy(z, \zeta)$ is continuous except at $\zeta$. Around $z = \zeta$, it satisfies
$$
\lim_{\epsilon \rightarrow 0} [\Cauchy(\zeta + \icomp \epsilon, \zeta) - \Cauchy(\zeta - \icomp \epsilon, \zeta)] = 1.
$$
\item At fixed $\zeta$, the function $z \mapsto K(z, \zeta)$ is semi-discrete holomorphic except at $\zeta$.
\item If $z = x + \icomp y$ with $x \in \frac12\bbZ, y \in \bbR$, we have the following asymptotic behaviors:
\begin{align*}
\Cauchy(z, \zeta) = \calO (|x|^{-1}) & \mbox{ as } x \rightarrow \infty \\
\Cauchy(z, \zeta) = \calO (|y|^{-1}) & \mbox{ as } y \rightarrow \infty.
\end{align*}
\end{enumerate}

The explicit formula of $K$ is given in \cite{Kurowski-sda}.

\begin{prop}[Cauchy kernel]
If $z = t + \icomp \frac{m}{2}$, the value $\Cauchy(z, 0)$ is given by
\begin{align*}
\Cauchy(z, 0) := & - \frac{1}{2 \pi} \int_0^\pi e^{-2|t| \sin u} \sin(mu) du \\
& \quad +  \sgn(t) \cdot \frac{i}{2 \pi} \int_0^\pi e^{-2|t| \sin u} \cos(mu) du.
\end{align*}
\end{prop}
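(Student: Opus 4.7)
The plan is to construct $\Cauchy(\cdot,0)$ as an explicit superposition of semi-discrete plane waves and then verify that the resulting formula satisfies the four characterizing properties listed just before the proposition. Those four properties determine $\Cauchy$ uniquely: translational invariance reduces it to a function of $\zeta-z$, the jump condition fixes the type of singularity at the origin, semi-discrete holomorphicity pins the function down up to an entire semi-discrete holomorphic correction, and the decay at infinity kills that correction (by a semi-discrete analogue of Liouville applied to the Laplacian from Section~\ref{sec:sdca_der}). Hence it suffices to exhibit one explicit formula with all four properties.

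The first step is to classify semi-discrete plane waves on $\mediallattice$: for a parameter $u\in(0,\pi)$, I look for functions of the form $e_u(z)=e^{-2\sin(u)\cdot x}\,\varphi_u(y)$ annihilated by $\derzbar$. Plugging into the definition of $\derzbar$ from \eqref{eqn:antider} yields a first-order ODE in $y$ whose solution space is two-dimensional and spanned by $\cos(2uy)$ and $\sin(2uy)$; the precise linear combination depends on the parity of the horizontal half-step, which is exactly the mechanism that produces the $\sgn(t)$ dependence in the proposed formula. The analogous family with $x$ replaced by $-x$ gives a second branch valid on the opposite half-plane, and the $|t|$ and $\sgn(t)$ in the formula select the branch which decays rather than blows up in each half-plane.

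The second step is to define
$$
F(z):=-\frac{1}{2\pi}\int_0^\pi e^{-2|t|\sin u}\sin(mu)\,du+\sgn(t)\cdot\frac{\icomp}{2\pi}\int_0^\pi e^{-2|t|\sin u}\cos(mu)\,du
$$
and check the four properties. Translational invariance is automatic. Holomorphicity for $t\ne 0$ follows by differentiating under the integral sign ($\derzbar$ commutes with $\int_0^\pi du$ thanks to the uniform bound $e^{-2|t|\sin u}\le 1$), using that the integrand is, for each $u$, one of the plane waves constructed in Step~1. The asymptotics come from Laplace's method applied to $u\mapsto e^{-2|t|\sin u}$ near $u=0,\pi$, giving the $\mathcal{O}(|t|^{-1})$ bound, while integration by parts in $u$ against the oscillating factors $\sin(mu),\cos(mu)$ yields the $\mathcal{O}(|m|^{-1})$ decay in the vertical direction.

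The main obstacle is the jump condition at the singularity: I must show that $\lim_{\epsilon\to 0^+}[F(\icomp\epsilon)-F(-\icomp\epsilon)]=1$ after the appropriate identification of $z=\icomp\epsilon$ with $t$ and $m$. The real part of $F$ is even in $t$ and cancels in the difference, so everything is carried by the second, $\sgn(t)$-sensitive integral: the difference is $\frac{1}{\pi}\int_0^\pi e^{-2|t|\sin u}\cos(mu)\,du$ (times $\icomp$, with an appropriate reconciliation of the convention for the jump). Taking $t\to 0$ by dominated convergence collapses the exponential to $1$, and the remaining integral $\frac{1}{\pi}\int_0^\pi\cos(mu)\,du$ is a standard Fourier kernel that produces a unit mass at $m=0$ after one more careful interchange of limits. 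The technical delicacy is that $F$ is only piecewise smooth in $t$ through $|t|$ and $\sgn(t)$, so one must track signs carefully and use the parity properties of the trigonometric integrands to justify the interchange; beyond this point the verification is routine, and the uniqueness argument mentioned above closes the proof.
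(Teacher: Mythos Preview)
The paper does not prove this proposition. It sits inside a \texttt{killcontents} block (material excised from the final version), and even there the text merely says ``the explicit formula of $K$ is given in \cite{Kurowski-sda}'' before stating the formula without argument. So there is nothing to compare against; your proposal has to be judged on its own.

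Your strategy is the right one: verify the four characterising properties for the explicit integral and appeal to the uniqueness they force. The plane-wave step and the Laplace/integration-by-parts asymptotics are fine in outline.

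The real gap is the jump condition. In the paper's ambient convention the lattice is $\tfrac12\bbZ\times\bbR$ with $x$ discrete and $y$ continuous, and the required singularity is $\lim_{\epsilon\to 0}[\Cauchy(\icomp\epsilon,0)-\Cauchy(-\icomp\epsilon,0)]=1$, a jump across the \emph{continuous} direction. But the only discontinuity in the stated formula comes from $\sgn(t)$, i.e.\ across $t=0$, which is the \emph{real} axis. These can only match if in $z=t+\icomp m/2$ the roles of discrete and continuous are swapped relative to the rest of the paper (plausible, since the formula is lifted from Kurowski and the section was never harmonised), but you must make that identification explicit before the computation is meaningful. Even then, your computed jump is
\[
\bigl[\sgn(0^+)-\sgn(0^-)\bigr]\cdot\frac{\icomp}{2\pi}\int_0^\pi\cos(mu)\,du
=\frac{\icomp}{\pi}\int_0^\pi\cos(mu)\,du,
\]
which at $m=0$ equals $\icomp$, not $1$; you need to explain where the extra factor of $\icomp$ is absorbed (rotation of the lattice, or a different normalisation of $\derzbar$ in Kurowski). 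Finally, the phrase ``produces a unit mass at $m=0$ after one more careful interchange of limits'' is misplaced: here $m$ is the discrete coordinate, so there is no Dirac mass and no interchange---one simply evaluates the integral at each integer $m$, getting $1$ at $m=0$ and $0$ at every nonzero integer (which is exactly the statement that $\Cauchy$ is continuous away from the origin).
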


\begin{prop}[Cauchy's formula]

\end{prop}

\end{killcontents}

\subsection{Brownian motion, harmonic measure and Laplacian} \label{sec:BM}

Let $\delta > 0$.
The standard Brownian motion on the semi-discrete lattice $\primallattice = \delta \bbZ \times \bbR$ can be seen as a continuous-time random walk in the horizontal direction; and a standard Brownian motion on $\bbR$ in the vertical direction.
We give a more precise description below.

\begin{defn}
Let $(T_i)_{i \in \bbN}$ be a family of \emph{i.i.d.} exponential random variables of rate 1 and $(D_i)_{i \in \bbN}$ be a family of \emph{i.i.d.} uniform random variables taking value in $\{ +1, -1 \}$.
We define
$$
S_t = \sum_{i=1}^{N(t)} D_i
$$
where
$$
N(t) = \sup \{ n \in \bbN, T_1 + \dots + T_n \leq t \}.
$$
The continuous-time process $(S_{t})_{t \in \bbR}$ is the \emph{standard continuous-time simple random walk} on $\bbZ$.
\end{defn}

\begin{rmk}
We can easily compute the expectation and variance of $S_t$ which are respectively 0 and $t$.
It also has a good scaling property and one can show that $(\delta S_{t / \delta^2})$ converges to $(B_t)$ in law when $\delta$ goes to 0, where $(B_t)$ is a standard one-dimensional Brownian motion.
Here, the process $(\delta S_{t / \delta^2})$ can be seen as the continuous-time random walk of parameter $\frac{1}{\delta^2}$ with symmetric jumps $\pm \delta$.
\end{rmk}

We can now define the \emph{semi-discrete standard Brownian motion} on $\primallattice$.

\begin{defn} \label{def:sds_BM}
A \emph{semi-discrete standard Brownian motion} on $\primallattice$ is given by
$$
(\BM{t} = (X_t, Y_t) = (\delta S_{t / \delta^2}, B_t))_{t \geq 0}
$$
where $(S_t)$ is a standard one-dimensional continuous-time simple random walk and $(B_t)$ is a standard one-dimensional Brownian motion, both of them being independent of each other.
The starting point $\BM{0}$ is arbitrary, which is given by the starting points of $(S_t)$ and $(B_t)$.
\end{defn}

As in the discrete and continuous cases, we can define the notion of \emph{harmonic measure} via the standard Brownian motion.

\begin{defn}
Given a primal domain $\primaldomain$ and $(\BM{t})$ a Brownian motion on $\primaldomain$ starting at some point $(x, y) \in \primaldomain$, we define
\begin{equation} \label{eqn:stopping_time}
\recstoppingtime = \inf \{ t \geq 0, \BM{t} \notin \Int \primaldomain \}
\end{equation}
The \emph{harmonic measure} of $\primaldomain$ with respect to $(x, y)$, denoted by $\dd \HM((x, y), \cdot)$, is the law of $\BM{\recstoppingtime}$.
\end{defn}

Here, we are interested in the harmonic measure on centered elementary rectangular domains $\recdomain = \{ -\delta, 0, \delta\} \times [-\epsilon, \epsilon]$.
On such domains, the harmonic measure with respect to $0$, denoted by $\hm$, is the sum of two Dirac masses at $\pm i \epsilon$ and two density measures which are symmetric in both discrete and continuous directions on $\{ \pm \delta \} \times [-\epsilon, \epsilon]$.

We will write $\exitingproba$ for the probability that the Brownian motion $\BM{t}$ leaves $\recdomain$ (the first time) from its left or right sides.
This can be expressed by using the harmonic measure on $\recdomain$ as follows,
\begin{equation}
\exitingproba = \int_{-\epsilon}^\epsilon \hm(- \delta, y) \dd y + \int_{-\epsilon}^\epsilon \hm(\delta, y) \dd y.
\end{equation}
Thus, we can write the Dirac masses at $\pm i \epsilon$ in this way:
$$
\hm (\pm i \epsilon) = \hm(0, \pm \epsilon) = \frac{1 - \exitingproba}{2}
\cdot \delta(\cdot).
$$

\begin{defn}
Given a primal domain $\primaldomain$, a function $f : \primaldomain \to \bbR$ is said to satisfy the \emph{mean-value property} on rectangles if for all $(x, y) \in \primaldomain$ and $\epsilon > 0$ such that $(x, y) + \recdomain \subset \primaldomain$, we have
\begin{equation} \label{eqn:mean_value}
f(x, y) = \bbE_{(x, y)} \left[ f \left( \BM{ \recstoppingtime } \right) \right].
\end{equation}
Here, $\BM{t}$ is the standard Brownian motion starting at $(x, y)$ and $\recstoppingtime$ the stopping time defined in Equation \eqref{eqn:stopping_time}.
\end{defn}

\begin{rmk}
In terms of harmonic measure, Equation \eqref{eqn:mean_value} can be reformulated as (without loss of generality, we take $(x, y) = (0, 0)$)
\begin{align*}
f(0, 0) & = \int_{\boundaryrecdomain} f(z) \rho_\epsilon(z) |\dd z| \\
& = \int_{-\epsilon}^\epsilon f (- \delta, y) \hm (- \delta, y) \dd y 
+ \int_{-\epsilon}^\epsilon f (\delta, y) \hm (\delta, y) \dd y \\
& \qquad + \frac{1-\exitingproba}{2} \cdot ( f(-i \epsilon) + f(i \epsilon) ).
\end{align*}
\end{rmk}

\begin{prop}
The probability that the Brownian motion $\BM{t}$ leaves $\recdomain$ (the first time) from the left or right boundary is
$$
\exitingproba
= \frac{\cosh( \sqrt{2} \epsilon / \delta) - 1}{\cosh( \sqrt{2} \epsilon / \delta)}
= \left( \frac\epsilon\delta \right)^2 + \calO_\delta \left(  \epsilon^4 \right)
$$
where the asymptotics is given for $\epsilon \rightarrow 0$.
\end{prop}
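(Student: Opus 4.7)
The plan is to exploit the product structure of $\BM{t} = (X_t, Y_t)$: starting from the origin, $X_t$ stays at $0$ until its first jump time $T_1$, at which moment it moves to $\pm\delta$ and the Brownian motion exits $\recdomain$ through one of the vertical sides. Before that time, the motion leaves $\recdomain$ only if the independent component $Y_t$ first hits $\pm\epsilon$; I write $\tau_\epsilon := \inf\{t\ge 0 : |Y_t| = \epsilon\}$ for that exit time. Hence
\begin{equation*}
\exitingproba \;=\; \Pr(T_1 < \tau_\epsilon).
\end{equation*}

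Recall that $T_1$ is exponential of rate $1/\delta^2$ (because $X_t = \delta S_{t/\delta^2}$ and $S$ jumps at rate $1$), and that $T_1$ is independent of $Y$, hence of $\tau_\epsilon$. Conditioning on $\tau_\epsilon$ and integrating out the exponential gives
\begin{equation*}
\exitingproba
\;=\; \bbE\bigl[\Pr(T_1 < \tau_\epsilon \mid \tau_\epsilon)\bigr]
\;=\; \bbE\bigl[1 - e^{-\tau_\epsilon/\delta^2}\bigr]
\;=\; 1 - \bbE\bigl[e^{-\tau_\epsilon/\delta^2}\bigr].
\end{equation*}
The Laplace transform of the exit time of a standard real Brownian motion from the symmetric interval $[-\epsilon,\epsilon]$ starting at $0$ is the classical formula $\bbE[e^{-\lambda\tau_\epsilon}] = 1/\cosh(\epsilon\sqrt{2\lambda})$, obtained by solving $\tfrac12 u'' = \lambda u$ on $[-\epsilon,\epsilon]$ with boundary value $1$ (an \emph{optional stopping / martingale} argument applied to $\cosh(\sqrt{2\lambda}\,Y_t)e^{-\lambda t}$). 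Applying it with $\lambda = 1/\delta^2$ yields
\begin{equation*}
\exitingproba \;=\; 1 - \frac{1}{\cosh(\sqrt{2}\,\epsilon/\delta)} \;=\; \frac{\cosh(\sqrt 2\,\epsilon/\delta)-1}{\cosh(\sqrt 2\,\epsilon/\delta)},
\end{equation*}
which is the claimed identity.

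The asymptotic expansion as $\epsilon \to 0$ then follows immediately from the Taylor series $\cosh(x) = 1 + x^2/2 + \calO(x^4)$ applied with $x = \sqrt{2}\,\epsilon/\delta$: the numerator becomes $\epsilon^2/\delta^2 + \calO(\epsilon^4)$ and the denominator tends to $1$, producing $\exitingproba = (\epsilon/\delta)^2 + \calO_\delta(\epsilon^4)$. The only genuinely non-routine ingredient is the Laplace transform identity for $\tau_\epsilon$, and it is a standard one-dimensional Brownian computation; everything else reduces to using the independence of $T_1$ and $Y_t$ built into Definition \ref{def:sds_BM}.
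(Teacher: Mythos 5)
Your proof is correct and follows essentially the same route as the paper: both reduce the exit probability to $1-\bbE[e^{-\tau_\epsilon/\delta^2}]$ via the independence of the jump time and the vertical Brownian motion, and then evaluate this Laplace transform by an optional-stopping argument yielding $1/\cosh(\sqrt{2}\,\epsilon/\delta)$. The only cosmetic difference is that you invoke the symmetric $\cosh$-martingale (or the ODE) directly, while the paper uses the one-sided exponential martingale together with the symmetry of Brownian motion, which amounts to the same computation.
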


\begin{proof}
Fix $(\BM{t} = (\delta S_{t/\delta}, B_t))_{t \geq 0}$ as in Definition \ref{def:sds_BM}.
This probability is exactly $\bbP[\delta^2 T_1 < \tau]$ where $T_1$ is an exponential law of parameter 1, which is independent of the stopping time $\tau = \tau_\epsilon \wedge \tau_{-\epsilon}$ for the standard 1D Brownian motion.
Here
$$
\tau_x := 
\left \{
\begin{array}{ll}
\inf \{ t, B_t \geq x \}, & \quad \mbox{if } x > 0, \\
\inf \{ t, B_t \leq x \}, & \quad \mbox{if } x < 0.
\end{array}
\right.
$$
By Fubini, we have
\begin{align*} 
1 - \exitingproba & = \bbP[\delta^2 T_1 > \tau]
= \bbE [ \bbP[ T_1 > \tau / \delta^2 \mid \tau ] ]
= \bbE[\exp(- \tau / \delta^2)],
\end{align*}
which is the Laplace transform of $\tau$.

To calculate this, we notice that the continuous-time process 
$$
M_t = \exp \left( \sqrt{2} B_t/\delta - t/\delta^2 \right)
$$
is a martingale with respect to the canonical filtration.
Moreover, for the stopping time $\tau$, the process $(M_{t \wedge \tau})_t$ is a martingale bounded by $e^{\sqrt{2} \epsilon / \delta}$.
The stopping time being finite almost surely, we can apply Doob's optional stopping theorem, giving us:
\begin{align} \label{eqn:martingale}
\begin{split}
1 & = \bbE \bracks{ M_0 } = \bbE \bracks{ M_\tau } \\
& = \frac{1}{2} \bbE \bracks{ M_\tau | \tau = \tau_\epsilon } + \frac{1}{2} \bbE \bracks{ M_\tau | \tau = \tau_{-\epsilon} } \\
& = \frac{1}{2} \exp \pars{ \sqrt{2} \epsilon / \delta } \bbE \bracks{ \exp(-\tau / \delta^2) | \tau = \tau_\epsilon } \\
& \qquad + \frac{1}{2} \exp \pars{- \sqrt{2} \epsilon / \delta } \bbE \bracks{ \exp(-\tau / \delta^2) | \tau = \tau_{-\epsilon} }.
\end{split}
\end{align}

Since $(B_t)_t$ and $(-B_t)_t$ are equal in law, we have
$$
\bbE \bracks{ \exp(-\tau / \delta^2) | \tau = \tau_\epsilon } = \bbE \bracks{ \exp(-\tau / \delta^2) | \tau = \tau_{-\epsilon} }.
$$
Moreover,
$$
\bbE \bracks{ \exp(-\tau / \delta^2) }
= \frac{1}{2} \bbE \bracks{ \exp(-\tau / \delta^2) | \tau = \tau_\epsilon }
+ \frac{1}{2} \bbE \bracks{ \exp(-\tau / \delta^2) | \tau = \tau_{-\epsilon} },
$$
giving
$$
\bbE \bracks{ \exp(-\tau / \delta^2) }
= \bbE \bracks{ \exp(-\tau / \delta^2) | \tau = \tau_\epsilon }
= \bbE \bracks{ \exp(-\tau / \delta^2) | \tau = \tau_{-\epsilon} }.
$$
Thus, Equation \eqref{eqn:martingale} becomes
$$
1 = \cosh \pars{ \sqrt{2} \epsilon / \delta } \cdot \bbE \bracks{ \exp(-\tau / \delta^2) },
$$
which implies
$$
\bbE \bracks{ \exp(-\tau / \delta^2) } = \frac{1}{\cosh \pars{ \sqrt{2} \epsilon / \delta}}
$$
and
$$
\exitingproba = 1 - \bbE \bracks{\exp(-\tau / \delta^2)}
= \frac{ \cosh \pars{\sqrt{2} \epsilon / \delta} - 1}{ \cosh \pars{ \sqrt{2} \epsilon / \delta } }.
$$
\end{proof}

\begin{prop} \label{prop:equiv_laplacian}
Let $\primaldomain$ be a primal domain and $h : \primaldomain \to \bbR$ be a $\calC^2$ function defined on it.
Then the following two statements are equivalent :
\begin{enumerate}
\item $h$ satisfies the mean-value property on elementary rectangles,
\item $\laplacian h \equiv 0$ on $\primaldomain$.
\end{enumerate}
\end{prop}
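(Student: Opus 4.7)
The key observation is that the infinitesimal generator of the semi-discrete Brownian motion $(\BM{t})$ applied to a $\mathcal{C}^2$ function $h$ is exactly $\tfrac12 \laplacian$. Indeed, the continuous-time random walk $X_t = \delta S_{t/\delta^2}$ has generator $\tfrac12 \Deltaxx$ (its total jump rate is $1/\delta^2$ and its jumps are $\pm \delta$ symmetrically), while the independent Brownian motion $Y_t$ has generator $\tfrac12 \ddy$ by the classical Itô formula; adding them gives $\tfrac12 (\Deltaxx + \ddy) = \tfrac12 \laplacian$. From this, Dynkin's formula at the stopping time $\recstoppingtime$, whose expectation is finite (of order $\exitingproba \delta^2 = \calO(\epsilon^2)$), reads
$$
\bbE_{(x,y)} \bigl[ h(\BM{\recstoppingtime}) \bigr] - h(x,y) = \bbE_{(x,y)} \left[ \int_0^{\recstoppingtime} \tfrac12 \laplacian h(\BM{s}) \, ds \right].
$$

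The ``if'' direction is then immediate: when $\laplacian h \equiv 0$ on $\primaldomain$, the right-hand side vanishes and we recover the mean-value property. For the ``only if'' direction, fix $(x_0, y_0) \in \primaldomain$ and assume the mean-value property; the left-hand side of the above identity is then zero for every admissible $\epsilon$. I would divide by $\bbE[\recstoppingtime]$ and send $\epsilon \to 0$: since the probability of exiting horizontally is $\exitingproba = \calO(\epsilon^2/\delta^2)$, with overwhelming probability the Brownian motion stays on the vertical line $\{x_0\} \times \bbR$, where $\laplacian h(x_0, \cdot)$ is continuous; a dominated convergence argument then forces $\laplacian h(x_0, y_0) = 0$.

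The main technical obstacle is justifying the Dynkin identity rigorously in this semi-discrete setting, since $(\BM{t})$ mixes a pure-jump process with a diffusion. The cleanest route is to condition on the almost surely discrete jump times of $X_t$ and apply the classical Itô formula to $s \mapsto h(X_s, Y_s)$ on each excursion between consecutive jumps, then sum and take expectations. A more elementary alternative would be to expand both sides of the mean-value identity in powers of $\epsilon$ using the explicit harmonic measure $\hm$ on $\recdomain$ together with the asymptotic for $\exitingproba$ just established; the moment identity $2 \int_{-\epsilon}^\epsilon s^2 \hm(\delta, s) \, ds + (1-\exitingproba) \epsilon^2 = \exitingproba \delta^2$, which follows from the fact that $X_t^2 - t$ and $Y_t^2 - t$ are both martingales, ensures that the leading-order terms combine precisely into $\tfrac{\exitingproba \delta^2}{2} \laplacian h(x, y)$, yielding the equivalence directly.
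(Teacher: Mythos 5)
Your main argument is correct, but it takes a genuinely different route from the paper, most visibly for the implication $\laplacian h \equiv 0 \Rightarrow$ mean-value property. The paper never uses a Dynkin-type identity (its proposition identifying the generator of $\BM{}$ with $\frac12 \Deltaxx + \frac12 \partial_{yy}$ is stated with proof omitted and plays no role in the argument): the direction mean-value $\Rightarrow \laplacian h \equiv 0$ is proved by Taylor-expanding the mean-value identity against the harmonic measure on $\recdomain$ and dividing by $\epsilon^2$ --- essentially your ``elementary alternative'' --- while the converse is deferred to Section~\ref{sec:dirichlet} and obtained by comparing $h$ with the Brownian-motion solution of the Dirichlet problem on the rectangle (Propositions~\ref{prop:existence_dirichlet} and~\ref{prop:uniqueness_Dirichlet}, via the maximum principle and the strong Markov property). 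Your single optional-stopping identity handles both directions at once and is more self-contained, since it bypasses the Dirichlet machinery entirely; the price is that you must justify Dynkin's formula for the mixed jump--diffusion, which the paper never needs. Your sketch is adequate and in fact simpler than you suggest: before $\recstoppingtime$ the process has not yet made a horizontal jump, so it moves deterministically on the single segment $\{x_0\}\times[y_0-\epsilon, y_0+\epsilon]$ (not merely ``with overwhelming probability''), the integrand $\laplacian h(\BM{s})$ is evaluated only there, and your limit $\epsilon \to 0$ after dividing by $\bbE[\recstoppingtime]$ follows from continuity of $\laplacian h(x_0,\cdot)$ alone.

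One caveat: your closing claim that the expansion based on the moment identity $\bbE[Y_{\recstoppingtime}^2] = \bbE[X_{\recstoppingtime}^2] = \exitingproba\,\delta^2$ ``yields the equivalence directly'' is overstated. That expansion shows the mean-value identity holds up to $o(\epsilon^2)$ errors when $\laplacian h \equiv 0$; this proves the direction mean-value $\Rightarrow$ harmonic (exactly as the paper does), but it cannot by itself deliver the \emph{exact} mean-value property in the converse direction. For that you need your Dynkin/optional-stopping identity, or the paper's route through uniqueness of the Dirichlet problem.
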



\begin{proof}
Here, we will show that the point 1 implies the point 2.
The converse will be discussed later in Section \ref{sec:dirichlet}.

Consider a function $f$ as in the statement.
We will apply the mean-value property at a point of $\primaldomain$ and consider smaller and smaller elementary rectangles to prove the desired property.
Let $\epsilon > 0$ and consider an elementary rectangle $R_\epsilon$.
Let us first approximate the contribution of $\bbE_0 \bracks{ f \left( \BM{T} \right) }$ on the left boundary by $h(-\delta, 0)$:
\begin{align*}
& \int_{-\epsilon}^\epsilon h(-\delta, y) \hm(-\delta, y) dy - \frac{\exitingproba}{2} \cdot h(-\delta, 0) \\
& \quad = \int_{-\epsilon}^\epsilon [ h(- \delta, y) - h(- \delta, 0) ] \rho_\epsilon(- \delta, y) \dd y \\
& \quad = \int_{-\epsilon}^\epsilon [y \dy h(- \delta, 0) + E_\epsilon(- \delta, y) ] \rho_\epsilon(- \delta, y) \dd y.
\end{align*}
The harmonic measure $\hm$ is symmetric in $y$, thus the integral of $y \dy h$ gives zero.
The error term can be expressed as follows
$$
E_\epsilon(- \delta, y) = \int_0^y \ddy h (- \delta, t) (y - t) \dd t
$$
giving the upper bound
$$
|E_\epsilon(- \delta, y)| \leq C \cdot \frac{y^2}{2}, \quad \forall y \in [- \epsilon, \epsilon],
$$
where $C = \sup \{ \ddy h (- \delta, y), y \in [-\epsilon, \epsilon] \}$.
In consequence, we have
\begin{align*}
& \left| \int_{-\epsilon}^\epsilon h(- \delta, y) \hm(-\delta, y) \dd y
- \frac{\exitingproba}{2} \cdot h(- \delta, 0) \right| \\
& \quad \leq C \int_{-\epsilon}^\epsilon \frac{\epsilon^2}{2} \hm (- \delta, y) \dd y
= \frac{C \epsilon^2}{2} \cdot \frac{\exitingproba}{2},
\end{align*}
allowing us to write
\begin{equation} \label{eqn:left}
\int_{-\epsilon}^\epsilon h(- \delta, y) \rho_\epsilon(- \delta, y) \dd y
= \frac{\exitingproba}{2} \cdot \left[ h(- \delta, 0) + \bigO{\epsilon^2} \right].
\end{equation}
Similarly, we also have
\begin{equation} \label{eqn:right}
\int_{-\epsilon}^\epsilon h(\delta, y) \rho_\epsilon(\delta, y) \dd y
= \frac{\exitingproba}{2} \cdot \left[ h( \delta, 0) + \bigO{\epsilon^2} \right].
\end{equation}

Combining Equations \eqref{eqn:left} and \eqref{eqn:right} and inserting in \eqref{eqn:mean_value}, we get
\begin{align*}
0 & = \frac{\exitingproba}{2} \cdot \bracks{ h(- \delta, 0) + h( \delta, 0) - 2 h(0, 0) + \bigO{\epsilon^2} } \\
& \qquad + \frac{1 - \exitingproba}{2} \cdot \bracks{ h(0, \epsilon) + h(0, -\epsilon) - 2 h(0, 0) } \\
& = \frac{\exitingproba}{2} \cdot \left[ \delta^2 \Deltaxx h(0, 0) + \bigO{\epsilon^2} \right]
+ \frac{1 - \exitingproba}{2} \cdot \left[ \epsilon^2 h_{yy}(0, 0) + \bigO{\epsilon^2} \right].
\end{align*}
We divide everything by $\epsilon^2$ to get
\begin{align*}
0 & = \frac{\exitingproba}{2 \epsilon^2} \cdot \left[ \delta^2 \Deltaxx h(0, 0) + \bigO{\epsilon^2} \right]
+ \frac{1 - \exitingproba}{2} \cdot \left[ h_{yy}(0, 0) + \calO(1) \right].
\end{align*}
When $\epsilon$ goes to $0$, we obtain
$$
\frac{1}{2} \laplacian h (0, 0) = \frac{1}{2} \Deltaxx h (0, 0) + \frac{1}{2} \partial_{yy} h (0, 0) = 0.
$$
\end{proof}

The semi-discrete Laplacian can also be interpreted with the notion of generator.
The \emph{generator} of a continuous-time Markov process $(X_t, Y_t)$ is the linear application $P$ such that
$$
P f (x, y) = \lim_{t \rightarrow 0} \frac{\bbE_{(x, y)}[f(X_t, Y_t)] - f(x, y)}{t}
$$
for $\calC^2$ functions $f : \bbR^2 \rightarrow \bbR$.

\begin{prop}
The generator of $B^{(\delta)}$ is $ \frac{1}{2} \Deltaxx + \frac{1}{2}\partial_{yy}$.
\end{prop}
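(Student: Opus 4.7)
The plan is to use the independence of the two coordinates, as given by Definition \ref{def:sds_BM}, to decouple the problem into two well-known computations: the generator of a symmetric continuous-time random walk on $\delta\bbZ$ (producing the $\frac{1}{2}\Deltaxx$ term) and the generator of a standard Brownian motion on $\bbR$ (producing the $\frac{1}{2}\partial_{yy}$ term). Fix a bounded $\calC^2$ function $f:\bbR^2\to\bbR$ with bounded first and second derivatives — this is the standard class on which one verifies the generator — and work at a point $(x,y)\in\primallattice$.

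The first step is to write, by independence,
\begin{align*}
\bbE_{(x,y)}[f(X_t,Y_t)] \;=\; \bbE\bigl[\,\bbE[\,f(X_t,y+B_t)\mid X_t]\,\bigr],
\end{align*}
and to Taylor-expand the inner expectation in the vertical variable. Using $\bbE[B_t]=0$, $\bbE[B_t^2]=t$, and boundedness of $\partial_{yy}f$, one obtains $\bbE[f(u,y+B_t)] = f(u,y) + \tfrac{t}{2}\partial_{yy}f(u,y) + o(t)$ uniformly in $u\in\delta\bbZ$.

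For the horizontal variable, I would condition on the Poisson count $N(t/\delta^2)$ of jumps of the underlying rate-$1$ clock. Since $\bbP[N=0]=1-t/\delta^2+O(t^2)$, $\bbP[N=1]=t/\delta^2+O(t^2)$, and $\bbP[N\geq 2]=O(t^2)$, and since on $\{N=1\}$ the value $X_t$ equals $x\pm\delta$ with equal probability by symmetry of the $D_i$, assembling the three cases gives
\begin{align*}
\bbE_{(x,y)}[f(X_t,Y_t)] - f(x,y)
&= \tfrac{t}{\delta^2}\cdot\tfrac{1}{2}\bigl[f(x+\delta,y)+f(x-\delta,y)-2f(x,y)\bigr] \\
&\quad + \tfrac{t}{2}\,\partial_{yy}f(x,y) + o(t) \\
&= t\cdot\tfrac{1}{2}\Deltaxx f(x,y) + t\cdot\tfrac{1}{2}\partial_{yy}f(x,y) + o(t),
\end{align*}
where the contribution from $\{N\geq 2\}$ is absorbed into the $o(t)$ using boundedness of $f$. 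Dividing by $t$ and taking $t\to 0$ yields the claim.

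There is no deep obstacle here; the only care required is in controlling remainder terms uniformly so that the $o(t)$ from the $y$-Taylor expansion survives after weighting by the conditional probabilities of the Poisson clock, which is why boundedness of $f$ and its derivatives is imposed. The statement is essentially the superposition of two classical generator computations made legitimate by the independence built into Definition \ref{def:sds_BM}, and it can be read as confirming, at the level of generators, the mean-value/harmonicity characterization already established in Proposition \ref{prop:equiv_laplacian}.
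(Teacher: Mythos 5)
Your argument is correct, and in fact it supplies a proof that the paper simply omits (its proof reads ``We omit the proof here''), so there is no competing approach to compare against. The decomposition you use is the natural one: independence of the two coordinates in Definition \ref{def:sds_BM}, a Taylor expansion in $y$ with $\bbE[B_t]=0$, $\bbE[B_t^2]=t$ giving the $\tfrac12\partial_{yy}$ term, and conditioning on the Poisson number of jumps of the rate-$1$ clock run at speed $t/\delta^2$ giving $\tfrac{t}{\delta^2}\cdot\tfrac12\bigl[f(x+\delta,y)+f(x-\delta,y)-2f(x,y)\bigr]=t\cdot\tfrac12\Deltaxx f(x,y)$, with the $N\geq 2$ and cross terms absorbed into $o(t)$ by the uniform bounds you impose. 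The only caveat worth recording is that you verify the generator identity on bounded $\calC^2$ functions with bounded derivatives, a slightly smaller class than the bare ``$\calC^2$ functions $f:\bbR^2\to\bbR$'' in the paper's definition of the generator; this restriction (or some growth control) is genuinely needed for the remainder estimates and is the standard convention, so it is a clarification rather than a gap.
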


\begin{proof}
We omit the proof here.
\end{proof}

In $\bbR^2$, the generator of the standard 2D Brownian motion is one half of the planar Laplacian, we may also expect the same property between the semi-discrete Brownian motion and Laplacian.
It is actually satisfied by the above proposition and Equation \eqref{eqn:Laplacian}.

\subsection{Dirichlet boundary problem} \label{sec:dirichlet}

Dirichlet boundary problems are of great importance in discrete and continuous harmonic analysis, which is closely related to the complex analysis.
In this section, we establish the Maximum principle and study such problems.

\begin{prop}[Maximum principle] \label{prop:max_principle}
Consider a primal semi-discrete domain $\primaldomain$.
Let $u$ be a subharmonic function defined on $\primaldomain$, \emph{i.e.} $\laplacian u (z) \geq 0$ for all $z \in \Int \primaldomain$.
Then we have
$$
\sup_{z \in \primaldomain} u(z) = \sup_{z \in \borderprimaldomain} u(z),
$$
meaning that the maximum of $u$ is reached on the boundary.
\end{prop}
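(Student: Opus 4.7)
The plan is a standard perturbation-based proof of the maximum principle, adapted to the semi-discrete Laplacian $\laplacian = \Deltaxx + \ddy$. I would perturb $u$ by a strictly subharmonic test function so as to turn the non-strict inequality $\laplacian u \geq 0$ into a strict one, derive a contradiction from a hypothetical interior maximum by combining the classical second-derivative test in $y$ with the discrete second-difference inequality in $x$, and then let the perturbation vanish.

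The auxiliary function is $\phi(x,y) := x^2 + y^2$. A direct computation gives $\Deltaxx(x^2) = \frac{(x+\delta)^2 + (x-\delta)^2 - 2x^2}{\delta^2} = 2$ and $\ddy(y^2) = 2$, so $\laplacian \phi \equiv 4$. For $\epsilon > 0$, set $v_\epsilon := u + \epsilon \phi$; then $\laplacian v_\epsilon \geq 4\epsilon > 0$ strictly, wherever it is defined. Now suppose for contradiction that $M := \sup_{\primaldomain} u - \sup_{\borderprimaldomain} u > 0$. Since $\primaldomain$ is bounded, $\phi$ is bounded on $\primaldomain$ by some constant $C$, and for $\epsilon < M/(2C)$ we still have $\sup_{\primaldomain} v_\epsilon > \sup_{\borderprimaldomain} v_\epsilon$. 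Because $v_\epsilon$ is continuous on the compact set $\primaldomain$, its supremum is attained at some $p_0 = (x_0, y_0)$ strictly away from $\borderprimaldomain$.

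Two observations then collide at $p_0$: on the vertical primal line through $x_0$, the coordinate $y_0$ is interior to the open vertical interval where $v_\epsilon(x_0, \cdot)$ has a local maximum, so the classical second-derivative test yields $\ddy v_\epsilon(p_0) \leq 0$; and since $p_0$ is away from the boundary, both horizontal primal neighbors $p_0 \pm \delta$ lie in $\primaldomain$, giving $v_\epsilon(p_0 \pm \delta) \leq v_\epsilon(p_0)$ and hence $\Deltaxx v_\epsilon(p_0) \leq 0$. Summing these yields $\laplacian v_\epsilon(p_0) \leq 0$, contradicting strict subharmonicity.

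The main obstacle is ensuring that the interior maximum $p_0$ actually lies at a point where $\laplacian$ is defined (essentially, in $\Int^2 \primaldomain$), rather than on one of the outermost primal vertical lines where $\Deltaxx$ has no right-hand side. This requires interpreting $\borderprimaldomain$ broadly enough to include those primal vertical lines adjacent to the dual boundary segments of the geometric contour---consistent with the paper's two-step notion of interior used to define the Laplacian---so that once $p_0$ is taken away from $\borderprimaldomain$, both horizontal primal neighbors $p_0 \pm \delta$ automatically remain inside $\primaldomain$ and the second-difference estimate is legitimate.
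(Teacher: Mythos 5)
Your proof is correct and takes essentially the same route as the paper's: the paper also reduces to the strictly subharmonic case and derives the contradiction at an interior maximum from the vertical second-derivative test combined with the horizontal second-difference inequality, the only cosmetic differences being that it perturbs by $\epsilon y^2$ instead of your $\epsilon(x^2+y^2)$ and concludes by letting $\epsilon \to 0$ rather than by a quantitative choice of $\epsilon$. Your closing remark about where $\Deltaxx$ (and the subharmonicity hypothesis) is actually available is a point the paper's proof passes over silently, so your treatment is if anything slightly more careful.
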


\begin{proof} First of all, let us assume that $u$ is strictly subharmonic, meaning that $\laplacian u > 0$ on $\Int \primaldomain$.
Take $z \in \Int \primaldomain$ a point at which $u$ reaches its maximum.
Since it is a maximum on vertical axes, we have $\partial_{yy} u(z) \leq 0$.
And we also have $\Deltaxx u(z) = u(z+\delta) + u(z-\delta) - 2u(z) \leq 0$.
Thus, the semi-discrete Laplacian $\laplacian u(z) = \Deltaxx u(z) + \partial_{yy} u(z) \leq 0$.
This is a contradiction.

In a more general case with $\laplacian u \geq 0$ on $\Int \primaldomain$, let us consider the family of functions $(u_\epsilon)_{\epsilon > 0}$ defined by
Let $\epsilon > 0$ and consider
$$
u_\epsilon (z) = u(z) + \epsilon y^2
$$
where $y$ is the second coordinate of $z$.
We have $\laplacian u_\epsilon = \laplacian u + 2 \epsilon$, meaning that $u_\epsilon$ is subharmonic.
From the first part of the proof, we deduce that
$$
\sup_{z \in \primaldomain} u_\epsilon (z) = \sup_{z \in
\borderprimaldomain} u_\epsilon(z).
$$
Since both terms are finite and decreasing while $\epsilon$ decreases to 0, taking the limit implies the desired result.
\end{proof}

Given a primal semi-discrete domain $\primaldomain$ and a function $g : \borderprimaldomain \to \bbR$, the associated \emph{Dirichlet problem} consists of determining a function $h : \primaldomain \to \bbR$ which
\begin{enumerate}
\item coincides with $g$ on the boundary, \emph{i.e.} $g = h_{\mid \borderprimaldomain}$,
\item satisfies $\laplacian h \equiv 0$.
\end{enumerate}
In such case, we say that $h$ is a \emph{solution} to Dirichlet problem.

\begin{prop}[Existence of solution] \label{prop:existence_dirichlet}
A solution to the Dirichlet problem is given by
\begin{equation} \label{eqn:sol_dirichlet}
h(z) = \bbE \bracks{ g(\BM{T} ) }, \quad \forall z \in \primaldomain,
\end{equation}
where
$$
T = \inf \{ t \geq 0, \BM{t} \notin \primaldomain \}.
$$
\end{prop}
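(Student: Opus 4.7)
The plan is to verify the two defining properties of a Dirichlet solution — the boundary condition and harmonicity — for the function $h$ given by \eqref{eqn:sol_dirichlet}, using tools already in hand: the strong Markov property of the semi-discrete Brownian motion $(\BM{t})$ and the direction of Proposition \ref{prop:equiv_laplacian} that has already been established.

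First I would handle the boundary condition. If $z \in \borderprimaldomain$, then $\BM{0} = z \notin \Int \primaldomain$, so $T = 0$ almost surely and $h(z) = \bbE_z\bigl[g(\BM{0})\bigr] = g(z)$. A minor point to check is that starting from a corner of $\primaldomain$ the Brownian motion indeed leaves $\Int \primaldomain$ immediately; since $(Y_t)$ is a standard one-dimensional Brownian motion and therefore fluctuates on both sides of $y_0$ in every right-neighborhood of $0$, this causes no issue.

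Next I would derive the mean-value property of $h$ on elementary rectangles via the strong Markov property. Fix $z \in \Int \primaldomain$ and $\epsilon > 0$ small enough that $z + \recdomain \subset \primaldomain$, and let $T_\epsilon$ denote the first exit time from $z + \recdomain$. Since $T_\epsilon \le T$ and $\BM{T_\epsilon} \in \primaldomain$, the strong Markov property of $(\BM{t})$ applied at the stopping time $T_\epsilon$ yields
$$
h(z) \;=\; \bbE_z\bigl[g(\BM{T})\bigr] \;=\; \bbE_z\!\left[\bbE_{\BM{T_\epsilon}}\!\bigl[g(\BM{T})\bigr]\right] \;=\; \bbE_z\bigl[h(\BM{T_\epsilon})\bigr],
$$
which is exactly Equation \eqref{eqn:mean_value} at $z$. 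Once this is established, the direction of Proposition \ref{prop:equiv_laplacian} already proved gives $\laplacian h \equiv 0$ in $\Int \primaldomain$, provided $h$ is $\calC^2$.

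The main obstacle is precisely this regularity. To obtain it, I would decompose $h$ along each vertical axis $\{\delta k\} \times \bbR \cap \primaldomain$ by conditioning on the first horizontal jump of $(S_t)$: up to the first jump, $Y_t$ evolves independently as a one-dimensional Brownian motion, and so $h$ restricted to such an axis can be written as a finite sum of convolutions of boundary-value data (including values of $h$ on the two neighboring vertical axes) against the transition kernel of a Brownian motion stopped upon leaving a vertical interval. These are smooth Poisson-type kernels, from which $\calC^2$ regularity in the $y$-variable follows by the standard smoothing property of the heat semigroup; the horizontal direction involves only finitely many values so no further regularity is needed. With the $\calC^2$ property in place, Proposition \ref{prop:equiv_laplacian} concludes the proof.
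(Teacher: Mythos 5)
Your proposal is correct and follows essentially the same route as the paper: the mean-value property on elementary rectangles via the strong Markov property at the first exit time of $z + \recdomain$, followed by $\calC^2$ regularity (which the paper also settles by a convolution-type argument) and an appeal to the already-proved direction of Proposition \ref{prop:equiv_laplacian}. Your explicit verification of the boundary condition and the slightly more detailed smoothing argument are fine additions but do not change the substance of the argument.
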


\begin{proof}
We notice that Equation \eqref{eqn:sol_dirichlet} is well defined because the trajectory of $\BM{T}$ is almost surely continuous (in the semi-discrete sense), thus $\BM{T} \in \borderprimaldomain$.

If $h$ is given by Equation \eqref{eqn:sol_dirichlet}, then it satisfies the mean-value property on elementary rectangles as well.
Indeed, take $z \in \primaldomain$ and $\epsilon > 0$ small enough such that $z + \recdomain \subset \primaldomain$.
Consider the stopping time
$$
T' = \inf \{ t \geq 0, \BM{t} \notin z + \recdomain \}
$$
and write
\begin{align*}
h(z) & = \bbE_z \bracks{ g \pars{ \BM{T} } }
= \bbE_z \bracks{ \bbE_{B_{T'}} \bracks{ g \pars{ \BM{T} } \middle| T' } }
= \bbE_z \bracks{ h \pars{ \BM{T'} } }
\end{align*}
which is exactly the mean-value property.
Moreover, one can also show that $h$ is $\calC^2$ using classical arguments (convolution for example), Proposition \ref{prop:equiv_laplacian} gives $\laplacian h \equiv 0$ on $\primaldomain$.
\end{proof}

\begin{prop}[Uniqueness] \label{prop:uniqueness_Dirichlet}
The solution to the Dirichlet problem is unique.
\end{prop}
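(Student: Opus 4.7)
The plan is the standard maximum-principle argument. Suppose $h_1$ and $h_2$ are both solutions of the Dirichlet problem with boundary data $g$, and set $h = h_1 - h_2$. Because the semi-discrete Laplacian $\laplacian = \Deltaxx + \partial_{yy}$ is linear, $h$ is harmonic on $\Int^2 \primaldomain$, i.e. $\laplacian h \equiv 0$, and $h$ vanishes on $\borderprimaldomain$ since $h_1$ and $h_2$ both agree with $g$ there.

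Next, I would apply Proposition \ref{prop:max_principle} twice. Since $\laplacian h \equiv 0$, the function $h$ is in particular subharmonic, so its supremum on $\primaldomain$ is attained on $\borderprimaldomain$, where $h \equiv 0$; hence $h \leq 0$ on $\primaldomain$. The same reasoning applied to $-h$ (which is also harmonic, hence subharmonic) gives $-h \leq 0$, so $h \geq 0$. Combining the two bounds yields $h \equiv 0$ on $\primaldomain$, i.e. $h_1 = h_2$.

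There is essentially no obstacle here once the maximum principle is available. The only point that deserves a sentence of verification is that the Dirichlet problem is stated for a $\calC^2$ function on a bounded primal domain, so $h$ is continuous up to the boundary and its supremum is indeed attained, which is exactly the hypothesis used in Proposition \ref{prop:max_principle}. No semi-discrete subtlety enters because the maximum principle has already been proved in the semi-discrete setting, with the perturbation trick $u_\epsilon(z) = u(z) + \epsilon y^2$ absorbing the degeneracy in the $x$-direction. Thus the uniqueness statement is an immediate corollary of Proposition \ref{prop:max_principle} applied to $\pm(h_1 - h_2)$.
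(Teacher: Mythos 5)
Your argument is correct and is exactly the paper's proof: reduce by linearity to zero boundary data and apply the maximum principle (Proposition \ref{prop:max_principle}) to $\pm(h_1-h_2)$ to conclude the difference vanishes. Nothing further is needed.
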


\begin{proof}
By linearity, it is enough to show uniqueness when the boundary condition is 0.
Consider a semi-discrete domain $\primaldomain$ and $h : \overline{\primaldomain} \to \bbR$ which is zero on the boundary $\borderprimaldomain$ and harmonic in $\primaldomain$.
Applying the maximum principle to $h$ and $-h$, the function $h$ should reach its maximum and minimum on the boundary.
Therefore, it is zero everywhere.
\end{proof}

\begin{proof}[Proof of Proposition \ref{prop:equiv_laplacian}]
Here, we finish the proof of the proposition by using the uniqueness of the solution to the Dirichlet problem.
Consider a semi-discrete domain $\primaldomain$ and a function $f : \primaldomain \to \bbR$ satisfying $\laplacian f \equiv 0$.
We want to show that it satisfies the mean-value property on rectangles.

Take $z \in \primaldomain$ and $\epsilon > 0$ such that $z + \recdomain \subset \primaldomain$.
Consider $g : \partial (z + \recdomain) \to \bbR$ which coincides with $f$.
There exists a unique function $h : z + \recdomain \to \bbR$ such that $h_{z + \partial \recdomain} \equiv g$ and $\laplacian h \equiv 0$ over $z + \recdomain$.
Since $f$ satisfies exactly the same conditions, we have $f \equiv h$ on $z + \recdomain$.

By the construction of the solution to the Dirichlet problem (Proposition \ref{prop:existence_dirichlet}), $f$ satisfies the mean-value property on rectangles.
\end{proof}

\subsection{Green's function} \label{sec:Greens}

A Green's function is a function which is harmonic everywhere except at one point, where it has a singularity given by the Dirac mass.
It is closely related to random walks in the discrete setting and to Brownian motions in the continuous setting.
We will explain its construction in the semi-discrete setting, show that it is unique up to an additive constant and derive some of its properties and asymptotics.

\subsubsection{Construction and properties}

A Green's function is a function $\Greens(z, \zeta)$ defined on the semi-discrete (primal) lattice satisfying the following three properties.
\begin{enumerate}
\item The function $\Greens$ is translational invariant, \emph{i.e.} there exists a function $\Greens$ such that $\Greens(\zeta-z) = \Greens(z, \zeta)$.
\item The function $\zeta \mapsto \Greens(\zeta)$ is $\calC^\infty$ and semi-discrete harmonic except at $\zeta = 0$, where it is only continuous.
\item When $\epsilon > 0$ is small, the quantities $\Greens(\icomp \epsilon)$ and $\Greens(-\icomp \epsilon)$ coincide at zero and second order,
\begin{align*}
\lim_{\epsilon \rightarrow 0^+} \Greens( \icomp \epsilon)
& = \lim_{\epsilon \rightarrow 0^+} \Greens( - \icomp \epsilon), \\
\lim_{\epsilon \rightarrow 0^+} \ddy \Greens( \icomp \epsilon)
& = \lim_{\epsilon \rightarrow 0^+} \ddy \Greens( - \icomp \epsilon),
\end{align*}
whereas at the first order, we have
$$
\dy \Greens(\icomp 0^+) - \dy \Greens(\icomp 0^-)
= \lim_{\epsilon \rightarrow 0} [\dy \Greens(\icomp \epsilon) - \dy \Greens(-\icomp \epsilon) ] = \frac{1}{\delta}.
$$
This is the \emph{normalization} of the Green's function.
\end{enumerate}

If $\Greens$ is a Green's function, we can apply Green's formula (Proposition \ref{prop:greens_formula}) or the Divergence Theorem (Proposition \ref{prop:div_thm}) to get the usual property that, for a dual domain $\dualdomain$ such that $0 \in \Int \dualdomain$,
$$
\int_{\dualdomain} \laplacian \Greens(y) \dd y = 1
$$
and
$$
\int_{\dualdomain} f(y) \laplacian \Greens(y) \dd y = f(0)
$$
where $f$ is a semi-discrete function on $\dualdomain$.

We will show that there exists a unique function (up to an additive constant) having these properties.
To prove the existence and uniqueness of $\Greens$ and compute it, we generalize the method of discrete exponentials from \cite{Kenyon-laplacian}.
First of all, let us define $\Greens$ for $\delta = 1$.
Consider a family of meromorphic functions on $\bbC$ indexed by vertices in $\mediallattice$ in the following way:
\begin{itemize}
\item at the origin: $f_0(z) = \frac{1}{z}$;
\item if $t \in \bbR$, then $f_{\icomp t} (z) = f_0(z) \cdot \exp \left[ 2 \icomp t \left( \frac{1}{z+1} + \frac{1}{z-1} \right) \right]$;
\item if $p \in \mediallattice$, then $f_{p^+} (z) = f_p(z) \cdot \frac{z+1}{z-1}$.
\end{itemize}
In other words, if $\zeta = m + it$ with $m \in \frac12 \bbZ$ and $t \in \bbR$, we can write
$$
f_\zeta (z) = \frac{1}{z}
\cdot \exp \left[ 2 \icomp t \left( \frac{1}{z+1} + \frac{1}{z-1} \right) \right]
\cdot \left( \frac{z+1}{z-1} \right)^{2m}.
$$

\begin{prop}[Green's function] \label{prop:free_Greens}
The following function is a Green's function on $\bbL_1$
\begin{equation} \label{eqn:Green}
G(\zeta) := \frac{1}{8 \pi^2 \icomp}\int_C f_\zeta(z) \ln (z) \dd z
\end{equation}
where $C$ in a path in $\bbC$ depending on $\zeta$, surrounding $\{ e^{\icomp \theta}, 0 \leq \theta \leq \pi \}$ and leaving the origin outside the contour.
For the complex logarithm, we define it in $(\theta-\pi, \theta+\pi)$ where $\theta$ is an argument of $\zeta$.
\end{prop}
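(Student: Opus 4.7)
The plan is to verify the three defining conditions of a Green's function directly on the candidate $G$. Property 1 is built in: the family $\{f_\zeta\}$ depends only on the relative position of $\zeta$, so setting $G(z, \zeta) := G(\zeta - z)$ yields a translation-invariant kernel. The smoothness part of Property 2 and the zeroth- and second-order continuity in Property 3 both follow once the integrand is shown to depend smoothly on $t = \Im \zeta$ on a neighborhood where both $C$ and the branch of $\ln z$ can be held fixed; this is immediate from the compactness of $C$ and the smooth dependence of the exponential factor on $t$.

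The harmonicity part of Property 2 reduces to the algebraic identity $\laplacian_\zeta f_\zeta(z) \equiv 0$ for $z \notin \{0, \pm 1\}$. A direct computation using the two recursions for $f_\zeta$ gives
$$
\Deltaxx f_\zeta(z) = f_\zeta(z) \cdot \frac{16 z^2}{(z^2-1)^2}
\quad \text{and} \quad
\ddy f_\zeta(z) = - f_\zeta(z) \cdot \frac{16 z^2}{(z^2-1)^2},
$$
so the two terms cancel. Exchanging $\laplacian$ with the integral then yields $\laplacian G(\zeta) = 0$ for every primal vertex $\zeta \neq 0$. Some care is needed when $\zeta$ lies near the ray along which the branch of $\ln z$ is specified, but there one simply works on each side of the ray separately and invokes continuity at the end.

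The main obstacle is the normalization in Property 3. Here the convention that the branch of $\ln z$ rotates with $\arg \zeta$ produces the jump: for $\zeta = \icomp \epsilon$ the branch is $(-\pi/2, 3\pi/2)$ and for $\zeta = -\icomp \epsilon$ it is $(-3\pi/2, \pi/2)$, so the two branches differ by $2\pi \icomp$ on the portion of $C$ lying to one side of the positive imaginary axis. Differentiating $G(\pm \icomp \epsilon)$ in the vertical direction and letting $\epsilon \to 0^+$, the exponential factors trivialize, and in the difference $\dy G(\icomp 0^+) - \dy G(\icomp 0^-)$ the common pieces of the integrand cancel, leaving only the contribution of the log jump. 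A contour deformation reduces this contribution to a residue calculation involving the simple rational function $\tfrac{2}{z^2-1}$ inside the deformed contour, and carrying through the prefactor $\tfrac{1}{8\pi^2 \icomp}$ yields the required value $1/\delta = 1$. Making this contour deformation and residue extraction fully rigorous in the presence of the moving branch cut is the technical heart of the proof.
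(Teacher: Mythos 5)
Your harmonicity computation is correct and is exactly the paper's (the two factors $\pm\tfrac{16z^2}{(z^2-1)^2}$ cancel), and your reduction of the first-order jump to the branch discrepancy of the logarithm, which turns the integrand into $\tfrac{2}{z^2-1}$ and is then evaluated by residues, is consistent with the paper's direct computation $\lim_{\epsilon\to0^\pm}\dy G(\icomp\epsilon)=\pm\tfrac12$. But there is a genuine gap in how you dispose of the matching at orders $0$ and $2$ in Property 3 (and of smoothness of $G$ at nonzero integers). You claim these are ``immediate'' because one may work ``on a neighborhood where both $C$ and the branch of $\ln z$ can be held fixed''; this is impossible precisely where it matters. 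Comparing $G(\icomp\epsilon)$ with $G(-\icomp\epsilon)$ forces $\theta\approx\pi/2$ against $\theta\approx-\pi/2$, so the two branches of $\ln$ differ by $2\pi\icomp$ on part of $C$ --- this is the very mechanism you invoke to produce the nonzero first-order jump. If the branch could be held fixed, your argument would equally prove continuity of $\dy G$ across $0$, contradicting the normalization. What is actually needed is that the branch-difference contributions vanish at orders $0$ and $2$ while surviving at order $1$: after expanding the exponential, these contributions are residues of the functions $g_{k,m}$ at $\pm1$, and the required vanishing for even $k$ is Lemma \ref{lem:general_zero} in the paper, proved by a nontrivial binomial-identity computation (and Proposition \ref{prop:Greens_Ck} does the analogous work to get $\calC^\infty$ regularity of $G$ at $\zeta=m\in\bbZ\setminus\{0\}$). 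None of this appears in your proposal, so Property 3 beyond the first-order jump, and Property 2 on the real axis, are unproven.

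A second, smaller omission: $G$ must first be shown to be well defined, since $\theta=\arg\zeta$ is only determined modulo $2\pi$ and changing the lift shifts $\ln$ by $2k\pi\icomp$; this requires $\int_C f_\zeta(z)\,\dd z=0$ (Proposition \ref{prop:int_zero}). The same fact is what licenses writing $G(\zeta+1)$, $G(\zeta)$, $G(\zeta-1)$ with a common contour and a common branch before summing them into $\Deltaxx G$, a point the paper flags explicitly (``it is allowed to add all the terms together since we consider always the same branch'') and which your ``work on each side of the ray and invoke continuity'' remark does not cover.
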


\begin{rmk}
We can estimate the Green's function of Proposition \ref{prop:free_Greens} with the help of the residue theorem.
The possible poles of the function $f_t(z) \ln (z)$ are $1$ and $-1$ and the choice of the branch creates a possible discontinuity only when $\myr(t) = 0$ since elsewhere, we have $\ln(-1) - \ln(1) = \pm \icomp \pi$ with a $+$ sign in the upper half-plane and a $-$ sign in the lower half-plane.
\end{rmk}

\begin{proof}
We start by checking that $G$ is well-defined.
If we change the lift of the logarithm, (equivalent to adding $2k\pi$ to $\log$), we need to show that this does not change the value of $G$, that is to say
\begin{equation*}
\int_C f_\zeta(z) \dd z = 0
\end{equation*}
for all $\zeta \in \primallattice$.
This is shown in the Appendix, see Proposition \ref{prop:int_zero}.

In each of the half-planes, the function $G$ is $\calC^\infty$ because we integrate a smooth function along a path and the branch of the logarithm does not cross $1$ or $-1$ where we take residues to estimate the integral.

On the $\bbZ$ axis (except from the origin), we can develop the exponential and see that the residues at $1$ and at $-1$ coincide at all orders.
It is explained in Proposition \ref{prop:Greens_Ck}.
This tells us that $G$ is $\mathcal{C}^\infty$ on $\bbL_1$ except at 0.

Let us now check that $G$ is harmonic everywhere apart from the origin.
Actually, it is sufficient to check that $f_\zeta$ is harmonic (with respect to $\zeta$) except at the origin.
Writing $\zeta = x + iy$ with $x \in \bbZ$ and $y \in \bbR$, we find
\begin{align*}
\discretelaplacian f_t & = \Deltaxx f_t + \ddy f_t \\
& = \left[ (2 \icomp)^2 \left( \frac{1}{z+1} + \frac{1}{z-1} \right)^2 
+ \left( \frac{z+1}{z-1} \right) ^2
+ \left( \frac{z-1}{z+1} \right) ^2
- 2 \right] f_t \\
& =0.
\end{align*}
Here, it is allowed to add all the terms together since we consider always the same branch of logarithm.

To conclude the proof, we need to check the third property.
This follows from a direct computation:
\begin{align*}
\lim_{\epsilon \rightarrow 0^+} \dy G(\icomp \epsilon) & =
\frac{1}{4 \pi^2} \int_C \left( \frac{1}{z+1} + \frac{1}{z-1} \right) \frac{\ln(z)}{z} \dd z \\
& = \frac{\icomp}{2 \pi} \left[ \ln(1) - \ln(-1) \right ] = \frac{1}{2}
\end{align*}
where we use the residue theorem in the second equality.
Similarly, we have
\begin{align*}
\lim_{\epsilon \rightarrow 0^-} \dy G(\icomp \epsilon) = -\frac{1}{2}.
\end{align*}
Then, to see that the zero and second orders of $G$ around zero coincide in the upper and lower half-planes, we use Lemma \ref{lem:general_zero}.

As such, we get all the properties we were looking for.
\end{proof}

\begin{prop}[Asymptotics of Green's function] \label{prop:asymp_Green's}
Let $\zeta \in \bbL_1$.
When $|\zeta|$ goes to infinity, we have the following asymptotic behavior,
\begin{equation} \label{eqn:Green's_asymptotics}
G(\zeta) = \frac{1}{2 \pi} \ln (4|\zeta|) + \frac{\gamma_{\rm Euler}}{2 \pi} + \bigO{ \frac{1}{|\zeta|^2} }.
\end{equation}
\end{prop}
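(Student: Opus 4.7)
The plan is to derive the asymptotics from the contour integral representation \eqref{eqn:Green} by a steepest-descent analysis. Writing $\zeta = m + it = r e^{i\phi}$ with $r = |\zeta|$, the exponential factor in $f_\zeta(z)$ becomes $\exp[r\, g_\phi(z)]$, where
$$
g_\phi(z) \;=\; 2i\sin\phi\left(\frac{1}{z+1}+\frac{1}{z-1}\right) + 2\cos\phi\,\ln\frac{z+1}{z-1}.
$$
First I would locate the saddles: parametrising $z = e^{i\theta}$ on the unit circle, a direct computation yields
$$
g_\phi'(e^{i\theta}) \;=\; \frac{2 i e^{-i\theta}\sin(\theta+\phi)}{\sin^2\theta},
$$
so the critical points are $z_0 = e^{-i\phi}$ and $-e^{-i\phi}$. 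The value $g_\phi(z_0)$ turns out to be purely imaginary, which is consistent with $G(\zeta)$ having only logarithmic, rather than exponential, growth.

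Second I would deform $C$ into a steepest-descent contour passing through the saddle $z_0$ lying inside $C$. The Gaussian quadratic expansion at $z_0$ produces a prefactor of order $r^{-1/2}$; combined with the oscillatory factor $e^{r g_\phi(z_0)}$ and its conjugate from the other saddle, this gives a contribution that decays polynomially. The logarithmic growth $\tfrac{1}{2\pi}\ln(4r)$ therefore cannot come from the saddle contributions themselves: it must be picked up from the branch cut of $\ln z$ and the pole structure of $f_\zeta$ at $z = \pm 1$ encountered while deforming the contour. Tracking these residues carefully should yield a $\ln r$ term with coefficient $\tfrac{1}{2\pi}$.

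A parallel and cleaner route, which I would pursue in tandem, is inductive. The candidate function $G^\infty(\zeta) := \tfrac{1}{2\pi}\ln(4|\zeta|) + \tfrac{\gamma_{\rm Euler}}{2\pi}$ is harmonic for the continuum Laplacian away from the origin. Expanding the semi-discrete Laplacian $\laplacian = \Deltaxx + \ddy = (\partial_x^2 + \partial_y^2) + \tfrac{1}{12}\partial_x^4 + \cdots$ and applying it to $G^\infty$ yields a source of size $O(|\zeta|^{-4})$; the reflection symmetry $\zeta \mapsto \bar\zeta$ rules out any $O(|\zeta|^{-3})$ contribution. The difference $H := G - G^\infty$ is then a bounded, semi-discrete harmonic function (up to this small source) which, by the maximum principle (Proposition~\ref{prop:max_principle}) applied on large semi-discrete annuli together with the Brownian-motion representation of Proposition~\ref{prop:existence_dirichlet}, must decay as $O(|\zeta|^{-2})$.

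The main obstacle is pinning down the two explicit constants $\ln 4$ and $\gamma_{\rm Euler}$. I would do this by an explicit evaluation along the horizontal axis $t = 0$, where the integrand of \eqref{eqn:Green} reduces to a rational function and $G(m,0)$ can be extracted by residues at $z = \pm 1$. The resulting explicit formula, resembling a partial sum of the harmonic series, can be compared to a Stirling/digamma asymptotic: $\gamma_{\rm Euler}$ emerges from $\sum_{k=1}^m \tfrac{1}{k} - \ln m \to \gamma_{\rm Euler}$, while the $\ln 4$ comes from the pair of poles at $\pm 1$ contributing a factor of $2$ inside the logarithm. Matching constants along this axis then determines the additive normalization uniquely, and combined with the maximum-principle argument above yields the stated $O(|\zeta|^{-2})$ error in all directions.
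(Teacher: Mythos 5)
Your overall strategy---extracting the asymptotics from the contour representation \eqref{eqn:Green}---is the same family of argument as the paper's, but the decisive quantitative step is missing in every branch of your plan, and one of your key claims is wrong. First, the saddle-point framing: a direct computation gives $g_\phi(e^{-i\phi}) = -2 + 2\cos\phi\,\ln\cot(\phi/2) + i\pi\cos\phi$, which is \emph{not} purely imaginary, and at the antipodal saddle the real part is $+2-2\cos\phi\ln\cot(\phi/2)$, so $e^{rg_\phi}$ is exponentially large there; the naive steepest-descent picture does not close. Moreover, for $t\neq 0$ the points $z=\pm 1$ are \emph{essential} singularities of $f_\zeta$ (because of the factor $\exp[2it(\tfrac{1}{z+1}+\tfrac{1}{z-1})]$), so ``tracking the residues at $\pm1$'' is not a well-defined computation, and in fact the logarithm does not come from $\pm 1$ at all. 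In the paper's proof the contour is collapsed onto the branch cut of $\ln z$ along the ray $-e^{i\theta_0}$ together with circles of radii $r=O(|\zeta|^{-4})$ and $R=O(|\zeta|^{4})$; the $2\pi i$ jump of the logarithm turns the two sides of the cut into $\tfrac{1}{4\pi}\int f_\zeta(se^{i\theta_0})\,ds$, and near $z=0$ one uses $f_\zeta(z)=e^{4\bar\zeta z+O(|\zeta| z^3)}/z$ so that an exponential-integral estimate produces $\ln(4|\zeta| r)+\gamma_{\rm Euler}$, with a matching term $-\ln(R/4|\zeta|)+\gamma_{\rm Euler}$ from large $z$, the middle range being exponentially small, and the $\ln r$, $\ln R$ cancelling against the two circle contributions. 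This is exactly where $\ln 4$, $\gamma_{\rm Euler}$ and the $O(|\zeta|^{-2})$ error are generated, and your proposal defers all of it (``should yield'', ``can be compared to'').

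Your fallback routes do not repair this. The comparison argument with $G^\infty=\tfrac1{2\pi}\ln(4|\zeta|)+\tfrac{\gamma_{\rm Euler}}{2\pi}$ presupposes that $G-G^\infty$ is bounded, which is essentially the statement to be proved; and even granted boundedness and $\Delta_{\rm semi}(G-G^\infty)=O(|\zeta|^{-4})$, the maximum principle alone gives no decay (a nonzero constant is bounded, harmonic, and does not decay), so you must both identify the additive constant and then upgrade ``tends to $0$'' to the rate $O(|\zeta|^{-2})$, which requires an exterior expansion or a quantitative Harnack-at-infinity argument (including ruling out an $O(|\zeta|^{-1})$ dipole term by a symmetry you never invoke)---none of which is available from Propositions \ref{prop:max_principle} and \ref{prop:existence_dirichlet} as stated. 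The axis computation at $t=0$ is a reasonable idea for pinning the constant, but it is only sketched, and by itself it fixes the constant along one ray, not the uniform $O(|\zeta|^{-2})$ error in all directions. As it stands the proposal is a programme rather than a proof, with the genuine content of the proposition left unestablished.
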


\begin{proof}
The proof is similar to the one given in \cite{Kenyon-laplacian} for the discrete Green's function on isoradial graphs.
We need to be careful when dealing with the exponential term in $f_\zeta$.
Let us write $d = |\zeta|$.
To get the improved error term $\bigO{1/d^2}$, we use the method from \cite{Bucking-circlepack}.

Consider $\zeta = m + \icomp t \in \primallattice$ and write $d = |\zeta|$, $\arg \zeta = \theta_0$.
Take $r = \bigO{1/d^4}$ and $R = \bigO{d^4}$.
We will consider the path $C$ going as follows:
\begin{enumerate}
\item counterclockwise around the ball of radius $R$ around the origin from the angles $\theta_0 -\pi$ to $\theta_0 + \pi$,
\item along the direction $e^{\icomp \theta_0}$ from $-R$ to $-r$,
\item around the ball of radius $r$ around the origin from angle $\theta_0 + \pi$ back to $\theta_0 -\pi$,
\item along the direction $e^{\icomp \theta_0}$ from $-r$ to $-R$, back to the starting point.
\end{enumerate}
This path is illustrated in Figure \ref{fig:int_path}.

\begin{figure}[htb]
  \centering
    \includegraphics[scale=0.8]{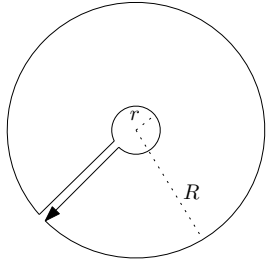}
    \caption{The path along which we integrate in the proof of Proposition \ref{prop:asymp_Green's}.}
    \label{fig:int_path}
\end{figure}

We estimate these integrals separately, combining the two integrals along the direction $e^{\icomp \theta_0}$.
First of all, to study the integral around the ball of radius $r$, we start by developping $f_\zeta$ for $z = r e^{\icomp \theta}$ when $r$ is small:
\begin{align*}
\frac{1}{1+ z} - \frac{1}{1- z} = \bigO{r} \quad \mbox{ and } \quad
\left( \frac{1+z}{1-z} \right) = \exp(\bigO{r})
\end{align*}
Thus,
$$
f_\zeta(z) = \frac{1}{z} \exp( \bigO{dr} ) = \frac{1}{z} (1 + \bigO{dr}).
$$
The integral around the ball of radius $r$ is
\begin{align*}
\frac{1}{8 \pi^2 \icomp} \int_{\theta_0 +\pi}^{\theta_0 -\pi} (1 + \bigO{dr}) (\ln r + \icomp \theta) \icomp \dd \theta
= - \frac{\ln r}{4 \pi} (1 + \bigO{dr})
= - \frac{\ln r}{4 \pi} + \bigO{\frac{1}{d^2}}
.
\end{align*}
Similarly, the integral around the ball of radius $R$ is
$$
\frac{\ln R}{4 \pi} + \bigO{\frac{1}{d^2}}.
$$
On the direction $e^{\icomp \theta_0}$ from $-R$ to $-r$, we add up the two integrals.
Since the logarithm differs by $2\pi \icomp$ on the both sides, by combining we get
$$
\frac{e^{\icomp \theta_0}}{4 \pi} \int_{-R}^{-r} f_\zeta(s e^{\icomp \theta_0}) \dd s.
$$
We should split this integral into 3 parts, $I_1$ for the integral from $-R$ to $\sqrt{d}$, $I_2$ from $-\sqrt{d}$ to $-1/\sqrt{d}$ and $I_3$ from $-1/\sqrt{d}$ to $-r$.
For $|z|$ small, we develop $f_\zeta$ to two orders further:
$$
f_\zeta(z) = \frac{e^{4 \overline{\zeta} z + \bigO{dz^3}}}{z}.
$$
Thus, $I_1$ can be rewritten (let $\alpha = 4 \overline{\zeta} e^{\icomp \theta_0} = 4 |\zeta| = 4d$)
\begin{align*}
I_1 & = \frac{1}{4 \pi} \int_{-1/\sqrt{d}}^{-r} \frac{e^{\alpha s + \bigO{ds^3}}}{s} \dd s
 = \frac{1}{4 \pi} \left(
\int_{-\alpha / \sqrt{d}}^{-\alpha r} \frac{e^s}{s} \dd s
+
\int_{-1/\sqrt{d}}^{-r} \bigO{ds^3} \frac{e^{\alpha s}}{s} \dd s
\right) \\
& = \frac{1}{4 \pi} \left(
\int_{-\alpha / \sqrt{d}}^{-1} \frac{e^s}{s} \dd s
+ \int_{-1}^{-\alpha r} \frac{e^s-1}{s} \dd s
+ \int_{-1}^{-\alpha r} \frac{\dd s}{s}
 \right) +\bigO{\frac{1}{d^2}} \\
& = \frac{1}{4 \pi} \left(
\ln( \alpha r) + \gamma_{\rm Euler}
\right) +\bigO{\frac{1}{d^2}},
\end{align*}
where in the first equality, we develop the exponential and the integral with $\calO$ term gives $\bigO{1/d^2}$; and in the last line, $\ln(\alpha r)$ comes from the third term and Euler's constant comes from the first two integrals by taking $\alpha r \rightarrow 0$ and $\alpha/\sqrt{d} \rightarrow \infty$.

In a similar way (or by making the change of variable $s \rightarrow 1/s$), we get
$$
I_3 = \frac{1}{4 \pi} (-\ln(R/\alpha) + \gamma_{\rm Euler}) + \bigO{\frac{1}{d^2}}.
$$

When it comes to the intermediate term, we can show that it is negligeable.
Let $z = s e^{\icomp \theta_0}$ for $s \in [-\sqrt{d}, -1/\sqrt{d}]$.
For the exponential term in $f_\zeta$, we have
\begin{align*}
\exp \left[ 2 \icomp t \left( \frac{1}{z+1} + \frac{1}{z-1} \right) \right] & 
= \exp \left[ - 2 t \myi \left( \frac{1}{z+1} + \frac{1}{z-1} \right) \right] \\
& = \exp \left[ 2 t s \sin \theta_0 \left( \frac{1}{|z+1|^2} + \frac{1}{|z-1|^2} \right) \right] \\
& \leq \exp \left( - \bigO{ \frac{t^2}{d^{3/2}} } \right).
\end{align*}
Then, for the other one, assume $m \geq 0$ (so $\cos \theta_0 \geq 0$),
\begin{align*}
\left| \frac{z+1}{z-1} \right|^{2m} & = \left( \frac{s^2+1 + 2s \cos \theta_0}{s^2+1 - 2s \cos \theta_0} \right)^m 
\leq \left( 1+ \frac{4s \cos \theta_0}{(s-1)^2}  \right)^m \\
& \leq \exp \left( \frac{4ms \cos \theta_0}{(s-1)^2} \right)
\leq \exp \left( -\bigO{\frac{m^2}{d^{3/2}}} \right)
\end{align*}
and we have the same bound for $m \leq 0$.
After all, the intermediate term can be bounded by
$$
I_2 \leq \sqrt{d} e^{-\bigO{\sqrt{d}}} \leq \bigO{\frac{1}{d^2}}.
$$

Finally, we sum up all the terms and take the limits $r \rightarrow 0$ and $R \rightarrow \infty$ to have
$$
G(\zeta) = \frac{1}{2 \pi} \left[ \ln( 4 |\zeta| ) + \gamma_{\rm Euler} \right] + \bigO{\frac{1}{d^2}}. \qedhere
$$
\end{proof}

\begin{prop} \label{prop:free_delta_Greens}
On $\primallattice$, there exists a unique Green's function with the following normalization at $0$
$$
G_\delta(0) = \frac{1}{2 \pi} ( \ln \delta - \ln 4 - \gamma_{\rm Euler}).
$$
Moreover, its asymptotic behavior when $\frac{|\zeta|}{\delta}$ goes to $\infty$ is
$$
\Greens(\zeta) = \frac{1}{2 \pi} \ln |\zeta| + \bigO{\frac{\delta^2}{|\zeta|^2}}.
$$
We call $\Greens$ the \emph{free Green's function on $\primallattice$}.
\end{prop}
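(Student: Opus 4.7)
The plan is to construct $G_\delta$ by rescaling the unit Green's function $G$ on $\bbL_1$ provided by Proposition \ref{prop:free_Greens}. I set
\[
G_\delta(\zeta) := G(\zeta/\delta) + c_\delta
\]
for a constant $c_\delta$ to be chosen. Translation invariance is built in, and the scaling identities
\[
\Deltaxx G_\delta(\zeta) = \frac{1}{\delta^2} \Deltaxx G(\zeta/\delta), \qquad \ddy G_\delta(\zeta) = \frac{1}{\delta^2} \ddy G(\zeta/\delta)
\]
give $\laplacian G_\delta \equiv 0$ off the origin. The chain rule in $y$ yields
\[
\dy G_\delta(\icomp 0^+) - \dy G_\delta(\icomp 0^-) = \frac{1}{\delta}\bigl[\dy G(\icomp 0^+) - \dy G(\icomp 0^-)\bigr] = \frac{1}{\delta},
\]
which is exactly the Green's function normalization on $\primallattice$. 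Smoothness away from $0$ and continuity at $0$ are inherited from the corresponding properties of $G$.

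Next I use Proposition \ref{prop:asymp_Green's} applied at $\zeta/\delta$ to obtain
\[
G_\delta(\zeta) = \frac{1}{2\pi} \ln|\zeta| + \frac{1}{2\pi}\bigl(\ln 4 - \ln \delta + \gamma_{\rm Euler}\bigr) + c_\delta + \bigO{\frac{\delta^2}{|\zeta|^2}}.
\]
Setting $c_\delta := \frac{1}{2\pi}(\ln\delta - \ln 4 - \gamma_{\rm Euler})$ eliminates the additive constant and produces exactly the claimed asymptotic. Since the contour integral defining $G$ at $\zeta = 0$ reduces to $\frac{1}{8\pi^2\icomp} \int_C \frac{\ln z}{z} \dd z$, which vanishes by Cauchy's theorem once the branch of $\ln z$ is chosen so that the contour avoids both the branch cut and the pole at $0$, one has $G(0) = 0$, and therefore $G_\delta(0) = c_\delta$, matching the prescribed normalization.

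For uniqueness, let $\widetilde G_\delta$ be another Green's function on $\primallattice$ with the same value at $0$ and the same asymptotic, and set $H := G_\delta - \widetilde G_\delta$. Both functions share the jump $1/\delta$ in $\dy$ at the origin, so this jump cancels in $H$; their values at $0$ agree by hypothesis. A removable-singularity argument---extending $H$ through $0$ by continuity and comparing it with the solution of the Dirichlet problem on a small elementary rectangle centered at $0$ via Propositions \ref{prop:existence_dirichlet} and \ref{prop:uniqueness_Dirichlet}---shows that $H$ is a semi-discrete harmonic function on all of $\primallattice$. Subtracting the two asymptotics yields $H(\zeta) \to 0$ as $|\zeta| \to \infty$, and applying the maximum principle (Proposition \ref{prop:max_principle}) to $H$ and $-H$ on an exhausting sequence of primal rectangles forces $H \equiv 0$.

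The main difficulty is the uniqueness step: the maximum principle only bounds $H$ on a bounded domain by its boundary values, so the $\bigO{\delta^2/|\zeta|^2}$ decay must be combined with an exhaustion argument to close the loop. This is a standard Liouville-type argument but relies on the asymptotic holding for both Green's functions. A smaller technical point is the explicit evaluation $G(0) = 0$ from the contour formula, which is a routine computation but requires some care with the branch of $\ln z$ at the boundary value $\zeta = 0$.
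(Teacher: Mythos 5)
Your existence argument is the paper's own: the same rescaling $G(\cdot/\delta)$ with the same additive constant, and your verification that $G(0)=0$ (via the primitive $(\ln z)^2/2$ for a branch whose cut avoids the contour) is a detail the paper leaves implicit. Your concluding Liouville step is also sound and is a mild variant of the paper's: the paper says the bounded harmonic difference must vanish by the Harnack principle, while you use the $\bigO{\delta^2/|\zeta|^2}$ decay together with the maximum principle on an exhausting sequence of rectangles; either works once the difference is known to be harmonic across the origin.

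That last point is where your proof has a genuine gap. You justify removability of the singularity of $H=G_\delta-\widetilde G_\delta$ by comparing $H$ with the solution of the Dirichlet problem on a small elementary rectangle via Propositions \ref{prop:existence_dirichlet} and \ref{prop:uniqueness_Dirichlet}. But Proposition \ref{prop:uniqueness_Dirichlet} applies only to functions harmonic in the whole rectangle, which is exactly what you are trying to establish for $H$, so the comparison is circular; and mere continuity at the puncture cannot force $H$ to agree with the Dirichlet solution, since the rectangle's own Green's function (harmonic off one interior point, continuous there, vanishing on the boundary, yet not identically zero) defeats any identification based on continuity alone. What actually makes the singularity removable is the normalization data at the origin: the $1/\delta$ jumps of $\dy$ cancel, and by the third defining property of a Green's function the zeroth- and second-order one-sided limits agree for each of $G_\delta$, $\widetilde G_\delta$; hence $H$ is $\calC^1$ at $0$, and since $\ddy H(0,y)=-\Deltaxx H(0,y)$ for $y\neq 0$ with $\Deltaxx H$ continuous, $H$ is in fact $\calC^2$ there with $\laplacian H(0)=0$. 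This is precisely the paper's route ($\calC^1$ from the cancelling singularities, $\calC^2$ from the matching second-order terms, then harmonicity everywhere); with that substitution in place of the Dirichlet comparison, your exhaustion/maximum-principle conclusion goes through.
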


\begin{proof}
To construct a Green's function with such normalization, we can consider
\begin{equation*}
\Greens(\zeta) = G \left( \frac{\zeta}{\delta} \right) + \frac{1}{2 \pi} \left(  \ln \delta - \ln 4 - \gamma_{\rm Euler} \right).
\end{equation*}
To show the uniqueness, assume that $\Greens$ and $\tilde{\Greens}$ are two such functions.
Let $G = \Greens - \tilde{\Greens}$.
The first order singularities at zero cancel out due to the same normalization, so the function $G$ is $\calC^1$ around 0.
Since the second order terms of $\Greens(\icomp \epsilon)$ and $\Greens(- \icomp \epsilon)$ coincide (same for $\tilde{\Greens}$), $G$ is $\calC^2$ around 0.
Finally, $G$ is harmonic in $\primallattice$ and is bounded (due to the asymptotic behaviour), it should be zero everywhere by Harnack principle (see below, Proposition \ref{prop:Harnack}).
\end{proof}


Given a primal semi-discrete domain $\primaldomain$, we can define the \emph{Green's function on $\primaldomain$} by
$$
\GreensDomain = \Greens - H_{\primaldomain}
$$
where $\Greens$ is the free Green's function on $\primallattice$ and $H_{\primaldomain}$ the unique solution to the Dirichlet problem on the primal semi-discrete domain $\primaldomain$ whose boundary condition is given by ${\Greens}_{\mid \borderprimaldomain}$
Here, we notice that $\GreensDomain$ is non-positive.

\subsubsection{Link with Brownian motion}

In the discrete setting, the Green's function measures how much time a standard Brownian motion spends on each site in average; in the continuous setting, it also gives an analogous of this.
We establish an equivalent of such a property in the semi-discrete case.

\begin{prop}
\label{prop:Green's_BM}
Let $\BM{}$ be a Brownian motion started at $x \in \Qprimal$ as defined in Section \ref{sec:BM}, stopped at $\tau$, the exiting time of the domain $\Qprimal$.
Then, we have the asymptotic behaviour which is independent of $\delta$
$$
\bbE[ \tau ] \asymp \int_{\Qprimal} |\GreensDomain (x, y)| \dd y
$$
where the left-hand side is the average time spent by the semi-discrete Brownian motion $\BM{}$ in $\Qprimal$; and the right-hand side is the integral of the Green's function on the same domain.
\end{prop}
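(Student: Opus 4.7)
The plan is to identify both sides of the claim with the unique solution of the same semi-discrete Poisson problem on $\Qprimal$, which then forces them to be comparable up to a multiplicative constant independent of $\delta$. Set $u(x) := -\int_{\Qprimal} \GreensDomain(x, y)\,\dd y$; this is precisely the right-hand side, since $\GreensDomain \leq 0$ by construction.

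First I would show that $u$ solves the semi-discrete Poisson problem
$$
\laplacian_x u \equiv -1 \text{ on } \Int^2 \Qprimal, \qquad u \equiv 0 \text{ on } \partial \Qprimal.
$$
The boundary vanishing is inherited from $\GreensDomain \equiv 0$ on $\partial \Qprimal$. For the interior equation, I would differentiate under the integral---justified by the $\calC^\infty$ regularity of $y \mapsto \GreensDomain(x, y)$ away from $x$ together with the integrable logarithmic singularity given by Proposition \ref{prop:asymp_Green's}---and then apply the distributional identity $\int_{\dualdomain} f(y) \laplacian \Greens(y)\,\dd y = f(0)$ that characterises the Green's function, specialised to $f \equiv 1$.

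Second, I would show that $v(x) := \bbE_x[\tau]$ solves the twin problem $\tfrac12 \laplacian v \equiv -1$ with $v \equiv 0$ on $\partial \Qprimal$, via a Dynkin-type martingale argument. By the preceding proposition, the generator of $\BM{}$ is $\tfrac12 \laplacian$; hence for $\phi \in \calC^2(\overline{\Qprimal})$ the stopped process $\phi(\BM{t \wedge \tau}) - \int_0^{t \wedge \tau} \tfrac12 \laplacian \phi(\BM{s})\,\dd s$ is a martingale. Applying this to $u$ (after checking $\bbE[\tau] < \infty$, which follows from the exponential moments of the vertical Brownian component on a bounded interval), letting $t \to \infty$, and using $u(\BM{\tau}) = 0$ on exit yields $u(x) = \tfrac12 \bbE_x[\tau]$. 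By the uniqueness result for the Dirichlet problem (Proposition \ref{prop:uniqueness_Dirichlet}) the two constructions are forced to match up to a $\delta$-independent factor.

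The main obstacle is the exit-point bookkeeping: because the horizontal component of $\BM{}$ is a pure-jump process, the stopped trajectory may leave $\Qprimal$ by a horizontal jump that lands strictly outside $\overline{\Qprimal}$ rather than on $\partial \Qprimal$ where $u$ vanishes. Controlling this boundary discrepancy---either by extending $u$ by zero and estimating the induced error using the gradient bound on $\GreensDomain$ near the lateral boundary, or by comparing $\Qprimal$ with an enlarged domain differing by a column of width $\delta$---is what degrades the would-be equality $\bbE_x[\tau] = 2u(x)$ to the comparability $\asymp$, with $\delta$-independent constants as claimed.
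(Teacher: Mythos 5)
Your argument is correct, but it is a genuinely different route from the paper's. The paper's proof is a compactness/convergence sketch: the semi-discrete Brownian motion converges in law to the planar Brownian motion and $\GreensDomain$ converges, uniformly on compacts away from the singularity, to the continuum Green's function (with the logarithmic singularity handled by integrability), so both sides of the claim are compared through their common continuum limit, where the identity is classical. You instead work entirely at fixed $\delta$: you identify $u(x)=-\int_{\Qprimal}\GreensDomain(x,y)\,\dd y$ and $v(x)=\bbE_x[\tau]$ as solutions of the same semi-discrete Poisson problem ($\laplacian u=-1$, respectively $\tfrac12\laplacian v=-1$, with zero boundary data) via Dynkin's formula and the generator computation, and conclude by uniqueness (Proposition \ref{prop:uniqueness_Dirichlet}). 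This buys more than the statement asks for: an exact identity $\bbE_x[\tau]=2\int_{\Qprimal}|\GreensDomain(x,y)|\,\dd y$, valid for every $\delta$, rather than an asymptotic comparison, and it avoids the unproved "convergence of Green's functions'' step the paper leans on. Two small points would tighten your write-up. First, differentiating in $x$ under the $\dd y$-integral implicitly uses the symmetry $\GreensDomain(x,y)=\GreensDomain(y,x)$ (the singularity and harmonicity of $\GreensDomain$ are set up in the second variable); this is standard and follows from Green's theorem (Proposition \ref{prop:greens_thm}), or one can sidestep it by invoking the Riesz representation (Proposition \ref{prop:Riesz}) for the solution of the Poisson problem, but it should be said. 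Second, your "main obstacle'' is in fact vacuous here: the horizontal component jumps by exactly $\delta$ between neighbouring columns of $\primallattice$, so when the walk exits a rectangular $\Qprimal$ laterally it lands precisely on a boundary column, and the vertical exit occurs on the horizontal boundary by continuity; hence $u(\BM{\tau})=0$ exactly and no overshoot correction (and no degradation from equality to $\asymp$) is needed.
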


On a discrete graph, the expectation of the number of visits of a random walk (stopped after an exponential time if it is recurrent) is given by the opposite of its associated Green's function.
Similarly, the average time spent by a Brownian motion in a continuous space ($\bbR^d$ for example) is also given by the opposite of its associated Green's function (again stopped after an exponential time if it is recurrent).
Here in the semi-discrete setting, we should interpret the Green's function in the continuous direction as the average time spent by the Brownian motion; whereas in the discrete direction, the expectation of the number of « visits », which explains the factor $\delta$ in $\int_{\Qprimal}$.

\begin{proof}
The semi-discrete Brownian motion $\BM{}$ converges in law to its continuous 2D counterpart, so does the semi-discrete Green's function, which converges uniformly on every compact not containing 0 to the 2D Green's function.
As such, the integral of semi-discrete Green's function converges to the integral of 2D Green's function on every compact not containing 0.
Moreover, $\ln y$ is integrable on $[0, \epsilon]$, so the integral of the semi-discrete Green's function on a small rectangular domain around 0 can be well controlled, and this quantity goes to 0 when $\epsilon \rightarrow 0$.
\end{proof}


\begin{prop}[Link with harmonic measure] \label{prop:link_green_harm_measure}
Consider a semi-discrete primal domain $\primaldomain$ with $u_0 \in \Int \primaldomain$.
Let $a \in \partial \primaldomain := \calC_h \cup \calC_v$ be a point on the boundary, where $\calC_h$ and $\calC_v$ denote respectively the horizontal and vertical parts.
Write $\omega_{\primaldomain}(u_0, \{ a \})$ for the harmonic measure with respect to $u_0$.
We notice that it should be seen as a density when $a \in \calC_v$ and Dirac masses when $a \in \calC_h$.
Then,
\begin{itemize}
\item if $a \in \calC_v$, we have $\omega(u_0, \{ a \} ) = - \frac{1}{\delta} G_{\primaldomain}(u_0, a_{int})$,
\item if $a \in \calC_h$, we have $\omega(u_0, \{ a \} ) = \pm \dy G_{\primaldomain}(u_0, a)$,
\end{itemize}
where $a_{int}$ is the unique vertex in $\{ a \pm \delta \} \cap \Int \primaldomain$, $\dy$ is the vertical derivative with respect to the second coordinate and we take the + sign if the boundary is oriented to the left at $a$ and the - sign otherwise.
\end{prop}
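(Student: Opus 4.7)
The plan is to derive both formulas from the semi-discrete second Green's identity (Proposition~\ref{prop:greens_thm}) applied to $g = G_{\primaldomain}(u_0, \cdot)$ and to a generic harmonic test function $h$ on $\primaldomain$. Since Proposition~\ref{prop:existence_dirichlet} gives $h(u_0) = \bbE_{u_0}[h(\BM{T})] = \int h \, d\omega_{\primaldomain}(u_0, \cdot)$ for every such $h$, any representation of $h(u_0)$ as a linear functional in the boundary values of $h$ will determine $\omega_{\primaldomain}$ uniquely on $\partial \primaldomain$.

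The first step is the master identity
\begin{equation*}
h(u_0) \;=\; \oint_{\partial \primaldomain} \bigl[h \, ; \, G_{\primaldomain}(u_0, \cdot) \bigr] \, dz,
\end{equation*}
which I would establish by excising from $\primaldomain$ a small elementary rectangle $R_{\varepsilon}$ around $u_0$ and applying Proposition~\ref{prop:greens_thm} on $\primaldomain \setminus R_{\varepsilon}$. There both $h$ and $g$ are harmonic, so the identity reduces to a matching of the two boundary integrals over $\partial \primaldomain$ and $\partial R_{\varepsilon}$. Letting $\varepsilon \to 0$, the coincidence of zeroth and second order terms of $G_{\primaldomain}$ at $u_0$ forces the horizontal sides of $\partial R_{\varepsilon}$ to contribute zero in the limit, while the jump $\dy G_{\primaldomain}(u_0, u_0 + \icomp 0^+) - \dy G_{\primaldomain}(u_0, u_0 + \icomp 0^-) = 1/\delta$ fed into the vertical pairing \eqref{eqn:int_pari_vert_up} extracts exactly $h(u_0)$.

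The second step is to evaluate the right-hand side using $G_{\primaldomain}(u_0, \cdot) \equiv 0$ on $\partial \primaldomain$. On a vertical boundary segment through $a \in \calC_v$, the pairing \eqref{eqn:int_pari_vert_up} collapses, since the only surviving contribution pairs $h$ on the boundary line with $G_{\primaldomain}$ at the unique interior neighbor $a_{\mathrm{int}}$, yielding a density $-\tfrac{1}{\delta} G_{\primaldomain}(u_0, a_{\mathrm{int}}) \cdot h(a)$ (the $1/\delta$ coming from the discrete transverse difference and the sign being fixed by the counterclockwise orientation and the side on which $a_{\mathrm{int}}$ lies). On a horizontal boundary segment at $a \in \calC_h$, the pairing \eqref{eqn:int_pari_hor_primal} collapses similarly: the $g \dy h$ term vanishes since $g(a)=0$, leaving $\pm \delta^{2} \cdot h(a) \, \dy G_{\primaldomain}(u_0, a)$, a Dirac-type contribution at $a$ whose sign again tracks the boundary orientation.

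Matching the two expressions $h(u_0) = \int h \, d\omega_{\primaldomain}$ and $h(u_0) = \oint_{\partial \primaldomain} [h ; G_{\primaldomain}] \, dz$, valid for every harmonic $h$, yields the two formulas of the statement after specializing the boundary trace of $h$ to approximations of point masses on $\calC_h$ and of indicator functions on $\calC_v$. The main obstacle I expect is bookkeeping: tracking signs inherited from the counterclockwise orientation and from the side of each vertical boundary segment on which $a_{\mathrm{int}}$ sits (explaining the $\pm$ in both formulas), and rigorously justifying the limit $\varepsilon \to 0$ around the singularity so that only the vertical sides of $\partial R_\varepsilon$ contribute and together account exactly for the $1/\delta$ mass prescribed by the normalization of $G_{\primaldomain}$.
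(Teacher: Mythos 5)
Your strategy is essentially the paper's: the paper's proof is a single application of Green's theorem (Proposition \ref{prop:greens_thm}) with $f=\omega_{\primaldomain}(\cdot,\{a\})$ and $g=G_{\primaldomain}(u_0,\cdot)$, and your variant (pair $G_{\primaldomain}(u_0,\cdot)$ with a generic harmonic $h$ and read off the exit kernel by duality) is the same computation; even the excision of a small rectangle around $u_0$ is not needed, since the normalization property $\int f\,\laplacian G_{\primaldomain}(u_0,\cdot)\,\dd y=f(u_0)$ recorded in Section \ref{sec:Greens} already packages that limit.

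The genuine issue is that your bookkeeping, as written, cannot close. With the paper's conventions ($\dy G$ jumping by $1/\delta$ at $u_0$, and the volume integral carrying a weight $\delta$ per line), Green's theorem gives $\oint_\calC[h;G_{\primaldomain}(u_0,\cdot)]\,\dd z=\delta\,h(u_0)$, not $h(u_0)$. Moreover the vertical pairing \eqref{eqn:int_pari_vert_up} is a plain product of values on the two lines adjacent to the boundary line: it contains no ``discrete transverse difference'' and hence no $1/\delta$; and for it to involve the primal neighbour $a_{int}=a\pm\delta$ rather than the dual lines at distance $\delta/2$ (where neither $h$ nor $G_{\primaldomain}$ is defined) you must run the identity through the cell decomposition used in the proof of Proposition \ref{prop:greens_thm}, namely dual cells centred on the interior primal lines. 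Doing this consistently, the vertical boundary contributes $\int h(a)\,\bigl(-G_{\primaldomain}(u_0,a_{int})\bigr)\,|\dd a|$ and each horizontal boundary point contributes $\pm\,\delta^2\,h(a)\,\dy G_{\primaldomain}(u_0,a)$; dividing by the $\delta$ of the master identity yields the density $-\frac{1}{\delta}G_{\primaldomain}(u_0,a_{int})$ on $\calC_v$, as required, but atoms $\pm\,\delta\,\dy G_{\primaldomain}(u_0,a)$ on $\calC_h$ --- off by a factor $\delta$ from the formula you set out to prove. Note that your three stated normalizations (master identity without $\delta$, a $1/\delta$ inserted in the vertical collapse, a $\delta^2$ kept in the horizontal one) are mutually incompatible, so no single run of the computation produces both bullet points as you wrote them; the extra $\delta$ in the horizontal case is in fact the scaling forced by Lemma \ref{lem:bound_harm_measure}, where the atoms on $\calC_h$ are of order $\delta$. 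You should carry this factor explicitly and confront the normalization of the statement, rather than leaving it to ``sign and constant bookkeeping''.
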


\begin{proof}
It is immediate from Green's Theorem (Proposition \ref{prop:greens_thm}) by taking $f = \omega_{\primaldomain} (\cdot, \{ a \})$ and $g = G_{\primaldomain}(u_0, \cdot)$ and the fact that $\int_{\primaldomain} f \laplacian g = 1$.
\end{proof}

\begin{lem} \label{lem:bound_harm_measure}
We keep the same notation as above and take $\primaldomain = \ball(u_0, R)$.
There exist two positive constants $c_1$ and $c_2$, independent of $\delta$, such that
\begin{itemize}
\item if $a \in \calC_v$, we have $c_1 \leq \omega(u_0, \{ a \} ) \leq c_2$ ;
\item if $a \in \calC_h$, we have $c_1 \delta \leq \omega(u_0, \{ a \} ) \leq c_2 \delta$.
\end{itemize}
\end{lem}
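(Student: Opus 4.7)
The plan is to use Proposition \ref{prop:link_green_harm_measure} to reduce the harmonic-measure bounds to pointwise estimates of the domain Green's function $G_\primaldomain(u_0,\cdot)$ near $\partial \ball(u_0,R)$, and to obtain those estimates through the decomposition $G_\primaldomain(u_0,\cdot) = \Greens(\cdot - u_0) - H_\primaldomain(u_0,\cdot)$ combined with the sharp asymptotics of the free Green's function (Proposition \ref{prop:free_delta_Greens}) and the maximum principle (Proposition \ref{prop:max_principle}).

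First I would observe that every boundary point $z \in \partial \ball(u_0,R)$ satisfies $|z-u_0| = R + O(\delta)$, so the asymptotics give $\Greens(z-u_0) = \tfrac{1}{2\pi} \ln R + O(\delta/R)$ on $\partial \primaldomain$. Since $H_\primaldomain(u_0, \cdot)$ coincides with $\Greens(\cdot - u_0)$ on the boundary and is harmonic inside, the maximum principle yields $H_\primaldomain(u_0, z) = \tfrac{1}{2\pi} \ln R + O(\delta/R)$ throughout $\primaldomain$, and hence
$$
G_\primaldomain(u_0, z) = \tfrac{1}{2\pi} \ln \tfrac{|z-u_0|}{R} + O(\delta/R).
$$

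For $a \in \calC_v$, the neighboring interior point $a_{int}$ lies at horizontal distance $\delta$ from the outermost vertical line, so $|a_{int}-u_0| = R - \Theta(\delta)$, and the above expansion yields $G_\primaldomain(u_0, a_{int}) = -\Theta(\delta/R)$. Dividing by $-\delta$ gives $\omega(u_0, \{a\}) \asymp 1/R$ with constants independent of $\delta$, which is the desired statement in this case.

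For $a \in \calC_h$, the required estimate $|\dy G_\primaldomain(u_0,a)| \asymp \delta$ is more delicate. The continuous comparison alone predicts the wrong scale (of order $1/R$ rather than $\delta$), and the correct $\delta$-factor must come from the semi-discrete structure: since $G_\primaldomain(u_0,\cdot)$ vanishes at every discrete primal point on the horizontal boundary segment containing $a$, and the semi-discrete Laplace equation $\Delta_x G + \partial_{yy} G \equiv 0$ ties $\partial_{yy} G$ to horizontal second differences that are themselves of order $\delta$ along such a segment, one can propagate the $\delta$-scale to $\dy G_\primaldomain$ at $a$ through a boundary-Harnack type argument for semi-discrete harmonic functions vanishing on part of the boundary. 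The main obstacle of the proof is precisely this last step: extracting the correct $\delta$-scale of the vertical derivative at a horizontal boundary point, which has no direct continuous counterpart and forces one to exploit the discrete horizontal structure rather than rely on the continuous Poisson-kernel heuristic.
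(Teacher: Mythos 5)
Your first bullet point reproduces the paper's own argument for the $\calC_v$ case: the reduction to the Green's function via Proposition \ref{prop:link_green_harm_measure}, the decomposition of $G_{\primaldomain}(u_0,\cdot)-\frac{1}{2\pi}\ln\frac{|\cdot-u_0|}{R}$ into the free-Green error term plus a harmonic remainder controlled by the maximum principle, and the evaluation at $a_{int}$, are exactly the steps of the paper (with the same degree of looseness in comparing the main term of size $\delta/R$ with the $O(\delta/R)$ error when extracting the lower bound).

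The genuine gap is the $\calC_h$ case, which your proposal does not prove: you delegate it to an unspecified ``boundary-Harnack type argument'' and yourself describe that step as the unresolved main obstacle, so the second half of the lemma is simply not established. Moreover, the route you sketch is not the one that works, and is not the paper's. The paper obtains the $\calC_h$ bound from the \emph{same} uniform estimate on $G_{\primaldomain}(u_0,\cdot)$, by letting $u$ tend to $a$ vertically along the line through $a$: for such $u$ one still has $|u-u_0|=R-O(\delta)-O(|u-a|)$, so the decay of $G_{\primaldomain}(u_0,\cdot)$ across the last layer next to the horizontal boundary is controlled by the very expansion you already derived, and via Proposition \ref{prop:link_green_harm_measure} this yields the $\delta$-scale of the atom --- exactly parallel to the $\calC_v$ case, where the factor $\frac1\delta$ in the link formula is compensated by $|G_{\primaldomain}(u_0,a_{int})|=O(\delta/R)$. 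In particular your premise that ``the continuous comparison alone predicts the wrong scale'' is at odds with the paper's proof, which uses precisely that comparison for both cases and needs no extra discrete input. Finally, the mechanism you propose is not viable as stated: a horizontal boundary segment has length $\delta$ and carries only its two primal endpoints, on which $G_{\primaldomain}(u_0,\cdot)$ vanishes, so there are no meaningful ``horizontal second differences of order $\delta$ along such a segment'' to exploit, and such boundary values in any case give no quantitative control of the one-sided vertical derivative $\dy G_{\primaldomain}(u_0,a)$; trying to prove a smallness property of that derivative by a boundary-Harnack propagation is both unsubstantiated in your write-up and unnecessary for the lemma.
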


\begin{proof}
We link the harmonic measure to Green's function via Proposition \ref{prop:link_green_harm_measure}, which can be estimated more easily by its asymptotic behavior given in Proposition \ref{prop:free_delta_Greens}.
We can write
\begin{align*}
\GreensDomain (u_0, u) - \frac{1}{2\pi} \ln \frac{|u-u_0|}{R}
= \left[ \Greens(u_0, u) - \frac{1}{2\pi} \ln |u-u_0| \right]
- \left[ \GreensDomain^*(u, u_0) - \frac{1}{2\pi} \ln R \right].
\end{align*}
The first term is $\bigO{ \frac{\delta^2}{|u-u_0|^2}}$. The second term is harmonic in $\ball(u_0, R)$, thus by maximum principle, we get
\begin{align*}
\left| \GreensDomain^*(u, u_0) - \frac{1}{2\pi} \ln R \right|
& \leq \sup_{v \in \partial \ball(u_0, R)} \left| \Greens(v, u_0) - \frac{1}{2\pi} \ln R \right| \\
& \leq \frac{\delta}{\pi R} + \bigO{\frac{\delta^2}{R^2}},
\end{align*}
where we use the fact that $R - 2\delta \leq |v - u_0| \leq R$ since $v \in \partial \ball(u_0, R)$.
In summary, for all $u \in \ball(u_u, R)$,
$$
\left| \GreensDomain (u_0, u) - \frac{1}{2\pi} \ln \frac{|u-u_0|}{R} \right|
= \frac{\delta}{\pi R} + \bigO{ \frac{\delta^2}{|u-u_0|^2} + \frac{\delta^2}{R^2}}.
$$
By taking $u = a_{int}$ with $a \in \calC_v$, we get the first part of the proposition.
By taking $u \in \ball(u_0, R)$ closer and closer to $a \in \calC_h$, we get the second part.
\end{proof}

\subsection{Harnack Principle and convergence theorems}

This part deals mostly with harmonic analysis.
We give the semi-discrete version of Harnack Lemma, Riesz representation and a convergence theorem of harmonic functions.

\begin{prop}[Semi-discrete Harnack Lemma] \label{prop:Harnack}
Let $u_0 \in \primaldomain$ and $0 < r < R$ such that $\ball(u_0, R) \subset \primaldomain$.
Consider a non-negative semi-discrete harmonic function $H : \ball(u_0, R) \rightarrow \bbR$.
Let $M = \max H$ on $\primaldomain$.
If $u, u^+ \in \ball(u_0, r)$, then 
\begin{align*}
|H(u^+) - H(u)| & \leq \Cst \cdot \frac{\delta M}{R-r}, \\
\dy H(u) & \leq \Cst \cdot \frac{M}{R-r}.
\end{align*}
\begin{killcontents}
If $u_1, u_2 \in \ball(u_0, r) \subset \Int \ball(u_0, R)$, then
$$
\exp \left[ - \Cst \cdot \frac{r}{R-r} \right] \leq \frac{H(u_2)}{H(u_1)}
\leq \exp \left[ \Cst \cdot \frac{r}{R-r}\right]
$$
\end{killcontents}
\end{prop}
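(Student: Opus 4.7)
The plan is to represent $H$ on a slightly smaller concentric ball via the Poisson kernel, reduce the two inequalities to discrete derivative estimates on that kernel in its first variable, and then extract those estimates from the asymptotics of the free Green's function established in Proposition \ref{prop:free_delta_Greens}.

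Set $R' := (R+r)/2$ so that every $u \in \ball(u_0, r)$ lies at (semi-discrete) distance at least $\rho := (R-r)/2$ from $\partial \ball(u_0, R')$. Since $H$ is harmonic on $\ball(u_0, R)$, Proposition \ref{prop:uniqueness_Dirichlet} identifies it on $\ball(u_0, R')$ with the unique solution of the Dirichlet problem with its own boundary values, and Proposition \ref{prop:existence_dirichlet} gives the representation
\[
H(u) = \int_{\partial \ball(u_0, R')} H(v)\, d\omega_{R'}(u, v), \qquad u \in \ball(u_0, R').
\]
The shift $u \mapsto u^+$ and the derivative $\dy$ act only on the $u$ variable and commute with the integration in $v$; combined with $0 \le H \le M$ this reduces the target inequalities to
\[
\int \bigl| d\omega_{R'}(u^+, v) - d\omega_{R'}(u, v) \bigr| \le C\,\frac{\delta}{R-r}, \qquad \int \bigl| \dy d\omega_{R'}(u, v) \bigr| \le \frac{C}{R-r}.
\]

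By Proposition \ref{prop:link_green_harm_measure} the harmonic measure is expressed in terms of normal derivatives of the Green's function $G_{\ball(u_0, R')}$: a factor $\delta^{-1}$ on the vertical portions of the boundary and a vertical derivative $\dy$ on the horizontal ones. Writing
\[
G_{\ball(u_0, R')}(u, v) = \Greens(u - v) - H_{\ball(u_0, R')}(u, v),
\]
the correction $H_{\ball(u_0, R')}$ matches $\Greens$ on $\partial \ball(u_0, R')$, so by the maximum principle (Proposition \ref{prop:max_principle}) it is controlled by the values of $\Greens$ on that boundary. For $\Greens$ itself, Proposition \ref{prop:free_delta_Greens} gives $\Greens(\zeta) = \frac{1}{2\pi} \log|\zeta| + \bigO{\delta^2/|\zeta|^2}$; at points at distance at least $\rho$ from the singularity, the discrete $x$-derivative $\Deltax \Greens$ is therefore of order $\delta/\rho$ and the continuous $y$-derivative $\dy \Greens$ is of order $1/\rho$. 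Combining these pointwise bounds with the uniform (in $\delta$) estimates of Lemma \ref{lem:bound_harm_measure} for the total mass of $\omega_{R'}$ along the two types of boundary pieces yields the Poisson-kernel bounds displayed above, and hence the announced inequalities. The one-sided bound $\dy H(u) \le CM/(R-r)$ follows from the two-sided one; alternatively, applying the same argument to the non-negative harmonic function $2M - H$ gives the reverse inequality directly.

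The main obstacle is that $\Greens$ has a logarithmic singularity and cannot be differentiated termwise, so the derivative estimates must be extracted from the explicit error in Proposition \ref{prop:free_delta_Greens} rather than from a naive differentiation of $\log|u-v|$; likewise, transferring these estimates from the free Green's function to $G_{\ball(u_0, R')}$ requires handling the harmonic correction via its boundary values and the maximum principle. Once these derivative bounds for the Poisson kernel are established uniformly in $\delta$, the remainder of the proof is a straightforward integration against the boundary data.
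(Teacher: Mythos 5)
Your strategy is genuinely different from the paper's: the paper proves this lemma by a reflection coupling of two semi-discrete Brownian motions started at $u$ and $u^+$ (resp.\ at $u\pm\icomp\epsilon$), so that the difference of harmonic values is bounded by $M$ times the probability that the coupled processes fail to meet before exiting $\ball(u_0,R)$, which is of order $\delta/(R-r)$; no Green's function asymptotics enter. Your reduction of the statement to a total-variation bound on the harmonic measure $\omega_{R'}(u,\cdot)$ is legitimate, but the way you propose to prove that bound has a genuine gap, and in fact is circular at its central step.

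Concretely: after writing $G_{\ball(u_0,R')}(u,v)=\Greens(u-v)-H_{\ball(u_0,R')}(u,v)$, you control the harmonic correction ``by the maximum principle''. The maximum principle only gives a sup bound on $H_{\ball(u_0,R')}(\cdot,v)$; what you need are bounds on its increment $H_{\ball(u_0,R')}(u^+,v)-H_{\ball(u_0,R')}(u,v)$ and on $\dy_u H_{\ball(u_0,R')}(u,v)$, i.e.\ an interior estimate of the form (derivative) $\le C\,(\sup)/(\mathrm{dist\ to\ boundary})$ for semi-discrete harmonic functions --- which is precisely the Harnack-type statement being proved. The free part has the same problem in the $y$-direction: Proposition \ref{prop:free_delta_Greens} is a pointwise asymptotic $\Greens(\zeta)=\frac{1}{2\pi}\ln|\zeta|+O(\delta^2/|\zeta|^2)$, and a pointwise error bound cannot be differentiated, so the claimed bound on $\dy\Greens$ (and, for the horizontal boundary pieces, on the mixed quantity $\dy_v G_{\ball(u_0,R')}(u^+,v)-\dy_v G_{\ball(u_0,R')}(u,v)$, which your sketch does not address at all) would require redoing the contour-integral asymptotics for derivatives --- work that neither the paper nor your proposal carries out. (Also, $\Deltax\Greens$ at distance $\rho$ from the singularity is of order $1/\rho$, not $\delta/\rho$; it is the one-step increment that is $O(\delta/\rho)$.) Finally, even granting crude derivative bounds on the two pieces separately, the splitting destroys the cancellation you need: near $\partial\ball(u_0,R')$ each of $\Greens(u-v)$ and $H_{\ball(u_0,R')}(u,v)$ is of order $\ln R$ (or $|\ln\delta|$ when $v=a_{int}$), while the kernel $\frac1\delta|G_{\ball(u_0,R')}(u,a_{int})|$ itself is only of order $1/R$, so piecewise bounds, multiplied by $1/\delta$ and integrated over a boundary of length of order $R$, cannot yield the required $C\delta/(R-r)$; one must estimate the combined quantity as in Lemma \ref{lem:bound_harm_measure}, and then one still needs its derivative in the pole $u$, which is again Harnack. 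As it stands the key kernel estimates are asserted rather than proved; either adopt the paper's coupling argument or genuinely establish derivative asymptotics for the semi-discrete Poisson kernel.
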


\begin{proof}
The proof is classical.
We use coupled semi-discrete Brownian motions issued from two neighboring sites on semi-discrete lattice by reflection.
For the derivative in $y$, we use the same method.
The difference between $H(u + \icomp \epsilon)$ and $H(u - \icomp \epsilon)$ can be bounded by $\Cst \cdot \frac{\epsilon M}{R-r}$ and by dividing everything by $\epsilon$ and taking the limit, we obtain the inequality.
\end{proof}

\begin{prop}[Riesz representation] \label{prop:Riesz}
Let $f$ be a function on $\primaldomain$ vanishing on the boundary $\partial \primaldomain$.
Then, for all $y \in \primaldomain$,
$$
f(y) = \int_{\primaldomain} \laplacian f(x) G_{\primaldomain}(x, y) \dd x.
$$
Here, if $f$ is not differentiable, we can define the integral in the sense of distributions.
\end{prop}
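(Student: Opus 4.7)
The plan is to reduce the identity to Green's theorem for pairs of functions (Proposition \ref{prop:greens_thm}) applied to $(f, g)$, where $g(x) := G_{\primaldomain}(x, y)$. By definition of the Green's function on the domain, $g$ is semi-discrete harmonic on $\primaldomain$ away from $y$, carries the prescribed singularity at $y$ with normalization $\dy g(y + \icomp 0^+) - \dy g(y + \icomp 0^-) = 1/\delta$, and vanishes on $\partial \primaldomain$. Since $f$ also vanishes on $\partial \primaldomain$, inspecting the defining formulas \eqref{eqn:int_pari_vert_up}–\eqref{eqn:int_pari_hor_primal} for the pair integral on each boundary primal horizontal or vertical segment shows that every integrand factor contains either $f_k$ or $g_k$ evaluated on the boundary, so the contour integral $\oint_\calC [f; g] \dd z$ vanishes term by term.

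To handle the interior, I would apply Green's theorem on the punctured domain $\primaldomain \setminus B_\epsilon(y)$, where $B_\epsilon(y)$ is a small elementary rectangular neighborhood of the singularity. On this region $g$ is smooth and harmonic, so Proposition \ref{prop:greens_thm} gives
\begin{equation*}
\oint_{\calC} [f; g] \dd z - \oint_{\partial B_\epsilon(y)} [f; g] \dd z
= -\delta \int_{\primaldomain \setminus B_\epsilon(y)} g_k \laplacian f_k(x)\, \dd x,
\end{equation*}
the first term being zero by the previous paragraph. Taking $\epsilon \to 0$, the right-hand side converges to $-\int_{\primaldomain} G_\primaldomain(x,y)\laplacian f(x) \dd x$ (the singularity of $g$ is logarithmic by Proposition \ref{prop:asymp_Green's}, hence integrable against $\laplacian f$ even interpreted distributionally), while the residual contour integral $\oint_{\partial B_\epsilon(y)}[f; g] \dd z$ tends to $-f(y)$. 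This last convergence is the heart of the argument and mirrors the identity $\int_{\dualdomain} f(y) \laplacian \Greens(y) \dd y = f(0)$ recalled just before Section~\ref{sec:Greens}: the dominant contribution comes from the vertical part of $\partial B_\epsilon(y)$, where the jump $\dy g(y+\icomp 0^+) - \dy g(y+\icomp 0^-) = 1/\delta$ combined with the factor $\delta^2$ in \eqref{eqn:int_pari_hor_primal} produces exactly $-f(y)$ in the limit, while the horizontal sides contribute a vanishing remainder since $g$ is uniformly bounded on them and their length tends to zero.

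The main technical obstacle is precisely this excision-and-limit procedure: one must verify carefully that the boundary contribution around $y$ concentrates to $-f(y)$ with the right sign and factor, using the $1/\delta$ jump in $\dy g$ and the logarithmic-type behavior of $g$ near the singularity. A secondary point is the case $y \in \partial \primaldomain$, which is trivial since then $g \equiv 0$ on $\primaldomain$ and $f(y) = 0$, so both sides vanish. When $f$ is not twice differentiable, the identity should be read in the distributional sense — Green's theorem remains valid with $\laplacian f$ interpreted as a distribution, as mentioned in Section~\ref{sec:sdca_int_pair}.
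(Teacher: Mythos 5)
Your route is genuinely different from the paper's. The paper disposes of this in two lines: it sets $u := f - \int_{\primaldomain} \laplacian f(x)\, G_{\primaldomain}(x,\cdot)\,\dd x$, observes that $u$ is harmonic (because $\laplacian G_{\primaldomain}(x,\cdot)$ acts as the normalized Dirac mass, so the Laplacian of the integral reproduces $\laplacian f$) and vanishes on $\partial\primaldomain$, and concludes $u\equiv 0$ by uniqueness of the Dirichlet problem (Proposition \ref{prop:uniqueness_Dirichlet}). Your plan instead runs the classical Green's-second-identity-with-excision argument through Proposition \ref{prop:greens_thm}. That is a legitimate alternative and has the merit of not invoking the maximum principle, but it is more computational, and as written the crucial step contains concrete slips.

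First, the outer boundary. The pair integral along a \emph{primal} vertical boundary segment at column $k$ is, by \eqref{eqn:int_pari_vert_up}, built from $f_{k\pm\frac12}$ and $g_{k\pm\frac12}$, i.e.\ from values on the adjacent dual columns, where $f$ and $g=G_{\primaldomain}(\cdot,y)$ are not even defined; so the claim that the integrand vanishes ``term by term'' because $f$ and $g$ vanish on $\partial\primaldomain$ does not follow from the formulas you cite. It does work if you set the contour up as in the proof of Proposition \ref{prop:greens_thm}, decomposing into dual elementary domains: then each vertical boundary contribution is of the form $f_k g_{k+1}-f_{k+1}g_k$ with one of the two primal columns lying on $\partial\primaldomain$, and each product indeed has a vanishing factor. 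Second, the inner contour. The jump $\dy g(y+\icomp 0^+)-\dy g(y+\icomp 0^-)=1/\delta$ is collected by the \emph{horizontal} sides of the excised rectangle, via the $-\delta^2 f\,\dy g$ terms of \eqref{eqn:int_pari_hor_primal} (the formula you cite while calling it the vertical part); the \emph{vertical} sides are the ones of length $2\epsilon$ whose contribution vanishes because $G_{\primaldomain}$ is continuous (not merely logarithmically singular -- in the semi-discrete setting only $\dy G$ jumps) and bounded near $y$. You have these two roles interchanged. Moreover, for the horizontal sides to straddle the jump, the excised rectangle must be centered on the primal column of $y$ (a dual-type elementary domain), not a primal elementary rectangle having that column on its edge. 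Finally, track the factors of $\delta$: your own heuristic $\delta^2\cdot\frac1\delta$ gives an inner contribution of order $\delta f(y)$, which is exactly what is needed to cancel the explicit $\delta$ in front of the right-hand side of Proposition \ref{prop:greens_thm} (recall $\int_{\primaldomain}$ already carries a weight $\delta$ per column), whereas your text asserts the inner integral tends to $-f(y)$; as stated the constants do not match the identity you are proving. All of this is repairable, but the ``heart of the argument'' needs to be redone with the correct contour, the correct sides carrying the singular contribution, and consistent $\delta$-bookkeeping; alternatively, the paper's harmonicity-plus-uniqueness argument avoids the excision entirely.
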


\begin{proof}
The function $f - \int_{\primaldomain} \laplacian f(x) G_{\primaldomain}(x, \cdot) \dd x$ is harmonic and zero on the boundary, thus zero everywhere.
\end{proof}

\begin{thm}[Convergence theorem for harmonic functions] \label{thm:cvg_harm}
Let $(h_\delta)_{\delta > 0}$ be a family of semi-discrete harmonic functions on $\primaldomain$.
It forms a precompact family for the uniform topology on compact subsets of $\Omega$ if one of the following conditions is satisfied.
\begin{enumerate}
\item The family $(h_\delta)$ is uniformly bounded on any compact subset of $\Omega$.
\item For any compact $K \subset \Omega$, there exists $M = M(K) > 0$ such that for all $\delta > 0$, we have
$$
\int_{K_\delta} |h_\delta(x)|^2 \dd x \leq M.
$$
\end{enumerate}
\end{thm}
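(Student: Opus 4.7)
\medskip

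\noindent\textbf{Strategy.} The plan is to invoke a semi\discretionary{-}{}{-}discrete version of Arzelà--Ascoli: after extending each $h_\delta$ to a continuous function on $\Omega$ (interpolate linearly in $x$ between neighboring vertical lines, which is compatible with the Harnack bounds of Proposition~\ref{prop:Harnack}), I would establish (i) a uniform local sup bound and (ii) a uniform modulus of continuity on every compact $K \subset \Omega$. Condition~1 gives (i) for free, and (ii) follows from Harnack. Condition~2 is reduced to condition~1 by upgrading the local $L^2$ bound to a local $L^\infty$ bound using that $|h_\delta|^2$ is subharmonic.

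\medskip
\noindent\textbf{Equicontinuity from a local sup bound.} Fix $K \subset\subset K' \subset\subset \Omega$ with $r_0 := d(K,\partial K') > 0$ and suppose $M_\delta := \sup_{K'}|h_\delta|$ is bounded uniformly in $\delta$ by $M$. Applying Proposition~\ref{prop:Harnack} to the non-negative harmonic functions $M \pm h_\delta$ on balls of radius $r_0$ centered at points of $K$ gives, for every $u \in K$,
\begin{equation*}
|h_\delta(u^+) - h_\delta(u)| \,\leq\, C \frac{\delta M}{r_0}, \qquad |\dy h_\delta(u)| \,\leq\, C \frac{M}{r_0}.
\end{equation*}
Telescoping the first estimate over the $\lceil|\Deltax|/\delta\rceil$ horizontal nearest-neighbor jumps and integrating the second in $y$ yields a modulus of continuity
\begin{equation*}
|h_\delta(u_1) - h_\delta(u_2)| \,\leq\, \frac{C\,M}{r_0}\bigl(|\myr(u_1 - u_2)| + |\myi(u_1 - u_2)|\bigr)
\end{equation*}
uniform in $\delta$. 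Together with the uniform sup bound this lets us extract a subsequence converging uniformly on $K$, and a diagonal extraction over an exhaustion of $\Omega$ gives Case~1.

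\medskip
\noindent\textbf{From $L^2$ to $L^\infty$.} A direct calculation shows
\begin{equation*}
\laplacian(h_\delta^2) \,=\, 2(\dy h_\delta)^2 + \tfrac{1}{\delta^2}\bigl[(h_\delta(\cdot+\delta)-h_\delta)^2 + (h_\delta(\cdot-\delta)-h_\delta)^2\bigr] \,\geq\, 0,
\end{equation*}
so $h_\delta^2$ is subharmonic. Comparing $h_\delta^2$ with the solution of the Dirichlet problem on an elementary rectangle $x + \recdomain \subset \primaldomain$ (exactly as in the proof of Proposition~\ref{prop:max_principle}, which already covers subharmonic comparison), one obtains the sub-mean-value inequality
\begin{equation*}
h_\delta(x)^2 \,\leq\, \bbE_x\bigl[h_\delta(\BM{\recstoppingtime})^2\bigr].
\end{equation*}
Plugging in the decomposition of $\hm$ from Section~\ref{sec:BM} gives, for each $r \in (0, r_0)$,
\begin{equation*}
h_\delta(x)^2 \,\leq\, c_2\!\!\sum_{\epsilon\in\{-\delta,0,\delta\}}\!\!\int_{-r}^r h_\delta(x+\epsilon+\icomp y)^2\,\dd y \;+\; \tfrac{1-\exitingproba}{2}\bigl[h_\delta(x+\icomp r)^2 + h_\delta(x-\icomp r)^2\bigr].
\end{equation*}
Averaging this over $r \in (r_0/2, r_0)$ kills the Dirac contributions at the horizontal sides (they integrate to a term of order $\delta$ in $r$, and $(1-\exitingproba)/2 \to 0$ anyway once $\epsilon/\delta$ is not too small) and produces, on the right-hand side, exactly a constant times the semi-discrete $L^2$ integral of $h_\delta^2$ on an enlargement of $B(x, r_0)$. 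This yields the pointwise bound $|h_\delta(x)|^2 \leq C(r_0)^{-2} \int_{B(x,r_0)} h_\delta^2$, which by hypothesis is uniform in $\delta$. We are thus back in Case~1.

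\medskip
\noindent\textbf{Main obstacle.} The technical difficulty is entirely in the second step: the sub-mean-value property for $h_\delta^2$ is expressed against the harmonic measure of an elementary rectangle, which contains two Dirac masses on its horizontal sides, not a $2$-dimensional area element. The averaging in the vertical radius $r$ (together with the density bound $\hm \leq c_2$ from Lemma~\ref{lem:bound_harm_measure}) is what converts this boundary estimate into an honest volume estimate, and this is the step that must be carried out carefully—one needs the harmonic measure of the top and bottom of the rectangle to contribute only a lower-order term, which is exactly guaranteed by $(1-\exitingproba)/2 = O((\delta/\epsilon)^{0})$ actually going to $1/2$ (hence dying after the $r$-average when combined with a Harnack-type modulus of continuity in $y$ obtained in the equicontinuity step).
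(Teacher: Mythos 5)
Your treatment of Case 1 matches the paper's (Harnack plus Arzel\`a--Ascoli), and your computation showing that $h_\delta^2$ is subharmonic is correct. The gap is in the step ``From $L^2$ to $L^\infty$''. You apply the sub-mean-value inequality on the \emph{elementary} rectangle $x+\recdomain$, which is only three columns wide, and you invoke the density bound $\hm\leq c_2$ of Lemma \ref{lem:bound_harm_measure}. That lemma is proved for a macroscopic ball $\ball(u_0,R)$, whose boundary is at distance of order $R$ from the center in \emph{every} direction; it does not apply to the thin rectangle $\recdomain$, whose vertical sides are at distance $\delta$ from the starting column. For $r\gg\delta$ the walk exits $x+\recdomain$ at its first horizontal jump, so the exit density on the sides $\{x\pm\delta\}\times[-r,r]$ is concentrated in a vertical window of height $O(\delta)$ around $\myi(x)$ and is of size $O(1/\delta)$ there, not $O(1)$. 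More fundamentally, even with a correct density bound the argument cannot close: the hypothesis $\int_{K_\delta}|h_\delta|^2\leq M$ is a $\delta$-weighted sum over columns, so the unweighted integral of $h_\delta^2$ over the three columns adjacent to $x$ is only controlled by $M/\delta$, and the thin-rectangle estimate can at best give $h_\delta(x)^2=O(M/\delta)$, which is useless as $\delta\to0$. (A minor further slip: with $r\in(r_0/2,r_0)$ macroscopic, $1-\exitingproba=1/\cosh(\sqrt2\,r/\delta)$ tends to $0$ exponentially, not to $1/2$; this error is harmless, but it signals that the regimes $r\ll\delta$ and $r\gg\delta$ got mixed up.)

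The paper's proof of Case 2 is built precisely to get around this: it represents $h_\delta(x)$ by the harmonic measure of a rectangle of \emph{macroscopic width} (of order the distance $d$ to $\partial\Omega$, i.e.\ $O(1/\delta)$ columns), chooses by a pigeonhole argument a column index $p\in\llbracket r/2,r\rrbracket$ on which the one-dimensional boundary $L^2$ norm is $O(1)$ (this is where the $\delta$-weight in the hypothesis is consumed), averages over the height $t\in[s/2,s]$ so that the Dirac masses on the horizontal boundary become a genuine area integral bounded by $M$, and concludes with Cauchy--Schwarz together with Lemma \ref{lem:bound_harm_measure} (which is now legitimately applied, the boundary being at macroscopic distance). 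Your subharmonicity-of-$h_\delta^2$ idea is not wrong and could replace Cauchy--Schwarz if you ran it on such a macroscopic rectangle with the same two devices (pigeonhole in the column, averaging in the height), but as written the conversion of the boundary estimate into the claimed volume bound $|h_\delta(x)|^2\leq C r_0^{-2}\int_{B(x,r_0)}h_\delta^2$ does not follow.
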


\begin{proof}
The first point comes from Arzelà-Ascola since $(h_\delta)$ is uniformly Lipschitz (Proposition \ref{prop:Harnack}).

We will show that the second point implies the first one to conclude.
Start by choosing a compact subset $K \subset \Omega$.
Denote by $d = d(D, \partial \Omega)$ the distance between $K$ and the boundary of $\Omega$.
Let $K'$ be the $d/2$-neighborhood of $K$.

Let $0 < \delta < d/2$ and $x \in \Int K_\delta$.
Choose $Q$ to be a rectangular domain in $K'$ which is centered at $x$.
Write $Q_\delta = (x + [-r\delta, r\delta] \times [-s, s]) \cap \primallattice$, $r \in \bbN$, for its semi-discrete counterpart.
It is possible to have $r\delta > d/4$ and $s > d/4$ due to the assumption on the distance, and we assume so in the following.

If we write $H_k = \{ k \delta\} \times [-s, s]$, the hypothesis implies
$$
\sum_{k=\frac{r}{2}}^{r} \delta \left( \int_{H_{-k}} + \int_{H_{k}} \right) |h_\delta(y)|^2 \dd y \leq M(K') =: M
$$
for a certain constant $M$ which is uniform in $\delta$.
Take $p \in \llbracket r/2, r \rrbracket$ such that the summand is minimum, we get
$$
\left( \int_{H_{-p}} + \int_{H_{p}} \right) |h_\delta(y)|^2 \dd y \leq \frac1\delta \frac{M}{r/2} \leq c_1,
$$
where $c_1$ is a uniform constant in $\delta$.

For $t \in [0, s]$, denote $H_p^t = \{ p \delta \} \times [-t, t]$.
We can write, by linearity, $h_\delta$ as linear combination of harmonic measures,
\begin{equation} \label{eqn:avg_harm}
h_\delta(x) = \left( \int_{H_{-p}^t} + \int_{H_{p}^t} \right) h_\delta(y) \omega_t(x, y) \dd y
+ \sum_{\substack{k=-p+1 \\ y = k\delta \pm \icomp t}}^{p-1} h_\delta(y) \omega_t(x, y),
\end{equation}
where $\omega_t$ is the harmonic measure in $[-p \delta, p \delta] \times [-t, t]$.

We integrate Equation \eqref{eqn:avg_harm} from $t=s/2$ to $t=s$ and get
\begin{align*}
h_\delta (x) = &
\frac{2}{s} \int_{s/2}^s \left( \int_{H_{-p}^t} + \int_{H_{p}^t} \right) h_\delta(y) \omega_t(x, y) \dd y \dd t \\
& \quad + \frac{2}{s} \int_{s/2}^2 \sum_{\substack{k=-p+1 \\ y = k\delta \pm \icomp t}}^{p-1} h_\delta(y) \omega_t(x, y) \dd t.
\end{align*}
We want to show that $h_\delta(x)$ is uniformly bounded in $x$ and in $\delta$.
We will take its square and apply Cauchy-Schwarz.

Below, denote respectively the first and second integrals $A$ and $B$.
First of all, notice that $h_\delta(x)^2 \leq 2 (A^2 + B^2)$, so we just need to show that $A$ and $B$ are bounded.
We have,
\begin{align*}
A^2 & \leq \left( \frac{2}{s} \right)^2 \int_{s/2}^s \left( \int_{H_{-p}^t} + \int_{H_{p}^t} \right) h_\delta(y)^2 \dd y \dd t
\int_{s/2}^s \left( \int_{H_{-p}^t} + \int_{H_{p}^t} \right) \omega_t(x, y)^2 \dd y \dd t \\
& \leq \left( \int_{H_{-p}^s} + \int_{H_{p}^s} \right) h_\delta(y)^2 \dd y
 \left( \int_{H_{-p}^t} + \int_{H_{p}^t} \right) \omega_s(x, y)^2 \dd y
\end{align*}
where $(2/s)^2$ is distributed once in the first term and once in the second, normalizing the integrals. (The length of the segment along which we integrate is $s/2$.)
Then, for $x$ and $y$ fixed, $\omega_t(x, y) \leq \omega_s(x, y)$ for $t \leq s$.
Here, the first term is bounded by $c_1$ by hypothesis, and the second by another constant $c_2$ from Lemma \ref{lem:bound_harm_measure}.

For the second term, Cauchy-Schwarz gives
\begin{align*}
B^2 & \leq \left( \frac{2}{s} \right)^2 \int_{s/2}^s \sum_{\substack{k=-p+1 \\ y = k\delta + \icomp t}}^{p-1} \delta h_\delta(y)^2 \dd t
\int_{s/2}^s \sum_{\substack{k=-p+1 \\ y = k\delta + \icomp t}}^{p-1} \frac{\omega_t(x, y)^2}{\delta} \dd t.
\end{align*}
On the right-hand side, the first term is bounded by $M$ by assumption and the second term bounded by a uniform constant $c_3$ because $\omega_t(x, y)$ can be bounded by $c_4 \delta$ uniformly (in a similar manner as before) in $\delta$ and in $y-x$, and there are $\bigO{1/\delta}$ terms in the sum.
\end{proof}

\begin{prop}[Estimation on the derivative of the Green's function] \label{prop:estimation_Green's}
Let $Q \subset \primaldomain$ such that $9Q \subset \primaldomain$.
There exists $C > 0$ such that for all $\delta > 0$ and $y \in 9\Qprimal$, we have
$$
\int_{Q_\delta} |\Deltax G_{9\Qprimal} (x, y)| \dd x \leq C \int_{\Qprimal} |G_{9 \Qprimal} (x, y)| \dd x.
$$
\end{prop}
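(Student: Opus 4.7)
The plan is to establish a pointwise gradient estimate for the Green's function via Harnack's Lemma (Proposition~\ref{prop:Harnack}), and then integrate over $Q_\delta$.

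Set $g(x) := -G_{9Q_\delta}(x,y) \geq 0$; this is non-negative, harmonic on $9Q_\delta \setminus \{y\}$, and vanishes on $\partial(9Q_\delta)$. Let $r := \mathrm{diam}(Q)$. The assumption $9Q \subset \primaldomain$ forces $d(x, \partial(9Q_\delta)) \geq c_0 r$ for every $x \in Q_\delta$ and some universal $c_0 > 0$. Define the scale $\rho(x) := c_1 \min(d(x,y), r)$, with $c_1$ chosen so small that $g$ is non-negative and harmonic on the enlarged ball $\ball(x, 64\rho(x)) \subset 9Q_\delta \setminus \{y\}$.

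Applying Proposition~\ref{prop:Harnack} to $g$ on $\ball(x, 32\rho(x))$ yields the Lipschitz bound $|\Deltax g(x)| \leq C \rho(x)^{-1} \max_{\ball(x, 16\rho(x))} g$. The comparability version of Harnack's principle (available in the semi-discrete setting via the coupling of Brownian motions sketched in the proof of Proposition~\ref{prop:Harnack}) then supplies $\max_{\ball(x, 16\rho(x))} g \leq C g(x)$, yielding the key pointwise estimate
$$
|\Deltax g(x)| \leq \frac{C\, g(x)}{\rho(x)}.
$$

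Integrating over $Q_\delta$ and splitting along the threshold $d(x,y) = r$: on the outer region $\rho(x) \geq c_1 r$, which contributes $\frac{C}{r}\int_{Q_\delta} g$, of the desired form. On the inner region $Q_\delta \cap \ball(y, r)$, one writes $g = -\Greens(\cdot-y) + H^y$, where $\Greens$ is the free Green's function (Proposition~\ref{prop:free_delta_Greens}) and $H^y$ is the harmonic extension of $\Greens(\cdot - y)|_{\partial(9Q_\delta)}$ into $9Q_\delta$. The asymptotics $|\Greens(\zeta)| \leq C(|\log|\zeta|| + 1)$, together with the boundedness of $H^y$ on $Q_\delta$ (via the maximum principle), give $g(x) \leq C(|\log(|x-y|/r)| + 1)$ on the inner region. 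A direct computation of $\int_{\ball(y,r)} (|\log|x-y|| + 1)/|x-y|\,dx$ yields a bound of order $r \log(r^{-1})$, which is comparable to $\frac{1}{r}\int_{Q_\delta} g$ because the latter integral contains the same singular contribution of order $r^2 \log(r^{-1})$.

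The main obstacle is ensuring the constant $C$ is uniform in $y \in 9Q_\delta$, particularly as $y$ approaches $\partial(9Q_\delta)$ where both integrals simultaneously vanish. In this regime one invokes the symmetry $G_{9Q_\delta}(x,y) = G_{9Q_\delta}(y,x)$ of the Green's function and the harmonic measure relation of Proposition~\ref{prop:link_green_harm_measure}, which together show that both sides of the desired inequality scale linearly with $d(y, \partial(9Q_\delta))$, so the ratio remains bounded by a constant depending only on $Q$.
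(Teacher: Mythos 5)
Your strategy (a pointwise bound $|\Deltax G_{9\Qprimal}(x,y)| \le C\,|G_{9\Qprimal}(x,y)|/\rho(x)$ obtained from Harnack, then integration over $Q_\delta$) differs from the paper's, but it has a genuine gap at the singularity. First, the pointwise estimate is not justified when $d(x,y)\lesssim\delta$: the finite difference $\Deltax g(x)$ involves the values at $x\pm\delta/2$, which lie outside $\ball(x,16\rho(x))$ as soon as $\rho(x)<\delta/32$, and the Harnack-type bounds of Proposition \ref{prop:Harnack} (which, as stated, give only additive Lipschitz bounds, not the multiplicative comparability $\max_{\ball(x,16\rho(x))}g\le C g(x)$ you invoke) require the ball to contain several columns; below the lattice scale the argument has no content. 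Second, and more seriously, even granting the bound $|\Deltax g(x)|\le C(|\log|x-y||+1)/|x-y|$ on the inner region, your integration treats $\int_{Q_\delta}\cdot\,\dd x$ as a planar integral. It is not: by definition it equals $\delta\sum_k\int\dd t$, and the column containing $y$ alone contributes $\delta\int_0^{c}(|\log t|+1)\,t^{-1}\,\dd t=+\infty$; the area element that makes $1/|x-y|$ integrable in the plane is absent in the semi-discrete integral, so the claimed bound of order $r\log(r^{-1})$ fails exactly at the singularity. (In reality $\Deltax G_{9\Qprimal}(\cdot,y)$ is not singular on that column --- by near-symmetry it behaves like $|m|/(m^2+t^2)$ in the horizontal displacement $m$ --- but your Harnack bound cannot detect this cancellation.)

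This is precisely the regime the paper handles differently: for $y$ close to $Q$ it writes $G_{9\Qprimal}=(G_{9\Qprimal}-G_\delta)+G_\delta$, applies Harnack only to the harmonic difference, and controls $\Deltax G_\delta$ through its explicit asymptotics, while the right-hand side is bounded below by a uniform constant via the Brownian-motion interpretation (Proposition \ref{prop:Green's_BM}). Your justification of that lower bound (``the same singular contribution of order $r^2\log(r^{-1})$'') is also loose, and your final paragraph rests on a symmetry $G_{9\Qprimal}(x,y)=G_{9\Qprimal}(y,x)$ that is not established anywhere in the semi-discrete setting and is not needed: for $y$ far from $Q$, a harmonic-measure comparability on $2\Qprimal$ (Lemma \ref{lem:bound_harm_measure}), as in the paper, yields the pointwise bound directly. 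To repair your argument you would need a separate estimate of $|\Deltax G_{9\Qprimal}(x,y)|$ for $|x-y|=O(\delta)$, e.g.\ the explicit computation with the free Green's function.
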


\begin{proof}
For $y \in 9\Qprimal \backslash 3\Qprimal$, we estimate the Green's function in terms of the Brownian motion, or more precisely, the harmonic measure.
We recall that $G_{9\Qprimal} (\cdot, y)$ is non positive, so that we can write for $x \in 2 \Qprimal$,
$$
|G_{9\Qprimal} (x, y)| =
\int_{\calC_v} |G_{9\Qprimal}(z, y)| \omega_{2\Qprimal}(z, y) |\dd z|
+ \frac1\delta \int_{\calC_h} |G_{9\Qprimal}(z, y)| \omega_{2\Qprimal}(z, y) |\dd z|
$$
where we denote the vertical and horizontal parts of the boundary $\partial(2\Qprimal)$ by $\calC_v$ and $\calC_h$.
We can assume that 
$$
H := \int_{\calC_h} |G_{9\Qprimal} (z, y)| |\dd z| \geq V:= \int_{\calC_v} |G_{9\Qprimal} (z, y)| |\dd z|.
$$
The estimations of $\omega_{9\Qprimal}$ in Lemma \ref{lem:bound_harm_measure} gives us the lower and upper bounds easily
\begin{align*}
|G_{9\Qprimal} (x, y)| \leq c_2 \oint_{\partial (2\Qprimal)} |G_{9\Qprimal}(z, y)| |\dd z| 
= c_2 (H+V) \leq 2 c_2 H\
\end{align*}
and
\begin{align*}
|G_{9\Qprimal} (x, y)| & \geq c_1 \oint_{\partial (2\Qprimal)} |G_{9\Qprimal}(z, y)| |\dd z| \\
& \geq c_1 \int_{\calC_h} |G_{9\Qprimal}(z, y)| |\dd z| = c_1 H.
\end{align*}
Thus, for $x, x' \in 2\Qprimal$,
$$
\frac{1}{c_3} |G_{9\Qprimal} (x, y)| \leq |G_{9\Qprimal} (x', y)| \leq c_3 |G_{9\Qprimal}(x, y)|
$$
with $c_3 = 2 c_2 / c_1$.
Knowing that $G_{9\Qprimal}(\cdot, y)$ is harmonic in $\Qprimal$, we apply Proposition \ref{prop:Harnack} and the above inequality to get
$$
|\Deltax G_{9\Qprimal} (x, y)| \leq c_4 \max_{x' \in \Qprimal} |G_{9 \Qprimal}(x', y)| \leq c_3 c_4 |G_{9\Qprimal} (x, y)|.
$$

For $y \in 3\Qprimal$, from Proposition \ref{prop:Green's_BM}, the average time spent by the Brownian motion, stopped when touching $\partial 9\Qprimal$, in $\Qprimal$ is proportional to
$$
\int_{\Qprimal} |G_{9\Qprimal}(x, y)| \dd x.
$$
This quantity can be bounded from below by a constant $c_5$ because the semi-discrete Brownian motion converges to its continuous counterpart in $\bbR^2$.

Now it remains to show that the left-hand side can be bounded from above by a constant.

We write
$$
G_{9\Qprimal} (x, y) = [G_{9\Qprimal} (x, y) - G_\delta (x, y)] + G_\delta(x, y),
$$
where $G_\delta$ is the Green's function on $\primallattice$ defined in Proposition \ref{prop:free_delta_Greens}.
The first part $G_{9\Qprimal} (\cdot, y) - G_\delta(\cdot, y)$ is harmonic in $9\Qprimal$ (the singularities cancel out); moreover, on the boundary $\partial 9\Qprimal$, $G_{9\Qprimal}$ is zero and $G_\delta$ is bounded by a constant depending only on the domain $Q$ by using the asymptotic behavior of the free Green's function in Proposition \ref{prop:free_delta_Greens}.
The Harnack principle (Proposition \ref{prop:Harnack}) gives
$$
| \Deltax [ G_{9\Qprimal} (x, y) - G_\delta (x, y) ] | \leq c_6.
$$
Thus,
$$
\int_{\Qprimal} | \Deltax [ G_{9\Qprimal} (x, y) - G_\delta (x, y) ]  | \dd x \leq \delta \cdot \frac{c_7}{\delta} \cdot c_6 = c_6 c_7.
$$
Concerning $|\Deltax G_\delta(x, y)|$, we can look at its asymptotic behaviour and show that, for $\zeta = m + \icomp t \in \Qprimal$ with $|\zeta| \gg \delta$,
\begin{align*}
\Deltax G_\delta(\zeta, y) & = \frac{1}{\delta} \left[ G_\delta(\zeta + \frac\delta2, y) - G_\delta(\zeta - \frac\delta2, y) \right] \\
& = \frac{1}{2 \pi \delta} \left[ \ln\left( \frac{(m+\frac\delta2)^2 + t^2}{m^2+t^2} \right) - \ln\left( \frac{(m-\frac\delta2)^2 + t^2}{m^2+t^2} \right) \right] \\
& = \frac{1}{2 \pi \delta} \left[ \frac{2\delta m}{m^2+t^2} + \bigO{\delta^2} \right] = \frac{1}{\pi} \frac{m}{m^2+t^2} + \bigO{\delta}.
\end{align*}
Thus, by integrating along vertical axes in $\primaldomain$, we  get some quantity with the same asymptotic behaviour (independent of $\delta$ but on $\Qprimal$),
$$
\int_{\Qprimal} |\Deltax G_\delta(x, y)| \dd x \leq c_8.
$$
The proof follows readily.
\end{proof}

\subsection{Convergence to the continuum Dirichlet problem}

In this subsection, we study the convergence of semi-discrete harmonic functions when the mesh size of the lattice goes to 0.

\begin{lem} \label{lem:cvg_harm}
Let $\Omega$ be a domain and $(\primaldomain)$ its semi-discretized approximations converging to $\Omega$ in the Carathéodory sense.
For each $\delta > 0$, consider a semi-discrete harmonic function $h_\delta$ on $\primaldomain$.
Assume that $h_\delta$ converges uniformly on any compact subset of $\Omega$ to a function $h$, then $h$ is also harmonic.
\end{lem}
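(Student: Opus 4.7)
My plan is to deduce harmonicity of $h$ from the probabilistic characterization: a continuous function on $\Omega$ is harmonic if and only if it satisfies the mean-value property with respect to the exit distribution of a standard planar Brownian motion from every sufficiently small disk. I would obtain this identity by passing to the limit in the analogous semi-discrete identity satisfied by $h_\delta$.

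Fix $z_0 \in \Omega$ and $r > 0$ with $\overline{B(z_0,r)} \subset \Omega$. For each $\delta$ small enough, pick $z_\delta \in \primaldomain$ with $z_\delta \to z_0$ and a semi-discrete primal subdomain $D_\delta \subset \primaldomain$ approximating $B(z_0,r)$ in the Carath\'eodory sense. Since $h_\delta$ is semi-discrete harmonic on $D_\delta$ with its own boundary values on $\partial D_\delta$, the combination of Propositions \ref{prop:existence_dirichlet} and \ref{prop:uniqueness_Dirichlet} yields
\begin{equation*}
h_\delta(z_\delta) \;=\; \bbE_{z_\delta}\!\left[ h_\delta\bigl( \BM{T_\delta} \bigr) \right],
\end{equation*}
where $T_\delta$ is the first exit time from $D_\delta$ of the semi-discrete Brownian motion $\BM{}$ started at $z_\delta$.

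To send $\delta \to 0$ I would combine three standard ingredients. First, by the remark following Definition \ref{def:sds_BM}, $\BM{}$ started at $z_\delta$ converges in law (in the Skorokhod topology) to the standard planar Brownian motion $B$ started at $z_0$. Second, since $\partial B(z_0,r)$ is smooth and Brownian paths almost surely hit it transversally, the exit-time and exit-point functionals are a.s.\ continuous at the limiting trajectory, so $\BM{T_\delta} \Rightarrow B_{T_0}$, where $T_0$ is the exit time of $B$ from $B(z_0,r)$. Third, the hypothesis gives uniform convergence $h_\delta \to h$ on the compact set $\overline{B(z_0,r)}$, so $h$ is continuous there and bounded convergence allows me to pass the expectation through. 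This yields $h(z_0) = \bbE_{z_0}[h(B_{T_0})]$; since $z_0$ and $r$ are arbitrary, $h$ is harmonic on $\Omega$ in the classical sense.

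The main technical obstacle is the joint weak convergence of $(\BM{}, T_\delta, \BM{T_\delta})$ to $(B, T_0, B_{T_0})$, because $\BM{}$ is continuous only in the vertical direction and can leave $D_\delta$ either through a vertical portion of the boundary (continuously) or through a horizontal primal segment (by a jump of size $\delta$). The smoothness and strict convexity of $\partial B(z_0,r)$, together with the fact that the horizontal jumps shrink to zero, make this a Donsker-type argument that should go through using only the convergence in law recorded in Section \ref{sec:BM}, so I do not anticipate any essentially new difficulty beyond a careful bookkeeping of the Carath\'eodory approximation of $B(z_0,r)$ by the domains $D_\delta$.
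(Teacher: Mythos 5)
Your argument is correct in substance, but it takes a genuinely different route from the paper. The paper's proof is purely analytic: it invokes the Harnack estimates (Proposition \ref{prop:Harnack}) and the convergence theorem for harmonic functions (Theorem \ref{thm:cvg_harm}) to get precompactness of the discrete derivatives $(\Deltax h_\delta)$, identifies $\partial_x h$ as the only subsequential limit, and then argues that $\laplacian h_\delta \equiv 0$ passes to the limit to give $\Delta h = 0$. You instead go through the probabilistic Dirichlet representation $h_\delta(z_\delta) = \bbE_{z_\delta}[h_\delta(\BM{T_\delta})]$ (legitimate via Propositions \ref{prop:existence_dirichlet} and \ref{prop:uniqueness_Dirichlet}) and an invariance principle, deducing the spherical mean-value property of $h$ — note that for a disk the exit distribution from the center is uniform on the circle, so this is exactly the classical converse of the mean-value theorem. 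What your approach buys is independence from Theorem \ref{thm:cvg_harm}; what it costs is the Donsker-type step you flag yourself: the joint convergence $\BM{T_\delta} \Rightarrow B_{T_0}$ requires the a.s.\ continuity of the exit functional at the Brownian path together with control of the Carathéodory/Hausdorff approximation of the disk by $D_\delta$, none of which is recorded in Section \ref{sec:BM}; it is standard but would have to be written out. Two small points to tighten: (i) continuity of $h$ on $\overline{B(z_0,r)}$ is not automatic from uniform convergence of lattice functions alone — you should derive equicontinuity of $(h_\delta)$ from Proposition \ref{prop:Harnack} (the $h_\delta$ are uniformly bounded on compacts by hypothesis), since the mean-value identity for a non-continuous limit would not suffice; (ii) to apply the uniqueness of the Dirichlet solution on $D_\delta$ you use that $h_\delta$ restricted to $D_\delta$ is harmonic with its own boundary data, which is fine, but say so explicitly since Proposition \ref{prop:equiv_laplacian} is what links $\laplacian h_\delta = 0$ to the mean-value/exit representation.
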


\begin{proof}
From Proposition \ref{prop:Harnack} and Theorem \ref{thm:cvg_harm}, we know that the family $(\Deltax h_\delta)$ is precompact thus we can extract from it a converging subsequence.
Since $\partial_x h$ is the only possible sub-sequential limit, $(\Deltax h_\delta)$ converges.
Similarly, one can also prove that $\laplacian h_\delta = 0$ converges to $\Laplacian h$, which is also zero.
\end{proof}

\begin{prop} \label{prop:cvg_Dirichlet}
Let $\Omega$ be a domain with two marked points on the boundary $a, b \in \partial \Omega$.
Consider $f$ a bounded continuous function on $\partial \Omega \backslash \{ a, b\}$ and $h$ the solution associated to the Dirichlet boundary value problem with boundary condition $f$.
For each $\delta > 0$, let $\primaldomain$ be the semi-discretized counterpart of the domain $\Omega$, $a_\delta$ and $b_\delta$ approximating $a$ and $b$.
Let $f_\delta : \partial \primaldomain \rightarrow \bbR$ be a sequence of uniformly bounded functions converging uniformly away from $a$ and $b$ to $f$ and $h_\delta$ be the solution of the semi-discrete Dirichlet boundary problem with $f_\delta$ as boundary condition.
Then,
$$
h_\delta \rightarrow h
$$
uniformly on compact subsets of $\Omega$.
\end{prop}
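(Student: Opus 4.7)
The plan is to establish convergence via the standard three-step pattern: \textbf{(i)} a uniform bound plus the harmonic convergence theorem yields precompactness of $(h_\delta)$, \textbf{(ii)} any subsequential limit is harmonic on $\Omega$ by Lemma \ref{lem:cvg_harm}, and \textbf{(iii)} any such limit realizes the boundary values $f$ on $\partial\Omega\setminus\{a,b\}$, so by uniqueness of the continuous Dirichlet problem it must coincide with $h$; since every subsequential limit equals $h$, the whole family converges uniformly on compact subsets of $\Omega$.

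For (i), the uniform bound on $(f_\delta)$ together with the maximum principle (Proposition \ref{prop:max_principle}) gives $\|h_\delta\|_\infty \le M := \sup_\delta \|f_\delta\|_\infty < \infty$. Hence by the first criterion of Theorem \ref{thm:cvg_harm}, the family $(h_\delta)$ is precompact in the topology of uniform convergence on compact subsets of $\Omega$. Extract an arbitrary subsequential limit $\tilde h$; Lemma \ref{lem:cvg_harm} implies $\tilde h$ is classically harmonic on $\Omega$.

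The core of the argument is (iii), which I would carry out using the probabilistic representations. By Proposition \ref{prop:existence_dirichlet},
\[
h_\delta(z) \;=\; \mathbb{E}_z\bigl[f_\delta\bigl(B^{(\delta)}_{T_\delta}\bigr)\bigr],
\]
where $T_\delta$ denotes the exit time of the semi-discrete Brownian motion $B^{(\delta)}$ (Definition \ref{def:sds_BM}) from $\Omega^\bullet_\delta$. In the limit, $B^{(\delta)}$ converges in law to the standard planar Brownian motion $B$, and since $\Omega^\bullet_\delta \to \Omega$ in the Carath\'eodory sense, the exit times and exit positions converge jointly in distribution to $(T, B_T)$ (Carath\'eodory convergence is precisely what is needed so that exit times are continuous in the appropriate sense). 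Combining this with the uniform convergence $f_\delta \to f$ away from $\{a,b\}$, a standard dominated-convergence argument yields $h_\delta(z) \to \mathbb{E}_z\bigl[f(B_T)\bigr] = h(z)$ at every interior $z$. Hence $\tilde h = h$, finishing the proof.

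The main obstacle will be controlling the exit distribution near the two exceptional points $a$ and $b$, where $f$ may be discontinuous and $f_\delta$ need not converge uniformly. The required input is that the harmonic measure on $\Omega$ from any interior point does not charge $\{a,b\}$ and that the (semi-)discrete harmonic measure of a small neighborhood of $\{a,b\}$ is small uniformly in $\delta$; one handles this by a cut-off: decompose $\partial\Omega$ into a ``bad'' part $U_\varepsilon$ (a neighborhood of $\{a,b\}$) and its complement, use uniform boundedness of $f_\delta$ on $U_\varepsilon$ together with the uniform smallness of $\mathbb{P}_z\!\bigl[B^{(\delta)}_{T_\delta}\in U_\varepsilon\bigr]$ (which follows from Lemma \ref{lem:bound_harm_measure} and the convergence of the underlying Brownian motion), and exploit uniform convergence $f_\delta\to f$ on the complement. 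Letting $\varepsilon \to 0$ at the end removes the error and identifies the limit, completing step (iii).
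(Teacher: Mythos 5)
Your steps (i) and (ii) coincide with the paper's proof: uniform boundedness of $(f_\delta)$ gives uniform boundedness of $(h_\delta)$, Theorem \ref{thm:cvg_harm} gives precompactness, and Lemma \ref{lem:cvg_harm} makes every subsequential limit harmonic. The divergence, and the gap, is in step (iii). The pivotal claim that ``since $\Omega^\bullet_\delta \to \Omega$ in the Carath\'eodory sense, the exit times and exit positions converge jointly in distribution to $(T,B_T)$'' is not justified as stated, and Carath\'eodory convergence is \emph{not} by itself the right hypothesis: the process here is the semi-discrete Brownian motion, not the planar one, and under an invariance principle the exit position is not a continuous functional of the trajectory (a limiting path may touch $\partial\Omega$ without the approximating walk leaving $\primaldomain$, or the walk may exit far away inside a narrow fjord of the discretized boundary). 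What makes this step work, and what the paper explicitly invokes instead, is a weak Beurling-type estimate for the semi-discrete Brownian motion, uniform in $\delta$: a trajectory that comes within distance $\epsilon$ of $\partial\Omega$ exits $\primaldomain$ within distance $r(\epsilon)\to 0$ of that point, except with probability tending to $0$. With that estimate one either proves convergence of the exit laws (your route) or, as the paper does, shows that any subsequential limit $\tilde h$ extends continuously to $\partial\Omega\setminus\{a,b\}$ with boundary values $f$ and concludes by uniqueness of the continuous Dirichlet problem (Proposition \ref{prop:uniqueness_Dirichlet}). Without it, the central convergence is only asserted.

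A second, smaller misstep: Lemma \ref{lem:bound_harm_measure} is an estimate for the harmonic measure of the specific centered domain $\ball(u_0,R)$ from its center; it does not bound the semi-discrete harmonic measure of a neighborhood of $\{a,b\}$ seen from a fixed interior point of a general semi-discretized domain. The uniform smallness you need in your cut-off argument is again a consequence of the Beurling-type bound (together with the fact that the continuous harmonic measure of $\Omega$ does not charge the two points $a,b$, which holds because $\Omega$ is bounded and simply connected, so all boundary points are regular). Once the weak Beurling estimate is supplied, your representation $h_\delta(z)=\bbE_z[f_\delta(\BM{T_\delta})]$ from Proposition \ref{prop:existence_dirichlet} and the cut-off near $\{a,b\}$ do close the argument, and in fact then yield pointwise convergence directly, making the detour through harmonicity of subsequential limits optional; but as written the decisive estimate is missing and attributed to the wrong cause.
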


\begin{proof}
We first notice that the semi-discretized domains converge in the Carathéo\-dory sense to $(\Omega, a, b)$.
Since $(f_\delta)$ is uniformly bounded, it is the same for the family $(h_\delta)$.
Theorem \ref{thm:cvg_harm} says that $(h_\delta)$ is a precompact family.
Let $\tilde{h}$ be a subsequential limit, which should also be harmonic inside $\Omega$ by Lemma \ref{lem:cvg_harm}.
To show that $h = \tilde{h}$, we need to prove that $\tilde{h}$ can be extended to the boundary by $f$ in a continuous way.
This can be done by using the weak Beurling's estimate, obtained by analyzing the Brownian motion on semi-discrete lattice.
This result is classical and can be adapted easily to our semi-discrete setting.
\end{proof}

\subsection{S-holomorphicity} \label{sec:s-hol}

The notion of \emph{s-holomorphicity} will turn out to be important when it comes to the convergence Theorem \ref{thm:cvg_shol}.
Actually, the semi-discrete holomorphicity provides us only with half of Cauchy-Riemann equations and the rest of the information can be ``recovered'' by s-holomorphicity.
More precisely, it allows us to define a primitive of $\myi f^2$ where $f$ is s-holomorphic in Section \ref{sec:obs_primitive} and the convergence of this primitive will tell us more about the convergence of $f$.
We will discuss this in more details in Sections \ref{sec:observables} and \ref{sec:CVG_thm}.

\begin{defn}
Let $f : \midedgedomain \to \bbC$ be a function defined on the mid-edge semi-discrete lattice.
It is said to be \emph{s-holomorphic} if it satisfies the two following properties.
\begin{enumerate}
\item Parallelism: for $e \in \midedgedomain$, we have $f(e) \parallel \tau(e)$ where $\tau(e) = [\icomp (w_e - u_e)]^{-1/2}$, $u_e$ and $w_e$ denote respectively the primal and the dual extremities of the mid-edge $e$.
In other words,
\begin{itemize}
\item $f(e) \in \nu \bbR$ if $p_e^+$ is a dual vertex, and
\item $f(e) \in \icomp \nu \bbR$ if $p_e^+$ is a primal vertex,
\end{itemize}
where $\nu = \exp(- \icomp \pi /4)$.
\item Holomorphicity: for all vertex $e$ on the mid-edge lattice $\midedgedomain$, we have $\derzbar f(e) = 0$.
\end{enumerate}
\end{defn}

\begin{defn}
Let $g : \medialdomain \to \bbC$ be a function defined on the medial semi-discrete domain.
It is said to be \emph{s-holomorphic} if it satisfies the two following properties.
\begin{enumerate}
\item Projection: for every $e = [p_e^- p_e^+] \in \midedgedomain$, we have
\begin{equation}
\label{eqn:proj_prop}
\Proj [g(p_e^-), \tau(e)] = \Proj [g(p_e^+), \tau(e)]
\end{equation}
where $\Proj(X, \tau)$ denotes the projection of $X$ in the direction of $\tau$ :
$$
\Proj[X, \tau] = \frac{1}{2} \left[ X + \frac{\tau}{\overline{\tau}} \cdot
\overline{X} \right].
$$
\item Holomorphicity: for all vertex $e$ on medial lattice $\medialdomain$, we have $\derzbar f(e) = 0$.
\end{enumerate}
\end{defn}

We have a correspondance between s-holomorphic functions on $\medialdomain$ and on $\midedgedomain$.

\begin{prop} \label{prop:s_hol_equiv}
Given a s-holomorphic function $f : \midedgedomain \rightarrow \bbC$, one can define $g : \medialdomain \to
\bbC$ by:
$$
g(p) = f(e_p^-) + f(e_p^+), \quad p \in \medialdomain.
$$
Then, the new function $g$ is still s-holomorphic.

Conversely, given a s-holomorphic function $g : \medialdomain \rightarrow \bbC$, one can define $f : \midedgedomain \to \bbC$ by:
$$
f(e) = \Proj [g(p_e^-), \tau(e)] = \Proj [g(p_e^+), \tau(e)], \quad e \in \midedgedomain.
$$
Then, the new function $f$ is still s-holomorphic.
\end{prop}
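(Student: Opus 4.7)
The plan is to establish both directions by exploiting the same underlying identity: at each medial vertex $p$, the value $g(p)$ decomposes canonically as $f(e_p^-) + f(e_p^+)$, with the two summands lying in the orthogonal real lines $\tau(e_p^-)\bbR$ and $\tau(e_p^+)\bbR$. A key preliminary observation, which follows from the alternation of mid-edge types along $\midedgelattice$, is that the two horizontal neighbors $e^\pm = e \pm \frac{\delta}{2}$ of a mid-edge $e$ satisfy $\tau(e^\pm) \perp \tau(e)$, while the two mid-edges $e_p^\pm$ adjacent to a medial vertex $p$ also carry mutually perpendicular $\tau$'s.

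For the forward direction, starting from $f$ s-holomorphic on $\midedgedomain$, set $g(p) = f(e_p^-) + f(e_p^+)$. To check the projection property for $g$, I would compute $g(p_e^-) - g(p_e^+)$; using the adjacency $e_{p_e^-}^+ = e = e_{p_e^+}^-$ the two copies of $f(e)$ cancel, leaving only $f(e_{p_e^-}^-) - f(e_{p_e^+}^+)$. Both of these surviving mid-edges have the opposite type to $e$, so their $f$-values lie in the line perpendicular to $\tau(e)$, and the difference has zero projection onto $\tau(e)$. For holomorphicity of $g$, a direct expansion of $\derzbar g(p)$ using $\Deltax g(p) = (g(p^+)-g(p^-))/\delta$ and $\dy g(p) = \dy f(e_p^-) + \dy f(e_p^+)$ produces the bookkeeping identity $\derzbar g(p) = \derzbar f(e_p^-) + \derzbar f(e_p^+)$, which vanishes by holomorphicity of $f$.

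For the converse, given $g$ s-holomorphic on $\medialdomain$, the projection property of $g$ makes $f(e) := \Proj[g(p_e^\pm),\tau(e)]$ well-defined, and parallelism $f(e) \parallel \tau(e)$ is automatic. Because $\tau(e_p^-) \perp \tau(e_p^+)$, decomposing $g(p)$ in the orthogonal basis $\{\tau(e_p^-),\tau(e_p^+)\}$ recovers the identity $g(p) = f(e_p^-) + f(e_p^+)$, and the same algebraic calculation as above yields $\derzbar f(e_p^-) + \derzbar f(e_p^+) = \derzbar g(p) = 0$ at every medial vertex $p$. It remains to extract the individual vanishing $\derzbar f(e) = 0$ from this pooled identity.

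This last step is the main obstacle, and the place where the parallelism of $f$ is indispensable. The point is to observe that $\derzbar f(e)$ always lies in the line $\icomp\tau(e)\bbR$ perpendicular to $\tau(e)$: the horizontal difference $(f(e^+)-f(e^-))/\delta$ sits in this perpendicular line because $f(e^\pm) \in \tau(e^\pm)\bbR = \icomp\tau(e)\bbR$, while $-\dy f(e)/\icomp$ also sits there because $\dy f(e) \in \tau(e)\bbR$ and multiplication by $\icomp^{-1}$ rotates this line by a quarter turn. Consequently, at each medial $p$ the two terms $\derzbar f(e_p^-)$ and $\derzbar f(e_p^+)$ belong to the mutually orthogonal lines $\icomp\tau(e_p^-)\bbR$ and $\icomp\tau(e_p^+)\bbR$; their sum vanishes only if each term vanishes separately, and since every mid-edge is adjacent to some medial vertex, this yields $\derzbar f \equiv 0$ on $\midedgedomain$, completing the proof.
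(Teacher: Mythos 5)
Your proof is correct, and for the nontrivial (converse) direction it follows a genuinely different route from the paper's. The paper proves the holomorphicity of $f$ by a direct computation: writing $f(e) = \Proj[g(p), \tau(e)] = \tfrac12 [\, g(p) + \tfrac{\tau(e)}{\overline{\tau(e)}} \overline{g(p)}\, ]$ for an adjacent medial vertex $p$, differentiating in $y$, and substituting the relation $\dy g(p) = \tfrac{\icomp}{\delta}(g(p^+) - g(p^-))$ coming from $\derzbar g(p) = 0$; after identifying the resulting projections with $f(e^+)$ and $f(e^-)$ via the projection property, this lands immediately on $\dy f(e) = \tfrac{\icomp}{\delta}[f(e^+) - f(e^-)]$, i.e.\ $\derzbar f(e) = 0$, with no need to disentangle anything. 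You instead first establish the pooled identity $\derzbar g(p) = \derzbar f(e_p^-) + \derzbar f(e_p^+)$ (the same bookkeeping you use in the forward direction, legitimate here because the orthogonal decomposition $g(p') = f(e_{p'}^-) + f(e_{p'}^+)$ holds at every medial vertex) and then split it using the observation that parallelism alone forces $\derzbar f(e)$ into the line $\icomp\tau(e)\bbR$, so that the two terms lie in orthogonal lines and must vanish separately. The paper's computation is shorter; your argument isolates a structural fact (the a priori direction of $\derzbar f$) that makes it transparent why the two formulations of s-holomorphicity are interchangeable, and it recycles one identity for both directions. The only point to watch is that both the pooled identity and the separation step require the relevant neighbors $p^\pm$, $e^{\pm}$ to lie in the domain, a boundary caveat the paper's computation shares and likewise leaves implicit.
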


\begin{proof}
Assume that $f : \midedgedomain \rightarrow \bbC$ is s-holomorphic.
Let us show that $g$ as defined above is s-holomorphic on $\medialdomain$.
The projection property is satisfied from the parallelism of $f$ and so is the holomorphicity.

Assume that $g : \medialdomain \rightarrow \bbC$ is s-holomorphic.
Let us show that $f$ as defined above is s-holomorphic on $\midedgedomain$.
The parallelism is clearly satisfied by the definition.
We just need to check the holomorphity of $f$.
Let $e \in \midedgedomain$.
We can assume that $p_e^+$ is primal and $p_e^-$ dual such that $\tau(e) \parallel e^{\icomp \pi/4}$.
We want to calculate $\dy f(e)$.
\begin{align*}
\dy f(e) & = \dy \Proj[g(p), \tau(e)] \\
& = \frac12 \dy [g(p) + \icomp \overline{g(p)}] \\
& = \frac12 \left[ \frac{\icomp}{\delta} (g(p^+) - g(p^-)) + \icomp \left( -\frac\icomp\delta \right) (\overline{g(p^+)} - \overline{g(p^-)}) \right] \\
& = \frac\icomp\delta [f(e^+) - f(e^-)]
\end{align*}
where we use $\tau(e) / \overline{\tau(e)} = \icomp$ and $\tau(e^+) / \overline{\tau(e^+)} = \tau(e^-) / \overline{\tau(e^-)} = -\icomp$.
\end{proof}

\section{Observable on semi-discrete lattice} \label{sec:observables}

\subsection{Definition and illustration}

Let us take a Dobrushin domain $(\Omega, a, b)$ in $\bbR^2$.
Consider $\delta > 0$ and the semi-discretized domain $\Dobrushindomain$ with mesh size $\delta$, on which we put the loop representation of the critical quantum Ising model (Equation \eqref{eqn:measure_loop_rep}) with parameter $\rho$, which is the density of Poisson point processes on both primal and dual vertical lines.
Here, we choose $\rho$ to be proportional to $1 / \delta$ so that the model is not degenerated when we take the limit $\delta \rightarrow 0$.
Actually, we take $\rho = \frac{1}{\sqrt2 \delta}$, the constant $\frac{1}{\sqrt2}$ being chosen to make the model isotropic, in the sense that we get the correct multiplicative constant in the relation of (s-)holomorphicity.

The loop representation of the quantum Ising model gives an interface going from $a_\delta$ to $b_\delta$.
If $e \in \midedgedomain$ is a mid-edge vertex of the Dobrushin domain $\Dobrushindomain$, we can define our observable at this point by
\begin{equation} \label{eqn:observable}
\obs (e) := F_{\Dobrushindomain} (e)
   = \frac{\nu}{\sqrt{\delta}} \cdot \bbE \left[ \exp \left( \frac\icomp2
      W(e, b_\delta) \right) \mathbbm{1}_{e \in \interface} \right]
\end{equation}
where $\interface$ denotes the (random) interface going from $a_\delta$ to $b_\delta$ and $W(e, b_\delta)$ its winding from $e$ to $b_\delta$ and $\nu = \exp(- \icomp \pi / 4)$.

\begin{rmk}
For the readers who might have read \cite{CS-universality}, since here the graph is oriented differently, the multiplicative factor $\nu$ is chosen so that we can keep the same notations for properties that follow later.
\end{rmk}

\begin{rmk} \label{rmk:obs_parallel}
Since the domain we consider here is simply connected, the winding $W(e, e_b^\delta)$ for a mid-edge vertex $e$ on the boundary does not depend on the random configuration.
We have two cases:
\begin{itemize}
\item If $p_e^- \in \primaldomain$ and $p_e^+ \in \dualdomain$, then the winding $W(e, b_\delta)$ is a multiple of $2 \pi$ and $\obs(e)$ is parallel to $\nu$.
\item If $p_e^- \in \dualdomain$ and $p_e^+ \in \primaldomain$, then the winding $W(e, b_\delta)$ is a multiple of $2 \pi$ plus $\pi$ and $\obs(e)$ is parallel to $\icomp \nu$.
\end{itemize}
This says that $\obs$ satisfies the property of parallelism.
\end{rmk}

\begin{figure}[htb] \centering
    \includegraphics[scale=0.8]{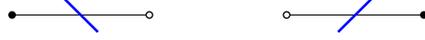}
    \caption{Local relative position of primal / dual vertices with the direction of $\obs$ in blue.}
    \label{fig:obs_parallel}
\end{figure}

We can notice that the winding at $e_b$ is $W(e_b, e_b) = 0$, thus $\obs (e_b) = \frac{\nu}{\sqrt{\delta}}$, which is called the \emph{normalizing constant}.

We then define the observable on $\medialdomain$ for all $p \in \medialdomain$ by
\begin{equation} \label{eqn:medial_observable}
\obsmedial(p) = \obs(p^+) + \obs(p^-).
\end{equation}
If $p \in \partial \medialdomain$, one of $p^+$ and $p^-$ is not defined.
We then take the undefined term to be $0$.
As such, we define $\obsmedial$ everywhere on $\medialdomain$.
We notice that $\obsmedial$ satisfies the projection property \eqref{eqn:proj_prop}.

Let $p$ be a primal or dual point on the arcs $\arcab$ and $\arcba$.
We denote by $\tau(p)$ the tangent vector to $\borderprimaldomain$ oriented from $b_\delta^b$ to $a_\delta^b$ if $p$ is on $\arcab$, and oriented from $b_\delta^w$ to $a_\delta^w$ if $p$ is on $\arcba$.

\begin{prop} \label{prop:obs_border}
For $p \in \arcab \cup \arcba$, we have $\obsmedial(p) \parallel \tau(p)^{-1/2}$.
\end{prop}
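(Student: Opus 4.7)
The plan is to reduce the statement to a computation using the fact that, on boundary mid-edges, the winding $W(e, b_\delta)$ appearing in \eqref{eqn:observable} is \emph{deterministic}: the interface, if it passes through a mid-edge on $\arcab$ or $\arcba$, must do so along the direction imposed by the Dobrushin boundary conditions. Therefore, pulling the exponential factor out of the expectation,
\[
  \obs(e) \;=\; \frac{\nu}{\sqrt\delta}\, e^{\icomp W(e, b_\delta)/2}\,\bbP[e \in \interface],
\]
so the argument of $\obs(e)$ is fully determined at such mid-edges, and not merely known modulo $\pi$ as in Remark \ref{rmk:obs_parallel}.

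The second step is to identify this deterministic winding with the boundary tangent. Starting from the reference point $e_b$, where $W(e_b, b_\delta) = 0$ and $\obs(e_b) = \nu/\sqrt\delta$, and moving along $\arcab$ (or $\arcba$) to a mid-edge $e$ adjacent to a vertex $p$, the winding accumulated equals (up to a multiple of $2\pi$) the rotation of the boundary tangent from $\tau(p)$ to the tangent at $b$. Taking half of this angle and adding the prefactor $\nu = e^{-\icomp\pi/4}$ yields $\nu\, e^{\icomp W(e, b_\delta)/2}\parallel \tau(p)^{-1/2}$, so that $\obs(e)$ itself is a nonnegative real multiple of $\tau(p)^{-1/2}$ (the nonnegative scalar being $\bbP[e\in\interface]/\sqrt\delta$).

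The final step is to assemble these two terms into $\obsmedial(p) = \obs(e_p^+) + \obs(e_p^-)$. Here one distinguishes two situations. If $p$ lies in the interior of a horizontal boundary segment, both $e_p^+$ and $e_p^-$ sit on the same straight piece of the boundary, so the winding from $e_p^-$ to $e_p^+$ is zero and $W(e_p^+, b_\delta) = W(e_p^-, b_\delta)$; both terms $\obs(e_p^\pm)$ are parallel to the common direction $\tau(p)^{-1/2}$, and so is their sum. If $p$ lies on a vertical boundary segment, only one of the mid-edges $e_p^\pm$ lies in $\medialdomain$ (the other being outside and replaced by $0$ by the convention following \eqref{eqn:medial_observable}), so $\obsmedial(p)$ reduces to the single term whose parallelism was just established. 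The treatment of $p \in \arcba$ is symmetric, with primal and dual roles exchanged.

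The main obstacle is purely bookkeeping: one has to track the correct branch of $\tau(p)^{-1/2}$ consistently with the cumulated value of $W(e, b_\delta)/2$ when moving along the boundary (care is needed at corners of $\arcab$ and $\arcba$, where the winding jumps by $\pm\pi/2$, and at the endpoints $a_\delta^b, a_\delta^w, b_\delta^b, b_\delta^w$). No new analytic ingredient beyond the definition of $\obs$ and Remark \ref{rmk:obs_parallel} is required.
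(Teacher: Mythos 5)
Your reduction to deterministic windings on the boundary and your treatment of the vertical boundary pieces (single contributing mid-edge, the other term being $0$ by convention) are fine and agree with the paper. The genuine gap is in your horizontal case. For $p$ in the interior of a horizontal boundary segment, the two adjacent mid-edges $e_p^\pm$ are of \emph{opposite type}: one has its primal extremity on the left and the other on the right, so by the very parallelism you invoke (Remark \ref{rmk:obs_parallel}) their observables lie in the \emph{orthogonal} lines $\nu\bbR$ and $\icomp\nu\bbR$ — there is no ``common direction''. Correspondingly, your claim that $W(e_p^+,b_\delta)=W(e_p^-,b_\delta)$ because the boundary is straight is false: the interface never passes straight through $p$; it is forced to bounce off the boundary there (a U-turn around $p$, e.g.\ down one mid-edge column, across, and back up), so the two deterministic windings differ by $\pi$, not $0$.

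Because of this, parallelism of the sum $\obsmedial(p)=\obs(e_p^+)+\obs(e_p^-)$ in the horizontal case is not the trivial statement you make it; the essential ingredient, which your argument never supplies, is that the two terms have \emph{equal modulus}. This is exactly what the paper proves: the interface passes through $e_p^+$ if and only if it passes through $e_p^-$ (the forced turn at the boundary), so the two expectations coincide up to the deterministic phase, giving $\obsmedial(p)=(1-\icomp)\,\obs(e_p^+)$; the sum of two orthogonal contributions of equal length then points along the bisecting direction, which is precisely $\tau(p)^{-1/2}$ (a horizontal tangent yields a real, respectively purely imaginary, value, not a value in $\nu\bbR\cup\icomp\nu\bbR$ as your argument would produce). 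Without the identification of the two events and the resulting equality of moduli, the horizontal case — which is the only nontrivial one — does not go through.
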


\begin{proof}
We can assume that $p \in \arcab$ since the proof is similar for $p \in \arcba$.
In this case, we get two types of tangent vector: $\tau(p)$ is horizontal when $p$ is a dual vertex and vertical when $p$ is a primal vertex.
\begin{enumerate}
\item Assume that the tangent vector $\tau(p)$ is vertical.
We may assume that $\tau(p)$ is oriented from right to left, then the paths counted in $\obs(e_p^+)$ are exactly those counted in $\obs(e_p^-)$, because the interface going through $e_p^+$ is forced to turn left and go through $e_p^-$.
Thus, the observable $\obsmedial(p)$ at $p$ can be written as
$$
\obsmedial(p) = (1-\icomp) \obs(e_p^+).
$$
The quantity $\obs(e_p^+)$ being parallel to $\icomp \nu$, we have $\obsmedial(p) \in \bbR$.
Also, we know that $\tau(p)^{-1/2}$ is parallel to $1$.
The case where $\tau(p)$ is oriented from left to right can be treated in the same way.
\item Assume that the tangent vector $\tau(p)$ is horizontal
which can be oriented either upwards or downwards.
If $\tau(p)$ is oriented downwards, $\obsmedial(p)$ takes the same value as $\obs(e_p^-)$, which belongs to $\icomp \nu \bbR$ due to Remark \ref{rmk:obs_parallel}.
Moreover, $\tau(p)^{-1/2}$ is parallel to $\icomp \nu$.
It is similar if $\tau(p)$ is oriented upwards.
\end{enumerate}
\end{proof}

\begin{figure}[htb] \centering
    \includegraphics[scale=0.8, page=5]{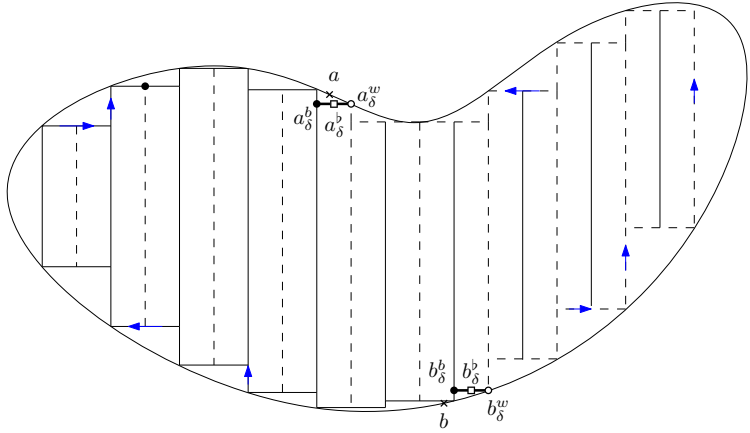}
    \caption{The Dobrushin domain shown in Figure \ref{fig:ex_dob} with tangent vectors $\tau$ drawn in blue with arrows on the boundary.}
    \label{fig:ex_tau}
\end{figure}


\subsection{Relations and holomorphicity} \label{sec:obs_relations}

To study the observables $\obs$ and $\obsmedial$, we need to establish local bijections between configurations which will give us local relations for $\obs$ and $\obsmedial$.
Our goal is to get a relation between the observable $\obs$ and its derivative $\dy \obs$.

To do so, we will fix a \emph{local window} with height equal to $\epsilon$ and width covering three columns, as shown in Figure \ref{fig:edges_not}.
We notice that, in the loop representation, if we reverse primal and dual axes, the loops and interfaces will also reverse their paths.
By studying the difference between the contribution of the term in the expectation in Equation \eqref{eqn:observable}, and by making $\epsilon$ go to 0, we will get the derivative.
Since the number of points given by point Poisson processes is proportional to the length of the interval, thus to $\epsilon$ here, in the limit, only the first order term in $\epsilon$ counts.
In consequence, we only need to constraint ourselves to configurations with at most one Poisson point in the local window.

Some abbreviations will be introduced to lighten our notations.
We denote the north-/south- west/middle/east mid-edge vertices by taking their initials: $nw$, $nm$, $ne$, $sw$, $sm$ and $se$.
We denote the primal extremity shared by $nw$ and $nm$ by $b_n$ and the one shared by $sw$ and $sm$ by $b_s$.
Similarly for $w_n$ and $w_s$.
See again Figure \ref{fig:edges_not}.

\begin{figure}[htb] \centering
  \includegraphics[scale=1]{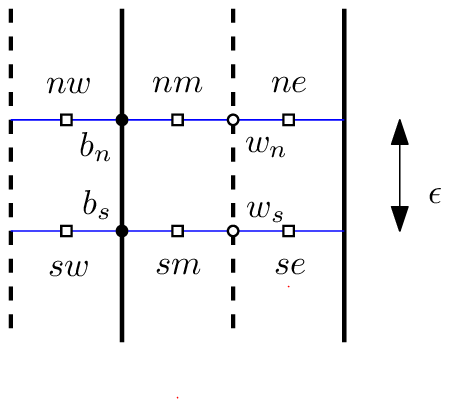}
  \caption{A local window with height $\epsilon$. The same notations are used in the following figures and tables of this section.}
  \label{fig:edges_not}
\end{figure}

To understand the bijections between configurations, the reader is invited to have a look at Figure \ref{fig:configs_bijection} while reading the following explanation.
The bijections are obtained by starting with an interface going through the middle column, which is not a loss of generality.
In our case, it goes down due to the choice of the local window.
We assume that there is not any Poisson points in this local window.

We will then analyze different possibilities.
Once the interface goes out of the local window, it may never come back to the neighboring mid-edge axes (\emph{i.e.} west and east), which is the case of (1a).
Otherwise, the interface may come back to one of the neighboring mid-edge axes.
In (2a), it comes back through the East column and in (3a), through the West column.

Now we can consider Poisson points in our configurations.
As we mentioned earlier, we are only interested in configurations with at most one such point.
In (1b), (2b) and (3b), we add one Poisson point between $b_s$ and $b_n$ whereas in (1c), (2c) and (3c), we add one between $w_s$ and $w_n$.
The configurations (1a), (1b) and (1c) are in bijection, same for (2a), (2b) and (2c) or (3a), (3b) and (3c).
Notice that these configurations do not have the same weight, but we know the ratio between their weights, which will allow us to get linear relations between the contribution of $\exp(\frac\icomp2 W(e, b_\delta)) \mathbbm{1}_{e \in \interface}$ to $f$ at $nw$, $sw$, $nm$, $sm$, $ne$ and $se$.

\begin{figure}[htb] \centering
  \includegraphics[scale=0.75]{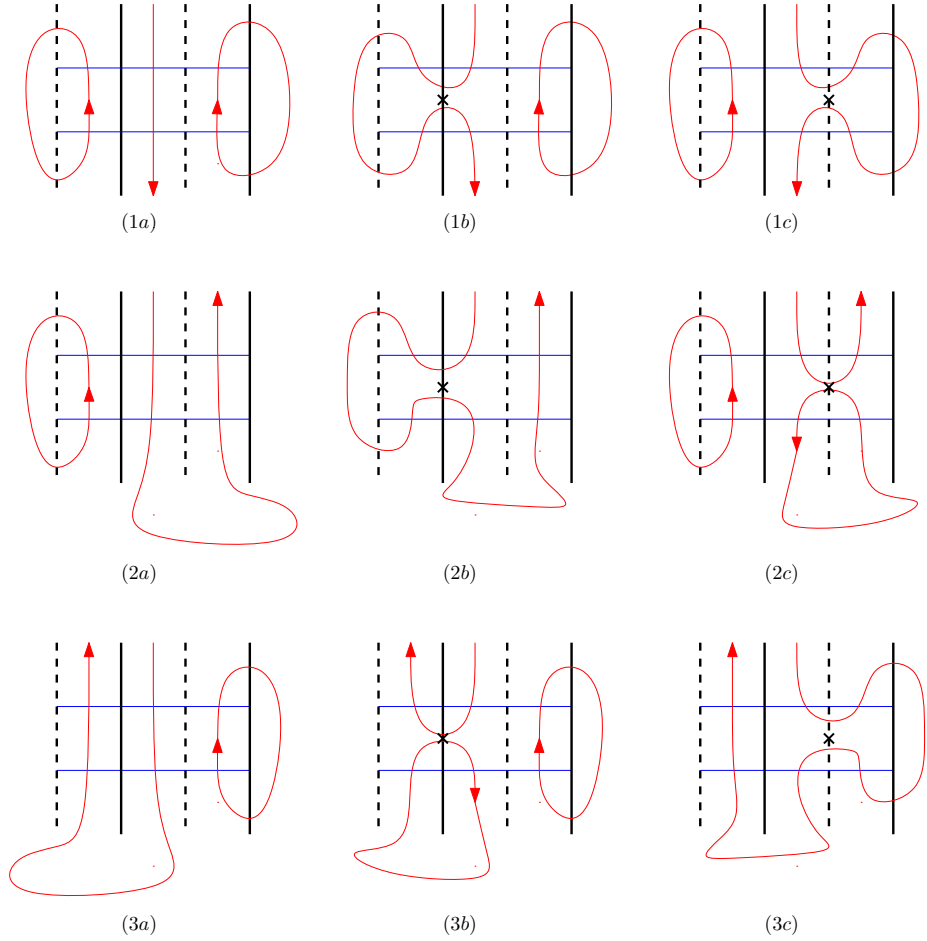}
  \caption{Bijection between configurations in a local window chosen above.}
  \label{fig:configs_bijection}
\end{figure}

We first establish Table \ref{table:first_quantities} containing different contributions.
The last columns contain the weight of each configuration up to a multiplicative constant depending on the original configuration in (1a), (2a) and (3a).
However, the fact that this multiplicative constant is unknown does not raise any difficulty since we only need linear relations between values of $\obs$ at different mid-edge vertices.

\begin{table}[htb] \centering
\begin{tabular}{c|ccccccc} \hline & $nw$ & $sw$ & $nm$ & $sm$ & $ne$ & $se$ & weights \\
\hline \hline
1a & 0 & 0 & 1 & 1 & 0 & 0 & $\sqrt{2}$ \\
1b & $e^{\icomp\frac{\pi}{2}}$ & $e^{-\icomp\frac{\pi}{2}}$ & 1 & 1 & 0 & 0 & $\epsilon \rho$ \\
1c & 0 & 0 & 1 & 1 & $e^{-i\frac{\pi}{2}}$ & $e^{i\frac{\pi}{2}}$ & $\epsilon \rho$ \\
\hline
2a & 0 & 0 & 1 & 1 & $e^{-i\frac{\pi}{2}}$ & $e^{-i\frac{\pi}{2}}$ & $\sqrt{2}$ \\
2b & $e^{i\frac{\pi}{2}}$ & $e^{-i\frac{\pi}{2}}$ & 1 & 1 & $e^{-i\frac{\pi}{2}}$ & $e^{-i\frac{\pi}{2}}$ & $\epsilon \rho$ \\
2c & 0 & 0 & 1 & 0 & $e^{-i\frac{\pi}{2}}$ & 0 & $2 \epsilon \rho$ \\
\hline
3a & $e^{i \frac\pi2}$ & $e^{i \frac\pi2}$ & 1 & 1 & 0 & 0 & $\sqrt{2}$ \\
3b & $e^{i \frac\pi2}$ & 0 & 1 & 0 & 0 & 0 & $2 \epsilon \rho$ \\
3c & $e^{i \frac\pi2}$ & $e^{i \frac\pi2}$ & 1 & 1 & $e^{-i \frac\pi2}$ & $e^{i \frac\pi2}$ & $\epsilon \rho$ \\
\hline
\end{tabular}
\caption{Contributions of the exponential term in each configuration at different positions.}
\label{table:first_quantities}
\end{table}

We take the difference of contributions between the north mid-edge and the south mid-edge in each of the three columns to get Table \ref{table:differences}.
After this, we get $\obs(w)$, $\obs(m)$ and $\obs(e)$ from Table \ref{table:first_quantities} by ignoring terms of
order higher than $\epsilon$; and $\dy \obs(w)$, $\dy \obs(m)$ and $\dy \obs(e)$ by dividing the quantities in Table \ref{table:differences} by $\epsilon$ and then making it go to 0.

\begin{table}[htb]
  \renewcommand\baselinestretch{2}\selectfont
  \begin{minipage}{.48\textwidth}
    \centering
    \begin{tabular}{c||c|c|c}
      \hline
      & $nw-sw$ & $nm-sm$ & $ne-se$ \\
      \hline
      \hline
      1 & $2i \epsilon \rho$ & 0 & $-2i \epsilon \rho$ \\
      \hline
      2 & $2i \epsilon \rho$ & $2 \epsilon \rho$ & $- 2i \epsilon \rho$ \\
      \hline
      3 & $2i \epsilon \rho$ & $2 \epsilon \rho$ & $- 2i \epsilon \rho$ \\
      \hline
    \end{tabular}
    \renewcommand\baselinestretch{1}\selectfont
    \caption{Computation of the difference between the contributions of North and South.}
    \label{table:differences}
  \end{minipage}
  \renewcommand\baselinestretch{1}\selectfont
  \hfill
  \begin{minipage}{.48\textwidth}
    \centering
    \begin{tabular}{c||c|c|c}
      \hline
      & $\obs(w)$ & $\obs(m)$ & $\obs(e)$ \\
      & $\dy \obs(w)$ & $\dy \obs(m)$ & $\dy \obs(e)$ \\
      \hline
      \hline
      \multirow{2}{*}{1} & 0 & $\sqrt{2}$ & 0 \\
      & $2i \rho$ & 0 & $-2i \rho$ \\
      \hline
      \multirow{2}{*}{2} & 0 & $\sqrt{2}$ & $- \icomp \sqrt{2}$ \\
      & $2i \rho$ & $2 \rho$ & $- 2i \rho$ \\
      \hline
      \multirow{2}{*}{3} & $\icomp \sqrt{2}$ & $\sqrt{2}$ & 0 \\
      & $2i \rho$ & $2 \rho$ & $- 2i \rho$ \\
      \hline
    \end{tabular}
    \caption{By considering order 0 and order 1 terms in $\epsilon$, we get $\obs$ and $\dy \obs$.}
    \label{table:final_quantities}
  \end{minipage}
\end{table}

The quantities in the first and the second lines of Table \ref{table:final_quantities} satisfy
\begin{equation}
(\obs(e) - \obs(w)) \cdot \icomp \sqrt{2} \rho = \dy \obs(m).
\end{equation}
Moreover, those in the first and the third line satisfy also the same equation.
By summing over the possible local configurations and by gathering them together, we obtain that for each $m \in \midedgedomain$
\begin{equation} \label{eqn:holo}
\derzbar \obs (m) = \frac12 \left[ \frac{\obs(e)-\obs(w)}{\delta} - \frac{\dy \obs(m)}{\icomp} \right] = 0.
\end{equation}

Gathering all the above computations and using Proposition \ref{prop:s_hol_equiv}, we obtain the following proposition.
\begin{prop} \label{prop:obs_prop}
The observables $\obs$ and $\obsmedial$ satisfy the following properties.
\begin{enumerate}
\item The observable $\obs$ is s-holomorphic on $\midedgedomain$.
\item The observable $\obsmedial$ is s-holomorphic on $\medialdomain$.
\end{enumerate}
\end{prop}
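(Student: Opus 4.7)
The plan is to collect the ingredients already assembled in the preceding discussion. No new computations are needed: the content of part 1 is the parallelism recorded in Remark \ref{rmk:obs_parallel} together with the local equation \eqref{eqn:holo} derived from the bijection tables, and part 2 will follow from part 1 by a direct application of Proposition \ref{prop:s_hol_equiv}.

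For part 1, I first verify the parallelism property of $\obs$. Because $\Omega^\diamond$ is simply connected, the winding $W(e, b_\delta)$ of an admissible interface through a fixed mid-edge $e$ depends only on the relative positions of the primal and dual endpoints of $e$ and not on the random configuration, so Remark \ref{rmk:obs_parallel} shows that $\obs(e) \in \nu \bbR$ or $\obs(e) \in \icomp \nu \bbR$, which in each case coincides with $\tau(e) \bbR$. Then I check that $\derzbar \obs(m) = 0$ at every interior $m \in \midedgedomain$ using equation \eqref{eqn:holo}, which is itself the outcome of summing the local bijection analysis of Tables \ref{table:first_quantities}--\ref{table:final_quantities} over the three admissible local behaviors (1), (2), (3) of the interface through the height-$\epsilon$ window around $m$ and then letting $\epsilon$ tend to zero. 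The key input here is the critical choice $\rho = 1/(\sqrt 2 \delta)$, which turns the prefactor in the relation $(\obs(e) - \obs(w)) \cdot \icomp \sqrt 2 \rho = \dy \obs(m)$ into $\icomp/\delta$, so that it reads exactly $\derzbar \obs(m) = 0$.

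For part 2, I observe that the definition \eqref{eqn:medial_observable} $\obsmedial(p) = \obs(p^+) + \obs(p^-)$ is literally the transfer formula prescribed by Proposition \ref{prop:s_hol_equiv} for lifting an s-holomorphic mid-edge function to a medial one. Applying that proposition to the $\obs$ built in part 1 then yields the s-holomorphicity of $\obsmedial$ on $\medialdomain$ without further work.

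The only place where something subtle could happen is the bijection analysis underlying Tables \ref{table:first_quantities}--\ref{table:final_quantities}: one must check that the listed classes 1a--3c are exhaustive up to events of probability $O(\epsilon^2)$, that the weights in the last columns correctly account for both the Poisson densities and the change in the loop count under \eqref{eqn:measure_loop_rep} when a bridge or a death point is inserted, and that boundary mid-edges are handled consistently with the convention that the undefined term in \eqref{eqn:medial_observable} vanishes, in a manner compatible with Proposition \ref{prop:obs_border}.
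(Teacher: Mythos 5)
Your proposal is correct and follows essentially the same route as the paper: the parallelism of $\obs$ from Remark \ref{rmk:obs_parallel}, the holomorphicity relation \eqref{eqn:holo} obtained from the local bijection tables with the critical choice $\rho = 1/(\sqrt{2}\delta)$, and then Proposition \ref{prop:s_hol_equiv} applied via the definition \eqref{eqn:medial_observable} to pass to $\obsmedial$. The paper itself gives no separate proof beyond gathering exactly these ingredients, so nothing further is required.
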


By Proposition \ref{prop:s_hol_equiv}, the observables $\obs$ and $\obsmedial$ encode the same amount of information.
We will then sometimes work with $\obs$, sometimes with $\obsmedial$, according to our convenience.

\subsection{Primitive of $\obsmedial^2$} \label{sec:obs_primitive}

We will be interested in Riemann-Hilbert boundary value problem in Section \ref{sec:BVP}.
To solve this problem in continuum, we make use of the fact that the function $h = \myi \int f^2$, where $f$ is a solution, is harmonic.
Therefore, in the semi-discrete setting, we try to make sense of a primitive of $\obsmedial^2$ then show that it is not far from being harmonic.
This will be illustrated in Section \ref{sec:CVG_thm}.

Given $v$ a site on the lattice, we denote by $e_v^+$ and $e_v^-$ the mid-edges having $v$ as extremity on the right side and the left side of $v$, as illustrated in Figure \ref{fig:neighbors}.
In a similar way, we denote by $e_v^{++}$ and $e_v^{--}$ the second on the right or left.

\begin{figure}[htb] \centering
  \includegraphics[scale=1.0]{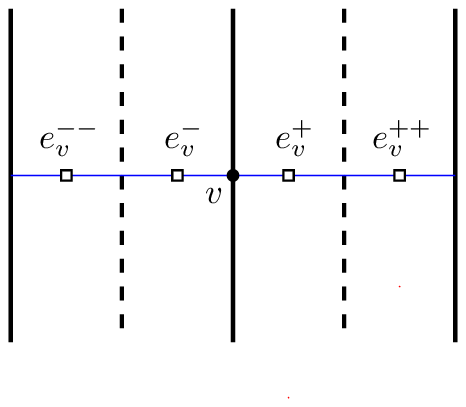}
  \caption{Notations for neighboring mid-edges.}
  \label{fig:neighbors}
\end{figure}

Let us define $\prim$, a ``primitive'' of $\obsmedial^2$ in the following way.
Since $\obsmedial$ and $\obs$ can be related, in the definitions below we only work with $\obs$ first.

\begin{enumerate}
\item If $b$ and $b'$ are primal vertices such that $\myr b = \myr b'$ and $[b b'] \subset \medialdomain$, define
\begin{equation} \label{eqn:H_primal}
\prim(b') - \prim(b) = 2 \cdot \myi \int_b^{b'} \obs(e_v^-) \overline{\obs}(e_v^+) \dd v
\end{equation}
\item If $w$ and $w'$ are dual vertices such that $\myr w = \myr w'$ and $[w w'] \subset \medialdomain$, define
\begin{equation} \label{eqn:H_dual}
\prim(w') - \prim(w) = - 2 \cdot \myi \int_w^{w'} \obs(e_v^-) \overline{\obs}(e_v^+) \dd v.
\end{equation}
\item If $b$ and $w$ are neighboring primal and dual vertices in $\medialdomain$, define
\begin{equation} \label{eqn:H_edge}
\prim(b) - \prim(w) = \delta |\obs(bw)|^2.
\end{equation}
\end{enumerate}

\begin{prop}
The primitive $\prim$ is well-defined up to an additive constant.
\end{prop}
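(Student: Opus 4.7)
The plan is to show that the signed sum of the three types of increments vanishes around every closed cycle in the graph whose vertices are the primal and dual lattice sites of $\medialdomain$ and whose edges are the allowed transitions (1)--(3). Since $\medialdomain$ is simply connected, any such cycle decomposes into elementary rectangular ones, so it suffices to treat one elementary plaquette: primal vertices $b_1=(k\delta,y_0)$, $b_2=(k\delta,y_1)$ together with the dual vertices $w_1=((k+\tfrac12)\delta,y_0)$, $w_2=((k+\tfrac12)\delta,y_1)$ on the adjacent vertical line.

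Traversing this plaquette counterclockwise $b_1 \to b_2 \to w_2 \to w_1 \to b_1$ and applying the rules with the proper signs, the four increments sum (after the horizontal jumps from (3) are rewritten as $\delta(|\obs(e(y_0))|^2-|\obs(e(y_1))|^2) = -\delta\int_{y_0}^{y_1}\dy|\obs(e)|^2\,dy$) to
$$
\int_{y_0}^{y_1}\Bigl\{2\myi\bigl[\obs(e_L(y))\overline{\obs}(e(y))+\obs(e(y))\overline{\obs}(e_R(y))\bigr]-\delta\,\dy|\obs(e(y))|^2\Bigr\}\,dy,
$$
where $e(y)$ denotes the middle mid-edge at height $y$ (which equals both $e_{b}^+$ for the primal column and $e_{w}^-$ for the dual column), and $e_L(y)=e_{b}^-$, $e_R(y)=e_{w}^+$ are the outer neighboring mid-edges.

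The heart of the argument will be to verify the pointwise identity
$$
\delta\,\dy|\obs(e)|^2 \;=\; 2\myi\bigl[\obs(e_L)\overline{\obs}(e)+\obs(e)\overline{\obs}(e_R)\bigr].
$$
This is a direct consequence of the s-holomorphicity of $\obs$ established in Proposition \ref{prop:obs_prop}: the relation $\derzbar\obs(e)=0$ rewritten as $\dy\obs(e)=\tfrac{\icomp}{\delta}[\obs(e_R)-\obs(e_L)]$, together with its complex conjugate and the Leibniz rule $\dy|\obs(e)|^2=\overline{\obs}(e)\,\dy\obs(e)+\obs(e)\,\dy\overline{\obs}(e)$, yields after regrouping the four resulting products exactly the right-hand side (using the commutativity $\obs(e_L)\overline{\obs}(e)=\overline{\obs}(e)\obs(e_L)$ to match the terms).

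The only real obstacle is sign and orientation bookkeeping: the opposite signs in rules (1) and (2) are precisely what cause the two vertical integrals to \emph{add} rather than cancel when the cell is traversed counterclockwise, and the sign convention in (3) must match so that the boundary term $-\delta\,\dy|\obs(e)|^2$ appears with the correct sign in the integrand. Once these are checked and the pointwise identity above is verified, the integrand vanishes identically on every elementary cell, so the closed-loop sum is zero. By connectedness of the graph and simple connectedness of $\medialdomain$, the value of $\prim$ at any vertex is then determined by its value at a single arbitrary base point, proving the proposition.
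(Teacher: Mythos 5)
Your proof is correct and follows essentially the same route as the paper: reduce to an elementary rectangle straddling one primal and one adjacent dual column, express the two horizontal jumps as $-\delta\int \dy|\obs(e)|^2$, and cancel against the two vertical contributions using the holomorphicity relation $\dy\obs(e)=\frac{\icomp}{\delta}[\obs(e^+)-\obs(e^-)]$; your pointwise identity $\delta\,\dy|\obs(e)|^2=2\myi[\obs(e_L)\overline{\obs}(e)+\obs(e)\overline{\obs}(e_R)]$ is exactly the computation the paper performs, just organized as a single Leibniz-rule identity rather than as two separately rewritten terms. The sign bookkeeping you flag (the minus sign in the dual-column rule combining with the reversed traversal so the vertical integrals add) is indeed the only delicate point, and you have it right.
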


\begin{proof} It is sufficient to check that the difference of $\prim$ along cycles is always 0, or equivalently, the difference along elementary rectangles is always 0.
Let $u_1$, $u_2$, $v_2$, $v_1$ be a rectangle as shown in Figure \ref{fig:rec}.
We denote by $e_{m_i}$ the mid-edge between $u_i$ and $v_i$ for $i = 1, 2$.

\begin{figure}[htb] \centering
  \includegraphics[scale=1.0]{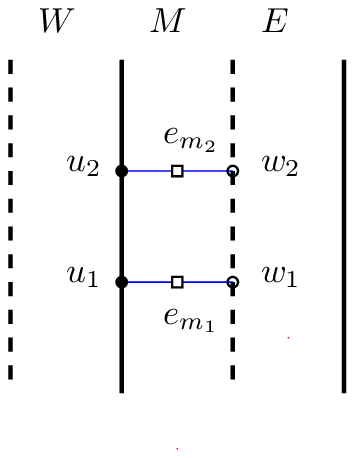}
  \caption{An elementary rectangle $u_1 u_2 w_2 w_1$.}
  \label{fig:rec}
\end{figure}

We need to show that the difference of $\prim$ along $u_1u_2$, $u_2w_2$, $w_2w_1$ and $w_1u_1$ gives 0.
We apply directly the definitions from Equations \eqref{eqn:H_primal}, \eqref{eqn:H_dual} and \eqref{eqn:H_edge}.

\begin{align*}
& \prim(u_2) - \prim(u_1) + \prim(w_2) - \prim(u_2)
 + \prim(w_1) - \prim(w_2) + \prim(u_1) - \prim(w_1) \\
& = 2 \myi \int_{u_1}^{u_2} \obs(e_v^-) \overline{\obs}(e_v^+) \dd v
 - \delta |\obs(u_2w_2)|^2 
 + 2 \myi \int_{w_1}^{w_2} \obs(e_v^-) \overline{\obs}(e_v^+) \dd v
 + \delta |\obs(u_1w_1)|^2 \\
& = 2 \myi \int_{m_1}^{m_2} [\obs(e_m^-) \overline{\obs}(e_m) + \obs(e_m) \overline{\obs}(e_m^+) ] \dd m
 - \delta (|\obs(u_2 w_2)|^2 - |\obs(u_1 w_1)|^2).
\end{align*}

The first term can be rewritten as:
\begin{align*}
2 \myi \int_{m_1}^{m_2} [\obs(e_m^-) \overline{\obs}(e_m) + \obs(e_m) \overline{\obs}(e_m^+) ] \dd m
= 2 \myi \int_{m_1}^{m_2} [\obs(e_m^-) - \obs(e_m^+) ] \overline{\obs}(e_m) \dd m,
\end{align*}
and the second term as:
\begin{align*}
- \delta (|\obs(u_2 w_2)|^2 - |\obs(u_1 w_1)|^2)
& = - \delta \myr \int_{m_1}^{m_2} 2 \dy \obs(e_m) \overline{\obs}(e_m) \dd m \\
& = - \myr \int_{m_1}^{m_2} 2 \icomp (\obs(e_m^+) - \obs(e_m^-)) \overline{\obs}(e_m) \dd m \\
& = 2 \myi \int_{m_1}^{m_2} [\obs(e_m^+) - \obs(e_m^-)] \overline{\obs}(e_m) \dd m \\
\end{align*}
where we use the holomorphic relation \eqref{eqn:holo} in the second line.
Thus, the quantity we were looking for is indeed 0.
\end{proof}

From the previous proposition, we can fix $\prim(b_\delta^w)$ to be zero, thus fixing the additive constant.
Now, we can really talk about \emph{the} primitie $\prim$.

Starting from the definitions above, we will reformulate to get relations for $\prim$ between different points on the same axis (Proposition \ref{prop:H_axis}) and neighboring points on medial lattice (Proposition \ref{prop:H_neighbors}).
This will give us a simpler expression in terms of $\obsmedial$: $\prim = \myi \int^\delta (\obs (z))^2 \dd^\delta z$.

\begin{prop} \label{prop:H_axis}
Let $p, p' \in \medialdomain$ such that $\myr p = \myr p'$ and $[p p'] \subset \medialdomain$.
Then we have
\begin{equation} \label{eqn:H_axis}
\prim(p') - \prim(p) = \myi \int_p^{p'} \icomp \cdot (\obsmedial(v))^2 \dd v.
\end{equation}
\end{prop}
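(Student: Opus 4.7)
The plan is to reduce Proposition \ref{prop:H_axis} to a pointwise identity between the integrands of \eqref{eqn:H_axis} and of the defining formula \eqref{eqn:H_primal} or \eqref{eqn:H_dual}. The first observation is that the constraint $\myr p = \myr p'$ forces $p$ and $p'$ to lie on a common vertical axis of $\medialdomain$, which is entirely primal or entirely dual; hence every medial vertex $v$ strictly between them is of the same type. Both sides of \eqref{eqn:H_axis} then decompose additively along $[p, p']$, so it suffices to check, for each such $v$, that $\myi[\icomp\,\obsmedial(v)^2]$ agrees with $\pm 2\,\myi[\obs(e_v^-)\overline{\obs}(e_v^+)]$, the sign being dictated by \eqref{eqn:H_primal} for primal $v$ and by \eqref{eqn:H_dual} for dual $v$.

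For primal $v$, the parallelism property of $\obs$ lets me write $\obs(e_v^+) = c\,\nu$ and $\obs(e_v^-) = d\,\icomp\nu$ with $c, d \in \bbR$. Using $\nu^2 = -\icomp$ and $\obsmedial(v) = \obs(e_v^+) + \obs(e_v^-)$, a direct expansion yields
\[
\obsmedial(v)^2 = 2cd + \icomp(d^2 - c^2), \qquad \icomp\,\obsmedial(v)^2 = (c^2 - d^2) + 2cd\,\icomp,
\]
so $\myi[\icomp\,\obsmedial(v)^2] = 2cd$. On the other hand, $\obs(e_v^-)\overline{\obs}(e_v^+) = cd\,\icomp$, hence $\myi[\obs(e_v^-)\overline{\obs}(e_v^+)] = cd$. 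The two integrands therefore differ by exactly the factor $2$ standing in front of the integral in \eqref{eqn:H_primal}, which gives \eqref{eqn:H_axis} after integration against $\dd v$.

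The dual case will be handled identically, with the roles of $\nu$ and $\icomp\nu$ swapped: writing $\obs(e_v^+) = c\,\icomp\nu$ and $\obs(e_v^-) = d\,\nu$ gives $\myi[\icomp\,\obsmedial(v)^2] = 2cd$ just as before, while now $\obs(e_v^-)\overline{\obs}(e_v^+) = -cd\,\icomp$ has imaginary part $-cd$; the extra sign is precisely absorbed by the $-2$ in front of the integral in \eqref{eqn:H_dual}. I do not expect any serious obstacle: the argument reduces to a short algebraic check based on the parallelism of $\obs$ and the identity $\nu^2 = -\icomp$, with only some bookkeeping needed to keep the various sign conventions consistent between the primal and dual cases.
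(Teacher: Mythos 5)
Your proposal is correct and follows essentially the same route as the paper: both reduce \eqref{eqn:H_axis} to a pointwise identity between the integrand $\myi[\icomp\,\obsmedial(v)^2]$ and $\pm 2\,\myi[\obs(e_v^-)\overline{\obs}(e_v^+)]$ from \eqref{eqn:H_primal}--\eqref{eqn:H_dual}, using the parallelism of $\obs$. The only cosmetic difference is that you parametrize $\obs(e_v^\pm)$ by real multiples of $\nu$ and $\icomp\nu$, whereas the paper expands $\obsmedial(v)^2$ and discards the purely imaginary square terms; the algebra and sign bookkeeping agree.
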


\begin{proof}
We first assume that $p, p' \in \primaldomain$.
Given $v \in [p p']$, since $\obs(e_v^-) \in \icomp \nu \bbR$ and $\obs(e_v^+) \in \nu \bbR$, we have
\begin{align*}
\myi \left[ \icomp \cdot \obsmedial(v)^2 \right]
& = \myr \left[ \obs(e_v^-)^2 + \obs(e_v^+)^2 + 2 \obs(e_v^-) \obs(e_v^+) \right] \\
& = 2 \myr \left[ \obs(e_v^-) \obs(e_v^+) \right] \\
& = 2 \myi \left[ \obs(e_v^-) \overline{\obs} (e_v^+) \right].
\end{align*}
The same computation for $p, p' \in \dualdomain$ and $v \in [p p']$ leads to
\begin{align*}
\myi \left[ \icomp \cdot \obsmedial(v)^2 \right]
 = - 2 \myi \left[ \obs(e_v^-) \overline{\obs} (e_v^+) \right].
\end{align*}
Using Equations \eqref{eqn:H_primal} and \eqref{eqn:H_primal}, we get the result.
\end{proof}

\begin{prop} \label{prop:H_neighbors}
Let $p \in \medialdomain$ such that $p^-, p^+ \in \medialdomain$.
Then,
\begin{equation} \label{eqn:H_neighbors}
\prim(p^+) - \prim(p^-) = \myi [ \obsmedial(p)^2 (p^+ - p^-)].
\end{equation}
\end{prop}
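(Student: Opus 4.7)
The proof is a direct computation from the definitions; the only subtlety is carefully tracking signs according to whether $p$ is primal or dual. Here is how I would organize it.

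First, I would apply Equation \eqref{eqn:H_edge} twice, passing through the intermediate vertex $p$, which is the only definition needed since $p^-$ and $p^+$ are neighbors of $p$ in the medial lattice. Writing $e^\pm := e_p^\pm$, the right-hand sides of the two applications differ in sign according to whether $p$ is primal or dual. Concretely:
\begin{itemize}
\item If $p$ is primal, then $p^\pm$ are dual, and telescoping gives
$\prim(p^+) - \prim(p^-) = \delta(|\obs(e^-)|^2 - |\obs(e^+)|^2)$;
\item If $p$ is dual, then $p^\pm$ are primal, and telescoping gives
$\prim(p^+) - \prim(p^-) = \delta(|\obs(e^+)|^2 - |\obs(e^-)|^2)$.
\end{itemize}
Note that $p^+ - p^- = \delta$ is real, so the target identity reads $\prim(p^+) - \prim(p^-) = \delta \cdot \myi[\obsmedial(p)^2]$.

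Next, I would expand $\obsmedial(p)^2 = \obs(e^-)^2 + 2\obs(e^-)\obs(e^+) + \obs(e^+)^2$ and extract the imaginary part using the parallelism from the s-holomorphicity of $\obs$. Since $p$ sits between mid-edges of opposite type, exactly one of $e^-, e^+$ is in $\nu\bbR$ and the other in $\icomp\nu\bbR$. Using $\nu^2 = -\icomp$, we get $(\nu r)^2 = -\icomp r^2$ and $(\icomp \nu s)^2 = \icomp s^2$, so each squared term is purely imaginary with sign dictated by the parallelism type. The cross term $2\obs(e^-)\obs(e^+)$ is proportional to $\icomp \nu^2 = 1$, hence real, contributing nothing to the imaginary part.

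Finally, I would verify case by case that the signs match. When $p$ is primal, $e^-\in\icomp\nu\bbR$ and $e^+\in\nu\bbR$, yielding $\myi[\obsmedial(p)^2] = |\obs(e^-)|^2 - |\obs(e^+)|^2$; when $p$ is dual the parallelism types swap and the sign flips accordingly. In both cases this matches the telescoped expression above for $\prim(p^+) - \prim(p^-)$, completing the identification. There is no real obstacle here beyond bookkeeping; the only thing to be careful about is reading off the parallelism type of each $e^\pm$ correctly from the convention in the definition of s-holomorphicity, which depends on whether $p_e^+$ is primal or dual.
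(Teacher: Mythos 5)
Your argument is correct and is essentially the paper's own proof: telescope the definition \eqref{eqn:H_edge} through $p$, then use the parallelism of $\obs(e_p^\pm)$ (with $\nu^2=-\icomp$, real cross term) to identify $\myi[\obsmedial(p)^2]$ with $\pm(|\obs(e_p^+)|^2-|\obs(e_p^-)|^2)$ and match signs via $p^+-p^-=\delta$. The only difference is cosmetic: the paper treats the dual case and leaves the primal case and the telescoping step implicit, while you spell out both.
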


\begin{proof}
We can assume that $p \in \dualdomain$ and $p^-, p^+ \in \primaldomain$.
The other case when $p \in \primaldomain$ can be treated in the same way.
 
From the parallelism property, we know that $\obs(e_p^-) \in \nu \bbR$ and $\obs(e_p^+) \in \icomp \nu \bbR$.
A simple computation gives
\begin{align*}
\myi \left[ \obsmedial(p)^2 \right] & = \myi \left[ \obs(e_p^-)^2 + \obs(e_p^+)^2 + 2 \obs(e_p^-) \obs(e_p^+) \right] \\
& = |\obs(e_p^+)|^2 - |\obs(e_p^-)|^2.
\end{align*}
Since $p^+ - p^- = \delta$, this completes the proof.
\end{proof}

\begin{cor}
The primitive $\prim$ is constant on both arcs $\arcab = \arcabfull$ and $\arcba = \arcbafull$.
Moreover,
\begin{equation}
{\prim}_{\mid \arcab} = 1 \quad \mbox{ and } \quad {\prim}_{\mid \arcba} = 0.
\end{equation}
\end{cor}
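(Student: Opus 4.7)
The plan is to pin down the two prescribed values of $\prim$ at the endpoints $b_\delta^w, b_\delta^b$ using the edge formula \eqref{eqn:H_edge}, and then propagate these values along each boundary arc by showing that each increment vanishes thanks to the boundary parallelism from Proposition \ref{prop:obs_border}. Concretely, I would fix $\prim(b_\delta^w)=0$ as in the preceding convention, observe that at the distinguished edge $[b_\delta^b b_\delta^w]$ the interface always terminates and the winding vanishes so $\obs(b_\delta^b b_\delta^w)=\nu/\sqrt{\delta}$, and then apply \eqref{eqn:H_edge} to obtain $\prim(b_\delta^b)-\prim(b_\delta^w)=\delta\,|\obs(b_\delta^b b_\delta^w)|^2=1$. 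These two anchor values dictate the constants $1$ and $0$ claimed on $\arcab$ and $\arcba$ respectively.

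Next, I would decompose each arc into its maximal vertical sub-segments and horizontal sub-segments and show $\prim$ is constant along each. Along a maximal vertical sub-segment of the arc joining two points $p,p'$, Proposition \ref{prop:H_axis} gives
\[
\prim(p')-\prim(p)=\myi\int_{p}^{p'}\icomp\,\obsmedial(v)^{2}\,\dd v.
\]
By Proposition \ref{prop:obs_border}, for each $v$ on this sub-segment one has $\obsmedial(v)=r(v)\tau(v)^{-1/2}$ with $r(v)\in\bbR$ and $\tau(v)\in\{\icomp,-\icomp\}$, so $\obsmedial(v)^{2}=r(v)^{2}\tau(v)^{-1}\in\icomp\,\bbR$, hence $\icomp\,\obsmedial(v)^{2}\in\bbR$ and the integrand has zero imaginary part. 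The increment is therefore $0$.

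Along a horizontal stretch of the arc, the boundary visits alternately primal and dual medial vertices $b,w,b',w',\dots$ with consecutive vertices differing by $\pm\delta/2$. Applying Proposition \ref{prop:H_neighbors} at each intermediate vertex $p$ yields $\prim(p^{+})-\prim(p^{-})=\delta\,\myi[\obsmedial(p)^{2}]$. For such $p$ lying on a horizontal piece of the arc, Proposition \ref{prop:obs_border} gives $\tau(p)\in\{1,-1\}$, so $\obsmedial(p)^{2}=r(p)^{2}\tau(p)^{-1}\in\bbR$, and the increment vanishes. Stringing together the vertical and horizontal pieces, $\prim$ is constant along $\arcab$ (hence equal to $1$ by the anchor $\prim(b_\delta^b)=1$) and along $\arcba$ (hence equal to $0$ by the anchor $\prim(b_\delta^w)=0$).

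The only real delicacy is making sure Propositions \ref{prop:H_axis} and \ref{prop:H_neighbors} are legitimately applicable at boundary vertices; but since $\prim$ was already shown to be globally well-defined on $\medialdomain$ and the mid-edges appearing in the definitions of $\prim$ on the arcs still lie in $\midedgedomain$, these identities carry over verbatim, so the decomposition argument above goes through without modification. The key structural input is entirely the boundary parallelism $\obsmedial\parallel\tau^{-1/2}$ from Proposition \ref{prop:obs_border}, which ensures $\tau(p)\cdot\obsmedial(p)^{2}\in\bbR$ in both regimes.
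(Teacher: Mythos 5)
Your proposal is correct and follows essentially the same route as the paper: constancy along each arc via Propositions \ref{prop:H_axis} and \ref{prop:H_neighbors} combined with the boundary parallelism of Proposition \ref{prop:obs_border}, and the two constants fixed by the convention $\prim(b_\delta^w)=0$ together with $\prim(b_\delta^b)-\prim(b_\delta^w)=\delta|\obs(b_\delta)|^2=1$ from the normalization $\obs(e_b)=\nu/\sqrt{\delta}$. You simply spell out the case analysis (vertical versus horizontal boundary pieces) that the paper's proof leaves implicit.
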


\begin{proof}
Proposition \ref{prop:obs_border} gives the direction of $\obsmedial$ and Equations \eqref{eqn:H_axis} and \eqref{eqn:H_neighbors} give the relation of $\prim$ on each part of the two arcs.
We conclude easily that $\obsmedial(p)$ is constant on both arcs.
The difference of these constants can be obtained by estimating $\prim$ at, for example, $b_\delta = [b_\delta^b b_\delta^w]$:
$$
\prim(b_\delta^b) = \prim(b_\delta^b) - \prim(b_\delta^w) = \delta |\obs (b_\delta)|^2 = 1.
$$
\end{proof}

\begin{prop} \label{prop:H_supersub_harm}
The primitive $\prim$ is subharmonic on primal axes and superharmonic on dual axes, $i.e.$
$$
\laplacian H (u) \geq 0 \quad \mbox{ and } \quad \laplacian H (w) \leq 0
$$
for all $u \in \primaldomain$ and $w \in \dualdomain$.
\end{prop}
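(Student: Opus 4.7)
The plan is to verify $\laplacian \prim(u) \geq 0$ for a primal vertex $u$ by computing both pieces of the Laplacian explicitly and combining them; the dual case will follow by a symmetric argument. I would first compute the horizontal piece $\Deltaxx \prim(u)$ by applying Proposition~\ref{prop:H_neighbors} at the two dual medial vertices $u^\pm = u \pm \delta/2$, which sandwich $u$ between its primal neighbours $u \pm \delta$. This yields $\prim(u+\delta) - \prim(u) = \delta \, \myi[\obsmedial(u^+)^2]$ and $\prim(u) - \prim(u-\delta) = \delta \, \myi[\obsmedial(u^-)^2]$, hence
\[
\Deltaxx \prim(u) = \frac{1}{\delta} \, \myi\left[\obsmedial(u^+)^2 - \obsmedial(u^-)^2\right].
\]

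Next I would compute the vertical piece $\ddy \prim(u)$. Proposition~\ref{prop:H_axis} evaluated along the primal axis through $u$ yields $\dy \prim(u) = \myr[\obsmedial(u)^2]$ (using $\myi(\icomp z) = \myr(z)$), so the chain rule gives $\ddy \prim(u) = \myr[2 \, \obsmedial(u) \, \dy \obsmedial(u)]$. The s-holomorphicity of $\obsmedial$ at $u$ from Proposition~\ref{prop:obs_prop} says $\derzbar \obsmedial(u) = 0$, equivalent to $\dy \obsmedial(u) = \icomp (\obsmedial(u^+) - \obsmedial(u^-))/\delta$, so
\[
\ddy \prim(u) = -\frac{2}{\delta} \, \myi\left[\obsmedial(u)\bigl(\obsmedial(u^+) - \obsmedial(u^-)\bigr)\right].
\]

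Summing the two pieces and factoring out $(\obsmedial(u^+) - \obsmedial(u^-))$, the Laplacian takes the compact form
\[
\laplacian \prim(u) = \frac{1}{\delta} \, \myi\left[(\obsmedial(u^+) - \obsmedial(u^-))\bigl(\obsmedial(u^+) + \obsmedial(u^-) - 2 \obsmedial(u)\bigr)\right].
\]
The final step is to show the bracket equals $\icomp$ times a non-negative real. Using Equation~\eqref{eqn:medial_observable} together with the parallelism of $\obs$ at the four mid-edges $e_u^\pm$ and $e_{u^\pm}^\pm$ (each parallel either to $\nu$ or to $\icomp \nu$ depending on whether its right extremity is dual or primal), one may write $\obsmedial(u) = \nu(a + \icomp b)$, $\obsmedial(u^+) = \nu(a + \icomp c)$ and $\obsmedial(u^-) = \nu(d + \icomp b)$ for real numbers $a, b, c, d$. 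Using $\nu^2 = -\icomp$, a direct expansion collapses the bracket to $\icomp[(a-d)^2 + (c-b)^2]$, yielding $\laplacian \prim(u) = [(a-d)^2 + (c-b)^2]/\delta \geq 0$. The dual case $w \in \dualdomain$ is handled identically: the opposite prefactors of \eqref{eqn:H_primal} and \eqref{eqn:H_dual} (and the corresponding reparametrisation of $\obsmedial$ around $w$) produce an overall sign flip, giving $\laplacian \prim(w) \leq 0$.

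The main obstacle is the algebraic simplification to the sum of squares. It is elementary, but hinges on careful bookkeeping of the $\nu$ versus $\icomp \nu$ orientations of $\obs$ at the four mid-edges surrounding $u$, and on remembering that $u^\pm$ are the \emph{first} medial neighbours at distance $\delta/2$, so the combination $\obsmedial(u^+) + \obsmedial(u^-) - 2\obsmedial(u)$ appearing in the factored Laplacian is a first-neighbour expression and not the standard $\Deltaxx \obsmedial(u)$.
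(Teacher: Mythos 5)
Your proof is correct and is essentially the paper's argument: the paper carries out the same local computation, working directly with the mid-edge observable $\obs$ and the defining relations of $\prim$ together with holomorphicity and parallelism, and arrives at $\delta \laplacian \prim(u) = |\obs(e_u^{++}) - \obs(e_u^-)|^2 + |\obs(e_u^+) - \obs(e_u^{--})|^2$, which is exactly your $(c-b)^2 + (a-d)^2$ (with the analogous computation giving the sign flip on dual axes). Your repackaging through $\obsmedial$, Propositions \ref{prop:H_axis} and \ref{prop:H_neighbors}, the factorization of the Laplacian and the shared-component bookkeeping is an equivalent reorganization of the same ingredients, not a different route.
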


\begin{proof}
We remind that in a semi-discrete lattice, the Laplacian is defined as follows:
$$
\discretelaplacian \prim (u) = \Deltaxx \prim(u) + \ddy \prim(u)
$$
for $u$ a vertex in primal or dual axis.

First, we assume that $u$ is a primal vertex.
By using the definition of $\prim$, the second derivative along $x$ can be reformulated,
$$
\Deltaxx \prim (u) = \frac1\delta \left[ |\obs (e_u^{++})|^2 - |\obs (e_u^{+})|^2
- |\obs (e_u^{-})|^2 + |\obs (e_u^{--})|^2 \right].
$$
Similarly, the second derivative along $y$ can be rewritten as:
\begin{align*}
\partial_{yy} \prim(u) & = 2 \myi [\partial_y (\obs(e_u^-) \overline{\obs}(e_u^+) ) ] \\
& = 2 \myi [ \dy \obs(e_u^-) \overline{\obs}(e_u^+) + \obs(e_u^-) \overline{\dy \obs}(e_u^+) ] \\
& = 2 \myi [ \dy \obs(e_u^-) \overline{\obs}(e_u^+) - \overline{\obs}(e_u^-) \dy \obs(e_u^+) ] \\
& = 2 \myi \left[ \frac\icomp\delta \left[ \obs(e_u^+) - \obs(e_u^{--}) \right] \cdot \overline{\obs}(e_u^+)
- \overline{\obs}(e_u^-) \cdot \frac\icomp\delta \left[ \obs(e_u^{++}) - \obs(e_u^-) \right] \right] \\
& = \frac2\delta \myr [ (\obs(e_u^+) - \obs(e_u^{--})) \overline{\obs}(e_u^+)
- \overline{\obs}(e_u^-) (\obs(e_u^{++}) - \obs(e_u^-)) ] \\
& = \frac2\delta \left[ |\obs(e_u^+)|^2 + |\obs(e_u^-)|^2
- \myr [ \obs(e_u^{--}) \overline{\obs}(e_u^+) + \overline{\obs}(e_u^-) \obs(e_u^{++}) ] \right].
\end{align*}

We also notice that
\begin{align*}
& |\obs(e_u^{++}) - \obs(e_u^-)|^2 + |\obs(e_u^+) - \obs(e_u^{--})|^2 \\
& \qquad \qquad = |\obs(e_u^{++})|^2 + |\obs(e_u^{+})|^2 + |\obs(e_u^{-})|^2 + |\obs(e_u^{--})|^2 \\
& \qquad \qquad \qquad - 2 \myr \left[ \overline{\obs}(e_u^-) \obs(e_u^{++}) + \obs(e_u^{--}) \overline{\obs}(e_u^+) \right] \\
& \qquad \qquad = \delta \Deltaxx \prim (u) + \delta \partial_{yy} \prim(u) \\
& \qquad \qquad = \delta \laplacian \prim(u).
\end{align*}
In consequence, the primitive $H$ is subharmonic on primal axes.

The proof for the superharmonicity on dual axes is similar.
We do the same calculation and obtain the above equation with a minus sign.
\end{proof}

\section{Uniform convergence theorem} \label{sec:CVG_thm}

\subsection{Boundary modification trick} \label{sec:boundary_modif}

A semi-discrete Dobrushin domain can be extended to a (semi-discrete) primal (\emph{resp.} dual) domain.
This technique, called \emph{boundary modification trick}, is presented below.
We can also extend our (semi-discrete) functions on these larger domains, making them easier to study.

The primal (\emph{resp.} dual) domain extended from a Dobrushin domain is given by keeping the primal boundary $\borderab$ and by adding an extra layer $\extborderba$ to the dual boundary $\borderba$.
More precisely, on the arc $\borderba$ we change the horizontal parts from dual to primal and add one more primal layer \emph{outside} (defined below) the original domain.
The same procedure applies similarly if we want to get an extended dual domain: we get $\extborderab$ from $\borderab$ and keep $\extborderba$.
We will denote by $\modifiedprimaldomain$ and $\modifieddualdomain$ these two modified domains.
See Figure \ref{fig:dob_primal} and Figure \ref{fig:dob_dual} for examples.

Each dual point $p \in \arcba$ possesses two primal neighbors $p^-$ and $p^+$.
One of them is in $\Int\medialdomain$ and the other is not (although it may lie on the boundary $\borderab$).
We include the one which is not in $\Int \medialdomain$, thus providing us with the new boundary.

\begin{figure}[htb] \centering
  \begin{minipage}{.44\textwidth}
    \includegraphics[scale=0.75, page=2]{images/dobrushin.pdf}
    \caption{The primal domain extended from the Dobrushin domain given in Figure \ref{fig:dob}. The red part indicates the overlapping part of the extended boundary with the arc $\arcab$.}
    \label{fig:dob_primal}
  \end{minipage}
  \hfill
  \begin{minipage}{.44\textwidth}
    \includegraphics[scale=0.75, page=3]{images/dobrushin.pdf}
    \caption{The dual domain extended from a Dobrushin domain given in Figure \ref{fig:dob}. The red part represents the extended boundary overlapping itself.}
    \label{fig:dob_dual}
  \end{minipage}
\end{figure}

We notice that some points may be added twice (red part in Figure \ref{fig:dob_dual}) and some points may overlap the other boundary (red part in Figure \ref{fig:dob_primal}).
Thus, the boundary of the extended domain is not described by a Jordan curve anymore.
However, this is not a problem: we just keep these double points and consider that they are situated on the two different sides of the same boundary and all the theorems concerning boundary value problem will still be valid.
We can also see this as a domain minus a slit.

The following lemma tells us how to extend the primitive $\prim$ to the extended domain after boundary modification trick.

\begin{lem} \label{lem:extend_H}
Let $w \in \duallattice \cap \arcba$ be a dual vertex on the arc $\arcba$.
Assume $u_{int}$ to be the neighboring primal vertex of $w$ which is in the domain $\medialdomain$ and $u_{ext}$ the primal one to be added via boundary modification trick.
Then, if we set $\prim (u_{ext}) = H(w)$, the function $\prim$ remains subharmonic at $u_{int}$.
We can also extend $H$ on $\modifieddualdomain$ in a similar way, keeping a similar result.
\end{lem}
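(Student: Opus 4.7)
Since $w$ lies on the dual arc $\arcba$ and $u_{int}$ is its primal neighbour in $\medialdomain$, write for brevity $e^\pm := e_{u_{int}}^\pm$ and $e^{\pm\pm} := e_{u_{int}}^{\pm\pm}$. Only the ``virtual'' mid-edge $e^{++}$ lying on the outside of $\arcba$ is affected by the extension; the other three mid-edges around $u_{int}$ carry genuine values of $\obs$. The plan is to compute $\laplacian \prim(u_{int})$ under the extension $\prim(u_{ext}) = \prim(w)$ and display the answer as a sum of squares, exactly as in Proposition \ref{prop:H_supersub_harm}, but with the formal term $|\obs(e^{++})|^2$ replaced by zero (consistent with the relation $\prim(u_{ext})-\prim(w) = \delta |\obs(e^{++})|^2 = 0$).

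First I would compute $\Deltaxx \prim(u_{int})$ by telescoping along $u_{ext},w,u_{int},w',u_{int}-\delta$ (where $w'$ is the dual neighbour on the opposite side) using Equation \eqref{eqn:H_edge}, substituting $\prim(u_{ext})-\prim(w)=0$ for the missing edge-relation. This yields
$$
\delta \Deltaxx \prim(u_{int}) \;=\; |\obs(e^{--})|^2 - |\obs(e^+)|^2 - |\obs(e^-)|^2,
$$
that is, the formula of Proposition \ref{prop:H_supersub_harm} with $|\obs(e^{++})|^2$ removed.

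Next I would compute $\partial_{yy}\prim(u_{int})$ by differentiating \eqref{eqn:H_primal} twice in $y$ and using s-holomorphicity of $\obs$ at the fully interior mid-edge $e^-$ to replace $\dy\obs(e^-)$ by $\icomp[\obs(e^+)-\obs(e^{--})]/\delta$. The remaining derivative $\dy\obs(e^+)$ is controlled through Proposition \ref{prop:obs_border}: since $\obsmedial(w)=\obs(e^+)$ is constrained to the fixed direction $\tau(w)^{-1/2}$ along the locally-straight arc $\arcba$, its $y$-derivative is parallel to $\obs(e^+)$ itself. Combined with the orthogonality $\obs(e^-)\in\icomp\nu\bbR \perp \nu\bbR \ni \obs(e^+)$ coming from parallelism, the cross-term involving $\dy\obs(e^+)$ reduces to a contribution that exactly balances the term $-|\obs(e^-)|^2$ from $\Deltaxx$, with an excess $+2|\obs(e^-)|^2$. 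Adding the two pieces and reorganising one obtains
$$
\delta \laplacian \prim(u_{int}) \;=\; |\obs(e^-)|^2 + \bigl|\obs(e^+)-\obs(e^{--})\bigr|^2 \;\geq\; 0,
$$
proving subharmonicity at $u_{int}$. The dual case on $\modifieddualdomain$ follows mutatis mutandis, with all inequalities reversed because of the sign difference between \eqref{eqn:H_primal} and \eqref{eqn:H_dual}.

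The main obstacle will be Step 2: because $\obs$ is not s-holomorphic at the boundary mid-edge $e^+$, one cannot blindly apply the derivation of Proposition \ref{prop:H_supersub_harm}. The boundary parallelism of Proposition \ref{prop:obs_border} has to do all of the work that interior s-holomorphicity did in the interior case, and the bookkeeping of the complex directions $\nu$, $\icomp\nu$ and $\tau(w)^{-1/2}$ must be performed with some care to extract the clean sum-of-squares form; intuitively, the prescription $\prim(u_{ext})=\prim(w)$ is \emph{exactly} the right one because it mimics the effect of a Neumann-type reflection of $\obs$ across $\arcba$, which is what makes subharmonicity survive the extension.
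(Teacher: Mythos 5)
Your strategy is the same as the paper's (extend across the free arc by forcing $\prim(u_{ext})=\prim(w)$, i.e. give the new mid-edge the value $\obs(e^{++})=0$, and rerun the sum-of-squares computation of Proposition \ref{prop:H_supersub_harm}), and your final identity $\delta \laplacian \prim(u_{int}) = |\obs(e^-)|^2 + |\obs(e^+)-\obs(e^{--})|^2$ is exactly what that computation yields. But there is a genuine gap in your Step 2, at precisely the point where the paper does real work. The computation needs the quantitative boundary relation
\begin{equation*}
\dy \obs(e^+) \;=\; \frac{\icomp}{\delta}\bigl[\obs(e^{++}) - \obs(e^-)\bigr] \;=\; -\frac{\icomp}{\delta}\,\obs(e^-),
\end{equation*}
i.e. the holomorphicity relation \eqref{eqn:holo} at the boundary mid-edge $e^+$ for the observable extended by zero. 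Proposition \ref{prop:obs_border} cannot supply this: parallelism only says that $\obs(e^+)$, hence also $\dy\obs(e^+)$ along a straight vertical stretch of $\arcba$, lies in the line $\nu\bbR$. Writing $\obs(e^-)=\icomp\nu r$ and $\dy\obs(e^+)=\nu s$ with $r,s\in\bbR$, the cross term in $\partial_{yy}\prim(u_{int})$ equals $2rs$, with $s$ a priori unrelated to $r$; your assertion that it ``exactly balances'' $-|\obs(e^-)|^2$ with an excess $+2|\obs(e^-)|^2$ is equivalent to $s=r/\delta$, which is the boundary s-holomorphicity identity itself and does not follow from the directional information. Without it one only gets $\delta\laplacian\prim(u_{int}) = |\obs(e^+)-\obs(e^{--})|^2 - r^2 + 2\delta r s$, which has no sign.

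The paper closes this gap differently: it sets $\obs(u_{ext}w)=0$ and $\obsmedial(w)=\obs(w\,u_{int})$ and then verifies that the relations of Proposition \ref{prop:obs_prop} — in particular \eqref{eqn:holo} at $e^+$ — still hold for the extended observable, by redoing the local-window bijection of Section \ref{sec:obs_relations} in a window touching the free arc, i.e. a boundary analogue of Table \ref{table:final_quantities}. This is a probabilistic/combinatorial input about the observable at the free boundary (the interface cannot cross $\arcba$, so fewer configurations contribute and the bookkeeping changes), not a consequence of the already-established boundary parallelism. Once that relation is available, the interior proof of Proposition \ref{prop:H_supersub_harm} applies verbatim and gives your formula, and the dual case follows symmetrically. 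To repair your argument you must either carry out this boundary table computation or otherwise prove $\dy\obs(e^+)=-\tfrac{\icomp}{\delta}\obs(e^-)$ along the free arc.
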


\begin{proof}
By abusing the notation, we continue writing $\obs$, $\obsmedial$ and $\prim$ on the extended domain $\modifiedprimaldomain$.
We notice that if we let $\obs(u_{ext} w) = 0$ and $\obsmedial(w) = \obs(w u_{int})$, the properties in Proposition \ref{prop:obs_prop} are still satisfied.
This can be computed by establishing a similar table as Table \ref{table:final_quantities} on the boundary.
Then, by setting $\prim(u_{ext}) = H(w)$, we get a primitive $\prim$ which always satisfies Equations \eqref{eqn:H_dual} and \eqref{eqn:H_edge}.
In such a way, the Proposition \ref{prop:H_supersub_harm} still holds.
\end{proof}

\subsection{Riemann-Hilbert boundary value problem} \label{sec:BVP}

We have studied Dirichlet problem in Section \ref{sec:dirichlet}.
The Riemann-Hilbert boundary value problem we are going to introduce is similar to this but its resolution is a bit more complicated.
We will study it by making some links to the Dirichlet problem.

In a semi-discrete Dobrushin domain $\Dobrushindomain$, we say that a function $\obs$ defined on $\medialdomain$ is a solution to the \emph{(semi-discrete) boundary value problem} with respect to the Dobrushin domain $\Dobrushindomain$ if the following three conditions are satisfied:

\begin{enumerate}[(A)]
\item s-holomorphicity: $\obsmedial$ is s-holomorphic in $\medialdomain$;
\item boundary conditions: for $p \in \arcab \cup \arcba$, $\obsmedial(p)$
is parallel to $\tau(p)^{-1/2}$;
\item normalization: $\obs (e_b^\delta) = \Proj[ \obsmedial (b_\delta^w), \nu] = \frac{\nu}{\sqrt{\delta}}$.
\end{enumerate}

Existence of such a solution has been shown already.
In effect, the observable we introduced earlier satisfies these three conditions, as shown in Section \ref{sec:obs_relations}.
When it comes to uniqueness, we will use the primitive $H$ we constructed in Section \ref{sec:obs_primitive} along with the boundary modification trick.

\begin{prop}[Existence of solution]
The observable $\obsmedial$ given by \eqref{eqn:observable} is a solution to the above boundary value problem.
\end{prop}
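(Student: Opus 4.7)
The plan is simply to verify conditions (A), (B), (C) in turn, each of which is already in place after the work of Section~\ref{sec:obs_relations} and Section~\ref{sec:obs_primitive}; nothing new needs to be computed. In particular, the entire content of the proposition is to collect the earlier local and boundary computations, translate them between the mid-edge picture ($\obs$) and the medial picture ($\obsmedial$) via Proposition~\ref{prop:s_hol_equiv}, and check the single point normalization.

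First I would dispatch (A). The s-holomorphicity of $\obs : \midedgedomain \to \bbC$ was obtained in Section~\ref{sec:obs_relations}: the parallelism follows from Remark~\ref{rmk:obs_parallel}, and the holomorphic relation \eqref{eqn:holo} from the bijection between local configurations in Figure~\ref{fig:configs_bijection} and Tables~\ref{table:first_quantities}--\ref{table:final_quantities}. The second statement of Proposition~\ref{prop:obs_prop} then passes this to the medial observable $\obsmedial$ defined by \eqref{eqn:medial_observable}, and Proposition~\ref{prop:s_hol_equiv} guarantees that the projection property \eqref{eqn:proj_prop} is preserved. This gives (A) verbatim.

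Next, (B) is precisely Proposition~\ref{prop:obs_border}, once one checks in both the ``vertical tangent'' and ``horizontal tangent'' cases that the forced turning of the interface at a boundary vertex reduces $\obsmedial(p)$ to a real multiple of $\tau(p)^{-1/2}$; the two cases are distinguished by whether $p$ is primal or dual, and whether the outward-oriented tangent points up/down or left/right, exactly as in the proof already given.

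Finally, (C) is a one-line calculation from definition~\eqref{eqn:observable}. Since the interface $\interface$ terminates at $b_\delta$ by construction, we have $b_\delta \in \interface$ almost surely and $W(b_\delta, b_\delta) = 0$, hence $\obs(b_\delta) = \tfrac{\nu}{\sqrt{\delta}}$. At the dual vertex $b_\delta^w$, only the mid-edge $b_\delta$ lies inside $\midedgedomain$ (the other neighbour falls outside, and is set to $0$ by the convention introduced just after \eqref{eqn:medial_observable}), so $\obsmedial(b_\delta^w) = \obs(b_\delta) = \tfrac{\nu}{\sqrt{\delta}}$, which is already parallel to $\nu$; projecting onto $\nu$ gives the required $\tfrac{\nu}{\sqrt{\delta}}$.

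There is no genuine obstacle: the only points requiring any care are bookkeeping, namely matching the orientation conventions for $\tau$ at each type of boundary vertex in (B), and correctly interpreting the extension-by-zero convention for $\obsmedial$ at the boundary mid-edge $b_\delta^w$ in (C). Both are already handled by the conventions set up in Section~\ref{sec:observables}, so the proposition is just the aggregation of the results already obtained.
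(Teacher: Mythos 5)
Your proposal is correct and follows the same route as the paper, whose proof is simply the observation that (A) and (B) are Propositions \ref{prop:obs_prop} and \ref{prop:obs_border}, with the normalization (C) coming from the remark after \eqref{eqn:observable} that $W(e_b,e_b)=0$ forces $\obs(b_\delta)=\nu/\sqrt\delta$. Your extra care in (C) is sound: whether or not the second mid-edge adjacent to $b_\delta^w$ contributes, its value is parallel to $\icomp\nu$, so the projection onto $\nu$ retains only $\obs(b_\delta)$, exactly as required.
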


\begin{proof}
It is direct from the properties of the observable as shown in Propositions \ref{prop:obs_prop} and \ref{prop:obs_border}.
\end{proof}

\begin{prop}[Uniqueness of solution]
For each semi-discrete Dobrushin domain $\Dobrushindomain$, the semi-discrete boundary value problem has a unique solution.
\end{prop}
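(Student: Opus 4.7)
The plan is to argue by linearity: if $F_1$ and $F_2$ are two solutions to the boundary value problem, then their difference $F := F_1 - F_2$ still satisfies conditions (A) and (B), but now with the normalization $F(e_b^\delta) = 0$ in place of (C). It suffices to show that any such $F$ must vanish identically on $\medialdomain$.

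To do this, I would attach to $F$ its primitive $H = H_F$ constructed as in Section \ref{sec:obs_primitive}, using the relations \eqref{eqn:H_primal}, \eqref{eqn:H_dual} and \eqref{eqn:H_edge}. The s-holomorphicity of $F$ ensures that $H$ is well-defined up to an additive constant, which we fix by setting $H(b_\delta^w) = 0$. By the boundary-value analysis carried out for the observable (the corollary following Proposition \ref{prop:H_neighbors}), the parallelism condition (B) forces $H$ to be constant on each of the two arcs $\arcab$ and $\arcba$. The gap between these two constants equals $\delta |F(b_\delta)|^2$ via \eqref{eqn:H_edge} at the edge $b_\delta = [b_\delta^b b_\delta^w]$, and since the new normalization gives $F(b_\delta) = 0$, both arcs carry the same constant, namely $0$.

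Now I apply the boundary modification trick of Section \ref{sec:boundary_modif}. Extend $H$ to $\modifiedprimaldomain$ via Lemma \ref{lem:extend_H}; the extension remains subharmonic on every primal vertex (Proposition \ref{prop:H_supersub_harm}) and is identically $0$ on $\partial \modifiedprimaldomain$. The maximum principle (Proposition \ref{prop:max_principle}) then yields $H \leq 0$ throughout $\modifiedprimaldomain$. Running the dual version of the trick extends $H$ to $\modifieddualdomain$, on which it is superharmonic with the same zero boundary value, so the minimum principle gives $H \geq 0$ there. Combining the two inequalities on their overlap forces $H \equiv 0$ on $\medialdomain$.

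Finally, to deduce $F \equiv 0$ from $H \equiv 0$, I use \eqref{eqn:H_edge}: for every mid-edge $e = [uw]$ we have $\delta |F(e)|^2 = H(u) - H(w) = 0$, so $F$ vanishes at every mid-edge, and hence $\obsmedial \equiv 0$ on $\medialdomain$ by Proposition \ref{prop:s_hol_equiv}. I expect the main (mild) obstacle to be the bookkeeping around the boundary modification: one must verify carefully that the extension of $H$ past the dual arc is consistent with setting $F = 0$ on the added edges, so that Proposition \ref{prop:H_supersub_harm} really applies at the primal vertices adjacent to the modified boundary, and symmetrically for the dual extension; once this is done, the maximum/minimum principle argument closes the proof cleanly.
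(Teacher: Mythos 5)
Your proposal is correct and follows essentially the same route as the paper: take the difference of the two solutions, form its primitive $H$, use the vanishing normalization to see that the two boundary constants coincide, apply the boundary modification trick, and conclude via subharmonicity on the primal part and superharmonicity on the dual part together with the maximum principle (equivalently, uniqueness of the Dirichlet problem) that $H \equiv 0$, hence $F \equiv 0$. The only loose phrase is ``combining the two inequalities on their overlap'': the primal and dual vertex sets are disjoint, and the actual bridge is \eqref{eqn:H_edge}, which for a mid-edge $e=[uw]$ gives $0 \geq H(u) \geq H(w) \geq 0$, exactly the chain of inequalities used in the paper.
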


\begin{proof}
Assume that there are two solutions $\calF_{\delta, 1}$ and $\calF_{\delta, 2}$ to the boundary value problem mentioned above.
Let $\calF_\delta := \calF_{\delta, 1} - \calF_{\delta, 2}$.
Notice that $\calF_\delta$ is still s-holomorphic being difference of two such functions.
Consider $H_\delta := \myi \int (\calF_\delta(z))^2 dz$ the primitive defined in Section \ref{sec:obs_primitive}.
The function $H_\delta$ is constant on the arcs $\arcab$ and $\arcba$ respectively.
Moreover, the identity $\calF_\delta (b_\delta) = 0$ says that these two constants should be the same.
Apply the boundary modification trick to extend $\medialdomain$ into the primal domain $\modifiedmedialdomain$.

Extend the primitive $\prim$ to the new boundary of $\modifiedmedialdomain$ as in Lemma \ref{lem:extend_H}.
The Lemma also says that $\prim$ stays subharmonic in $\primallattice \cap \modifiedmedialdomain$ and subharmonic in $\duallattice \cap \modifiedmedialdomain$.
Then, we have $0 \geq (\prim)_{\mid \modifiedprimaldomain} \geq (\prim)_{\mid \modifieddualdomain} \geq 0$ by uniqueness of Dirichlet problem (Proposition \ref{prop:uniqueness_Dirichlet}), $\prim$ is constant everywhere.

The fact that $\prim$ is constant everywhere on $\modifiedmedialdomain$ tells us that $\calF_\delta$ is zero everywhere on $\midedgedomain$.
Thus, these two solutions must be equal.
\end{proof}


\subsection{Convergence theorem}

\begin{thm}[Convergence theorem for \emph{s-holomorphic} functions] \label{thm:cvg_shol}
Let $Q \subset \Omega$ be a rectangular domain such that $9Q \subset \Omega$.
Let $(\obsmedial)_{\delta > 0}$ be a family of s-holomorphic functions on $\medialdomain$ and $\prim = \myi \int \obsmedial^2$.
If $(\prim)_{\delta>0}$ is uniformly bounded on $9Q$, then $(\obsmedial)$ is precompact on $Q$.
\end{thm}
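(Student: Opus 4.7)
The plan is to transfer the boundedness of $\prim$ into a pointwise bound on $|\obsmedial|$ on $Q$, and then invoke the (complex) harmonicity of s-holomorphic functions to conclude precompactness via Theorem \ref{thm:cvg_harm}. The key identities are Propositions \ref{prop:H_axis} and \ref{prop:H_neighbors}, which together give $\obsmedial^2 = \dy \prim + \icomp\, \Deltax \prim$ on the relevant axes, so it suffices to bound $\Deltax \prim$ and $\dy \prim$ uniformly on $Q$.

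First I would split $\prim$ into its primal restriction $\prim^\bullet$ and dual restriction $\prim^\circ$. By Proposition \ref{prop:H_supersub_harm}, these are respectively subharmonic and superharmonic, and both remain uniformly bounded on $9Q$ by hypothesis. After applying the boundary modification trick (Lemma \ref{lem:extend_H}) to extend them without losing sub/super-harmonicity, I decompose $\prim^\bullet$ on an intermediate domain such as $3Q$ using Riesz representation (Proposition \ref{prop:Riesz}):
\begin{equation*}
\prim^\bullet(y) \;=\; \hat H_\delta^\bullet(y) \;+\; \int_{3Q} \laplacian \prim^\bullet(x)\, G_{3Q}(x,y)\, \dd x,
\end{equation*}
where $\hat H_\delta^\bullet$ is the semi-discrete harmonic extension of the boundary values of $\prim^\bullet$. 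By the maximum principle $\hat H_\delta^\bullet$ is uniformly bounded, hence precompact by Theorem \ref{thm:cvg_harm}(1), with uniformly controlled semi-discrete derivatives on $Q$ by Proposition \ref{prop:Harnack}. The Riesz potential term is also bounded, and since $\laplacian \prim^\bullet \geq 0$ and $G_{3Q} \leq 0$ this gives the weighted $L^1$ bound $\int_{3Q} \laplacian \prim^\bullet(x)\, |G_{3Q}(x,y)|\, \dd x \leq 2\sup_{9Q} |\prim|$.

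Next, I would differentiate the Riesz potential in $y$ and, using the symmetry $G_{3Q}(x,y) = G_{3Q}(y,x)$ to turn a semi-discrete derivative in the second argument into one in the first, apply Proposition \ref{prop:estimation_Green's} to bound $|\Deltax \prim^\bullet(y)|$ and $|\dy \prim^\bullet(y)|$ in terms of the already controlled quantity $\int \laplacian \prim^\bullet(x)\, |G_{3Q}(x,y)|\, \dd x$. The same argument applied to $\prim^\circ$ yields a parallel bound. Combined with the identifications from Propositions \ref{prop:H_axis} and \ref{prop:H_neighbors}, this produces a uniform pointwise bound $|\obsmedial|^2 \leq C$ on $Q$. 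The s-holomorphicity of $\obsmedial$ gives $\derzbar \obsmedial = 0$, so by \eqref{eqn:Laplacian} the real and imaginary parts of $\obsmedial$ are semi-discrete harmonic; applying Theorem \ref{thm:cvg_harm}(1) to each component then yields the desired precompactness. The main obstacle will be this gradient step for the Riesz potential: Proposition \ref{prop:estimation_Green's} is an $L^1$-type estimate for the derivative of $G_{3Q}$, and turning it into a usable pointwise bound requires handling the diagonal singularity carefully and exploiting the fact that the measure $\laplacian \prim^\bullet$ is coupled to $|G_{3Q}|$ precisely through the boundedness of the Riesz potential itself.
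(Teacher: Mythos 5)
The step you yourself flag as ``the main obstacle'' is a genuine gap, and it cannot be closed with the tools you invoke. Proposition \ref{prop:estimation_Green's} controls $\int_{\Qprimal}|\Deltax G_{9\Qprimal}(x,y)|\,\dd x$, i.e.\ it is an estimate \emph{integrated in the variable you differentiate}; it gives no pointwise comparison between $|\Deltax G|$ (or $|\dy G|$) and $|G|$. Such a pointwise comparison is in fact false both near the diagonal, where $|G(x,y)|\sim \frac{1}{2\pi}|\log|x-y||$ while the derivative is of order $|x-y|^{-1}$, and near the boundary of the larger domain, where $G$ vanishes but its derivative does not. More fundamentally, your argument only uses that $\prim$ is uniformly bounded and sub/superharmonic on the respective axes, and these two properties alone cannot yield locally uniform pointwise bounds on first derivatives: already in the continuum the functions $u_\epsilon(z)=\log^{+}(|z-z_0|/\epsilon)/\log(1/\epsilon)$ are subharmonic, take values in $[0,1]$, and have gradients of size $(\epsilon\log(1/\epsilon))^{-1}$ at distance $\epsilon$ from $z_0$. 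So the intermediate claim $|\obsmedial|^2\le C$ uniformly on $Q$ (which is equivalent, via $\obsmedial^2=\dy\prim+\icomp\,\Deltax\prim$, to the pointwise derivative bound you are after) is not reachable by this route; it is true, but only a posteriori, as a consequence of the precompactness you are trying to establish, so aiming for it directly makes the intermediate step essentially as hard as the theorem itself.

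The repair is to give up pointwise control and integrate over $Q$ \emph{before} estimating, which is exactly what the paper does: instead of criterion (1) of Theorem \ref{thm:cvg_harm} it verifies criterion (2), the uniform $L^2$ bound. One writes $\delta \int_{\Qmedial} |\obsmedial(v)|^2 \dd v = \delta \int_{\Qprimal} |\Deltax \prim| \dd x + \delta \int_{\Qdual} |\Deltax \prim| \dd x$, decomposes $\prim = S_\delta + R_\delta$ on $9\Qprimal$ with $S_\delta$ harmonic with the boundary values of $\prim$, bounds $\int_{\Qprimal}|\Deltax S_\delta|$ by Harnack plus the maximum principle, and bounds $\int_{\Qprimal}|\Deltax R_\delta|$ by Riesz representation, Fubini, Proposition \ref{prop:estimation_Green's} used precisely in its integrated form, and a second application of Riesz to recognize $\int_{\Qprimal} R_\delta$, which is $\bigO{1/\delta}$ since $R_\delta$ is bounded. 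Your decomposition and your list of ingredients (Riesz, Harnack, maximum principle, the Green's-function derivative estimate, sub/superharmonicity of $\prim$) are the right ones; what is missing is this change in the order of operations, which is what allows the Green's-function estimate to be applied at all.
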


\begin{rmk}
For each $z \in \Int \Omega$, we can find a neighborhood $Q$ of $z$ small enough such that $9Q \subset \Omega$ to have precompactness of $(\obsmedial)$ near $z$.
Then we can use a diagonal argument to extract a subsequence of $(\obsmedial)$ converging uniformly on all compacts of $\Omega$.
\end{rmk}

\begin{proof}
It is sufficient to show the second point in Theorem \ref{thm:cvg_harm}.
We write
$$
\delta \int_{\Qmedial} |\obsmedial(v)|^2 \dd v
= \delta \int_{\Qprimal} |\Deltax \prim(x)| \dd x
+ \delta \int_{\Qdual} |\Deltax \prim(x)| \dd x
$$
which is exactly the definition of $\prim$ in Proposition \ref{prop:H_neighbors}.
These two terms can be treated in a similar way.
We will thus just look at the first one and show that it is bounded by a constant uniformly in $\delta$.
On the primal semi-discretized domain $9\Qprimal$, write $\prim = S_\delta + R_\delta$ where $S_\delta$ is semi-discrete harmonic with boundary values $\prim_{|\partial 9\Qprimal}$ on $\partial 9 \Qprimal$.

\begin{align*}
\int_{\Qprimal} |\Deltax S_\delta (x)| \dd x &
\leq \int_{\Qprimal} c_1 \cdot \sup_{9\Qprimal} |S_\delta|
\leq \frac{c_2}{\delta} \cdot c_1 \cdot \sup_{9\Qprimal} |S_\delta| \\
& = \frac{c_1 c_2}{\delta} \sup_{\partial 9\Qprimal} |S_\delta|
= \frac{c_3}{\delta} \sup_{\partial 9 \Qprimal} |H_\delta| \leq \frac{c_4}{\delta}.
\end{align*}
Here, we use Proposition \ref{prop:Harnack} in the first inequality; the total length of axes in $Q_\delta$ is proportional to $\delta^{-1}$ in the second; the maximum principle (Proposition \ref{prop:max_principle}) in the third; $H_\delta$ and $S_\delta$ coincide on the boundary $\partial 9 \Qprimal$ in the fourth ; and finally, $H_\delta$ is bounded by hypotheses.
Moreover, the constants $c_i$ may depend on the domain $\Omega$ but are uniform in $\delta$.

We will now do something similar to $R_\delta$.
First, we write (Proposition \ref{prop:Riesz})
$$
R_\delta(x) = \int_{9 \Qprimal} \laplacian R_\delta(y) G_{9\Qprimal} (x, y) \dd y.
$$
Since $H$ is subharmonic, it is the same for $R_\delta$.
Thus, $\laplacian R_\delta \geq 0$ in $9\Qprimal$.
Then, we have
\begin{align*}
\int_{\Qprimal} |\Deltax R_\delta(x)| \dd x
& \leq \int_{\Qprimal} \int_{9\Qprimal} \laplacian R_\delta (y) |\Deltax G_{9\Qprimal}(x, y)| \dd y \dd x \\
& = \int_{9\Qprimal} \laplacian R_\delta (y) \int_{\Qprimal} |\Deltax G_{9\Qprimal}(x, y)| \dd x \dd y \\
& \leq \int_{9\Qprimal} c_5 \cdot \laplacian R_\delta(y) \int_{\Qprimal} G_{9\Qprimal}(x, y) \dd x \dd y \\
& = c_5 \int_{\Qprimal} \int_{9\Qprimal} \laplacian R_\delta (y) G_{9\Qprimal}(x, y) \dd y \dd x \\
& = c_5 \int_{\Qprimal} R_\delta(x) \dd x \\
& \leq c_5 \cdot \frac{c_6}{\delta} = \frac{c_5 c_6}{\delta}
\end{align*}
where we use the triangular inequality in the first line; Fubini in the second line (all the terms are non-negative); Proposition \ref{prop:estimation_Green's} in the third; Fubini again in the fourth, Riesz representation (Proposition \ref{prop:Riesz}) again in the fifth; and finally $R_\delta$ is bounded in the last one (because $H_\delta$ and $S_\delta$ are bounded).
\end{proof}

With all what we have done so far, we can determine the uniform limit of $\prim$ and $\obsmedial$ when $\delta$ goes to 0.
First of all, we need to describe the continuous version of the boundary-valued problem.
Given a continuous Dobrushin domain $(\Omega, a, b)$, we say that a function $f$ defined on $\Omega$ is a solution to the \emph{boundary-valued problem} if
\begin{enumerate}[(a)]
\item holomorphicity: $f$ is holomorphic in $\Omega$ with singularities at $a$ and $b$;
\item boundary conditions: $f(\zeta)$ is parallel to $\tau(\zeta)^{-1/2}$ for $\zeta \in \partial \Omega \backslash \{ a, b \}$, where $\tau(\zeta)$ denotes the tangent vector to $\Omega$ oriented from $a$ to $b$ (on both arcs);
\item normalization: the function $h := \myi \int (f(\zeta))^2 \dd \zeta$ is uniformly bounded in $\Omega$ and
$$
h_{\mid \arcab} = 0, \quad h_{\mid \arcba} = 1.
$$
\end{enumerate}

Note that (a) and (b) guarantee that $h$ is harmonic in $\Omega$ and constant on both boundary arcs $(ab)$ and $(ba)$.
Thus, if we write $\Phi$ the conformal mapping from $\Omega$ onto the infinite strip $\bbR \times (0, 1)$ sending $a$ and $b$ to $\mp \infty$, the function $h \circ \Phi^{-1}$ is still harmonic.
Moreover, the harmonic function on the strip $\bbR \times (0, 1)$ with boundary condition 1 on $\bbR \times \{ 1 \}$ and 0 on $\bbR \times \{ 0 \}$ is $\myi(z)$, we obtain that $h(z) = \myi \Phi(z)$
And from the definition of $h$ in (c), we get
$$
h(v) - h(u) = \myi (\Phi(v) - \Phi(u)) = \myi \int_u^v (f(\zeta))^2 \dd \zeta
$$
for $u, v \in \Omega$.
At $u$ fixed, since $\Phi(v) - \Phi(u)$ and $\int_u^v (f(\zeta))^2 \dd \zeta$ are both holomorphic in $v$ and have the same imaginary part, they differ only by a real constant.
By taking the derivative, we can deduce that
$$
\Phi'(v) = f(v)^2
$$
or equivalently,
$$
f = \sqrt{\Phi'}.
$$
Since $\Phi$ is a conformal map, its derivative is never 0 on $\Omega$, we can define the square root in a continuous manner (with respect to $\Omega$), and the solution $f$ is well-defined up to the sign.
Moreover, this tells us that $f(\zeta) = c(\zeta) \tau(\zeta)^{-1/2}$ for all $\zeta \in \partial \Omega \backslash \{ a, b \}$ where $c$ keeps the same sign all along the boundary.
Therefore, we can choose the branch of the logarithm such that $\sqrt{\Phi'}$ corresponds to $c$ positive and $-\sqrt{\Phi'}$ corresponds to $c$ negative.
Actually, if we look around $b$, this branch is given by $\sqrt{1} = 1$.


\begin{thm} \label{thm:convergence_FK}
The solutions $\obs$ of the semi-discrete boundary value problems are uniformly close in any compact subset of $\Omega$ to their continuous counterpart $f$ defined by (a), (b) and (c).
In other words, $\obs$ converges uniformly on all compact sets of $\Omega$ to $\sqrt{\Phi'}$ where $\Phi$ is any conformal map from $\Omega$ to $\bbR \times (0, 1)$ mapping $a$ and $b$ to $\mp \infty$ respectively.
\end{thm}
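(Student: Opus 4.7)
The plan is to show precompactness of $(\obsmedial)_\delta$ and then identify every sub-sequential limit with the unique continuum solution $\sqrt{\Phi'}$ of the problem (a)--(c). The starting point is a uniform bound on the primitive $\prim_\delta = \myi \int^\delta \obsmedial^2$. By the corollary at the end of Section \ref{sec:obs_primitive}, $\prim_\delta$ equals $0$ on $\arcba$ and $1$ on $\arcab$. The boundary modification trick of Section \ref{sec:boundary_modif} together with Lemma \ref{lem:extend_H} extends $\prim_\delta$ to a primal modified domain $\modifiedprimaldomain$, on which it becomes subharmonic throughout with constant boundary values $0$ and $1$, and to a dual modified domain $\modifieddualdomain$, on which it becomes superharmonic with the same boundary values. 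The maximum principle (Proposition \ref{prop:max_principle}) then yields $0 \le \prim_\delta \le 1$ uniformly in $\delta$ on $\medialdomain$.

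Next, I apply the convergence theorem for s-holomorphic functions (Theorem \ref{thm:cvg_shol}) on every rectangle $Q$ with $9Q \subset \Omega$: the uniform bound on $\prim_\delta$ gives precompactness of $(\obsmedial)_\delta$ on $Q$, and a diagonal extraction along an exhaustion of $\Omega$ by compacts produces a subsequence converging uniformly on compact subsets of $\Omega$ to some holomorphic $f$ (the holomorphicity follows by passing to the limit in the s-holomorphicity relation \eqref{eqn:holo}). Propositions \ref{prop:H_axis} and \ref{prop:H_neighbors} force $\prim_\delta \to \prim := \myi \int f^2$ uniformly on compacts, with $\prim$ harmonic in $\Omega$. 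To identify the boundary values of $\prim$, I would combine the convergence of Dirichlet solutions (Proposition \ref{prop:cvg_Dirichlet}) applied to the sub- and super-harmonic extensions with a weak Beurling-type estimate for the semi-discrete Brownian motion (as invoked in the proof of Proposition \ref{prop:cvg_Dirichlet}) to conclude that $\prim$ extends continuously to $\partial\Omega \setminus \{a, b\}$ with values $0$ on $\arcba$ and $1$ on $\arcab$. The direction condition $f(\zeta) \parallel \tau(\zeta)^{-1/2}$ on $\partial\Omega \setminus \{a, b\}$ is inherited directly from Proposition \ref{prop:obs_border}. Thus $f$ solves (a)--(c), so by the analysis preceding the statement, $f = \sqrt{\Phi'}$ is uniquely determined. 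Since every sub-sequential limit is the same $\sqrt{\Phi'}$, the whole family $(\obsmedial)_\delta$ converges uniformly on compact subsets of $\Omega$, and Proposition \ref{prop:s_hol_equiv} transfers this convergence to $\obs$.

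The main obstacle is the boundary identification of $\prim$. The difference between the dual and primal extensions of $\prim_\delta$ equals $\delta |\obsmedial|^2$ pointwise (Equation \eqref{eqn:H_edge}), so one must show that both extensions converge in the bulk to the \emph{same} harmonic function with the prescribed boundary data; this is exactly where the semi-discrete Beurling-type estimate has to do the work, controlling oscillations near $\partial \Omega$ uniformly in $\delta$. The square-root singularities of $\obsmedial$ at $a_\delta, b_\delta$, where it blows up like $\delta^{-1/2}$, cause no trouble because they match the known singularities of $\sqrt{\Phi'}$ at the preimages of $\mp\infty$ in the strip, and the statement concerns convergence only on compact subsets of $\Omega$.
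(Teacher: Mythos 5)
Your architecture is, up to reordering, the paper's own: uniform bound on $\prim$ via the boundary modification trick and sub-/super-harmonicity, precompactness of $(\obsmedial)$ through Theorem \ref{thm:cvg_shol}, and identification of the limit of the primitive by sandwiching between harmonic comparison functions that converge by Proposition \ref{prop:cvg_Dirichlet}. Getting $0\le\prim\le 1$ directly from Lemma \ref{lem:extend_H}, the maximum principle (Proposition \ref{prop:max_principle}) and the relation \eqref{eqn:H_edge} before extracting limits, rather than after proving convergence of $\prim$ as the paper does, is a harmless and even slightly cleaner variation. One point of hygiene: Proposition \ref{prop:cvg_Dirichlet} concerns harmonic solutions of Dirichlet problems, so it cannot be ``applied to the sub- and super-harmonic extensions''; it must be applied to the harmonic functions $h_\delta^\bullet$, $h_\delta^\circ$ on $\modifiedprimaldomain$, $\modifieddualdomain$ with the same boundary data $0$ and $1$, and the sandwich is then $\extprim\le h_\delta^\bullet$, $\extprimdual\ge h_\delta^\circ$ combined with \eqref{eqn:H_edge}. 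This is a phrasing issue, not a gap, since it is clearly what you intend.

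The genuine gap is the sentence claiming that the boundary condition (b) for the limit, $f(\zeta)\parallel\tau(\zeta)^{-1/2}$ on $\partial\Omega\setminus\{a,b\}$, is ``inherited directly from Proposition \ref{prop:obs_border}''. The convergence you have established is uniform only on compact subsets of $\Omega$; it gives no information about the values of a subsequential limit near $\partial\Omega$, and the discrete boundary relation satisfied by $\obsmedial$ at lattice boundary points does not pass to the limit without boundary-uniform estimates on the observable, which neither you nor the paper provide. Since your identification runs through ``$f$ solves (a)--(c), hence $f=\sqrt{\Phi'}$ by uniqueness of the continuum problem'', this step as written breaks the chain. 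It is, however, avoidable with ingredients you already have, and this is exactly how the paper concludes: once the sandwich gives $\prim\to h=\myi\,\Phi$ uniformly on compacts, the relation $h(v)-h(u)=\myi\int_u^v f(z)^2\,\dd z$ for $u,v$ in the bulk forces $f^2=\Phi'$ with no boundary information on $f$ at all. What then remains is only the global sign of the square root ($\Phi'$ never vanishes, so $f$ does not either and the sign is constant); the paper fixes it implicitly by the normalization at $b$ and otherwise glosses over it, but an appeal to (b) for the limit would in any case not be a rigorous way to settle it.
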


\begin{proof}
We start by showing the convergence of the discrete primitive $\prim := \myi \int_\delta (\obsmedial(\zeta))^2 \dd \zeta$, using the boundary modification trick introduced in Section \ref{sec:boundary_modif}.
We extend $\prim$ on $\modifiedprimaldomain$ and denote its restriction on the primal axes $\extprim$.
By Lemma \ref{lem:extend_H}, $\extprim$ is still subharmonic, thus it is smaller than the harmonic function $h_\delta^\bullet$ with boundary condition $0$ on $\arcab$ and $1$ on $\arcba$.
Proposition \ref{prop:cvg_Dirichlet} tells that $h_\delta^\bullet$ converges to the solution $H$ of the continuous Dirichlet boundary problem with boundary conditions $0$ on $\borderab$ and $1$ on $\borderba$.
We can deduce that
$$
\limsup_{\delta \rightarrow 0} \extprim \leq h
$$
on any compact subset of $\Omega$.
In a similar manner, denote $\extprimdual$ the function $\prim$ extended on $\modifieddualdomain$ which is restricted on dual axes.
As before, this time by superharmonicity, we deduce that
$$
\liminf_{\delta \rightarrow 0} \extprimdual \geq h
$$
on any compact subset of $\Omega$.
By definition (Equation \eqref{eqn:H_edge}), for a sequence of $w_\delta$ and $b_\delta$ neighbors in $\primaldomain$, both approximating $u \in \Omega$ (\emph{i.e.} $w_\delta \rightarrow u$, $b_\delta \rightarrow u$), we have
$$
h(u) \leq \liminf_{\delta \rightarrow 0} \extprimdual (w_\delta)
\leq \limsup_{\delta \rightarrow 0} \extprimdual (b_\delta)
\leq h(u).
$$
Since the convergence to $h$ on $\modifiedprimaldomain$ and $\modifieddualdomain$ is uniform on compact subsets, it is the same for the convergence of both $\extprim$ and $\extprimdual$.

Consider $Q \subset \Omega$ such that $9Q \subset \Omega$.
By the uniform convergence of $\prim$, the family $(\prim)$ is bounded uniformly in $\delta > 0$ on $9Q$.
Theorem \ref{thm:cvg_shol} implies that $\obsmedial$ is a precompact family of semi-discrete s-holomorphic functions on $Q$.

Consider $\delta_n$ a subsequence such that $\calF_{\delta_n}$ converges uniformly on all compact subsets of $\primaldomain$ to $\calF$.
For $u, v \in \primaldomain$ and converging subsequences $u_n \rightarrow u$ and $v_n \rightarrow v$, we have
\begin{align*}
h(v) - h(u) & = \lim_{n \rightarrow \infty} (H_{\delta_n}(v_n) - H_{\delta_n}(u_n)) \\
& = \lim_{n \rightarrow \infty} \myi \int_{v_n}^{u_n} \calF_{\delta_n}(z)^2 \dd z \\
& = \myi \int_v^u \calF(z)^2 \dd z.
\end{align*}
Same as the discussion just above, the limit $\calF$ is given by $\sqrt{\Phi'}$ where $\Phi$ is any conformal map from $\Omega$ to $\bbR \times (0, 1)$ mapping $a$ and $b$ to $\mp \infty$.
\end{proof}

\subsection{RSW property: random-current representation} \label{sec:RSW_prop}

In the previous section, we established the conformal invariance of the limit of our semi-discrete observables.
To show that the interface is given by an SLE curve in the limit and to determine its parameter, we need the so-called RSW property.
This provides the hypothesis needed in \cite{Kemp-SLE} which, along with Theorem \ref{thm:convergence_FK}, shows the main Theorem \ref{thm:main}.

The goal of this section is to show the following property.

\begin{prop}[RSW property] \label{prop:RSW_prop}
Let $\alpha > 0$.
Consider $R_{n, \alpha} = [-n, n] \times [-\alpha n, \alpha n]$ a rectangular domain and write $R_{n, \alpha}^\delta$ for its semi-discretized counterpart (primal domain).
Let $\xi$ a boundary condition on $R_{n, \alpha}^\delta$.
Then, there exists $c(\alpha) > 0$ independent of $n$ and $\delta$ such that
\begin{align*}
c & \leq \bbP^\xi ( \calC_h(R_{n, \alpha}^\delta) ) \leq 1-c, \\
c & \leq \bbP^\xi ( \calC_v(R_{n, \alpha}^\delta) ) \leq 1-c
\end{align*}
where $\calC_h$ and $\calC_v$ denote the events ``having a horizontal / vertical crossing''.
\end{prop}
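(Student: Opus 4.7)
The plan is to adapt the second moment method of \cite{RSW-harmonic} to the semi-discrete setting, using the random-parity (random-current) representation of the quantum Ising model à la Björnberg–Grimmett together with the convergence of the fermionic observable established in Theorem \ref{thm:convergence_FK}. I would proceed in four stages.

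First, I would reduce the upper bounds to the lower bounds by planar duality. At the self-dual point $\mu/\lambda = 2$, the absence of a horizontal primal crossing of $R_{n,\alpha}^\delta$ is equivalent to the existence of a vertical dual crossing of the dual rectangle. Combined with the $\pi/2$-rotation symmetry of the lattice and the comparison of boundary conditions via monotonicity in $\xi$, an upper bound of the form $\bbP^\xi(\mathcal{C}_h) \leq 1-c$ follows from a lower bound on the complementary dual crossing event. Thus the task is reduced to proving lower bounds $\bbP^\xi(\mathcal{C}_v) \geq c(\alpha)$, uniformly in $\delta$ and $n$, under (say) free boundary conditions; the other boundary conditions follow by comparison.

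Second, I would introduce the random variable
\[
N \;=\; \int_{T^\delta} \mathbbm{1}\{x \leftrightarrow \partial_{\rm bot} R_{n,\alpha}^\delta\}\,\mathrm{d}x,
\]
where $T^\delta$ denotes a horizontal mid-strip of the rectangle. Then $\{N>0\}$ implies the existence of a crossing between top and bottom arcs, and by Cauchy–Schwarz it suffices to prove that $\bbE[N]^2 / \bbE[N^2]$ is bounded away from zero. The first moment is controlled by the one-point connection probability, which can be read off from the fermionic observable: since $|\obs(e)|^2$ encodes (up to winding) the probability that the interface passes through $e$, Theorem \ref{thm:convergence_FK} and its identification of the limit with $\sqrt{\Phi'}$ provide a uniform lower bound on the probability that a macroscopic point is connected to a given boundary arc, hence a lower bound of order $n^2$ on $\bbE[N]$ in the appropriate Dobrushin/half-plane geometry.

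Third — and this is the heart of the argument — I would bound the second moment. The quantity $\bbE[N^2]$ is the double integral of the two-point connection function $\bbP(x \leftrightarrow \partial_{\rm bot},\, y \leftrightarrow \partial_{\rm bot})$. The Edwards–Sokal coupling relates FK-connections to spin correlations, and spin correlations are treated via the random-parity representation of the quantum Ising model. The key input is the semi-discrete switching lemma (present in Björnberg's work on the quantum random-parity representation) which lets one decouple the sourced configurations and bound the four-point function by products of two-point functions, reducing $\bbE[N^2]$ to an expression controlled by half-plane one-arm estimates. Combined with the one-arm estimates from the observable, this gives $\bbE[N^2] = O(\bbE[N]^2)$, and Cauchy–Schwarz concludes.

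The main obstacle will be the third step: transporting the random-current/random-parity machinery (sources, switching lemma, and the resulting decoupling inequalities) from the discrete Ising setting to the semi-discrete quantum one, and verifying that the half-plane connection estimates needed to close the second moment estimate hold with constants uniform in $\delta$. The combinatorial structure of sources becomes a measure-theoretic structure on Poisson configurations, and some care is required to state the switching identity cleanly. Once these tools are in place, the remaining arguments — reduction to half-plane geometry, extraction of the first moment from the observable, and the final Cauchy–Schwarz — are essentially the same as in the classical planar case.
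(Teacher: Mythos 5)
Your overall skeleton (duality reduction to a lower bound with free boundary conditions, then a second moment argument in the spirit of \cite{RSW-harmonic}) is the same as the paper's, but two of your key steps have genuine gaps. First, you propose to extract the one-point estimates from Theorem \ref{thm:convergence_FK}. That theorem gives uniform convergence of $\obs$ only on compact subsets of the \emph{interior} of a fixed domain, while the quantities entering the first and second moments are connection probabilities of points \emph{on the boundary} of $R_{n,\alpha}^\delta$, and the bounds must hold for all $n$ and $\delta$ simultaneously, not just in a limit. The paper instead uses two facts that are exact at every scale: for $u$ adjacent to the free arc, $\PDob(u \leftrightarrow \arcab)^2 = \delta |\obs(e)|^2$ (the winding is deterministic there), and the sub/super-harmonicity of the primitive $\prim$ on the modified primal/dual domains, which sandwiches this probability between $\sqrt{\hmdual}$ and $\sqrt{\hmprim}$; the harmonic measures are then estimated by local CLT and gambler's ruin bounds for the semi-discrete Brownian motion, uniformly in $n$ and $\delta$. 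Without this boundary identity and harmonic comparison, your first-moment bound is not justified.

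Second, the heart of your argument — bounding $\bbE[N^2]$ via Edwards--Sokal, the random-parity representation and a semi-discrete switching lemma — is exactly the step you leave unresolved, and it is not the mechanism that makes the classical proof work. Bounding a four-point function by products of two-point functions does not by itself produce the decay in the distance between the two boundary points that the second moment requires; what is needed is the quantitative half-plane estimate $\bbP^0(x,z \leftrightarrow l) \leq c/\sqrt{|x-z|\,n}$ (and the one-arm bound $\bbP(\ball(x,k) \leftrightarrow \arcab) \leq c\sqrt{k/n}$ behind it), with constants uniform in $\delta$. In the paper these come from conditioning on the exploration path up to its hitting time of $\ball(x,k)$, using the domain Markov property, and applying the same two-sided harmonic bounds in the slit domain — no switching lemma is involved, and the decoupling across the middle vertical line is done directly at the level of the FK measure. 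So the "main obstacle" you flag is precisely the content of the proof, and the tools you point to (switching identities for sourced Poisson configurations) are neither developed in the paper nor obviously sufficient to produce the exponent $1/2$ you need; the workable route is the exploration-path argument driven by the boundary identity for $\obs$ and the harmonicity of $\prim$.
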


The proof of Proposition \ref{prop:RSW_prop} is based on the use of the same fermionic observable introduced in Section \ref{sec:observables} and the second moment method to estimate the crossing probabilities.
This idea comes from \cite{RSW-harmonic} where the classical Ising case is treated and here we adapt the proof to the case of quantum Ising.

To show the RSW property in Proposition \ref{prop:RSW_prop}, we only need to show the lower bound for free boundary condition by duality \cite{RSW-harmonic}.
In this section, we will just show the property for the horizontal crossing, since the proof to estimate the probability of the vertical crossing is similar.

We recall that $\Dobrushindomain$ is a Dobrushin domain, meaning that the arc $\arcab$ is wired and the arc $\arcba$ is free.
In Section \ref{sec:boundary_modif}, we introduced the notion of modified primal and dual domains of a Dobrushin domain, which are denoted by $\modifiedprimaldomain$ and $\modifieddualdomain$ respectively.
Let us write $\hmprim$ and $\hmdual$ the harmonic functions on modificed domains $\modifiedprimaldomain$ and $\modifieddualdomain$ having boundary conditions 1 on the (extended) wired arc ($\borderab$ for $\modifiedprimaldomain$ and $\extborderab$ for $\modifieddualdomain$) and 0 on the (extended) free arc ($\borderba$ for $\modifiedprimaldomain$ and $\extborderba$ for $\modifieddualdomain$).

We start by noticing that the connection probability of a vertex next to the free arc $\arcba$ to the wired arc $\arcab$ can be written in a simple way by using the parafermionic observable.

\begin{prop} \label{prop:f_proba}
Let $u \in \primaldomain$ such that $\{ u^+, u^- \} \cap \arcba \neq \emptyset$ (equivalently, $u$ is next to the free arc).
Write $e$ for the mid-edge between $u$ and the free arc $\arcba$.
Then, we have
$$
\PDob (u \leftrightarrow \arcab)^2 = \delta |F(e)|^2.
$$
\end{prop}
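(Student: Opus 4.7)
The plan is to show that when $e$ sits between $u$ and the free arc $\arcba$, both quantities appearing in the expectation defining $F(e)$ become essentially deterministic, so that the expectation collapses to a single probability.

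First, I would establish the identity of events
\[
\{e \in \interface\} = \{u \leftrightarrow \arcab\}.
\]
Write $w$ for the dual extremity of $e$, so $w$ lies on $\arcba$ and is therefore connected (in the dual configuration) to every other vertex of $\arcba$ via the ``wiring'' produced by Dobrushin boundary conditions. If $u$ is connected to $\arcab$, then $u$ belongs to the wired primal cluster and $w$ belongs to the wired dual cluster; by definition the mid-edge $e$ then lies on the interface separating these two clusters. Conversely, if $e \in \interface$, then $e$ lies on the unique interface running from $a_\delta$ to $b_\delta$, which separates the wired primal cluster touching $\arcab$ from the wired dual cluster touching $\arcba$, so $u$ must be connected to $\arcab$.

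Next I would invoke Remark~\ref{rmk:obs_parallel} (or rather its argument): because $\Omega^\diamond$ is simply connected, the interface from any mid-edge $e$ on $\partial \medialdomain$ to $b_\delta$ has a prescribed tangent direction at both of its endpoints, and being a simple curve in a simply connected domain it cannot wrap around anything. The total winding $W(e, b_\delta)$ is therefore a deterministic quantity, depending only on $e$ and on the geometry of $\partial \medialdomain$, and not on the random configuration. Denote this deterministic value by $W(e)$.

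Combining the two previous steps with the definition \eqref{eqn:observable},
\[
F(e) \;=\; \frac{\nu}{\sqrt{\delta}} \, \bbE\!\left[ \exp\!\left(\tfrac{\icomp}{2} W(e,b_\delta)\right) \mathbbm{1}_{e \in \interface} \right]
\;=\; \frac{\nu}{\sqrt{\delta}} \, \exp\!\left(\tfrac{\icomp}{2} W(e)\right) \PDob(u \leftrightarrow \arcab).
\]
Taking modulus squared (the complex phase $\nu \exp(\icomp W(e)/2)$ has modulus one) and multiplying by $\delta$ yields
\[
\delta \, |F(e)|^2 \;=\; \PDob(u \leftrightarrow \arcab)^2,
\]
which is the claim. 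The only delicate point is the deterministic nature of the winding; it is handled exactly as in the classical FK-Ising boundary observable computation (\cite{Smirnov-conformal, DS-conf-inv}), transposed to the semi-discrete boundary, and no additional difficulty appears because the arc $\arcba$ consists of dual horizontal and vertical segments whose outward normal directions are fixed.
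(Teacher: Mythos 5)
Your proof is correct and follows essentially the same route as the paper: the paper's argument likewise pulls the deterministic phase $\exp(\tfrac{\icomp}{2}W(e,b_\delta))$ out of the expectation (constant winding for a boundary mid-edge in a simply connected domain) and identifies $\{e \in \interface\}$ with $\{u \leftrightarrow \arcab\}$. You merely spell out these two observations in more detail than the paper does.
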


\begin{proof}
We take the definition of $F$,
\begin{align*}
\delta |F(e)|^2 & = \left| \EDob \left[ \exp \left( \frac{\icomp}{2} W(e, b_\delta) \right) \mathbbm{1}_{e \in \interface} \right] \right|^2 \\
& = | \EDob [\mathbbm{1}_{e \in \interface}] |^2 = \PDob (e \in \interface)^2
\end{align*}
where the winding $W(e, b_\delta)$ is always a constant if $e$ is adjacent to the boundary.
We also notice that $e \in \interface$ is equivalent to $u$ connected to the wired arc $\arcab$.
\end{proof}

By using harmonic functions $\hmprim$ and $\hmdual$, we can get easily the following proposition.

\begin{prop} \label{prop:proba_harm}
Let $u \in \primaldomain$ next to the free arc.
Write $w \in \{ u^+, u^- \}$ which is not on the free arc.
We have
$$
\sqrt{\hmdual(w)} \leq \PDob (u \leftrightarrow \arcab)  \leq \sqrt{\hmprim(u)}
$$ 
\end{prop}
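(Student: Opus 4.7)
The plan is to combine Proposition \ref{prop:f_proba}, the definition of the primitive $H$ on edges (Equation \eqref{eqn:H_edge}), and the sub/superharmonicity of $H$ (Proposition \ref{prop:H_supersub_harm}) together with the boundary modification trick of Section \ref{sec:boundary_modif}.

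First I would turn the connection probability into a value of the primitive $H$. Let $e = [u\, w_{\rm free}]$ be the mid-edge between $u$ and its neighbor $w_{\rm free} \in \{u^+, u^-\}$ lying on $\arcba$. By Proposition \ref{prop:f_proba}, $\PDob(u \leftrightarrow \arcab)^2 = \delta|F(e)|^2$. Equation \eqref{eqn:H_edge} then gives $\delta|F(e)|^2 = H(u) - H(w_{\rm free})$, and since the corollary after Proposition \ref{prop:H_neighbors} tells us $H \equiv 0$ on $\arcba$, we conclude
\begin{equation*}
\PDob(u \leftrightarrow \arcab)^2 = H(u).
\end{equation*}

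Next I would prove the upper bound. By Lemma \ref{lem:extend_H}, $H$ extends to $\modifiedprimaldomain$ while remaining subharmonic, with boundary values $1$ on $\borderab$ and $0$ on the extended free arc, i.e.\ exactly the boundary data of $\hmprim$. Applying the maximum principle (Proposition \ref{prop:max_principle}) to the subharmonic function $H - \hmprim$, we obtain $H(u) \leq \hmprim(u)$, hence $\PDob(u \leftrightarrow \arcab) \leq \sqrt{\hmprim(u)}$.

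For the lower bound, I would first relate $H(u)$ to $H(w)$ at the \emph{other} dual neighbor $w \in \{u^+, u^-\}$ (the one not on $\arcba$): by \eqref{eqn:H_edge}, $H(u) - H(w) = \delta |F(uw)|^2 \geq 0$, so $H(u) \geq H(w)$. Now apply the dual version of the boundary modification trick: $H$ extends to $\modifieddualdomain$ as a superharmonic function with boundary values $0$ on the extended free arc and $1$ on $\extborderab$, matching the boundary data of $\hmdual$. The maximum principle applied to $\hmdual - H$ yields $H(w) \geq \hmdual(w)$. Combining, $\PDob(u \leftrightarrow \arcab)^2 = H(u) \geq H(w) \geq \hmdual(w)$, which is the desired lower bound.

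The only delicate point, and the one I would check most carefully, is that the extended boundary values of $H$ really agree with $1$ on the wired side of both modified domains: on $\modifieddualdomain$ this requires using Lemma \ref{lem:extend_H} to add a dual layer outside $\arcab$ and to verify that $H$ still equals $1$ there (which follows because $H \equiv 1$ on $\arcab$ and the definition extends this value to the new outer vertices), so that both $\hmprim$ and $\hmdual$ serve as the correct harmonic comparison functions. Everything else is a direct application of results already established.
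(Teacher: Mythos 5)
Your proof is correct and follows essentially the same route as the paper: identify $\PDob(u \leftrightarrow \arcab)^2$ with $H(u)$ via Proposition \ref{prop:f_proba}, Equation \eqref{eqn:H_edge} and the vanishing of $H$ on $\arcba$, then compare $H$ with $\hmprim$ (subharmonicity on $\modifiedprimaldomain$) for the upper bound and with $\hmdual$ at the neighboring dual vertex $w$ (using $H(u)-H(w)=\delta|F(uw)|^2\geq 0$ and superharmonicity on $\modifieddualdomain$) for the lower bound, exactly as in the paper's argument.
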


\begin{proof}
Write $w_\partial$ the neighbor of $u$ which is on the free arc.
We have
$$
H(u) = H(u) - H(w_\partial) = \delta |F(e)|^2 = \PDob (u \leftrightarrow \arcab)^2.
$$
Moreover, by Lemma \ref{lem:extend_H}, $H$ is subharmonic on $\modifiedprimaldomain$, we get $H(u) \leq \hmprim(u)$.
Similarly, writing $e = (u w)$, 
$$
H(w) = \delta |F(e)|^2 - \delta |F(e')|^2 \leq \delta |F(e)|^2 = \PDob (u \leftrightarrow \arcab)^2
$$
and we conclude by superharmonicity of $H$ on $\modifieddualdomain$.
\end{proof}

\begin{killcontents}
\begin{lem} \label{lem:estimate_harm}

\end{lem}

\begin{proof}
 local central limit and gambler's ruin type estimate
\end{proof}
\end{killcontents}

Now, we are ready to show the RSW property.
We keep the same notation as in the statement of Proposition \ref{prop:RSW_prop}.
We write $\leftborder$ and $\rightborder$ for the left and right (primal) borders of $R_{n, \alpha}^\delta$.
We define the random variable $N$ given by the 2D Lebesgue measure of the subset of $\leftborder \times \rightborder \subset \bbR^2$ consisting of points which are connected in $R_{n, \alpha}^\delta$.
More precisely, 
$$
N = \iint_{\substack{x \in \leftborder \\ y \in \rightborder}} \mathbbm{1}_{x \leftrightarrow y} \dd x \dd y.
$$
To show Proposition \ref{prop:RSW_prop}, we use the second moment method.
In other words, by using Cauchy-Schwarz, we need to show that the lower bound of
\begin{equation} \label{eqn:second_moment}
\bbP^0 (N > 0) = \bbE^0 [\mathbbm{1}_{N > 0}^2] \geq \frac{\bbE^0 [N]^2}{\bbE^0 [N^2]}
\end{equation}
is uniform in $n$ and $\delta$.
First, we get a lower bound for $\bbE^0 [N]$.

\begin{lem} \label{lem:estimate_N}
There exists a uniform constant $c$ independent of $n$ and $\delta$ such that
$$
\bbE^0 [N] \geq c n.
$$
\end{lem}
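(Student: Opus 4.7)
Plan. The plan is to reduce the estimate $\bbE^0[N] \geq cn$ to a pointwise lower bound on the two-point function via Fubini, and then to extract that pointwise bound from the parafermionic observable constructed in Section~\ref{sec:observables}.

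\textbf{Step 1 (Fubini).} Interchanging expectation and integration gives
\begin{equation*}
\bbE^0[N] \;=\; \int_{\leftborder}\int_{\rightborder} \bbP^0(x \leftrightarrow y)\, dy\, dx.
\end{equation*}
It therefore suffices to show that $\bbP^0(x \leftrightarrow y) \geq c'/n$ uniformly whenever $x$ and $y$ lie in a set of positive measure in $\leftborder\times\rightborder$ --- say, in the middle vertical third of their respective sides --- since this region has two-point measure of order $n^2$ and integration then produces the desired $\bbE^0[N] \gtrsim n$.

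\textbf{Step 2 (Pointwise lower bound via the observable).} The main tool is Proposition~\ref{prop:proba_harm}: in a Dobrushin domain, the probability that a point next to the free arc connects to the wired arc is bounded below by $\sqrt{\hmdual}$, with $\hmdual$ the discrete harmonic function equal to~$1$ on the wired arc and~$0$ on the free arc. I would apply this with an \emph{auxiliary} Dobrushin boundary condition on $R_{n,\alpha}^\delta$ in which a small neighbourhood of $y$ (of length comparable to $\delta$) is wired and the rest of $\partial R_{n,\alpha}^\delta$ is free. Proposition~\ref{prop:f_proba} then gives an exact identity for this Dobrushin connection probability, and $\hmdual$ can be estimated by comparison with its continuum counterpart using Proposition~\ref{prop:cvg_Dirichlet}. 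Since the continuum harmonic measure of a length-$\delta$ arc near $y$, seen from $x$ on the opposite side, is of order $\delta/n$ by Beurling-type estimates (in particular Lemma~\ref{lem:bound_harm_measure}), this yields the correct power $1/n$ after an averaging argument over positions of $y$ along $\rightborder$.

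\textbf{Step 3 (Integration).} Integrating the pointwise bound from Step~2 over the good subregion of $\leftborder\times\rightborder$ gives $\bbE^0[N] \geq cn$ with a constant $c=c(\alpha)>0$ independent of $\delta$ and $n$, as required.

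\textbf{Main obstacle.} The delicate point is Step~2: transferring the bound from the auxiliary Dobrushin domain to the free-BC quantity $\bbP^0(x\leftrightarrow y)$. The FKG/CBC inequality for FK percolation gives $\bbP^0 \leq \bbP^{\rm Dob}$ for increasing connection events, i.e.\ monotonicity in the \emph{wrong} direction. Two routes to overcome this come to mind. The first is to use the Edwards--Sokal coupling with the Ising spin model: under free spin boundary conditions, GKS inequalities yield lower bounds on two-point spin correlations that translate, via the coupling, to lower bounds on FK connectivities in the right direction. The second is a domain Markov / exploration argument: one explores the cluster of $\leftborder$ (or an appropriate boundary cluster) from outside the good region, thereby exposing an inner random subdomain with boundary conditions for which the observable lower bound is applicable. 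Making either comparison rigorous---and in particular ensuring uniformity in $\delta$---is the real technical heart of the proof, and will occupy most of the calculation.
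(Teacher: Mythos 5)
Your skeleton (Fubini, a lower bound on connection probabilities, integration) is the same as the paper's, but the reduction you choose in Step~1 is where the argument breaks. In this model a fixed target point $y \in \rightborder$ can only be reached through the primal sub-segment of $\rightborder$ of length $\asymp \delta$ containing it (death points occur at rate $\asymp 1/\delta$), and your own mechanism in Step~2 quantifies the cost: for a wired arc of length $\asymp \delta$ around $y$, one has $\hmdual(w) \asymp (\delta/n)^2$ (a gambler's-ruin factor $\delta/n$ for starting next to the free boundary times a factor $\delta/n$ for the harmonic measure of a far interval of length $\delta$), so Proposition \ref{prop:proba_harm} delivers a lower bound of order $\delta/n$, and the matching upper bound $\sqrt{\hmprim}$ shows this order cannot be improved. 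Hence the pointwise bound $\bbP^0(x \leftrightarrow y) \geq c'/n$ you aim for cannot hold uniformly in $\delta$, and "averaging over positions of $y$" cannot upgrade it --- that averaging is nothing but the integration you already perform in Step~3, which with the true order $\delta/n$ only yields $\bbE^0[N] \gtrsim \delta n$, short of the stated bound by a factor $\delta$. The paper never works with point targets: it cuts $\rightborder$ into $m = \lfloor n/\delta \rfloor$ segments $\rightborder^i$ of length $\asymp \delta$, expresses $\bbE^0[N]$ through the events $\{x \leftrightarrow \rightborder^i\}$, bounds each such probability below by $\sqrt{\hmdual(w)} \geq c\,\delta/n$ (Proposition \ref{prop:proba_harm} together with a direct local-CLT/gambler's-ruin estimate --- not Proposition \ref{prop:cvg_Dirichlet}, whose Carath\'eodory-type convergence cannot resolve a boundary feature of size $\delta$, and not Lemma \ref{lem:bound_harm_measure}, which concerns balls), and then sums the $n/\delta$ segments before integrating over $x \in \leftborder$. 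To prove Lemma \ref{lem:estimate_N} as stated you must adopt this segment-level bookkeeping rather than a pointwise two-point bound.

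Concerning your "main obstacle": you are right that FKG/monotonicity in boundary conditions goes the wrong way, but both remedies you propose are far heavier than what is needed, and the exploration route would in any case still face the $\delta/n$ versus $1/n$ mismatch above. The paper applies the Dobrushin estimate to $\bbP^0$ without comment; the reason this is harmless is that the wired arc is microscopic: the Radon--Nikodym derivative of $\bbP^0$ with respect to $\PDob$ is proportional to $2^{D}$, where $D$ is the number of distinct clusters meeting $\rightborder^i$ minus one, and $D$ is dominated by the number of death points on a segment of length $\asymp \delta$, a Poisson variable of bounded mean. This gives $\bbP^0(x \leftrightarrow \rightborder^i) \geq c\, \PDob(x \leftrightarrow \rightborder^i)$ with $c$ uniform in $n$ and $\delta$, which is the one-line transfer your plan is missing; it is available precisely because the wired arc has length $O(\delta)$, and it is the reason the first-moment argument can be run with the observable bounds of Propositions \ref{prop:f_proba} and \ref{prop:proba_harm}.
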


\begin{proof}
We decompose the right boundary into $m = \lfloor n/\delta \rfloor$ parts,
$$
\rightborder = \bigcup_{i=0}^{m-1} \rightborder^i \mbox{ where } \rightborder^i = \left( \{ \alpha n \} \times \left( -n + i \cdot \frac{2n}{m}, -n + (i+1) \cdot \frac{2n}{m} \right) \right).
$$
We expand the expectation,
\begin{align*}
\bbE^0 [N] & = \iint_{\substack{x \in \leftborder \\ y \in \rightborder}} \bbP^0 (x \leftrightarrow y) \dd x \dd y \\
& = \int_{x \in \leftborder} \sum_{i=0}^{m-1} \bbP^0 (x \leftrightarrow \rightborder^i) \dd x.
\end{align*}
By Proposition \ref{prop:proba_harm}, each $\bbP^0(x \leftrightarrow \rightborder^i)$ can be bounded from below by $\hmdual(w)$ where $w$ is a neighbor of $x$ which is not on the free arc, and the harmonic measure is with respect to the modified domain $\modifieddualdomain$ where the Dobrushin domain $\Dobrushindomain$ is given by $\medialdomain = R_{n, \alpha}^\delta$, $a_\delta$ and $b_\delta$ such that $\rightborder^i = (a_\delta b_\delta)$.
Moreover, from the local central limit and gambler's ruin-type estimate, we have that $\hmdual(w) \geq c (\delta / n)^2$ for a $c > 0$ uniform in $n$, $\delta$ and $i$.

Finally, we get
$$
\bbE^0 [N] \geq \int_{x \in \leftborder} m \cdot \sqrt{c} \frac{\delta}{n} \dd x \geq c' n
$$
where $c'$ is a uniform constant.
\end{proof}

To estimate $\bbE^0 [N^2]$, we need Proposition \ref{lem:two_pts}, a consequence of Lemma \ref{lem:arm_exp}.
Both of them make use of the so-called \emph{exploration path}, which is the interface between the primal wired cluster and the dual free cluster.
The proof in the discrete case \cite{RSW-harmonic} can be easily adapted to the semi-discrete case, since the interface is well-defined and we have similar estimates on harmonic functions, by means of semi-discrete Brownian motion, local central limit and gambler's ruin-type estimates.
Therefore, we will just give the proof of Lemma \ref{lem:arm_exp}.

For any given $\alpha, n$ and $\delta$, let us consider $R_{n, \alpha}$ as before and $(R_{n, \alpha}^\delta, a_\delta, b_\delta)$ the semi-discretized Dobrushin domain obtained from $R_{n, \alpha}$ with the right boundary $\rightborder = \arcab$ which is wired.

\begin{prop}[\protect{\cite[Proposition 14]{RSW-harmonic}}] \label{lem:two_pts}
There exists a constant $c > 0$ which is uniform in $\alpha$ and $n$ such that for any rectangle $R_{n, \alpha}^\delta$ and any two points $x, z \in \leftborder$, we have
$$
\bbP_{(R_{n, \alpha}^\delta, a_\delta, b_\delta)} (x, z \leftrightarrow \rightborder)
= \bbP^0_{R_{n, \alpha}^\delta} (x, z \leftrightarrow \rightborder)
\leq \frac{c}{\sqrt{|x- z| n}}
$$
\end{prop}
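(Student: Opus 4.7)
The plan is to adapt the argument of \cite[Proposition 14]{RSW-harmonic} to the semi-discrete setting: run the FK-Ising exploration interface, condition on it via the domain Markov property, and then apply the one-arm estimate of Lemma \ref{lem:arm_exp} twice -- once unconditionally in the full rectangle and once in the smaller domain obtained after conditioning. No new conceptual ingredient is required compared to the discrete case; all the work is in checking that the semi-discrete harmonic analysis and the continuous-time Poisson exploration do not disrupt the combinatorics.

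I would first assume without loss of generality that $z$ lies above $x$ on $\leftborder$ and write $d = |x-z|$. Viewing $R^\delta_{n,\alpha}$ as a Dobrushin domain with $\rightborder$ wired and the rest free (this change of boundary condition does not alter the event $\{x, z \leftrightarrow \rightborder\}$, since collapsing $\rightborder$ to a single vertex preserves the set of points connected to it), one introduces the loop-representation interface $\interface$ from the upper corner $a_\delta$ of $\rightborder$ to its lower corner $b_\delta$. On $\{x, z \leftrightarrow \rightborder\}$ the interface $\interface$ must visit a mid-edge adjacent to $z$ before one adjacent to $x$. I would then define $\tau$ as the first time $\interface$ enters the semi-discrete ball $B(z, d/2) \cap \medialdomain$, which is finite on $\{z \leftrightarrow \rightborder\}$.

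By the domain Markov property (valid in the continuum FK-representation by the independence of the Poisson point processes on disjoint spatial regions), conditionally on $\interface_{[0,\tau]}$ the unexplored component containing $x$ carries the FK-Ising measure on a new Dobrushin domain in which the wired arc now runs from $\interface_\tau$ to $b_\delta$ along the remainder of $\rightborder$, while the free arc is the original $\leftborder$ plus the trace of $\interface_{[0,\tau]}$. In this new domain, $x$ sits on the free arc at distance at least $d/2$ from the new wired endpoint $\interface_\tau$. Applying Lemma \ref{lem:arm_exp} inside this conditioned domain yields $\bbP(x \leftrightarrow \arcab \mid \interface_{[0,\tau]}) \leq C/\sqrt d$. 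A second, unconditional application of Lemma \ref{lem:arm_exp} in the full rectangle gives $\bbP^0(z \leftrightarrow \rightborder) \leq C/\sqrt n$. Combining via the tower property,
$$
\bbP^0(x, z \leftrightarrow \rightborder) \leq \frac{C}{\sqrt d}\, \bbP^0(z \leftrightarrow \rightborder) \leq \frac{C'}{\sqrt{dn}}.
$$

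The principal obstacle is to secure Lemma \ref{lem:arm_exp} with constants that are uniform over the random, microscopically irregular Dobrushin domains produced by the conditioning. Via Proposition \ref{prop:proba_harm} this reduces to bounding $\hmprim(x)$ by a multiple of $d/n$ in the conditioned domain, uniformly in the shape of the deformed free arc. Such a bound is a Beurling-type estimate, and it should follow from the semi-discrete Brownian motion analysis of Section \ref{sec:BM}: the gambler's ruin estimate for the horizontal continuous-time simple random walk (distance $d/\delta$ to the wired arc at distance $n/\delta$) combined with the local CLT in the continuous vertical direction produces the desired $O(d/n)$ control, uniformly in the boundary roughness. The continuous-time feature of the semi-discrete exploration is not an additional difficulty: between Poisson events the interface is piecewise deterministic, so the stopping-time argument and the spatial Markov decomposition transfer verbatim from the discrete isoradial case treated in \cite{RSW-harmonic}.
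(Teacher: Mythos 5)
Your overall strategy (run the exploration, stop it, use the domain Markov property, and apply the one\nobreakdash-arm estimate twice) is exactly the route the paper intends: the paper gives no proof of this proposition at all, deferring to \cite[Proposition 14]{RSW-harmonic} and proving only Lemma~\ref{lem:arm_exp}. But as written your argument has two concrete gaps. First, the final ``tower property'' step is not valid: with your stopping time $\tau$ (first entry of $\interface$ into $\ball(z,d/2)$, $d=|x-z|$), the event $\{z\leftrightarrow\rightborder\}$ is \emph{not} measurable with respect to $\interface[0,\tau]$ --- the interface can enter the ball without $z$ ever being connected, and whether it later visits the mid-edge adjacent to $z$ is decided after $\tau$. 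So you may not multiply the conditional bound by $\bbP^0(z\leftrightarrow\rightborder)$; what your decomposition actually gives is $\bbP^0(x,z\leftrightarrow\rightborder)\le (C/\sqrt d)\,\bbP(\tau<\infty)$, and $\{\tau<\infty\}$ only forces $\ball(z,d/2+2\delta)\leftrightarrow\rightborder$, whose probability is of order $\sqrt{d/n}$ by Lemma~\ref{lem:arm_exp}. The product is $C/\sqrt n$, which misses the claimed $c/\sqrt{dn}$ by a factor $\sqrt d$ (fatal, since $d$ can be of order $n$). To retain a genuine factor coming from $z$ you must stop at a time at which the connection of $z$ is already decided, e.g.\ the first visit of $\interface$ to the mid-edge adjacent to $z$.

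Second, and more seriously, the conditional estimate $\bbP(x\leftrightarrow\arcab\mid\interface[0,\tau])\le C/\sqrt d$ is not justified by the distance from $x$ to the \emph{tip} $\interface(\tau)$. After conditioning, the wired arc of the new Dobrushin domain contains the whole primal side of the explored trace together with the remainder of $\rightborder$ (your description ``the free arc is $\leftborder$ plus the trace'' is incorrect: one side of the trace is wired, the other free), and nothing in your choice of $\tau$ prevents $\interface[0,\tau]$ from coming arbitrarily close to $x$ through the bulk before its first entry into $\ball(z,d/2)$; the monotonicity of boundary visits only forbids it from touching the free arc at or below $z$, not from a Euclidean near-approach to $x$. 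On such realizations the wired boundary sits at microscopic distance from $x$, the Beurling/gambler's-ruin bound on $\hmprim(x)$ via Proposition~\ref{prop:proba_harm} fails, and so does the uniform $C/\sqrt d$ bound. Controlling precisely this scenario is the real content of the two-point estimate: a correct adaptation must choose the stopping rule so that the \emph{entire} explored path is kept at distance of order $d$ from $x$ (not just its endpoint), and must treat separately the realizations where the interface approaches $x$ before the connection of $z$ is decided. Your sketch passes over this point, so the two applications of Lemma~\ref{lem:arm_exp} do not yet combine into the stated bound.
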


\begin{lem}[\protect{\cite[Lemma 15]{RSW-harmonic}}] \label{lem:arm_exp}
There exists a constant $c > 0$ which is uniform in $\alpha$, $n$, $\delta$ and $x \in \leftborder$ such that for any rectangle $R_{n, \alpha}^\delta$ and all $k \geq 0$,
$$
\bbP_{(R_{n, \alpha}^\delta, a_\delta, b_\delta)} (\ball (x, k) \leftrightarrow \arcab) \leq c \sqrt\frac{k}{n}.
$$
\end{lem}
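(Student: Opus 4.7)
The proof follows the strategy of the analogous classical FK-Ising result in \cite{RSW-harmonic}, using the parafermionic observable from Section~\ref{sec:observables} together with the semi-discrete Brownian motion estimates of Section~\ref{sec:BM}. The plan is to convert the bulk connection event $\{\ball(x,k)\leftrightarrow\arcab\}$ into a boundary event about $\partial\ball(x,k)$ and then bound that boundary event via the observable.

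First, by comparison of boundary conditions (FKG monotonicity), I would increase the connection probability by wiring $\partial\ball(x,k)$ and work in the modified domain $\tilde D := R_{n,\alpha}^\delta \setminus \ball(x,k)$. After a further (simpler) monotonicity step freeing this inner boundary, the problem reduces to estimating $\tilde\bbP(\partial\ball(x,k)\leftrightarrow\arcab)$ in the Dobrushin domain $\tilde D$ whose wired arc is $\arcab$ and whose free arc is $\arcba \cup \partial\ball(x,k)$. In this modified domain, Propositions~\ref{prop:obs_prop} and~\ref{prop:H_supersub_harm} apply verbatim to the associated observable $\tilde F$ and its primitive $\tilde H$, after the boundary-modification trick of Section~\ref{sec:boundary_modif} has been used to extend $\tilde H$ to $\widetilde{\Omega^\bullet_\delta}$.

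Second, I would apply Proposition~\ref{prop:proba_harm} to a vertex $u$ on the right-facing portion of $\partial\ball(x,k)$, adjacent to the new free arc, giving
\[
\tilde\bbP(u\leftrightarrow\arcab) \;\leq\; \sqrt{\tilde\hmprim(u)},
\]
where $\tilde\hmprim$ is the harmonic function on the extended primal domain with boundary values $1$ on $\arcab$ and $0$ on the enlarged free arc. For such a $u$, the semi-discrete Brownian motion of Section~\ref{sec:BM} must exit through $\arcab$ at horizontal distance $\asymp n$ while avoiding the free arc, most of which (namely $\leftborder \cup \partial\ball(x,k)$) lies within distance $\asymp k$ of $u$. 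A gambler's-ruin argument in the horizontal coordinate, combined with a Beurling-type estimate near the slit-like part of the free arc, should give $\tilde\hmprim(u) \leq c\, k/n$, hence $\tilde\bbP(u\leftrightarrow\arcab) \leq c\sqrt{k/n}$.

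The main obstacle is the passage from this pointwise bound to the bound on the event $\{\partial\ball(x,k)\leftrightarrow\arcab\}$: the naive union bound over the $O(k/\delta)$ vertices of $\partial\ball(x,k)$ loses a factor of $k/\delta$. To close this gap I would follow the quasi-multiplicativity / second-moment argument of \cite{RSW-harmonic}: show that conditionally on the connection event, a macroscopic fraction of the vertices on $\partial\ball(x,k)$ are simultaneously connected to $\arcab$ (using FKG to extend a single connection along the wired side of $\partial\ball(x,k)$ via short horizontal arcs of length $O(\delta)$, whose probability is bounded below uniformly in $\delta$). Combining $\tilde\bbE[N] = \sum_u \tilde\bbP(u\leftrightarrow\arcab) \leq c(k/\delta)\sqrt{k/n}$ with the lower bound $\tilde\bbE[N \mid N\geq 1] \gtrsim k/\delta$ then yields $\tilde\bbP(\partial\ball(x,k)\leftrightarrow\arcab) \leq c'\sqrt{k/n}$. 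The semi-discrete adaptation is delicate near the horizontal caps of $\partial\ball(x,k)$, where the asymmetry between the discrete $x$-direction and the continuous $y$-direction in the Green's function estimates of Section~\ref{sec:Greens} and in Lemma~\ref{lem:bound_harm_measure} must be handled carefully so that the constants remain uniform in $\delta$.
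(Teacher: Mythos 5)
Your reduction step contains a monotonicity error that breaks the argument. Wiring $\partial\ball(x,k)$ indeed only increases the probability of the connection event, but the subsequent step of ``freeing this inner boundary'' moves in the wrong direction: for $q=2\geq 1$, free boundary conditions are FKG-dominated by wired ones, so the probability of $\{\partial\ball(x,k)\leftrightarrow\arcab\}$ in the Dobrushin domain whose free arc is $\arcba\cup\partial\ball(x,k)$ is a \emph{lower} bound for the original probability, not an upper bound. You cannot keep the ball wired either, since then the domain carries two wired boundary pieces and is no longer a Dobrushin domain, so Proposition \ref{prop:proba_harm} (which is what produces the $\sqrt{\cdot}$ of a harmonic function) is not applicable. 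Moreover, even granting the pointwise bound, your boosting step from one boundary vertex to the whole of $\partial\ball(x,k)$ does not work as described: chaining $\Theta(k/\delta)$ short arcs, each of probability bounded below by a constant, gives a conditional bound that degenerates exponentially in $k/\delta$, and obtaining a macroscopic fraction of simultaneously connected boundary vertices would require scale-invariant crossing estimates, i.e.\ precisely the RSW property that this lemma is being used to prove — the argument is circular.

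The paper's proof avoids both issues with an exploration-path argument that is the key idea you are missing. One first uses monotonicity in the correct direction, enlarging the domain to $R_{n,\alpha+1}^\delta$ and the wired arc to the whole right side $\arccd$. The event $\{\ball(x,k)\leftrightarrow\arccd\}$ is then exactly $\{T<\infty\}$, where $T$ is the hitting time of the mid-edges adjacent to the ball by the interface $\interface$ started at $c_\delta$. Conditioning on $\interface[0,T]$ and using the domain Markov property, the slit domain $(R_{n,\alpha+1}^\delta\setminus\interface[0,T],\interface(T),d_\delta)$ is again a Dobrushin domain whose wired arc passes within distance $2k$ of the topmost point $p$ of the ball, so Proposition \ref{prop:proba_harm} together with the harmonic-measure estimate gives $\bbP\bigl(p\leftrightarrow\arccd\mid\interface[0,T]\bigr)\geq c_2/\sqrt{2k}$ almost surely on $\{T<\infty\}$; on the other hand, $p$ is at distance at least $n$ from the wired arc, so unconditionally $\bbP(p\leftrightarrow\arccd)\leq c_1/\sqrt{n}$. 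Comparing the two bounds yields $\bbP(T<\infty)\leq c\sqrt{k/n}$ directly, with no union bound, no second moment on the ball boundary, and no Beurling-type input.
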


\begin{proof}
Let $n, k, \delta, \alpha > 0$, the rectangular domain $R_{n, \alpha}^\delta$ and its semi-discrete counterpart, the Dobrushin domain $(R_{n, \alpha}^\delta, a_\delta, b_\delta)$ where $\rightborder = \arcab$ is the wired arc.
Consider $x \in \leftborder$.
For $k \geq n$, the inequality is trivial, so we can assume $k < n$.

Since the probability $\bbP_{(R_{n, \alpha}^\delta, a_\delta, b_\delta)} (\ball (x, k) \leftrightarrow \arcab)$ is non-decreasing in $\alpha$, we can bound it by above by replacing $\alpha$ by $\alpha+1$, which we bound by above by a longer wired arc $\arccd$, where $c_\delta$ and $d_\delta$ are respectively the left-bottom and the left-top points of the rectangular domain $R_{n, \alpha}^\delta$.
See Figure \ref{fig:recdoms} for notations.
\begin{align*}
\bbP_{(R_{n, \alpha}^\delta, a_\delta, b_\delta)} (\ball (x, k) \leftrightarrow \arcab)
& \leq \bbP_{(R_{n, \alpha+1}^\delta, a_\delta, b_\delta)} (\ball (x, k) \leftrightarrow \arcab) \\
& \leq \bbP_{(R_{n, \alpha+1}^\delta, c_\delta, d_\delta)} (\ball (x, k) \leftrightarrow \arccd)
\end{align*}

\begin{figure}[htb] \centering
  \begin{minipage}{.22\textwidth}
    \includegraphics[scale=0.85, page=1]{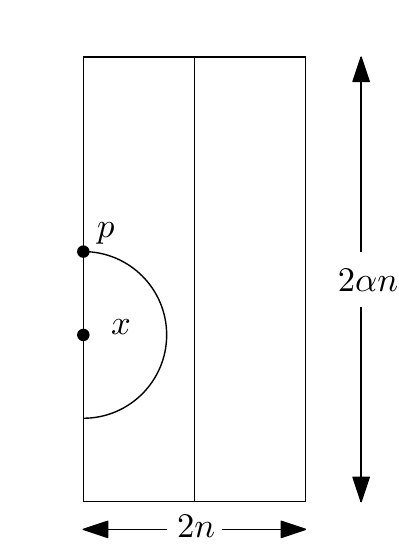}
  \end{minipage}
  \hfill
  \begin{minipage}{.22\textwidth}
    \includegraphics[scale=0.85, page=2]{images/rec_domain.pdf}
  \end{minipage}
  \hfill
  \begin{minipage}{.22\textwidth}
    \includegraphics[scale=0.85, page=3]{images/rec_domain.pdf}
  \end{minipage}
  \hfill
  \begin{minipage}{.22\textwidth}
    \includegraphics[scale=0.85, page=4]{images/rec_domain.pdf}
  \end{minipage}
  \caption{\emph{First:} The primal domain $R_{n, \alpha}^\delta$. \emph{Second:} The Dobrushin domain $(R_{n, \alpha}^\delta, a_\delta, b_\delta)$. \emph{Third:} The Dobrushin domain $(R_{n, \alpha}^\delta, c_\delta, d_\delta)$. \emph{Third:} The Dobrushin domain $(R_{n, \alpha}^\delta, c_\delta, d_\delta)$. \emph{Fourth:} The exploration path starting at $c_\delta$ and touching $\ball(x, k)$ at time $T$.}
  \label{fig:recdoms}
\end{figure}

Let $\interface$ be the interface as defined in Section \ref{sec:QI_loop} and used in Section \ref{sec:observables} to define the fermionic observable.
The definition of $\interface$ tells us that the ball $\ball(x, k)$ is connected to the wired arc if and only if $\interface$ goes through a mid-edge which is adjacent to the ball.
We parametrize $\interface$ by its length and denotes $T$ the hitting time of the set of the mid-edges adjacent to the ball $\ball(x, k)$.
Therefore, $\ball(x, k)$ is connected to the wired arc if and only if $T < \infty$.

Write $p$ for the top-most point of $\ball(x, k)$.
We can rewrite the probability of $\{ p \leftrightarrow \arccd \}$ by conditionning on $\interface[0, T]$ and using Markov domain property to obtain
\begin{align*}
\bbP_{(R_{n, \alpha+1}^\delta, c_\delta, d_\delta)} (p \leftrightarrow \arccd)
& = \bbE_{(R_{n, \alpha+1}^\delta, c_\delta, d_\delta)} [ \mathbbm{1}_{T < \infty} \bbP_{(R_{n, \alpha+1}^\delta, c_\delta, d_\delta)} (p \leftrightarrow \arccd \mid \interface[0, T]) ] \\
& = \bbE_{(R_{n, \alpha+1}^\delta, c_\delta, d_\delta)} [ \mathbbm{1}_{T < \infty} \bbP_{(R_{n, \alpha+1}^\delta \backslash \interface[0, T], \interface(T), d_\delta)} (p \leftrightarrow \arccd) ].
\end{align*}

We estimate this quantity in two different ways to get the desired inequality.
Firstly, since $p$ is at distance at least $n$ from the wired arc, we have
$$
\bbP_{(R_{n, \alpha+1}^\delta, c_\delta, d_\delta)} (p \leftrightarrow \arccd) \leq \frac{c_1}{\sqrt{n}},
$$
which follows from Proposition \ref{prop:proba_harm} and the fact that $\hmprim \leq \frac{c}{n}$.
Secondly, we can write $\interface(T)$ as $z + (s, -r, s)$ where $0 \leq s \leq k$ and $0 \leq r \leq 2k$.
Thus, the line $z + \bbZ \times \{ -r \}$ disconnects $a$ from the free arc, we estimate the harmonic function and we obtain a.s.
$$
\bbP_{(R_{n, \alpha+1}^\delta \backslash \interface[0, T], \interface(T), d_\delta)} (p \leftrightarrow \arccd)
\geq \frac{c_2}{\sqrt{r}} \geq \frac{c_2}{\sqrt{2k}}
$$
which again comes from Proposition \ref{prop:proba_harm} and the estimate $\hmdual \geq \frac{c}{r}$.
This final estimate being uniform in $\interface[0, T]$, we get
$$
\frac{c_2}{\sqrt{2k}} \bbP_{(R_{n, \alpha+1}^\delta, c_\delta, d_\delta)} ( T < \infty)
\leq \bbP_{(R_{n, \alpha+1}^\delta, c_\delta, d_\delta)} (p \leftrightarrow \arccd)
\leq \frac{c_1}{\sqrt{n}}.
$$
which implies the statement.
\end{proof}

Now, we can complete the proof of the RSW property.

\begin{proof}[Proof of Proposition \ref{prop:RSW_prop}]
From Equation \eqref{eqn:second_moment} and Lemma \ref{lem:estimate_N}, we just need to show that
\begin{equation} \label{eqn:N2}
\bbE^0 [N^2] = \iiiint \bbP^0 (x \leftrightarrow y, z \leftrightarrow t) \dd x \dd y \dd z \dd t
\end{equation}
is $\bigO{n^2}$.

Consider $x, z \in \leftborder$, $y, t \in \rightborder$ and $l$ the middle vertical line separating them, we get
\begin{align*}
\bbP^0(x \leftrightarrow y, z \leftrightarrow t) \leq 
\bbP^0(x, z \leftrightarrow l) \bbP^0(y, t \leftrightarrow l) 
\end{align*}
because the left-hand side of $l$ and the right-hand side of $l$ are independent.
Therefore, the integral in Equation \eqref{eqn:N2} can be cut into two independent parts, each of whom gives the same contribution,
\begin{align*}
& \iiiint_{\substack{x, z \in \leftborder \\ y, t \in \rightborder}} \bbP^0 (x \leftrightarrow y, z \leftrightarrow t) \dd x \dd y \dd z \dd t \\
& \qquad = \left( \iint_{x, z \in \leftborder} \bbP^0 (x, z \leftrightarrow l) \dd x \dd z \right) \left( \iint_{y, t \in \rightborder} \bbP^0 (y, t \leftrightarrow l) \dd y \dd t \right) \\
& \qquad = \left( \iint_{x, z \in \leftborder} \bbP^0 (x, z \leftrightarrow l) \dd x \dd z \right)^2 \\
& \qquad \leq \left( \iint_{x, z \in \leftborder} \frac{c}{\sqrt{|x-z| n}} \dd x \dd z \right)^2 \leq (4cn)^2
\end{align*}
where in the last line we use Proposition \ref{lem:two_pts}.

The proof is thus complete.
\end{proof}

Here we give a brief idea to the proof for the vertical crossing.
To start with, we need to establish propositions similar to Propositions \ref{prop:f_proba} and \ref{prop:proba_harm}.
We will get $\sqrt{\dy \hmprim}$ and $\sqrt{\dy \hmdual}$ in the statement.
And to estimate these harmonic functions, we can use Harnack Principle (Proposition \ref{prop:Harnack}) to get the correct orders.
Then, the end of the story is the same, since we can always define the exploration path and get the same estimates (Lemmas \ref{lem:estimate_N} and \ref{lem:two_pts}).

\subsection{Conclusion: proof of the main Theorem}

We have all the necessary ingredients to conclude the proof of the main Theorem:
\begin{enumerate}
\item The RSW property shown in the previous section gives the G2 condition mentioned in \cite{Kemp-SLE}, giving as conclusion that the family of interfaces $(\interface)$ is tight for the weak convergence.
\item The fact that the fermionic observable (seen as an exploration process) is a martingale and is conformally invariant allows us to identify the limit via Itô's formula.
More precisely, if $\gamma$ is a subsequential limit of the interface parametrized by a Löwner chain $W$, from property of martingales and Itô's formula, we prove that $(W_t)$ and $(W_t^2 - \kappa t)$ are both martingales ($\kappa = 16/3$ for quantum FK-Ising).
The computation is exactly the same as in the limit of the classical FK-Ising since we have the same Riemann-Hilbert Boundary value problem in continuum and same martingales.
Readers who are interested in more details, see \cite{DS-conf-inv, Duminil-book}.
\end{enumerate}

\appendix

\section{Computation of residues} \label{app:residues}

For a non-negative integer $k$ and $m \in \bbZ$, define 
\begin{align*}
g_{k, m} (z) := \frac{1}{z} \left( \frac{1}{z+1} + \frac{1}{z-1} \right)^k \left( \frac{z+1}{z-1} \right)^{2m}
 = \frac{2^k z^{k-1}}{(z-1)^{k+2m} (z+1)^{k-2m}}.
\end{align*}

\begin{lem}
When $k=0$, we have $\Res(g_{k, m}, 1) = \Res(g_{k, m}, -1) = 0$ for all $m$.
\end{lem}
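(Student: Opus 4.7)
The case $m = 0$ is immediate: $g_{0,0}(z) = 1/z$ is holomorphic at $\pm 1$, so both residues vanish.

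For the general case, I would first record an elementary symmetry obtained directly from the defining formula: a short computation gives $g_{0,-m}(-z) = -g_{0,m}(z)$. Combined with the change of variable $\zeta = -z$ in the residue integral, this yields
$$
\Res(g_{0,m}, 1) = \Res(g_{0,-m}, -1),
$$
so it suffices to treat $m > 0$.

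For $m > 0$, rewrite
$$
g_{0,m}(z) = \frac{(z+1)^{2m}}{z\,(z-1)^{2m}}.
$$
Then $z = -1$ is a zero of order $2m$, so $\Res(g_{0,m}, -1) = 0$ trivially. The only substantive point is $\Res(g_{0,m}, 1) = 0$, and since the pole at $z=1$ has order $2m$, this residue equals the coefficient of $(z-1)^{2m-1}$ in the Taylor expansion of $(z+1)^{2m}/z$ about $z = 1$.

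I would compute that coefficient by a Cauchy product: expanding
$$
(z+1)^{2m} = \sum_{j=0}^{2m} \binom{2m}{j} 2^{2m-j} (z-1)^j, \qquad \frac{1}{z} = \sum_{k \ge 0} (-1)^k (z-1)^k,
$$
gives the target coefficient as $-\sum_{j=0}^{2m-1} \binom{2m}{j} 2^{2m-j} (-1)^j$. By the binomial theorem, $\sum_{j=0}^{2m} \binom{2m}{j} 2^{2m-j} (-1)^j = (2-1)^{2m} = 1$, and the $j=2m$ term equals $1$, so the truncated sum vanishes. There is no real obstacle beyond this one-line application of the binomial identity; the main work is simply arranging the algebra so that the symmetry reduces the four cases to one computation.
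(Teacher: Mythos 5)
Your proof is correct and follows essentially the same route as the paper: in both cases the residue at the point where the function has no pole vanishes trivially, and the residue at the other point is computed by expanding around it and killing a truncated sum with the binomial identity $(2-1)^{2m}=1$. The only cosmetic difference is that you handle negative $m$ via the explicit symmetry $g_{0,-m}(-z)=-g_{0,m}(z)$, where the paper simply notes the negative case is analogous.
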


\begin{proof}
When $m = 0$, the result is trivial.
Assume $m \in \bbN^*$, the function $g_{k, m}$ does not have any pole at $-1$, so it is clear that $\Res(g_{k, m}, -1) = 0$.
The residue of $g_{k, m}(z)$ at $z = 1$ is the residue of $g_{k, m}(y+1)$ at $y = 0$.
We have
\begin{align*}
g_{k, m} (y+1) & = \frac{1}{y+1} \left( 1 + \frac2y \right)^{2m} \\
& = \left[ \sum_{k \geq 0} (-y)^k \right]
\left[ \sum_{l=0}^{2m} {2m \choose l} \left( \frac2y \right)^l \right],
\end{align*}
thus the coefficient of $\frac{1}{y}$ is given by
\begin{align*}
\sum_{l=1}^{2m} {2m \choose l} 2^l (-1)^{l-1} = - [(1-2)^{2m} - 1] = 0.
\end{align*}
When $m$ is negative, the proof is similar.
\end{proof}

\begin{lem} \label{lem:sum_residues_zero}
For $k \geq 1$, we have $\Res(g_{k, m}, 1) + \Res(g_{k, m}, -1) = 0$.
\end{lem}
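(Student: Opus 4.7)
The plan is to apply the residue theorem on a large circle and deduce the identity from the fact that $g_{k,m}$ decays sufficiently fast at infinity. Since $g_{k,m}$ is a rational function whose only possible finite singularities are at $\pm 1$ (the factor $z^{k-1}$ contributes a zero of order $k-1 \geq 0$ at the origin, not a pole), the sum of residues at $\pm 1$ equals the negative of the residue at infinity, and the latter will turn out to vanish.

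First I would read off the asymptotic behavior at infinity. The total degree of the denominator is $(k+2m) + (k-2m) = 2k$, so
\[
g_{k,m}(z) = \frac{2^k z^{k-1}}{(z-1)^{k+2m}(z+1)^{k-2m}} = \frac{2^k}{z^{k+1}}\,(1 + \bigO{1/z}) \quad \text{as } |z| \to \infty,
\]
and for $k \geq 1$ this decays at least as fast as $1/z^2$. Next I would consider the contour integral $\oint_{C_R} g_{k,m}(z)\,dz$ over a circle $C_R$ of large radius $R$ enclosing both $\pm 1$. By the residue theorem it equals $2\pi \icomp\,[\Res(g_{k,m},1) + \Res(g_{k,m},-1)]$. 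On the other hand, the bound $|g_{k,m}(z)| \leq C/R^{k+1}$ on $C_R$ combined with the length $2\pi R$ of the contour gives an overall estimate of order $1/R^k$, which tends to $0$ as $R \to \infty$ since $k \geq 1$.

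Combining these two facts forces the sum of residues to vanish, which is exactly the claim. I do not expect any serious obstacle: the only mildly delicate point is the bookkeeping when $k+2m \leq 0$ or $k-2m \leq 0$, in which case the corresponding factor moves to the numerator and the would-be pole at $\pm 1$ becomes a zero or a regular point. In these degenerate cases the corresponding residue is simply $0$; meanwhile the global asymptotic bound at infinity is unaffected, so the same contour argument applies uniformly in $m$.
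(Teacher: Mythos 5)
Your argument is correct and is essentially the same as the paper's proof: both note that for $k \geq 1$ the origin is not a pole, that $g_{k,m}(z) = \calO(|z|^{-k-1})$ at infinity, and then apply the residue theorem on a large circle whose integral vanishes in the limit, forcing the two residues to sum to zero. Your extra remark about the degenerate exponents $k \pm 2m \leq 0$ is a harmless refinement that the contour argument handles automatically.
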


\begin{proof}
When $k \geq 1$, the singularity at $0$ is removable.
We observe that $|g_{k, m} (z)|$ behaves like $|z|^{-k-1} \leq |z|^{-2}$ when $|z|$ is large, giving
$$
\lim_{R \rightarrow \infty} \frac{1}{2 \pi \icomp} \int_{\partial B(0, R)} g_{k, m} (z) \dd z = 0.
$$
Moreover, when $R > 1$,
$$
\frac{1}{2 \pi \icomp} \int_{\partial B(0, R)} g_{k, m} (z) \dd z = \Res(g_{k, m}, -1)  + \Res(g_{k, m}, 1),
$$
giving us the wanted result.
\end{proof}

\begin{lem} \label{lem:certain_residues_zero}
For $k \geq 1$ and $k \leq 2 |m|$, $\Res(g_{k, m}, 1) = \Res(g_{k, m}, -1) = 0$.
\end{lem}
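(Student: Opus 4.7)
The plan is to exploit the explicit factorized form $g_{k,m}(z) = \dfrac{2^k z^{k-1}}{(z-1)^{k+2m}(z+1)^{k-2m}}$ and observe that, under the hypothesis $k \leq 2|m|$, one of the two apparent singularities is not actually a pole. Combined with Lemma \ref{lem:sum_residues_zero}, which says the two residues always sum to zero when $k \geq 1$, the vanishing of one residue forces the other to vanish as well.

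Concretely, I would split into two cases according to the sign of $m$ (the case $m=0$ being vacuous since then $2|m|=0 < k$). In the case $m > 0$, the hypothesis $k \leq 2m$ means $k - 2m \leq 0$, so $(z+1)^{k-2m}$ is in fact a polynomial factor of $g_{k,m}$ of degree $2m-k \geq 0$, and there is no singularity at $z = -1$. Therefore $\Res(g_{k,m},-1) = 0$, and Lemma \ref{lem:sum_residues_zero} then gives $\Res(g_{k,m},1) = 0$ as well. Symmetrically, in the case $m < 0$ the hypothesis $k \leq -2m$ gives $k + 2m \leq 0$, so $(z-1)^{k+2m}$ is a polynomial factor and $\Res(g_{k,m},1) = 0$; invoking Lemma \ref{lem:sum_residues_zero} once more yields $\Res(g_{k,m},-1) = 0$.

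There is no real obstacle here: the argument is a one-line observation about orders of poles once the factorization is written out, the only subtlety being to keep track of signs when $k - 2m$ or $k + 2m$ is non-positive, so that a factor that syntactically appears in the denominator is actually holomorphic (or even a polynomial) at the relevant point. I would make this explicit by rewriting $g_{k,m}$ in each case as a genuine rational function whose denominator vanishes at only one of $\pm 1$, so that the absence of a pole at the other point is manifest.
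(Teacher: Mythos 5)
Your proposal is correct and follows essentially the same route as the paper: observe that under the hypothesis $k \leq 2|m|$ the function $g_{k,m}$ has no pole at one of the points $\pm 1$ (depending on the sign of $m$), and then invoke Lemma \ref{lem:sum_residues_zero} to conclude the other residue also vanishes. Your version is slightly more explicit about why the exponent sign makes the relevant factor polynomial, but the argument is the same.
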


\begin{proof}
We use the previous lemma.
It is enough to show that the residue is zero at either $1$ or $-1$.
If $m$ is positive, we notice that $g_{k, m}(z)$ does not have any pole at $-1$, thus the residue at $-1$ is zero.
If $m$ is negative, the residue at $1$ is zero.
\end{proof}

\begin{lem} \label{lem:general_zero}
More generally, for all $k$ even integer, 
$$
\Res(g_{k, m}, 1) = \Res(g_{k, m}, -1) = 0.
$$
\end{lem}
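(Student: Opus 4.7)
The plan is to exploit the inversion symmetry $\iota(z) = 1/z$, which fixes both poles $\pm 1$. The first step is to verify the functional equation
$$
g_{k,m}(1/z) \;=\; (-1)^{k}\, z^{2}\, g_{k,m}(z),
$$
which is a routine substitution in $g_{k,m}(z) = 2^{k}z^{k-1}/[(z-1)^{k+2m}(z+1)^{k-2m}]$: pulling a factor $z^{k+2m}$ out of $(1/z - 1)^{k+2m}$ and $z^{k-2m}$ out of $(1/z + 1)^{k-2m}$ produces the overall numerator $z^{k+1}$, and the sign $(-1)^{k+2m} = (-1)^{k}$ arises from $(1-z)^{k+2m} = (-1)^{k+2m}(z-1)^{k+2m}$.

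For $k$ even the identity reads $g_{k,m}(1/z) = z^{2}\, g_{k,m}(z)$, which is exactly the statement that the meromorphic 1-form $\omega := g_{k,m}(z)\,dz$ is anti-invariant under pullback by $\iota$:
$$
\iota^{*}\omega \;=\; g_{k,m}(1/z)\, d(1/z) \;=\; z^{2}\, g_{k,m}(z)\cdot\bigl(-1/z^{2}\bigr)\,dz \;=\; -\omega.
$$
Residues of meromorphic 1-forms are invariant under biholomorphic change of coordinates, so $\Res_{z_{0}}(\iota^{*}\omega) = \Res_{\iota(z_{0})}(\omega)$. At each fixed point $z_{0} = \pm 1$ of $\iota$ this gives
$$
\Res(g_{k,m}, \pm 1) \;=\; \Res_{\pm 1}(\iota^{*}\omega) \;=\; -\Res_{\pm 1}(\omega) \;=\; -\Res(g_{k,m}, \pm 1),
$$
forcing both residues to vanish. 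If one prefers to avoid the differential-form language, the same computation can be carried out directly: take a small counterclockwise loop $C$ around $z=1$, substitute $z = 1/w$ inside $\oint_{C} g_{k,m}(z)\,dz$, and observe that the image loop is a same-orientation small loop around $w=1$, so the functional equation forces the integral to equal its own negative (and analogously around $-1$).

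The only real obstacle is spotting the correct symmetry: the parity hypothesis of the lemma is precisely matched by the sign $(-1)^{k}$ in the functional equation, and once noticed the proof collapses to a few lines. A direct attack via Leibniz on $(z+1)^{k-2m}g_{k,m}(z)$ evaluated at $z=-1$ yields a sum of binomial coefficients whose vanishing for all admissible $m$ (and only for even $k$) is combinatorially opaque. By contrast, the symmetry argument makes the role of parity transparent: for odd $k$ one instead has $\iota^{*}\omega = +\omega$ and the argument becomes tautological, consistent with the fact that individual residues can then be nonzero (e.g.\ $\Res(g_{1,0},-1) = -1$).
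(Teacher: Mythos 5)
Your proof is correct, and it takes a genuinely different route from the paper's. The paper argues computationally: it writes $\Res(g_{2l,m},1)$ as the coefficient of $y^{2l+2m-1}$ in $(1+y)^{2l-1}\big/\big(1+\frac{y}{2}\big)^{2l-2m}$, reduces it through the binomial identities \eqref{eqn:neg_binomial}--\eqref{eqn:dev_poly} to the coefficient of $x^{2m+2l-1}$ in $(1-x^2)^{2l-1}$, which vanishes by parity; the residue at $-1$ then comes from Lemma \ref{lem:sum_residues_zero}, and the cases left out by the assumption $k=2l>m\geq 0$ are covered by the other appendix lemmas. You instead exploit the involution $\iota(z)=1/z$, which fixes both poles: the functional equation $g_{k,m}(1/z)=(-1)^k z^2\,g_{k,m}(z)$ (which I checked, as well as the orientation of the image contour in your contour-integral variant) says that $\omega=g_{k,m}(z)\,dz$ satisfies $\iota^*\omega=(-1)^{k+1}\omega$, and coordinate invariance of residues of $1$-forms at the fixed points $\pm1$ gives $\Res_{\pm1}(\omega)=-\Res_{\pm1}(\omega)=0$ for $k$ even. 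Your argument is shorter, handles both poles simultaneously and all $m\in\bbZ$ and all even $k\geq0$ uniformly (so it subsumes the $k=0$ case and needs neither Lemma \ref{lem:sum_residues_zero} nor Lemma \ref{lem:certain_residues_zero} as inputs), and it makes the role of the parity of $k$ transparent, consistent with your remark that for odd $k$ the symmetry is vacuous and individual residues such as $\Res(g_{1,0},-1)=-1$ are nonzero. What the paper's route buys in exchange is only that it stays at the level of elementary power-series and binomial manipulations, at the cost of the identities and the case-patching.
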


\begin{proof}
Assume that $k = 2l > m \geq 0$ for a positive integer $l$.
Look at the residue of $g_{k, m} (z)$ around $z = 1$ is equivalent to looking at the residue of $g_{k, m} (y+1)$ around $y=0$,
$$
\Res(g_{k, m}(z), z=1) = \Res(g_{k, m}(y+1), y=0).
$$
We have the following equivalent relations
\begin{align*}
& \quad \Res(g_{2l, m}(y+1), y=0) = 0 \\
\Leftrightarrow & \quad \Res \left( \frac{(y+1)^{2l-1}}{y^{2l+2m}(y+2)^{2l-2m}}, y=0 \right) = 0 \\
\Leftrightarrow & \quad \frac{(1+y)^{2l-1}}{(1+\frac{y}{2})^{2l-2m}} [y^{2l+2m-1}]= 0
\end{align*}
We develop the rational fraction to evaluate this coefficient where the following three identities are useful,
\begin{align}
{{2m-2l \choose 2m+p}} & = (-1)^{2m+p} {{2l+p-1 \choose 2m+p}} \label{eqn:neg_binomial} \\
{{2l+p-1 \choose 2m+p}} & = \sum_{q=0}^{2l-1} {{p \choose q}}  {{2l-1 \choose 2m+p-q}}, \label{eqn:comb} \\
\sum_{k=0}^n {{n \choose k}} {{ k \choose r}} (-x)^k & = (-x)^r (1-x)^{n-r} {{n \choose r}}. \label{eqn:dev_poly}
\end{align}
Equation \eqref{eqn:neg_binomial} comes from the general definition of binomial coefficients, here $2m-2l < 0$.
Equation \eqref{eqn:comb} is an easy combinatorial identity and Equation \eqref{eqn:dev_poly} a simple development.
Then,
\begin{align*}
& \sum_{p=0}^{2l-1} {{2l-1 \choose p}} \left( \frac12 \right)^{2l+2m-1-p} {{2m-2l \choose 2l+2m-1-p}} \\
(p \leadsto 2l-1-p) & \quad = \sum_{p=0}^{2l-1} {{2l-1 \choose p}} \left( \frac12 \right)^{2m+p} {{2m-2l \choose 2m+p}} \\
& \quad = \sum_{p=0}^{2l-1} {{2l-1 \choose p}} \left( -\frac12 \right)^{2m+p} {{2l+p-1 \choose 2m+p}} \\
(\mbox{Equation } \eqref{eqn:comb}) & \quad = \sum_{p=0}^{2l-1} {{2l-1 \choose p}} \left( -\frac12 \right)^{2m+p} 
\sum_{q=0}^{2l-1} {{p \choose q}}  {{2l-1 \choose 2m+p-q}} \\
(q \leadsto p-q) & \quad = \sum_{p=0}^{2l-1} {{2l-1 \choose p}} \left( -\frac12 \right)^{2m+p} 
\sum_{q=0}^{2l-1} {{p \choose q}}  {{2l-1 \choose 2m+q}} \\
& \quad = \left( \frac12 \right)^{2m}  \sum_{q=0}^{2l-1} {{2l-1 \choose 2m+q}}
\sum_{p=0}^{2l-1} {{2l-1 \choose p}} {{p \choose q}} \left( -\frac12 \right)^p \\
(\mbox{Equation } \eqref{eqn:dev_poly}) & \quad = \left( \frac12 \right)^{2m}  \sum_{q=0}^{2l-1} {{2l-1 \choose 2m+q}}
\left( -\frac12 \right)^q \left( \frac12 \right)^{2l-1-q} {{2l-1 \choose q}} \\
& \quad = \left( \frac12 \right)^{2m+2l-1}  \sum_{q=0}^{2l-1} (-1)^q {{2l-1 \choose 2m+q}} {{2l-1 \choose 2l-1-q}} =0
\end{align*}
where the last sum in the final quantity is the coefficient in front of $x^{2m+2l-1}$ in $(1-x)^{2l-1}(1+x)^{2l-1} = (1-x^2)^{2l-1}$, which is zero because the polynomial only contains monomials of even degree.
\end{proof}

\begin{prop} \label{prop:int_zero}
For $\zeta \in \primallattice$, let $f_\zeta$ as defined in \ref{sec:Greens}.
Consider $C$ a path as defined in Proposition \ref{prop:free_Greens}.
Then,
$$
\int_C f_\zeta(z) \dd z = 0.
$$
\end{prop}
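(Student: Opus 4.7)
The plan is to apply the residue theorem to $\int_C f_\zeta(z)\,dz$ and show that the sum of residues at the only two poles enclosed, $z = \pm 1$, vanishes. Since $C$ encloses $\{e^{i\theta}, 0 \leq \theta \leq \pi\}$ but leaves the origin outside, and since $f_\zeta$ is meromorphic on $\mathbb{C} \setminus \{0, \pm 1\}$, we have
\begin{equation*}
\int_C f_\zeta(z)\,dz = 2\pi i \bigl[\operatorname{Res}(f_\zeta, 1) + \operatorname{Res}(f_\zeta, -1)\bigr].
\end{equation*}
So the task reduces to showing that this sum of residues equals zero.

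The key observation is that expanding the exponential factor as a power series in $2it$ turns $f_\zeta$ into a linear combination of exactly the functions $g_{k,m}$ studied in the appendix. Writing $\zeta = m + it$, I would unfold
\begin{equation*}
f_\zeta(z) = \frac{1}{z}\left(\frac{z+1}{z-1}\right)^{2m} \sum_{k \geq 0} \frac{(2it)^k}{k!}\left(\frac{1}{z+1} + \frac{1}{z-1}\right)^k = \sum_{k \geq 0} \frac{(2it)^k}{k!}\, g_{k,m}(z).
\end{equation*}

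Next I would justify exchanging the sum and the integral. Since $C$ is a compact curve bounded away from $0$ and $\pm 1$, there exist constants $M_C$ and $B_C > 0$ such that $\left|\frac{1}{z+1} + \frac{1}{z-1}\right| \leq M_C$ and $\frac{1}{|z|}\left|\frac{z+1}{z-1}\right|^{2m} \leq B_C$ uniformly on $C$; hence
\begin{equation*}
\sum_{k \geq 0} \left|\frac{(2it)^k}{k!}\, g_{k,m}(z)\right| \leq B_C \exp(2|t|M_C),
\end{equation*}
and Fubini (or dominated convergence) lets us swap. This yields
\begin{equation*}
\int_C f_\zeta(z)\,dz = \sum_{k \geq 0} \frac{(2it)^k}{k!} \int_C g_{k,m}(z)\,dz.
\end{equation*}

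Finally, I would conclude by applying the appendix lemmas to each term. By the residue theorem, $\int_C g_{k,m}(z)\,dz = 2\pi i [\operatorname{Res}(g_{k,m},1) + \operatorname{Res}(g_{k,m}, -1)]$. For $k = 0$, the first lemma gives that each residue at $\pm 1$ vanishes individually (either because there is no pole, or by the explicit binomial computation). For $k \geq 1$, Lemma \ref{lem:sum_residues_zero} gives $\operatorname{Res}(g_{k,m},1) + \operatorname{Res}(g_{k,m},-1) = 0$. Hence every term in the expansion vanishes and $\int_C f_\zeta(z)\,dz = 0$, as claimed. The main potential obstacle is the interchange of sum and integral, but as shown above the uniform bound on $C$ makes this routine; all the substantive work has already been carried out in the appendix lemmas.
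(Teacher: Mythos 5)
Your proof is correct and follows essentially the same route as the paper: expand the exponential so that $f_\zeta = \sum_k \frac{(2\icomp t)^k}{k!} g_{k,m}$, justify the interchange of sum and integral by uniform convergence on the compact contour, and kill each term using the residue computations of the appendix (the $k=0$ lemma together with Lemma \ref{lem:sum_residues_zero}). Your write-up is in fact slightly more careful than the paper's, which omits the $1/k!$ factors and cites only Lemma \ref{lem:sum_residues_zero}, but the substance is identical.
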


\begin{proof}
We will develop the exponential into series and show that this integral is zero for all terms.
We can do this because exponential converges uniformly on all compacts, it makes sense to exchange integral and series.
After developping, we get
$$
f_\zeta(z) = \sum_{k \geq 0} (2 \icomp t)^k g_{k, m} (z)
$$
and Lemma \ref{lem:sum_residues_zero} allows us to conclude.
\end{proof}

\begin{prop} \label{prop:Greens_Ck}
Around $\zeta = m \in \bbZ \backslash \{ 0 \}$, the function $\Greens$ is $\calC^\infty$.
\end{prop}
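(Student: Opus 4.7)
The plan is to derive a power series expansion of $\Greens(m + \icomp t)$ in $t$ whose coefficients agree whether computed with the branch of $\ln$ corresponding to $\zeta$ in the upper half-plane or in the lower one. In each open half-plane $\Greens$ is already $\calC^\infty$ in $\zeta$, because the integrand in the defining formula is smooth in $\zeta$ and the branch of $\ln$ is fixed there (as in the proof of Proposition \ref{prop:free_Greens}); so the question reduces to matching all one-sided vertical derivatives at $\zeta = m$.

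First I would substitute $\zeta = m + \icomp t$ and expand the exponential factor in $f_\zeta$:
\begin{equation*}
f_{m + \icomp t}(z) = \frac{1}{z}\left(\frac{z+1}{z-1}\right)^{2m} \sum_{k \geq 0} \frac{(2 \icomp t)^k}{k!} \left(\frac{1}{z+1} + \frac{1}{z-1}\right)^{k} = \sum_{k \geq 0} \frac{(2 \icomp t)^k}{k!}\, g_{k,m}(z),
\end{equation*}
where $g_{k,m}$ is the rational function studied in the Appendix. Since the series converges uniformly on the contour $C$ (which is bounded and stays at positive distance from $\pm 1$), swapping sum and integral yields
\begin{equation*}
\Greens(m + \icomp t) = \frac{1}{8\pi^2 \icomp} \sum_{k \geq 0} \frac{(2 \icomp t)^k}{k!}\, I_k, \qquad I_k = \int_C g_{k, m}(z)\, \ln(z)\, dz.
\end{equation*}
Each $I_k$ is then evaluated by the residue theorem: as $g_{k,m}$ is rational with poles only at $\pm 1$ (both enclosed by $C$) and $\ln$ is holomorphic in the interior of $C$ once the branch cut is placed outside, one obtains $I_k = 2\pi \icomp \bigl[ \Res_{z = 1}(g_{k, m} \ln) + \Res_{z = -1}(g_{k, m} \ln) \bigr]$.

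The critical step, and the main obstacle, is to verify that $I_k$ does not depend on the choice of branch. By the Remark following Proposition \ref{prop:free_Greens}, the two relevant branches coincide near $z = 1$ (both giving $\ln(1) = 0$) but differ by the constant $2 \pi \icomp$ in a neighborhood of $z = -1$, so the discrepancy between the values of $I_k$ computed from above and below the real axis reduces to an expression proportional to $\Res_{z=-1}(g_{k,m})$. The residue identities from the Appendix (Lemmas \ref{lem:sum_residues_zero}, \ref{lem:certain_residues_zero}, \ref{lem:general_zero}) are precisely what is needed to see that this contribution cancels to all orders. This requires a careful contour-deformation argument near $z = -1$ to make sure that the cut of $\ln$ crossing the contour on the two sides does not introduce any extra term beyond the shift just described. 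Once this is in hand, the Taylor coefficients of $\Greens(m + \icomp t)$ agree from above and below, the series defines a real-analytic function of $t$ near $t = 0$, and $\Greens$ is $\calC^\infty$ in a neighborhood of $\zeta = m$.
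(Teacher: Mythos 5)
Your proposal follows essentially the same route as the paper's proof: you expand the exponential factor of $f_{m+\icomp t}$ in powers of $t$ to reduce the matching of all one-sided vertical derivatives to the equality of $\int_C g_{k,m}\ln$ for the two branches, observe that the branch discrepancy localizes at $z=-1$ and is proportional to $\Res_{z=-1}(g_{k,m})$, and then invoke the residue lemmas of the Appendix, which is exactly the paper's argument. The contour issue you flag is dealt with in the paper simply by noting that the difference of the two logarithms is constant in a neighbourhood of each of the poles $\pm 1$, so nothing beyond what you describe is needed.
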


\begin{proof}
To show this, we need to check that for all $k \in \bbN$, we have
$$
\int_C g_{k, m}(z) \ln_1(z) \dd z = \int_C g_{k, m}(z) \ln_2(z) \dd z
$$
where $\ln_i$ is chosen such that $\ln_i(1) - \ln_i(-1) = (-1)^i \icomp \pi$.
Here, $\ln_1$ corresponds the logarithm we chose in the upper half-plan and $\ln_2$ in the lower half-plane.
From Proposition \ref{prop:int_zero}, we can fix a lift of logarithm, for example $\ln_1 (1) = \ln_2 (1) = 0$ and $\ln_1(-1) = \icomp \pi$ and $\ln_2(-1) = -\icomp \pi$.

Let $I_1$ the integral on the left-hand side and $I_2$ the one on the right-hand side.
Since $\ln_1 - \ln_2$ is constant in a small neighborhood of $1$ and $-1$, we can write
\begin{align*}
I_1 - I_2 & = \Res(g_{k, m} (z) [\ln_1(z) - \ln_2(z)], z = -1) \\
& = 2 \icomp \pi \Res(g_{k, m} (z), z=-1 ),
\end{align*}
whose value is zero according to Lemmas \ref{lem:certain_residues_zero} and \ref{lem:general_zero}.
\end{proof}

\bibliographystyle{alpha}
\bibliography{qi}

\Address

\end{document}